\theoremstyle{definition}
\newtheorem{define}{Definition}[section]
\theoremstyle{plain}
\theoremstyle{plain}
\newtheorem{theorem}[define]{Theorem}
\theoremstyle{plain}
\newtheorem{lemma}[define]{Lemma}
\theoremstyle{plain}
\newtheorem{proposition}[define]{Proposition}
\theoremstyle{plain}
\newtheorem{corollary}[define]{Corollary}
\theoremstyle{remark}
\newtheorem{remark}[define]{Remark}
\theoremstyle{definition}
\theoremstyle{definition}
\newtheorem{fact}[define]{Fact}
\theoremstyle{definition}
\newtheorem{eg}[define]{Example}
\theoremstyle{definition}
\newtheorem{notation}[define]{Notation}
\theoremstyle{definition}
\theoremstyle{definition}
\theoremstyle{definition}
\theoremstyle{definition}
\theoremstyle{definition}
\numberwithin{equation}{section}
\numberwithin{figure}{section}
\numberwithin{table}{section}
\renewcommand{\labelenumi}{\textup{(\theenumi) }}
\renewcommand{\theenumi}{\arabic{enumi}}
\title[Symmetric pairs and compact symmetric triads]
{A duality between non-compact semisimple symmetric pairs and commutative compact semisimple symmetric triads and its general theory}
\author[K. Baba, O. Ikawa, A. Sasaki]{Kurando Baba, Osamu Ikawa, Atsumu SASAKI}
\thanks{
	The third author 
	was partially supported 
	by JSPS KAKENHI Grant Number JP19K03453. 
}
\subjclass[2010]{Primary: 22E60, Secondary: 17B05, 53C35}
\keywords{involution; non-compact symmetric pair; commutative compact symmetric triad; 
duality theorem; irreducibility; type $K_{\varepsilon }$ symmetric pair}
\address[K. Baba]{Faculty of Science and Technology, 
Tokyo University of Science, 
Noda, Chiba, 278-8510, Japan. }
\email{kurando.baba@rs.tus.ac.jp}
\address[O. Ikawa]{Faculty of Arts and Sciences, Kyoto Institute of Technology, 
Matsugasaki, Sakyoku, Kyoto 606-8585, Japan. }
\email{ikawa@kit.ac.jp}
\address[A. Sasaki]{Department of Mathematics, Faculty of Science, Tokai University, 
4-1-1, Kitakaname, Hiratsuka, Kanagawa, 259-1292, Japan, 
and Department of Mathematics, FAU Erlangen-N\"urnberg, 
Cauerstrasse 11, 91058, Erlangen, Germany}
\email{atsumu@tokai-u.jp}
\date{\today}
\begin{document}

\begin{abstract}
The present paper investigates a natural generalization of the duality 
between Riemannian symmetric pairs of compact type and those of non-compact type \`a la \'E. Cartan. 
The main result of this paper is 
to construct an explicit description of a one-to-one correspondence 
between non-compact pseudo-Riemannian semisimple symmetric pairs 
and commutative compact semisimple symmetric triads, which is called the duality theorem. 
Further, we develop a general theory of the duality theorem. 
\end{abstract}

\maketitle

\tableofcontents


\section{Introduction}
\label{sec:intro}

In Riemannian geometry, 
it is a well-known fact due to \'E. Cartan that 
there is a one-to-one correspondence between the set of non-compact Riemannian symmetric spaces 
and the set of locally isomorphism classes of compact Riemannian symmetric spaces. 
This means that there exists a one-to-one correspondence between 
the set of Riemannian symmetric pairs of non-compact type and that of compact type 
on the Lie algebra level. 
This correspondence is usually called the {\it duality} for Riemannian symmetric pairs. 
The aim of this paper is to give a one-to-one correspondence between 
\begin{center}
non-compact semisimple symmetric pairs
\end{center}
and 
\begin{center}
commutative compact semisimple symmetric triads 
\end{center}
as a natural generalization of the duality for Riemannian symmetric pairs 
(see Theorem \ref{thm:duality-thm} for its description) 
and develop its general theory. 
We say that this correspondence is the {\it duality theorem}. 

One of our motivations for the study on the duality theorem 
is to investigate the geometry of isometric group actions on pseudo-Riemannian symmetric spaces. 
In fact, 
the action of a maximal compact subgroup $K$ of the isometry group $G$ 
of a non-compact pseudo-Riemannian symmetric space $G/H$ corresponds to 
the $K$-action on the compact Riemannian symmetric space $G_u/H_u$ 
as an extension of the duality theorem. 
Thanks to this perspective, 
we can show in \cite[Proposition 2]{bis0} 
that each orbit for the $K$-action on $G/H$ is a pseudo-Riemannian submanifold 
and this action admits a flat totally geodesic submanifold as a cross-section. 
These properties could be interpreted 
that they correspond to the properties that 
each orbit for the $K$-action on $G_u/H_u$ is a Riemannian submanifold 
and this admits a flat totally geodesic submanifold as a cross-section. 
Furthermore, we can investigate precisely geometric properties 
of each orbit for the $K$-action on $G/H$ via the duality theorem.
For example, we will specify all totally geodesic orbits for this action in the forthcoming paper \cite{bis2}. 
We note that there is literature on the study of Lie group actions 
on Riemannian symmetric spaces using the duality \`a la Cartan 
(see \cite{koike,kollross}). 

Another motivation of our work is that 
our duality theorem plays a crucial role to study calibrated geometry. 
Harvey--Lawson, 
who have introduced the notion of calibrations on Riemannian manifolds \cite{harvey-lawson}, 
constructed calibrations using various geometric structures for some Riemannian manifolds. 
On the other hand, 
Mealy, 
who has introduced the notion of calibrations on pseudo-Riemannian manifolds \cite{mealy}, 
constructed calibrations using various geometric structures for some pseudo-Riemannian manifolds. 
Due to our duality theorem, 
we can understand the relationship 
between the calibrations on Riemannian manifolds and those on pseudo-Riemannian manifolds. 
Namely, 
we can regard the calibration on a Riemannian manifold given explicitly by Harvey--Lawson 
as an element of the space of functions on a certain compact symmetric space 
which are invariant under the action of the Lie group 
preserving some geometric structures, 
and the calibration on a pseudo-Riemannian manifold given explicitly by Mealy 
as an element of the space of functions on a certain pseudo-Riemannian symmetric space 
which are invariant under the action of the Lie group preserving some geometric structures, 
and then our duality theorem gives rise to a correspondence between two spaces of functions. 
%

Now, 
let us give a quick review on this paper as follows. 
A non-compact semisimple symmetric pair is a pair $(\mathfrak{g}_0,\sigma )$ 
which consists of a non-compact real semisimple Lie algebra $\mathfrak{g}_0$ 
and an automorphism $\sigma $ on $\mathfrak{g}_0$ satisfying $\sigma ^2=\operatorname{id}$, 
that is, an involutive automorphism. 
If $\sigma$ coincides with a Cartan involution $\theta $ of $\mathfrak{g}_0$, 
the pair $(\mathfrak{g}_0,\theta )$ is usually said to be a non-compact Riemannian symmetric pair. 
On the other hand, 
a commutative compact semisimple symmetric triad is a triplet $(\mathfrak{g},\theta _1,\theta _2)$ 
with a compact semisimple Lie algebra $\mathfrak{g}$ 
and two involutive automorphisms $\theta _1,\theta _2$ on $\mathfrak{g}$ 
satisfying $\theta _1\theta _2=\theta _2\theta _1$. 
Clearly, $(\mathfrak{g},\theta ,\theta )$ is a commutative compact semisimple symmetric triad 
for any involutive automorphism $\theta$, 
and it seems to be a compact Riemannian symmetric pair $(\mathfrak{g},\theta )$. 

The present paper provides an explicit description of a one-to-one correspondence between 
the set of equivalence classes in non-compact semisimple symmetric pairs 
and the set of equivalence classes in commutative compact semisimple symmetric triads 
(the duality theorem, Theorem \ref{thm:duality-thm}). 
If $(\mathfrak{g}_0,\sigma )$ corresponds to $(\mathfrak{g},\theta _1,\theta _2)$ via our duality theorem, 
we shall call $(\mathfrak{g},\theta _1,\theta _2)$ (resp. $(\mathfrak{g}_0,\sigma )$) the dual 
of $(\mathfrak{g}_0,\sigma )$ (resp. $(\mathfrak{g},\theta _1,\theta _2)$) 
and use the notation $(\mathfrak{g},\theta _1,\theta _2)=(\mathfrak{g}_0,\sigma )^*$ 
and $(\mathfrak{g}_0,\sigma )=(\mathfrak{g},\theta _1,\theta _2)^*$ (see Notation \ref{notation:*}). 

Our duality theorem might be a natural generalization of the duality for Riemannian symmetric pairs. 
Namely, 
when $\sigma $ is a Cartan involution $\theta $ of $\mathfrak{g}_0$, 
the corresponding commutative compact semisimple symmetric triad is $(\mathfrak{g},\theta ,\theta )$ 
and then essentially the same as a compact Riemannian symmetric pair $(\mathfrak{g},\theta )$. 
Our perspective is explained in Section \ref{subsec:riemannian}. 

We would emphasize that 
our duality theorem is compatible with the fundamental properties on non-compact semisimple symmetric pairs 
and with those on commutative compact semisimple symmetric triads. 

First, there is a notion of the associated symmetric pair $(\mathfrak{g}_0,\sigma )^a$ 
(Definition \ref{def:associated}) 
and the dual symmetric pair $(\mathfrak{g}_0,\sigma )^d$ (Definition \ref{def:dual}) 
of a non-compact semisimple symmetric pair $(\mathfrak{g}_0,\sigma )$, 
whereas, there is a notion of the associated symmetric triad $(\mathfrak{g},\theta _1,\theta _2 )^a$ 
(Definition \ref{def:associated-triad}) and 
the dual symmetric triad $(\mathfrak{g},\theta _1,\theta _2)^d$ 
(Definition \ref{def:dual-triad}) of 
a commutative compact semisimple symmetric triad $(\mathfrak{g},\theta _1,\theta _2 )$. 
Then, our duality theorem satisfies the compatible conditions 
on these notions in the sense of Propositions \ref{prop:compatible-c} and \ref{prop:compatible-n}. 
For example, the dual of $(\mathfrak{g}_0,\sigma )^{a}$, 
denoted by $(\mathfrak{g}_0,\sigma )^{a*}$, equals 
the associated symmetric triad of $(\mathfrak{g}_0,\sigma )^{*}$, 
denoted by $(\mathfrak{g}_0,\sigma )^{*a}$. 

Second, our duality theorem preserves the property of irreducibility, namely, 
an irreducible non-compact semisimple symmetric pair (Definition \ref{def:irr-pair}) 
corresponds to an irreducible commutative compact semisimple symmetric triad 
(Definition \ref{def:irr-triad}) and vice versa (see Theorem \ref{thm:corresp-irreducible}). 
Further, 
any irreducible non-compact semisimple symmetric pair belongs to 
either a non-compact simple symmetric pair without complex structures or one of four 
subclasses 
(we shall use the labels (P-\ref{item:cpx-anti})--(P-\ref{item:non-linear}), see Corollary \ref{cor:irr-pair}), 
and any irreducible commutative compact semisimple symmetric triad belongs to 
either a compact simple symmetric triad or one of four 
subclasses 
(the labels (T-\ref{type:i2})--(T-\ref{type:ii2d}), see Proposition \ref{prop:irr-triad}). 
Then, our duality theorem induces a one-to-one correspondence 
between non-compact simple symmetric pairs without complex structures 
and commutative compact simple symmetric triads (see Proposition \ref{prop:simple}) 
and that between the subclasses (P-\ref{item:cpx-anti})--(P-\ref{item:non-linear}) 
and the subclasses (T-\ref{type:i2})--(T-\ref{type:ii2d}) 
(see Theorem \ref{thm:irreducible}). 
We remark that 
our definition for $(\mathfrak{g}_0,\sigma )$ to be irreducible might be different from 
the definition given by \cite[p.435]{oshima-sekiguchi}. 
The properness of our definition and the comparison of two definitions are explained 
in Section \ref{sec:appendix} as an appendix. 

Third, 
we provide a new characterization for a non-compact semisimple symmetric pair $(\mathfrak{g}_0,\sigma )$ 
to be of type $K_{\varepsilon }$ 
(see \cite[Proposition 2.1]{kaneyuki96}, also Definition \ref{def:typeK}) 
by the corresponding commutative compact semisimple symmetric triad 
$(\mathfrak{g},\theta _1,\theta _2)=(\mathfrak{g}_0,\sigma )^*$ via our duality theorem, 
namely, 
$\theta _1$ is conjugate to $\theta _2$ by inner automorphisms (see Theorem \ref{thm:typeK}). 

Here is a remark that 
we can offer an alternative proof for the classification 
of non-compact semisimple symmetric pairs, 
of which a concrete description 
was given by Berger \cite{berger}, 
from the viewpoint that 
any non-compact semisimple symmetric pair is the dual of some commutative compact semisimple symmetric triad. 
An explicit correspondence has been given in \cite[Table 1]{bis0}. 
The significance of our proof is to bring the classification of compact semisimple symmetric pairs 
(\cite[Table V in p.518]{helgason}) 
to the classification of non-compact semisimple symmetric pairs 
(\cite[Tableau II in pp.157--161]{berger}). 
For details, we refer to the next paper \cite{bis1}, 
and we summarize the mechanism of our idea as follows. 
The classification of compact semisimple symmetric triads 
is gained in view of compact semisimple symmetric pairs. 
Further, the classification of non-compact semisimple symmetric pairs is derived 
from the former via the duality theorem. 
What bridges between the classification of compact semisimple symmetric triads 
and that of non-compact semisimple symmetric pairs is 
the notion of symmetric triads with multiplicities 
whose classification is accomplished by the second author (cf. \cite{ikawa-jms}). 
We note that we find another approach to Berger's classification by Helminck \cite{helminck}, 
however in a completely different way. 

From this kind of circumstances, 
this paper is the foundation of the studies mentioned above. 



\section{Preliminaries}
\label{sec:preliminaries}

This section provides a quick review on non-compact semisimple symmetric pairs 
and commutative compact semisimple symmetric triads. 
We refer to the references \cite{berger,conlon1,ikawa-jms}, for example. 

\subsection{Pseudo-Riemannian semisimple symmetric pair}
\label{subsec:pair}

We begin with a summary on non-compact semisimple symmetric pairs. 

Let $\mathfrak{g}_0$ be a non-compact real semisimple Lie algebra. 
Suppose we are given an involutive automorphism (involution, for short) $\sigma $ on $\mathfrak{g}_0$. 
The pair $(\mathfrak{g}_0,\sigma )$ is called a {\it non-compact semisimple symmetric pair}. 
We denote by $\mathfrak{g}_0^{\sigma}:=\{ X\in \mathfrak{g}_0:\sigma (X)=X\} $ 
the fixed point set of $\sigma$ in $\mathfrak{g}_0$. 
Then, $\mathfrak{g}_0^{\sigma }$ is a subalgebra of $\mathfrak{g}_0$. 
We note that $\mathfrak{g}_0^{\sigma}$ is a maximal compact subalgebra if and only if 
$\sigma$ is a Cartan involution of $\mathfrak{g}_0$. 
In this paper, 
we say that a non-compact semisimple symmetric pair $(\mathfrak{g}_0,\sigma)$ is {\it Riemannian} 
if $\sigma$ is a Cartan involution of $\mathfrak{g}_0$, 
and {\it pseudo-Riemannian} semisimple symmetric pair if not. 

The terminology of non-compact (pseudo-)Riemannian semisimple symmetric pairs 
comes from the corresponding (pseudo-)Riemannian symmetric spaces 
in the sense as follows. 
Let $G$ be a non-compact connected real semisimple Lie group with finite center 
whose Lie algebra is $\mathfrak{g}_0$. 
For a Cartan involution $\theta$ of $G$, 
the fixed point set $K=G^{\theta}:=\{ g\in G:\theta (g)=g\} $ 
is a maximal compact subgroup of $G$ (\cite[Chap. VI, Theorem 1.1]{helgason}), 
and then $G/K$ has a structure of a Riemannian symmetric space of non-compact type. 
On the other hand, 
for a non-trivial involution $\sigma$ which is different from Cartan involutions of $G$, 
the fixed point set $H:=G^{\sigma }$ is non-compact. 
Then, $G/H$ has a structure of a pseudo-Riemannian symmetric space. 

We introduce an equivalence relation on non-compact semisimple symmetric pairs as follows. 
Two non-compact semisimple symmetric pairs $(\mathfrak{g}_0,\sigma )$, $(\mathfrak{g}_0',\sigma ')$ 
satisfy $(\mathfrak{g}_0,\sigma )\equiv (\mathfrak{g}_0',\sigma ')$ 
if there exists a Lie algebra isomorphism $\varphi :\mathfrak{g}_0\to \mathfrak{g}_0'$ 
such that $\varphi \sigma =\sigma '\varphi $. 
Non-compact semisimple symmetric pairs are classified by Berger \cite{berger} 
up to the equivalence relation. 

\subsection{Pseudo-Riemannian semisimple symmetric pair equipped with Cartan involution}
\label{subsec:pair-cartan}

Let $(\mathfrak{g}_0,\sigma )$ be a non-compact semisimple symmetric pair. 
It is known by \cite{berger} that 
there exists a Cartan involution $\theta $ of $\mathfrak{g}_0$ commuting with $\sigma$ 
(cf. \cite[Theorem 6.16]{knapp}). 

\begin{define}
\label{def:cartan}
We call such a triplet $(\mathfrak{g}_0,\sigma ;\theta )$ 
a {\it non-compact semisimple symmetric pair equipped with a Cartan involution}, 
and say that it is Riemannian (resp. pseudo-Riemannian) 
if $(\mathfrak{g}_0,\sigma )$ is Riemannian (resp. pseudo-Riemannian). 
\end{define}

Two non-compact semisimple symmetric pairs equipped with Cartan involutions 
$(\mathfrak{g}_0,\sigma ;\theta ),(\mathfrak{g}_0',\sigma ';\theta ')$ satisfy 
$(\mathfrak{g}_0,\sigma ;\theta )\equiv (\mathfrak{g}_0',\sigma ';\theta ')$ 
if there exists a Lie algebra isomorphism $\varphi :\mathfrak{g}_0\to \mathfrak{g}_0'$ 
such that $\varphi \sigma =\sigma '\varphi $ and $\varphi \theta =\theta '\varphi $. 
Clearly, $(\mathfrak{g}_0,\sigma )\equiv (\mathfrak{g}_0',\sigma ')$ holds 
if $(\mathfrak{g}_0,\sigma ;\theta )\equiv (\mathfrak{g}_0',\sigma ';\theta ')$. 

Given a non-compact semisimple symmetric pair equipped with a Cartan involution, 
we can construct other ones, namely, the associated and the dual non-compact semisimple symmetric pairs 
equipped with Cartan involutions of it, 
whose notions are based on those \`a la Berger \cite{berger} 
(see also \cite[Definition 1.3]{oshima-sekiguchi}). 

Let $(\mathfrak{g}_0,\sigma ;\theta )$ be a non-compact semisimple symmetric pair 
equipped with a Cartan involution. 
We set $\mathfrak{k}_0:=\mathfrak{g}_0^{\theta }$ and $\mathfrak{p}_0:=\mathfrak{g}_0^{-\theta }$. 
Then, 
$\mathfrak{g}_0=\mathfrak{k}_0+\mathfrak{p}_0$ is the corresponding Cartan decomposition. 
Similarly, 
we put $\mathfrak{h}_0:=\mathfrak{g}_0^{\sigma }$ and $\mathfrak{q}_0:=\mathfrak{g}_0^{-\sigma}$. 
Then, 
$\mathfrak{g}_0=\mathfrak{h}_0+\mathfrak{q}_0$ is the $\sigma$-eigenspace decomposition. 
As $\theta \sigma =\sigma \theta $, 
the Lie algebra $\mathfrak{g}_0$ is decomposed into the direct sum as follows: 
\begin{align*}
\mathfrak{g}_0
&=\mathfrak{k}_0\cap \mathfrak{h}_0+\mathfrak{k}_0\cap \mathfrak{q}_0
+\mathfrak{p}_0\cap \mathfrak{h}_0+\mathfrak{p}_0\cap \mathfrak{q}_0. 
\end{align*}

The commutativity $\theta \sigma =\sigma \theta$ 
gives rise to another involution $\theta \sigma$ on $\mathfrak{g}_0$. 
Clearly, $\theta \sigma$ commutes with $\theta$, 
from which we obtain another non-compact semisimple symmetric pair equipped with a Cartan involution 
$(\mathfrak{g}_0,\theta \sigma ;\theta )$. 

Cartan involutions of $\mathfrak{g}_0$ commuting with $\sigma$ are unique up to conjugations 
in the sense as follows. 

\begin{fact}[{\cite[p.153]{loos}}]
\label{fact:loos}
For any two Cartan involutions $\theta ,\theta '$ of $\mathfrak{g}_0$ commuting with $\sigma$, 
there exists $X\in \mathfrak{g}_0^{\sigma }$ such that 
$\theta '=e^{\operatorname{ad}X}\theta e^{-\operatorname{ad}X}$. 
\end{fact}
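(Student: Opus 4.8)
The plan is to run the classical argument showing that any two Cartan involutions are conjugate by an inner automorphism, and then to verify that the extra hypothesis that both commute with $\sigma$ forces the conjugating element into $\mathfrak{g}_0^{\sigma}$. First I would fix the Killing form $B$ of $\mathfrak{g}_0$ and introduce the $\theta$-twisted bilinear form $B_\theta(X,Y):=-B(X,\theta Y)$. Since $\theta$ is a Cartan involution this is a positive-definite inner product; every automorphism of $\mathfrak{g}_0$ preserves $B$, and because $\sigma$ commutes with $\theta$ one checks directly that $\sigma$ is a $B_\theta$-isometry, a point I will need for the refinement.

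Next I would analyse $p:=\theta\theta'$. Using that automorphisms preserve $B$ and that $\theta,\theta'$ are involutions, a short computation gives $B_\theta(pX,Y)=B_\theta(X,pY)$ together with $B_\theta(pX,X)=B_{\theta'}(X,X)>0$ for $X\neq 0$; thus $p$ is self-adjoint and positive-definite with respect to $B_\theta$. The spectral theorem then supplies the real power family $p^{t}$, and the key lemma, standard for semisimple $\mathfrak{g}_0$, is that, $p$ being an automorphism, each $p^{t}$ is again an automorphism, so that $S:=\log p$ is a derivation. As $\mathfrak{g}_0$ is semisimple, $S$ is inner, say $S=\operatorname{ad}(2Z)$ with $Z\in\mathfrak{g}_0$ unique, and self-adjointness of $S$ with respect to $B_\theta$ forces $Z\in\mathfrak{p}_0=\mathfrak{g}_0^{-\theta}$.

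Then I would carry out the conjugation and the $\sigma$-refinement. From $\theta p\theta=\theta'\theta=p^{-1}$ one obtains $\theta S\theta=-S$, hence $\theta\operatorname{ad}(Z)\theta=-\operatorname{ad}(Z)$, so that $e^{-\operatorname{ad}Z}\theta e^{\operatorname{ad}Z}=e^{-2\operatorname{ad}Z}\theta=p^{-1}\theta=\theta'$; setting $X:=-Z$ gives $\theta'=e^{\operatorname{ad}X}\theta e^{-\operatorname{ad}X}$. Since $\theta$ and $\theta'$ commute with $\sigma$, so does $p$; as $\sigma$ is linear and invertible, commuting with $p$ forces it to commute with every spectral projection of $p$, hence with $S=\log p$. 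Therefore $\operatorname{ad}(\sigma Z)=\sigma\operatorname{ad}(Z)\sigma^{-1}=\operatorname{ad}(Z)$, and injectivity of $\operatorname{ad}$ on the centerless algebra $\mathfrak{g}_0$ yields $\sigma Z=Z$. Thus $X=-Z\in\mathfrak{p}_0\cap\mathfrak{g}_0^{\sigma}$, which in particular lies in $\mathfrak{g}_0^{\sigma}$, as required.

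I expect the main obstacle to be the functional-calculus step: justifying rigorously that $\log p$ is a derivation, equivalently that $t\mapsto p^{t}$ is a one-parameter group of automorphisms of $\mathfrak{g}_0$, and that $\sigma$-invariance descends from $p$ to $\log p$. Once these spectral facts are secured, everything else is bookkeeping with the inner product $B_\theta$ and with the commutation relations among $\theta$, $\theta'$, and $\sigma$.
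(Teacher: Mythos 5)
Your proof is correct, but note that the paper contains no proof to compare against: the statement is labelled a \emph{Fact} and quoted from Loos (p.~153), so the paper's route is a bare citation. Your argument is the classical polar-decomposition proof of conjugacy of Cartan involutions (essentially Theorem 6.16 of Knapp and its supporting lemma), refined to track $\sigma$, and the bookkeeping checks out: $p=\theta\theta'$ is $B_\theta$-self-adjoint and positive definite, $S=\log p=\operatorname{ad}(2Z)$ is an inner derivation with $\theta Z=-Z$, and conjugation by $e^{-\operatorname{ad}Z}=p^{-1/2}$ sends $\theta$ to $e^{-2\operatorname{ad}Z}\theta=p^{-1}\theta=\theta'\theta\theta=\theta'$. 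The $\sigma$-refinement is also sound, and in fact needs less than you invoke: since $\sigma$ commutes with the diagonalizable operator $p$, it preserves every eigenspace of $p$, hence commutes with $\log p$, and injectivity of $\operatorname{ad}$ on the centerless algebra $\mathfrak{g}_0$ gives $\sigma Z=Z$; the observation that $\sigma$ is a $B_\theta$-isometry is true but not required. The two steps you flag as the main obstacles are exactly the standard inputs --- that a positive-definite automorphism has all real powers $p^t$ again automorphisms (so $\log p$ is a derivation), and that derivations of a semisimple Lie algebra are inner --- and both are textbook facts, so there is no genuine gap. Two final remarks: the conjugator $p^{-1/2}$ suffices here (no fourth-root trick is needed) precisely because both involutions are Cartan, which makes $p$ itself positive definite rather than merely self-adjoint; and your argument actually delivers the sharper conclusion $X\in\mathfrak{g}_0^{-\theta}\cap\mathfrak{g}_0^{\sigma}$, which is what Loos's proof yields and is slightly stronger than what the paper records.
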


Then, two involutions $\theta \sigma$ and $\theta '\sigma$ are conjugate in the following sense: 

\begin{lemma}
\label{lem:conjugate-associated}
Retain the setting of Fact \ref{fact:loos}. 
Then, we have $\sigma e^{\operatorname{ad}X}=e^{\operatorname{ad}X}\sigma $ and 
$(\theta '\sigma )e^{\operatorname{ad}X}=e^{\operatorname{ad}X}(\theta \sigma )$. 
\end{lemma}

\begin{proof}
As 
$\sigma (X)=X$, we have 
$(\operatorname{ad}X)\sigma (Y)=\sigma (\operatorname{ad}X)Y$ for any $Y\in \mathfrak{g}_0$. 
This means $(\operatorname{ad}X)\sigma =\sigma (\operatorname{ad}X)$. 

Let us observe the one-parameter transformation group 
$\{ \sigma e^{t\operatorname{ad}X}\sigma ^{-1}:t\in \mathbb{R}\}$. 
Since the following relation holds: 
\begin{align*}
\left. \frac{d}{dt}\sigma e^{t\operatorname{ad}X}\sigma ^{-1}\right| _{t=0}
=\sigma (\operatorname{ad}X)\sigma ^{-1}=\operatorname{ad}X, 
\end{align*}
we obtain 
\begin{align}
\sigma e^{t\operatorname{ad}X}\sigma ^{-1}=e^{t\operatorname{ad}X}. 
\label{eq:commute sigma}
\end{align}
In particular, $\sigma e^{\operatorname{ad}X}=e^{\operatorname{ad}X}\sigma$. 
Hence, we obtain 
\begin{align*}
(\theta '\sigma )e^{\operatorname{ad}X}=\theta 'e^{\operatorname{ad}X}\sigma 
=e^{\operatorname{ad}X}(\theta \sigma ). 
\end{align*}
Therefore, Lemma \ref{lem:conjugate-associated} has been proved. 
\end{proof}

Lemma \ref{lem:conjugate-associated} explains that 
$(\mathfrak{g}_0,\sigma ;\theta )\equiv (\mathfrak{g}_0,\sigma ;\theta ')$ 
and $(\mathfrak{g}_0,\theta \sigma ;\theta )\equiv (\mathfrak{g}_0,\theta '\sigma ;\theta ')$ 
for any Cartan involutions $\theta ,\theta '$ of $\mathfrak{g}_0$ commuting with $\sigma$ 
by the automorphism $\mathfrak{g}_0\to \mathfrak{g}_0,~Y\mapsto e^{\operatorname{ad}X}Y$. 

\begin{define}[cf. {\cite{berger}}]
\label{def:associated}
The triplet $(\mathfrak{g}_0,\theta \sigma ;\theta )$ is called 
the {\it associated} symmetric pair equipped with a Cartan involution 
of $(\mathfrak{g}_0,\sigma ;\theta)$. 
If $(\mathfrak{g}_0,\theta \sigma ;\theta )\equiv (\mathfrak{g}_0,\sigma ;\theta )$, 
then $(\mathfrak{g}_0,\sigma ;\theta )$ is said to be {\it self-associated}. 
\end{define}

In this context, we write 
\begin{align}
(\mathfrak{g}_0,\sigma ;\theta )^a:=(\mathfrak{g}_0,\theta \sigma ;\theta ). 
\label{eq:associated noncompact}
\end{align}

\begin{lemma}
\label{lem:associated}
If $(\mathfrak{g}_0,\sigma ;\theta )\equiv (\mathfrak{g}_0',\sigma ';\theta ')$, 
then $(\mathfrak{g}_0,\sigma ;\theta )^a\equiv (\mathfrak{g}_0',\sigma ';\theta ')^a$. 
\end{lemma}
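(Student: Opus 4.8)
The plan is to unwind the definitions and transport the equivalence $\varphi$ directly.
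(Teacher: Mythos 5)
Your plan is correct and is exactly the paper's argument: taking the isomorphism $\varphi:\mathfrak{g}_0\to\mathfrak{g}_0'$ with $\varphi\sigma=\sigma'\varphi$ and $\varphi\theta=\theta'\varphi$, the single computation $\varphi(\theta\sigma)=\theta'\varphi\sigma=(\theta'\sigma')\varphi$ shows that $\varphi$ also intertwines the associated involutions, and since $\varphi\theta=\theta'\varphi$ already holds, this gives $(\mathfrak{g}_0,\theta\sigma;\theta)\equiv(\mathfrak{g}_0',\theta'\sigma';\theta')$, i.e.\ $(\mathfrak{g}_0,\sigma;\theta)^a\equiv(\mathfrak{g}_0',\sigma';\theta')^a$. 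Nothing beyond this one-line transport is needed.
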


\begin{proof}
Suppose $(\mathfrak{g}_0,\sigma ;\theta )\equiv (\mathfrak{g}_0',\sigma ';\theta ')$. 
We take a Lie algebra isomorphism $\varphi :\mathfrak{g}_0\to \mathfrak{g}_0'$ 
such that $\varphi \sigma =\sigma '\varphi $ and $\varphi \theta =\theta '\varphi $. 
Then, we have $\varphi (\theta \sigma )=\theta '\varphi \sigma =(\theta '\sigma ')\varphi $. 
This implies $(\mathfrak{g}_0,\theta \sigma ;\theta )\equiv (\mathfrak{g}_0',\theta '\sigma ';\theta ')$. 
\end{proof}

Let $\mathfrak{g}_{\mathbb{C}}=\mathfrak{g}_0+\sqrt{-1}\mathfrak{g}_0$ 
be the complexification of $\mathfrak{g}_0$. 
Then, $\mathfrak{g}_{\mathbb{C}}$ is a complex semisimple Lie algebra 
and its subalgebra 
\begin{align}
\mathfrak{g}:=\mathfrak{k}_0+\sqrt{-1}\mathfrak{p}_0
=\mathfrak{g}_0^{\theta }+\sqrt{-1}\mathfrak{g}_0^{-\theta } 
\label{eq:compact real form}
\end{align}
is a compact real form of $\mathfrak{g}_{\mathbb{C}}$. 
We denote by $\mu$ the complex conjugation of $\mathfrak{g}_{\mathbb{C}}$ with respect to $\mathfrak{g}$. 
We extend $\theta $ and $\sigma$ 
to $\mathbb{C}$-linear involutions on $\mathfrak{g}_{\mathbb{C}}$ 
which we use the same letters $\theta $ and $\sigma$ to denote, respectively. 
Clearly, $\mu$ commutes with $\sigma $, 
which gives rise to another involution $\mu \sigma $ on $\mathfrak{g}_{\mathbb{C}}$. 
Then, $\mu \sigma $ is anti-linear because 
$\mu$ is anti-linear and $\sigma $ is $\mathbb{C}$-linear. 
Hence, we obtain a new real form of $\mathfrak{g}_{\mathbb{C}}$ as follows: 
\begin{align}
\label{eq:dual}
\mathfrak{g}_{\mathbb{C}}^{\mu \sigma }
&=\mathfrak{k}_0\cap \mathfrak{h}_0+\sqrt{-1}(\mathfrak{k}_0\cap \mathfrak{q}_0)
+\sqrt{-1}(\mathfrak{p}_0\cap \mathfrak{h}_0)+\mathfrak{p}_0\cap \mathfrak{q}_0. 
\end{align}
This $\mathfrak{g}_{\mathbb{C}}^{\mu \sigma }$ is a non-compact real semisimple Lie algebra 
if $\sigma $ is not the identity map. 
The restriction of $\theta $ to $\mathfrak{g}_{\mathbb{C}}^{\mu \sigma }$ defines an involution 
on $\mathfrak{g}_{\mathbb{C}}^{\mu \sigma }$. 

We remark that the subalgebra 
\begin{align}
\mathfrak{k}_0^d:=\mathfrak{k}_0\cap \mathfrak{h}_0+\sqrt{-1}(\mathfrak{p}_0\cap \mathfrak{h}_0)
\label{eq:maximal-dual}
\end{align}
is a maximal compact subalgebra of $\mathfrak{g}_{\mathbb{C}}^{\mu \sigma }$. 

\begin{lemma}
\label{lem:dual-cartan}
The restriction of $\sigma $ to $\mathfrak{g}_{\mathbb{C}}^{\mu \sigma }$ 
becomes a Cartan involution of $\mathfrak{g}_{\mathbb{C}}^{\mu \sigma }$. 
In particular, we obtain $(\mathfrak{g}_{\mathbb{C}}^{\mu \sigma })^{\sigma }=\mathfrak{k}_0^d$. 
\end{lemma}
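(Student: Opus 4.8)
The plan is to verify the defining property of a Cartan involution directly through the Killing form. Write $B$ for the Killing form of $\mathfrak{g}_{\mathbb{C}}$ and recall the standard fact that its restriction to any real form is the Killing form of that real form and takes real values there; all sign computations below will be carried out with this restricted $B$. Before anything else I would check that $\sigma$ really restricts to an involution of $\mathfrak{g}_{\mathbb{C}}^{\mu \sigma }$: since $\mu $ and $\sigma $ commute and $\sigma ^2=\operatorname{id}$, one has $\sigma (\mu \sigma )=(\mu \sigma )\sigma $, so $\sigma $ preserves the fixed-point algebra $\mathfrak{g}_{\mathbb{C}}^{\mu \sigma }$, and the restriction is an involution because $\sigma ^2=\operatorname{id}$.

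Next I would read off the $\pm 1$-eigenspaces of $\sigma $ from the decomposition \eqref{eq:dual}. As $\sigma $ is $\mathbb{C}$-linear with $\sigma =+1$ on $\mathfrak{h}_0$ and $\sigma =-1$ on $\mathfrak{q}_0$, multiplication by $\sqrt{-1}$ does not change the $\sigma $-eigenvalue of a vector. Hence $\sigma =+1$ on $\mathfrak{k}_0\cap \mathfrak{h}_0$ and on $\sqrt{-1}(\mathfrak{p}_0\cap \mathfrak{h}_0)$, while $\sigma =-1$ on $\sqrt{-1}(\mathfrak{k}_0\cap \mathfrak{q}_0)$ and on $\mathfrak{p}_0\cap \mathfrak{q}_0$. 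Comparing with \eqref{eq:maximal-dual} immediately yields $(\mathfrak{g}_{\mathbb{C}}^{\mu \sigma })^{\sigma }=\mathfrak{k}_0^d$, which settles the ``in particular'' assertion.

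The core of the proof is then a bookkeeping of signs on each of the four summands. The intersection spaces $\mathfrak{k}_0\cap \mathfrak{h}_0$, $\mathfrak{k}_0\cap \mathfrak{q}_0$, $\mathfrak{p}_0\cap \mathfrak{h}_0$, $\mathfrak{p}_0\cap \mathfrak{q}_0$ are mutually $B$-orthogonal, since $B$ is invariant under both commuting involutions $\theta $ and $\sigma $ and eigenspaces for distinct eigenvalues of a $B$-symmetric map are orthogonal; inserting a factor $\sqrt{-1}$ does not disturb this orthogonality. Because $\theta $ is a Cartan involution of $\mathfrak{g}_0$, the form $B$ is negative definite on $\mathfrak{k}_0$ and positive definite on $\mathfrak{p}_0$. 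Using $B(\sqrt{-1}X,\sqrt{-1}X)=-B(X,X)$, I would conclude that $B<0$ on $\mathfrak{k}_0\cap \mathfrak{h}_0$ and on $\sqrt{-1}(\mathfrak{p}_0\cap \mathfrak{h}_0)$, so that $B$ is negative definite on $\mathfrak{k}_0^d=(\mathfrak{g}_{\mathbb{C}}^{\mu \sigma })^{\sigma }$; and $B>0$ on $\sqrt{-1}(\mathfrak{k}_0\cap \mathfrak{q}_0)$ and on $\mathfrak{p}_0\cap \mathfrak{q}_0$, so that $B$ is positive definite on $(\mathfrak{g}_{\mathbb{C}}^{\mu \sigma })^{-\sigma }$.

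Finally I would invoke the standard characterization: an involution $\tau $ of a real semisimple Lie algebra is a Cartan involution precisely when its $+1$-eigenspace is $B$-negative definite and its $-1$-eigenspace is $B$-positive definite, equivalently when $-B(\cdot ,\tau \cdot )$ is positive definite. The sign computation above is exactly this statement for $\sigma |_{\mathfrak{g}_{\mathbb{C}}^{\mu \sigma }}$, which completes the proof. There is no deep obstacle here — the argument is purely a sign count — so the only points demanding care are, first, correctly using that the Killing form of the real form $\mathfrak{g}_{\mathbb{C}}^{\mu \sigma }$ is the restriction of the complex Killing form (so that the definiteness properties on $\mathfrak{k}_0$ and $\mathfrak{p}_0$ transfer), and second, faithfully tracking the sign flip coming from $(\sqrt{-1})^2=-1$ on the two summands that carry a factor $\sqrt{-1}$.
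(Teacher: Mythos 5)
Your proof is correct. A point of comparison: the paper itself gives no written proof of Lemma \ref{lem:dual-cartan} --- the lemma is stated bare, and the intended justification is evidently the (also unproved) remark just before it that $\mathfrak{k}_0^d$ in (\ref{eq:maximal-dual}) is a maximal compact subalgebra of $\mathfrak{g}_{\mathbb{C}}^{\mu \sigma }$, combined with the criterion recalled in Section \ref{subsec:pair} that an involution of a non-compact real semisimple Lie algebra is a Cartan involution exactly when its fixed point set is a maximal compact subalgebra. Your argument takes a genuinely different, self-contained route: you verify the Killing-form definition directly. The steps are all sound --- the identification $(\mathfrak{g}_{\mathbb{C}}^{\mu \sigma })^{\sigma }=\mathfrak{k}_0\cap \mathfrak{h}_0+\sqrt{-1}(\mathfrak{p}_0\cap \mathfrak{h}_0)=\mathfrak{k}_0^d$ is immediate from $\mathbb{C}$-linearity of $\sigma $; the fact that the Killing form of $\mathfrak{g}_{\mathbb{C}}$ restricts on any real form to the Killing form of that real form legitimately transfers the definiteness of $B$ on $\mathfrak{k}_0$ and $\mathfrak{p}_0$; the mutual $B$-orthogonality of the four joint eigenspaces (preserved under insertion of $\sqrt{-1}$) is exactly what is needed to add up the signs; and the resulting sign pattern says precisely that $-B(\cdot ,\sigma \cdot )$ is positive definite on $\mathfrak{g}_{\mathbb{C}}^{\mu \sigma }$, i.e. that $\sigma |_{\mathfrak{g}_{\mathbb{C}}^{\mu \sigma }}$ is a Cartan involution. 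What your computation buys is that it does not presuppose the maximal-compactness remark; on the contrary, it proves it, since it exhibits $\mathfrak{k}_0^d$ as the fixed point set of a Cartan involution. What the paper's implicit route buys is brevity, delegating the content to standard structure theory.
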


Lemma \ref{lem:dual-cartan} implies 
a new non-compact semisimple symmetric pair equipped with a Cartan involution 
$(\mathfrak{g}_{\mathbb{C}}^{\mu \sigma },\theta ;\sigma)$. 

For another Cartan involution $\theta '$ of $\mathfrak{g}_0$ commuting with $\sigma$, 
let $\mu '$ be the complex conjugation of $\mathfrak{g}_{\mathbb{C}}$ 
with respect to $\mathfrak{g}':=\mathfrak{g}_0^{\theta '}+\sqrt{-1}\mathfrak{g}_0^{-\theta '}$. 
Then, we obtain 
a non-compact semisimple symmetric pair equipped with a Cartan involution 
$(\mathfrak{g}_{\mathbb{C}}^{\mu '\sigma },\theta ';\sigma )$. 
It follows from Fact \ref{fact:loos} that 
$\theta '=e^{\operatorname{ad}X}\theta e^{-\operatorname{ad}X}$ 
for some $X\in \mathfrak{g}_0^{\sigma }$. 
Then, $e^{-\operatorname{ad}X}$ defines an isomorphism 
from $\mathfrak{g}_0^{\theta '}$ to $\mathfrak{g}_0^{\theta }$ 
and that from $\mathfrak{g}_0^{-\theta '}$ to $\mathfrak{g}_0^{-\theta }$. 
Moreover, 
Lemma \ref{lem:conjugate-associated} implies that 
$e^{-\operatorname{ad}X}$ becomes automorphisms 
on both $\mathfrak{g}_0^{\sigma }$ and $\mathfrak{g}_0^{-\sigma }$. 
Hence, $\mathfrak{g}_{\mathbb{C}}^{\mu '\sigma }$ is isomorphic to $\mathfrak{g}_{\mathbb{C}}^{\mu \sigma }$ 
via $e^{-\operatorname{ad}X}$ and then $(\mathfrak{g}_{\mathbb{C}}^{\mu '\sigma },\theta ';\sigma )\equiv 
(\mathfrak{g}_{\mathbb{C}}^{\mu \sigma },\theta ;\sigma )$. 

\begin{define}[cf. {\cite{berger}}]
\label{def:dual}
The triplet $(\mathfrak{g}_{\mathbb{C}}^{\mu \sigma },\theta ;\sigma )$ is called 
the {\it dual} symmetric pair equipped with a Cartan involution of $(\mathfrak{g}_0,\sigma ;\theta )$. 
Further, $(\mathfrak{g}_0,\sigma ;\theta )$ is said to be {\it self-dual} 
if $(\mathfrak{g}_{\mathbb{C}}^{\mu \sigma },\theta ;\sigma )\equiv (\mathfrak{g}_0,\sigma ;\theta )$ holds. 
\end{define}

Hereafter, we write $\mathfrak{g}_0^d:=\mathfrak{g}_{\mathbb{C}}^{\mu \sigma }$ and 
\begin{align}
(\mathfrak{g}_0,\sigma ;\theta )^d:=(\mathfrak{g}_0^d,\theta ;\sigma ). 
\label{eq:dual noncompact}
\end{align}

\begin{lemma}
\label{lem:dual}
If $(\mathfrak{g}_0,\sigma ;\theta )\equiv (\mathfrak{g}_0',\sigma ';\theta ')$, 
then $(\mathfrak{g}_0,\sigma ;\theta )^d\equiv (\mathfrak{g}_0',\sigma ';\theta ')^d$. 
\end{lemma}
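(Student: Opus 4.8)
The plan is to complexify the isomorphism $\varphi$ and to show that its $\mathbb{C}$-linear extension carries the real form $\mathfrak{g}_{\mathbb{C}}^{\mu \sigma }$ onto $(\mathfrak{g}_{\mathbb{C}}')^{\mu '\sigma '}$ while intertwining the two involutions that define the dual symmetric pair equipped with a Cartan involution. First I would take a Lie algebra isomorphism $\varphi :\mathfrak{g}_0\to \mathfrak{g}_0'$ with $\varphi \sigma =\sigma '\varphi $ and $\varphi \theta =\theta '\varphi $, and extend it to a $\mathbb{C}$-linear Lie algebra isomorphism $\varphi _{\mathbb{C}}:\mathfrak{g}_{\mathbb{C}}\to \mathfrak{g}_{\mathbb{C}}'$. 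Since these relations hold on $\mathfrak{g}_0$ and all the maps involved are extended $\mathbb{C}$-linearly, the relations $\varphi _{\mathbb{C}}\sigma =\sigma '\varphi _{\mathbb{C}}$ and $\varphi _{\mathbb{C}}\theta =\theta '\varphi _{\mathbb{C}}$ persist on $\mathfrak{g}_{\mathbb{C}}$.

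The key step is to verify $\varphi _{\mathbb{C}}\mu =\mu '\varphi _{\mathbb{C}}$. Because $\varphi \theta =\theta '\varphi $, the map $\varphi $ sends $\mathfrak{k}_0=\mathfrak{g}_0^{\theta }$ to $\mathfrak{k}_0'=(\mathfrak{g}_0')^{\theta '}$ and $\mathfrak{p}_0=\mathfrak{g}_0^{-\theta }$ to $\mathfrak{p}_0'=(\mathfrak{g}_0')^{-\theta '}$; hence $\varphi _{\mathbb{C}}$ carries the compact real form $\mathfrak{g}=\mathfrak{k}_0+\sqrt{-1}\mathfrak{p}_0$ onto $\mathfrak{g}'=\mathfrak{k}_0'+\sqrt{-1}\mathfrak{p}_0'$. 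As $\mu $ (resp. $\mu '$) is the complex conjugation of $\mathfrak{g}_{\mathbb{C}}$ (resp. $\mathfrak{g}_{\mathbb{C}}'$) with respect to $\mathfrak{g}$ (resp. $\mathfrak{g}'$), and $\varphi _{\mathbb{C}}$ is $\mathbb{C}$-linear, writing $Z=X+\sqrt{-1}Y$ with $X,Y\in \mathfrak{g}$ gives $\varphi _{\mathbb{C}}\mu (Z)=\varphi _{\mathbb{C}}(X)-\sqrt{-1}\varphi _{\mathbb{C}}(Y)=\mu '\varphi _{\mathbb{C}}(Z)$, so $\varphi _{\mathbb{C}}\mu =\mu '\varphi _{\mathbb{C}}$. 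Combining with $\varphi _{\mathbb{C}}\sigma =\sigma '\varphi _{\mathbb{C}}$ yields $\varphi _{\mathbb{C}}(\mu \sigma )=(\mu '\sigma ')\varphi _{\mathbb{C}}$, whence $\varphi _{\mathbb{C}}$ maps the fixed point set $\mathfrak{g}_0^d=\mathfrak{g}_{\mathbb{C}}^{\mu \sigma }$ isomorphically onto $(\mathfrak{g}_0')^d=(\mathfrak{g}_{\mathbb{C}}')^{\mu '\sigma '}$. I denote this restriction by $\psi :\mathfrak{g}_0^d\to (\mathfrak{g}_0')^d$.

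Finally I would restrict the relations $\varphi _{\mathbb{C}}\theta =\theta '\varphi _{\mathbb{C}}$ and $\varphi _{\mathbb{C}}\sigma =\sigma '\varphi _{\mathbb{C}}$ to $\mathfrak{g}_0^d$. Since $\theta $ commutes with both $\mu $ and $\sigma $, and $\sigma $ commutes with $\mu $, both $\theta $ and $\sigma $ commute with $\mu \sigma $ and therefore preserve $\mathfrak{g}_0^d$; thus the restrictions are precisely the involution $\theta $ and the Cartan involution $\sigma $ appearing in the dual pair $(\mathfrak{g}_0^d,\theta ;\sigma )$. The two restricted relations say exactly that $\psi $ intertwines $\theta $ with $\theta '$ and $\sigma $ with $\sigma '$, so $(\mathfrak{g}_0^d,\theta ;\sigma )\equiv ((\mathfrak{g}_0')^d,\theta ';\sigma ')$, i.e. $(\mathfrak{g}_0,\sigma ;\theta )^d\equiv (\mathfrak{g}_0',\sigma ';\theta ')^d$.

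The only non-formal point is the intertwining of the conjugations $\mu $ and $\mu '$ in the middle paragraph, which rests on $\varphi $ sending the compact real form $\mathfrak{g}$ to $\mathfrak{g}'$; the remaining steps are routine bookkeeping with $\mathbb{C}$-linear extensions, entirely parallel to the proof of Lemma \ref{lem:associated}.
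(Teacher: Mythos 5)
Your proof is correct and follows essentially the same route as the paper's: extend $\varphi$ $\mathbb{C}$-linearly, show it intertwines the conjugations $\mu$ and $\mu'$ via $\varphi(\mathfrak{g})=\mathfrak{g}'$, deduce $\varphi(\mu\sigma)=(\mu'\sigma')\varphi$, and restrict to get the equivalence of the dual pairs. Your write-up is in fact slightly more explicit than the paper's at the two small points it leaves implicit (the pointwise computation showing $\varphi_{\mathbb{C}}\mu=\mu'\varphi_{\mathbb{C}}$, and the remark that $\theta$ and $\sigma$ preserve $\mathfrak{g}_0^d$).
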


\begin{proof}
Let $\varphi :\mathfrak{g}_0\to \mathfrak{g}_0'$ be a Lie algebra isomorphism 
such that $\varphi \sigma =\sigma '\varphi $ and $\varphi \theta =\theta '\varphi $. 
We extend $\varphi $ to a $\mathbb{C}$-linear isomorphism 
from $\mathfrak{g}_{\mathbb{C}}=\mathfrak{g}_0+\sqrt{-1}\mathfrak{g}_0$ to 
$\mathfrak{g}_{\mathbb{C}}'=\mathfrak{g}_0'+\sqrt{-1}\mathfrak{g}_0'$. 
Let $\mu$ be the complex conjugation of $\mathfrak{g}_{\mathbb{C}}$ with respect to 
$\mathfrak{g}:=\mathfrak{g}_0^{\theta }+\sqrt{-1}\mathfrak{g}_0^{-\theta }$ 
and $\mu '$ be that of $\mathfrak{g}_{\mathbb{C}}'$ with respect to 
$\mathfrak{g}':=(\mathfrak{g}_0')^{\theta '}+\sqrt{-1}(\mathfrak{g}_0')^{-\theta '}$. 
As $\varphi \theta =\theta '\varphi $, 
we have $\varphi (\mathfrak{g}_0^{\pm \theta })=(\mathfrak{g}_0')^{\pm \theta '}$. 
This implies that $\varphi (\mathfrak{g}_{\mathbb{C}}^{\pm \mu })
=(\mathfrak{g}_{\mathbb{C}}')^{\pm \mu '}$. 
Thus, we obtain $\varphi \mu =\mu '\varphi $, 
from which $\varphi (\mu \sigma )=(\mu '\sigma ')\varphi $. 
Hence, we have verified that 
$\varphi :
\mathfrak{g}_{\mathbb{C}}^{\mu \sigma }\to (\mathfrak{g}_{\mathbb{C}}')^{\mu '\sigma '}$ 
is a Lie algebra isomorphism satisfying $\varphi \sigma =\sigma '\varphi $ 
and $\varphi \theta =\theta '\varphi$. 
As $\mathfrak{g}_0^d=\mathfrak{g}_{\mathbb{C}}^{\mu \sigma }$, 
we conclude 
$(\mathfrak{g}_0^d,\theta ;\sigma )\equiv ((\mathfrak{g}_0')^d,\theta ';\sigma ')$. 
\end{proof}

Now, let us give some examples of non-compact semisimple symmetric pairs equipped with Cartan involutions. 
In the following, we will use the notation as follows. 
Let $I_m$ be the unit matrix of degree $m$ and 
\begin{align}
\label{eq:J_m}
J_m=\left( 
	\begin{array}{cc}
	O & -I_m \\
	I_m & O
	\end{array}
\right) \in M(2m,\mathbb{R}). 
\end{align}
For positive integers $m,n$, 
we write 
\begin{align}
\label{eq:I_{m,n}}
I_{m,n}:=\left( 
	\begin{array}{cc}
	I_{m} & O \\
	O & -I_{n}
	\end{array}
\right) \in M(m+n,\mathbb{R})
\end{align}
and 
\begin{align}
\label{eq:J_{m,n}}
J_{m,n}:=\left( 
	\begin{array}{cc}
	J_m & O \\
	O & J_n
	\end{array}
\right) \in M(2m+2n,\mathbb{R}). 
\end{align}

\begin{eg}
\label{eg:so-u}
For positive integers $p,q$, 
let $(\mathfrak{g}_0,\sigma )$ be a non-compact semisimple symmetric pair as 
$\mathfrak{g}_0=\mathfrak{so}(2p,2q):=\{ X\in M(2p+2q,\mathbb{R}):{}^tXI_{2p,2q}+I_{2p,2q}X=O\} $ 
and $\sigma (X):=J_{p,q}XJ_{p,q}^{-1}$ ($X\in \mathfrak{g}_0$). 
Here, ${}^tX$ denotes the transposed matrix of $X$. 

The non-compact real simple Lie algebra $\mathfrak{so}(2p,2q)$ is of the form 
\begin{align}
\label{eq:so(2p,2q)}
\mathfrak{so}(2p,2q)
&=\left\{ 
	\left( 
		\begin{array}{cc}
		A & B \\
		{}^tB & D
		\end{array}
	\right) :
	\begin{array}{c}
	A\in \mathfrak{so}(2p),~D\in \mathfrak{so}(2q),\\
	B\in M(2p,2q\,;\mathbb{R})
	\end{array}
\right\} 
\end{align}
where $\mathfrak{so}(m):=\{ X\in M(m,\mathbb{R}):{}^tX=-X\} $ is a compact Lie algebra. 
In this case, the fixed point set $\mathfrak{g}_0^{\sigma }$ is given by 
\begin{align*}
\mathfrak{g}_0^{\sigma }
&=\left\{ 
	\left(
		\begin{array}{cccc}
		A_{1} & -A_{2} & B_1 & -B_2 \\
		A_2 & A_1 & B_2 & B_1 \\
		{}^tB_1 & {}^t B_2 & D_1 & -D_2 \\
		-{}^tB_2 & {}^tB_1 & D_2 & D_1
		\end{array}
	\right) :
	\begin{array}{c}
	A_1\in \mathfrak{so}(p),A_2\in \operatorname{Sym}(p,\mathbb{R}),\\
	D_1\in \mathfrak{so}(q),D_2\in \operatorname{Sym}(q,\mathbb{R}),\\
	B_1,B_2\in M(p,q\,;\mathbb{R}) 
	\end{array}
\right\} 
\end{align*}
where $\operatorname{Sym}(m,\mathbb{R}):=\{ X\in M(m,\mathbb{R}):{}^tX=X\} $ 
is a vector space over $\mathbb{R}$. 
Then, $\mathfrak{g}_0^{\sigma }$ is isomorphic to the non-compact reductive Lie algebra 
$\mathfrak{u}(p,q):=\{ X\in M(p+q,\mathbb{C}):{}^t\overline{X}I_{p,q}+I_{p,q}X=O\} $ 
by the Lie algebra isomorphism $\mathfrak{g}_0^{\sigma }\stackrel{\sim }{\to }\mathfrak{u}(p,q)$ given by 
\begin{align*}
\left(
	\begin{array}{cccc}
	A_{1} & -A_{2} & B_1 & -B_2 \\
	A_2 & A_1 & B_2 & B_1 \\
	{}^tB_1 & {}^t B_2 & D_1 & -D_2 \\
	-{}^tB_2 & {}^tB_1 & D_2 & D_1
	\end{array}
\right) \mapsto 
\left( 
	\begin{array}{cc}
	A_1+\sqrt{-1}A_2 & B_1+\sqrt{-1}B_2 \\
	{}^tB_1-\sqrt{-1}\,{}^tB_2 & D_1+\sqrt{-1}D_2
	\end{array}
\right) . 
\end{align*}

We take an involution $\theta $ on $\mathfrak{g}_0$ as $\theta (X)=I_{2p,2q}XI_{2p,2q}$ 
($X\in \mathfrak{g}_0$). 
Then, the fixed point set $\mathfrak{g}_0^{\theta }$ is of the form 
\begin{align}
\label{eq:so(2p)+so(2q)}
\mathfrak{g}_0^{\theta }
&=\left\{ 
	\left( 
		\begin{array}{cc}
		A & O \\
		O & B 
		\end{array}
	\right) :
	A\in \mathfrak{so}(2p),~B\in \mathfrak{so}(2q)
\right\} . 
\end{align}
This implies that $\mathfrak{g}_0^{\theta }=\mathfrak{so}(2p)+\mathfrak{so}(2q)$ 
is a maximal compact subalgebra of $\mathfrak{g}_0$, 
and then $\theta $ is a Cartan involution of $\mathfrak{g}_0$. 
As $I_{2p,2q}J_{p,q}=J_{p,q}I_{2p,2q}$, 
the Cartan involution $\theta $ commutes with $\sigma$. 
Hence, $(\mathfrak{g}_0,\sigma ;\theta )$ is a non-compact semisimple symmetric pair 
equipped with a Cartan involution. 
\end{eg}

The next example uses the matrix $J_{2p}'$ defined by 
\begin{align}
\label{eq:J_{2p}'}
J_{2p}':=\left( 
	\begin{array}{cc}
	O & I_{2p} \\
	I_{2p} & O
	\end{array}
\right) . 
\end{align}

\begin{eg}
\label{eg:so-gl}
Let $p$ be a positive integer. 
We set $\mathfrak{g}_0:=\mathfrak{so}(2p,2p)$ and an involution $\sigma $ on $\mathfrak{g}_0$ as 
$\sigma (X)=J_{2p}'XJ_{2p}'$ ($X\in \mathfrak{g}_0$). 
Under the realization of $\mathfrak{so}(2p,2p)$ as in (\ref{eq:so(2p,2q)}), 
the fixed point set $\mathfrak{g}_0^{\sigma }$ forms 
\begin{align}
\label{eq:gl(2p)}
\mathfrak{g}_0^{\sigma }
=\left\{ 
	\left( 
		\begin{array}{cc}
		A & B \\
		B & A
		\end{array}
	\right) :A\in \mathfrak{so}(2p),B\in \operatorname{Sym}(2p,\mathbb{R})
\right\} . 
\end{align}
This is isomorphic to $\mathfrak{gl}(2p,\mathbb{R})$ by Lie algebra isomorphism 
\begin{align}
\label{eq:gl(2p)-isom}
\iota :\mathfrak{g}_0^{\sigma }\stackrel{\sim}{\to }\mathfrak{gl}(2p,\mathbb{R}),\quad 
\left( 
	\begin{array}{cc}
	A & B \\
	B & A
	\end{array}
\right) \mapsto & A+B. 
\end{align}

We take an involution $\theta $ on $\mathfrak{g}_0$ as 
$\theta (X)=I_{2p,2p}XI_{2p,2p}$ ($X\in \mathfrak{g}_0$). 
Then, $\theta $ is a Cartan involution of $\mathfrak{g}_0$ and $(\mathfrak{g}_0,\sigma ;\theta )$ 
is a non-compact semisimple symmetric pair equipped with a Cartan involution. 
\end{eg}

Here, let us compare the non-compact semisimple symmetric pair equipped with a Cartan involution 
$(\mathfrak{g}_0,\sigma _1;\theta )$ discussed in Example \ref{eg:so-u} in case of $p=q$ 
with $(\mathfrak{g}_0,\sigma _2;\theta )$ in Example \ref{eg:so-gl}. 
Namely, $\mathfrak{g}_0=\mathfrak{so}(2p,2p)$, $\theta (X)=I_{2p,2p}XI_{2p,2p}$ 
and $\sigma _1(X)=J_{p,p}XJ_{p,p}^{-1}$, $\sigma _2(X)=J_{2p}XJ_{2p}^{-1}$ ($X\in \mathfrak{g}_0$). 
We will focus on two fixed point sets $\mathfrak{g}_0^{\sigma _1}$ and $\mathfrak{g}_0^{\sigma _2}$. 
Then, $\mathfrak{g}_0^{\sigma _1}$ is isomorphic to $\mathfrak{u}(p,p)$ 
and $\mathfrak{g}_0^{\sigma _2}$ to $\mathfrak{gl}(2p,\mathbb{R})$. 
If $p\geq 2$, 
then $\mathfrak{u}(p,p)$ is not isomorphic to $\mathfrak{gl}(2p,\mathbb{R})$, 
from which $(\mathfrak{g}_0,\sigma _1;\theta )\not \equiv (\mathfrak{g}_0,\sigma _2;\theta )$. 
In the special case $p=1$, $\mathfrak{u}(1,1)$ is isomorphic to $\mathfrak{gl}(2,\mathbb{R})$, 
nevertheless, 
$(\mathfrak{so}(2,2),\sigma _1;\theta )$ is not equivalent to $(\mathfrak{so}(2,2),\sigma _2;\theta )$. 
Indeed, 
the center of $\mathfrak{u}(1,1)$ is contained in $\mathfrak{g}_0^{\theta }$, 
whereas, the center of $\mathfrak{gl}(2,\mathbb{R})$ is contained in $\mathfrak{g}_0^{-\theta }$. 

\subsection{Compact semisimple symmetric triad}
\label{subsec:triad}

In this subsection, we give a brief summary on compact semisimple symmetric triads 
whose definition is given in the following. 

\begin{define}
\label{def:triad}
A triplet $(\mathfrak{g},\theta _1,\theta _2)$ of a compact semisimple Lie algebra $\mathfrak{g}$ 
and two involutions $\theta _1,\theta _2$ on $\mathfrak{g}$ 
is called a {\it compact semisimple symmetric triad}. 
Further, $(\mathfrak{g},\theta _1,\theta _2)$ is {\it commutative} 
if $\theta _1\theta _2=\theta _2\theta _1$. 
\end{define}

In a special case where $\theta _1$ coincides with $\theta _2$, 
a compact semisimple symmetric triad $(\mathfrak{g},\theta _1,\theta _1)$ means 
a compact semisimple symmetric pair $(\mathfrak{g},\theta _1)$. 
We shall sometimes identify $(\mathfrak{g},\theta _1,\theta _1)$ with $(\mathfrak{g},\theta _1)$ 
which will be treated again in Section \ref{subsec:riemannian}. 

Two compact semisimple symmetric triads $(\mathfrak{g},\theta _1,\theta _2)$, 
$(\mathfrak{g}',\theta _1',\theta _2')$ are equivalent, denoted by 
$(\mathfrak{g},\theta _1,\theta _2)\equiv (\mathfrak{g}',\theta '_1,\theta '_2)$, 
if there exists a Lie algebra isomorphism $\psi :\mathfrak{g}\to \mathfrak{g}'$ such that 
two equalities $\psi \theta _1=\theta _1'\psi $, $\psi \theta _2=\theta _2'\psi $ hold. 
This notion defines an equivalence relation on the set of compact semisimple symmetric triads. 
Further, this is compatible with the commutativity, 
namely, 
if $(\mathfrak{g},\theta _1,\theta _2)\equiv (\mathfrak{g},\theta _1',\theta _2')$ and 
$\theta _1\theta _2=\theta _2\theta _1$, 
then $\theta _1'\theta _2'=\theta _2'\theta _1'$ holds. 

We notice that 
compact semisimple symmetric triads may not be always commutative. 
In fact, 
Matsuki \cite[Remark 1.2]{matsuki} has pointed out that 
no pair $(\theta _1,\theta _2)$ of involutions on $\mathfrak{g}=\mathfrak{so}(2n)$ satisfying 
$\mathfrak{g}^{\theta _1}\simeq \mathfrak{so}(2p+1)+\mathfrak{so}(2n-2p-1)$ and 
$\mathfrak{g}^{\theta _2}\simeq \mathfrak{u}(n)$ with $2p+1<n$ satisfies $\theta _1\theta _2
=\theta _2\theta _1$. 
In the argument of this paper, 
we will always consider compact semisimple symmetric triads $(\mathfrak{g},\theta _1,\theta _2)$ 
with $\theta _1\theta _2=\theta _2\theta _1$. 

Let $(\mathfrak{g},\theta _1,\theta _2)$ be a commutative compact semisimple symmetric triad. 
Then, $\theta _1\theta _2$ is another involution on $\mathfrak{g}$. 
Obviously, $\theta _1\theta _2$ commutes with $\theta _1$ and $\theta _2$. 
Thus, we obtain a new commutative compact semisimple symmetric triad 
$(\mathfrak{g},\theta _1,\theta _1\theta _2)$. 

\begin{define}
\label{def:associated-triad}
We say that $(\mathfrak{g},\theta _1,\theta _1\theta _2)$ 
is the {\it associated} compact symmetric triad of $(\mathfrak{g},\theta _1,\theta _2)$. 
Further, $(\mathfrak{g},\theta _1,\theta _2)$ is {\it self-associated} 
if $(\mathfrak{g},\theta _1,\theta _1\theta _2) \equiv (\mathfrak{g},\theta _1,\theta _2)$. 
\end{define}

We write 
\begin{align*}
(\mathfrak{g},\theta _1,\theta _2)^a:=(\mathfrak{g},\theta _1,\theta _1\theta _2). 
\end{align*}

The fixed point set $\mathfrak{g}^{\theta _1\theta _2}$ is expressed as follows. 
We set $\mathfrak{k}_i:=\mathfrak{g}^{\theta _i}$ and $\mathfrak{p}_i:=\mathfrak{g}^{-\theta _i}$ 
($i=1,2$). 
Since $\theta _1\theta _2=\theta _2\theta _1$, we decompose $\mathfrak{g}$ into as follows 
\begin{align}
\label{eq:decomp}
\mathfrak{g}
&=\mathfrak{k}_1\cap \mathfrak{k}_2+\mathfrak{k}_1\cap \mathfrak{p}_2
+\mathfrak{p}_1\cap \mathfrak{k}_2+\mathfrak{p}_1\cap \mathfrak{p}_2. 
\end{align}
Hence, we obtain 
\begin{align}
\mathfrak{g}^{\theta _1\theta _2}=\mathfrak{k}_1\cap \mathfrak{k}_2+\mathfrak{p}_1\cap \mathfrak{p}_2. 
\label{eq:associated}
\end{align}

It is natural to regard that two compact semisimple symmetric triads 
$(\mathfrak{g},\theta _1,\theta _2)$ and $(\mathfrak{g},\theta _2,\theta _1)$ are the same. 
However, we shall distinguish them in the sense as follows: 

\begin{define}
\label{def:dual-triad}
We say that $(\mathfrak{g},\theta _2,\theta _1)$ is the {\it dual} compact symmetric triad 
of $(\mathfrak{g},\theta _1,\theta _2)$. 
Further, 
$(\mathfrak{g},\theta _1,\theta _2)$ is {\it self-dual} 
if $(\mathfrak{g},\theta _2,\theta _1)\equiv (\mathfrak{g},\theta _1,\theta _2)$. 
\end{define}

We write 
\begin{align}
(\mathfrak{g},\theta _1,\theta _2)^d:=(\mathfrak{g},\theta _2,\theta _1). 
\label{eq:dual compact}
\end{align}
A direct computation shows that 
$(\mathfrak{g},\theta _1,\theta _2)^{ada}=(\mathfrak{g},\theta _1,\theta _2)^{dad}$. 

\begin{eg}
\label{eg:so-so-u}
Let $p,q$ be positive integers. 
We take a compact semisimple Lie algebra $\mathfrak{g}$ as 
$\mathfrak{so}(2p+2q)=\{ X\in M(2p+2q,\mathbb{R}):{}^tX=-X\} $ 
and two involutions $\theta _1,\theta _2$ on $\mathfrak{g}$ as 
$\theta _1(X)=I_{2p,2q}XI_{2p,2q}$ and $\theta _2(X)=J_{p,q}XJ_{p,q}^{-1}$ ($X\in \mathfrak{g}$). 
Here, two matrices $I_{2p,2q}$ and $J_{p,q}$ are given by (\ref{eq:I_{m,n}}) and (\ref{eq:J_{m,n}}), 
respectively. 
Then, $(\mathfrak{g},\theta _1,\theta _2)$ is a commutative compact semisimple symmetric triad. 

The fixed point set $\mathfrak{g}^{\theta _1}$ is of the form 
\begin{align}
\label{eq:k_1}
\mathfrak{g}^{\theta _1}
=\left\{ 
	\left( 
		\begin{array}{cc}
		A & O \\ 
		O & D
		\end{array}
	\right) :A,D\in \mathfrak{so}(2p)
\right\} =\mathfrak{so}(2p)+\mathfrak{so}(2p). 
\end{align}
and $\mathfrak{g}^{\theta _2}$ is described as follows 
\begin{align}
\label{eq:k_2}
\mathfrak{g}^{\theta _2}
&=\left\{ 
	\left(
		\begin{array}{cccc}
		A_{1} & -A_{2} & -B_1 & B_2 \\
		A_2 & A_1 & -B_2 & -B_1 \\
		{}^tB_1 & {}^t B_2 & D_1 & -D_2 \\
		-{}^tB_2 & {}^tB_1 & D_2 & D_1
		\end{array}
	\right) :
	\begin{array}{c}
	A_1\in \mathfrak{so}(p),A_2\in \operatorname{Sym}(p,\mathbb{R}),\\
	D_1\in \mathfrak{so}(q),D_2\in \operatorname{Sym}(q,\mathbb{R}),\\
	B_1,B_2\in M(p,q\,;\mathbb{R}). 
	\end{array}
\right\} . 
\end{align}
Then, $\mathfrak{g}^{\theta _2}$ is isomorphic to the compact reductive Lie algebra 
$\mathfrak{u}(p+q)=\{ X\in M(p+q,\mathbb{C}):{}^t\overline{X}=-X\} $ 
by the Lie algebra isomorphism $\mathfrak{g}^{\theta _2}\stackrel{\sim }{\to }\mathfrak{u}(p+q)$ given by 
\begin{align*}
\left(
	\begin{array}{cccc}
	A_{1} & -A_{2} & -B_1 & B_2 \\
	A_2 & A_1 & -B_2 & -B_1 \\
	{}^tB_1 & {}^t B_2 & D_1 & -D_2 \\
	-{}^tB_2 & {}^tB_1 & D_2 & D_1
	\end{array}
\right) \mapsto \left( 
	\begin{array}{cc}
	A_1+\sqrt{-1}A_2 & -B_1-\sqrt{-1}B_2 \\
	{}^tB_1-\sqrt{-1}\,{}^tB_2 & D_1+\sqrt{-1}D_2 
	\end{array}
\right) . 
\end{align*}
\end{eg}

\begin{eg}
\label{eg:so-so-u'}
For a positive integer $p$, 
let $\mathfrak{g}$ be $\mathfrak{so}(4p)$ and $\theta _1,\theta _2$ be two involutions on $\mathfrak{g}$ 
defined by $\theta _1(X)=I_{2p,2p}XI_{2p,2p}$ and $\theta _2(X)=J_{2p}XJ_{2p}^{-1}$ ($X\in \mathfrak{g}$). 
Here, $J_{2p}$ is given by (\ref{eq:J_m}). 
Then, $(\mathfrak{g},\theta _1,\theta _2)$ is a commutative compact semisimple symmetric triad 
and $\mathfrak{g}^{\theta _1}=\mathfrak{so}(2p)+\mathfrak{so}(2p)$. 
The fixed point set $\mathfrak{g}^{\theta _2}$ is 
\begin{align}
\label{eq:k_2'}
\mathfrak{g}^{\theta _2}
=\left\{ 
	\left( 
		\begin{array}{cc}
		A & -B \\
		B & A
		\end{array}
	\right) :A\in \mathfrak{so}(2p),B\in \operatorname{Sym}(2p,\mathbb{R})
\right\} . 
\end{align}
Thus, $\mathfrak{g}^{\theta _2}\simeq \mathfrak{u}(2p)$ 
because of the following Lie algebra isomorphism 
\begin{align*}
\mathfrak{g}^{\theta _2}\stackrel{\sim}{\to } \mathfrak{u}(2p),\quad 
\left( 
	\begin{array}{cc}
	A & -B \\
	B & A
	\end{array}
\right) \mapsto A+\sqrt{-1}B. 
\end{align*}
\end{eg}

\begin{eg}
\label{eg:equivalent}
It is noteworthy to mention here the relations between two commutative compact semisimple symmetric triads 
treated in Examples \ref{eg:so-so-u} and \ref{eg:so-so-u'}. 
Let us consider three involutions $\theta _1,\theta _2,\theta _2'$ 
on the compact simple Lie algebra $\mathfrak{g}=\mathfrak{so}(4p)$ 
defined by $\theta _1(X)=I_{2p,2p}XI_{2p,2p}$, $\theta _2(X)=J_{p,p}XJ_{p,p}^{-1}$, 
$\theta _2'(X)=J_{2p}XJ_{2p}^{-1}$ ($X\in \mathfrak{g}$). 
As we have seen in Examples \ref{eg:so-so-u} and \ref{eg:so-so-u'}, 
$\mathfrak{g}^{\theta _1}=\mathfrak{so}(2p)+\mathfrak{so}(2p)$ 
and $\mathfrak{g}^{\theta _2}\simeq \mathfrak{g}^{\theta _2'}\simeq \mathfrak{u}(2p)$. 
Nevertheless, $(\mathfrak{g},\theta _1,\theta _2)$ is not equivalent to $(\mathfrak{g},\theta _1,\theta _2')$. 
To see it, we focus on $\mathfrak{g}^{\theta _1}\cap \mathfrak{g}^{\theta _2}$ 
and $\mathfrak{g}^{\theta _1}\cap \mathfrak{g}^{\theta _2'}$. 
In view of (\ref{eq:k_1}) and (\ref{eq:k_2}), 
it turns out that 
$\mathfrak{g}^{\theta _1}\cap \mathfrak{g}^{\theta _2}\simeq \mathfrak{u}(p)+\mathfrak{u}(p)$, 
and it follows from (\ref{eq:k_1}) and (\ref{eq:k_2'}) that 
$\mathfrak{g}^{\theta _1}\cap \mathfrak{g}^{\theta _2'}\simeq \mathfrak{so}(2p)$. 
This means that $\mathfrak{g}^{\theta _1}\cap \mathfrak{g}^{\theta _2}$ is not isomorphic to 
$\mathfrak{g}^{\theta _1}\cap \mathfrak{g}^{\theta _2'}$. 
Due to Lemma \ref{lem:k_1k_2} below, 
we conclude $(\mathfrak{g},\theta _1,\theta _2)\not \equiv (\mathfrak{g},\theta _1,\theta _2')$. 
\end{eg}


We will here explain Lemma \ref{lem:k_1k_2} used in Example \ref{eg:equivalent}. 
This provides a necessary condition for two commutative compact semisimple symmetric triads to be equivalent. 

\begin{lemma}
\label{lem:k_1k_2}
If two compact semisimple symmetric triads $(\mathfrak{g},\theta _1,\theta _2)$, 
$(\mathfrak{g}',\theta _1',\theta _2')$ are equivalent, 
then $\mathfrak{g}^{\theta _1}\cap \mathfrak{g}^{\theta _2}$ is isomorphic to 
$(\mathfrak{g}')^{\theta _1'}\cap (\mathfrak{g}')^{\theta _2'}$. 
\end{lemma}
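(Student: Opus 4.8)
The plan is to unwind the definition of equivalence and observe that the intertwining isomorphism automatically carries one intersection of fixed-point sets onto the other; the whole argument is a matter of tracking eigenspaces. First I would take, by the hypothesis $(\mathfrak{g},\theta_1,\theta_2)\equiv(\mathfrak{g}',\theta_1',\theta_2')$, a Lie algebra isomorphism $\psi:\mathfrak{g}\to\mathfrak{g}'$ satisfying $\psi\theta_1=\theta_1'\psi$ and $\psi\theta_2=\theta_2'\psi$. The guiding principle is that an isomorphism intertwining two involutions must send the $(+1)$-eigenspace of each involution to the $(+1)$-eigenspace of its partner.

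Concretely, for $X\in\mathfrak{g}^{\theta_1}\cap\mathfrak{g}^{\theta_2}$ we have $\theta_1(X)=X$ and $\theta_2(X)=X$, hence $\theta_i'(\psi(X))=\psi(\theta_i(X))=\psi(X)$ for $i=1,2$, so that $\psi(X)\in(\mathfrak{g}')^{\theta_1'}\cap(\mathfrak{g}')^{\theta_2'}$. This yields the inclusion $\psi\bigl(\mathfrak{g}^{\theta_1}\cap\mathfrak{g}^{\theta_2}\bigr)\subseteq(\mathfrak{g}')^{\theta_1'}\cap(\mathfrak{g}')^{\theta_2'}$. Applying the identical reasoning to $\psi^{-1}$, which satisfies $\psi^{-1}\theta_i'=\theta_i\psi^{-1}$, produces the reverse inclusion, so $\psi$ restricts to a bijection between the two subspaces.

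Finally I would note that $\mathfrak{g}^{\theta_1}$ and $\mathfrak{g}^{\theta_2}$ are each Lie subalgebras of $\mathfrak{g}$ (the fixed-point set of an involution is always a subalgebra), whence their intersection $\mathfrak{g}^{\theta_1}\cap\mathfrak{g}^{\theta_2}$ is a Lie subalgebra, and similarly on the primed side. Since $\psi$ is a Lie algebra homomorphism, its restriction to this subalgebra is again a homomorphism, and we have just verified it is a bijection onto $(\mathfrak{g}')^{\theta_1'}\cap(\mathfrak{g}')^{\theta_2'}$. Therefore the restriction of $\psi$ is the desired Lie algebra isomorphism.

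There is essentially no hard step here: the entire content is that an intertwining isomorphism preserves fixed-point sets, so the only point requiring mild care is surjectivity, which is dispatched by the symmetric argument for $\psi^{-1}$ rather than by any separate computation. I would emphasize that the statement uses no commutativity assumption on the triads, since the argument only invokes that each $\mathfrak{g}^{\theta_i}$ is a subalgebra and that $\psi$ intertwines the involutions.
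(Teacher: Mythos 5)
Your proposal is correct and follows essentially the same route as the paper: both take the intertwining isomorphism, observe that it carries each fixed-point algebra $\mathfrak{g}^{\theta_i}$ onto $(\mathfrak{g}')^{\theta_i'}$, and conclude that it restricts to an isomorphism of the intersections. The only cosmetic difference is that you verify surjectivity by running the argument for $\psi^{-1}$, whereas the paper gets it at once from $\varphi(\mathfrak{g}^{\theta_1})\cap\varphi(\mathfrak{g}^{\theta_2})=\varphi(\mathfrak{g}^{\theta_1}\cap\mathfrak{g}^{\theta_2})$, which holds since $\varphi$ is injective.
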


\begin{proof}
We take a Lie algebra isomorphism $\varphi :\mathfrak{g}\to \mathfrak{g}'$ such that 
$\theta _1'=\varphi \theta _1\varphi ^{-1}$ and $\theta _2'=\varphi \theta _2\varphi ^{-1}$. 
Then, $\varphi $ induces Lie algebra isomorphisms 
from $\mathfrak{g}^{\theta _1}$ to $(\mathfrak{g}')^{\theta _1'}$ 
and from $\mathfrak{g}^{\theta _2}$ to $(\mathfrak{g}')^{\theta _2'}$. 
Since $(\mathfrak{g}')^{\theta _1'}\cap (\mathfrak{g}')^{\theta _2'}
=\varphi (\mathfrak{g}^{\theta _1})\cap \varphi (\mathfrak{g}^{\theta _2})
=\varphi (\mathfrak{g}^{\theta _1}\cap \mathfrak{g}^{\theta _2})$, 
the Lie algebra $\mathfrak{g}^{\theta _1}\cap \mathfrak{g}^{\theta _2}$ must be isomorphic to 
$(\mathfrak{g}')^{\theta _1'}\cap (\mathfrak{g}')^{\theta _2'}$. 
\end{proof}


\section{Duality between non-compact semisimple symmetric pairs and commutative compact semisimple symmetric triads}
\label{sec:duality}

This section is one of the main parts of this paper. 
Namely, we construct a one-to-one correspondence 
between non-compact semisimple symmetric pairs and commutative compact semisimple symmetric triads. 

From now on, we denote 
by $\mathfrak{P}$ the set of non-compact semisimple symmetric pairs, 
by $\mathfrak{P}_c$ the set of non-compact semisimple symmetric pairs equipped with Cartan involutions 
and by $\mathfrak{T}$ the set of commutative compact semisimple symmetric triads. 
The aim of this section is to give a bijection 
between the set $\mathfrak{P}/{\equiv }$ of equivalence classes in $\mathfrak{P}$ 
and the set $\mathfrak{T}/{\equiv }$ of equivalence classes in $\mathfrak{T}$. 

\subsection{One-to-one correspondence between $\mathfrak{P}_c/{\equiv }$ and $\mathfrak{P}/{\equiv }$}
\label{subsec:cartan}

First of all, 
we will consider the relation between $\mathfrak{P}_c/{\equiv }$ and $\mathfrak{P}/{\equiv }$. 

Let $p$ be the projection from $\mathfrak{P}_c$ to $\mathfrak{P}$ defined by 
\begin{align*}
p:\mathfrak{P}_c\to \mathfrak{P},\quad (\mathfrak{g}_0,\sigma ;\theta )\mapsto (\mathfrak{g}_0,\sigma ). 
\end{align*}
Clearly, 
$p(\mathfrak{g}_0,\sigma ;\theta )\equiv p(\mathfrak{g}'_0,\sigma ';\theta ')$ 
if $(\mathfrak{g}_0,\sigma ;\theta )\equiv (\mathfrak{g}'_0,\sigma ';\theta ')$, 
which gives rise to the map $\widetilde{p}$ from $\mathfrak{P}_c/{\equiv }$ to $\mathfrak{P}/{\equiv }$. 

\begin{lemma}
\label{lem:cartan-involution}
The map $\widetilde{p}:\mathfrak{P}_c/{\equiv }\to \mathfrak{P}/{\equiv }$ is bijective. 
\end{lemma}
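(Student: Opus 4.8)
The plan is to show that $\widetilde{p}$ is both surjective and injective. Surjectivity is essentially immediate from the existence result cited from Berger: given any non-compact semisimple symmetric pair $(\mathfrak{g}_0,\sigma)\in\mathfrak{P}$, there exists a Cartan involution $\theta$ of $\mathfrak{g}_0$ commuting with $\sigma$, so $(\mathfrak{g}_0,\sigma;\theta)\in\mathfrak{P}_c$ and $\widetilde{p}[(\mathfrak{g}_0,\sigma;\theta)]=[(\mathfrak{g}_0,\sigma)]$. Thus every class in $\mathfrak{P}/{\equiv}$ lies in the image.

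The substance of the lemma is injectivity, and this is where Fact \ref{fact:loos} does the real work. Suppose $(\mathfrak{g}_0,\sigma;\theta)$ and $(\mathfrak{g}_0',\sigma';\theta')$ are two elements of $\mathfrak{P}_c$ with $p(\mathfrak{g}_0,\sigma;\theta)\equiv p(\mathfrak{g}_0',\sigma';\theta')$, i.e. there is a Lie algebra isomorphism $\varphi:\mathfrak{g}_0\to\mathfrak{g}_0'$ with $\varphi\sigma=\sigma'\varphi$. I would first transport the Cartan data across $\varphi$: the involution $\varphi^{-1}\theta'\varphi$ is a Cartan involution of $\mathfrak{g}_0$, and since $\varphi\sigma=\sigma'\varphi$ one checks it commutes with $\sigma$. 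Now both $\theta$ and $\varphi^{-1}\theta'\varphi$ are Cartan involutions of $\mathfrak{g}_0$ commuting with $\sigma$, so Fact \ref{fact:loos} supplies an $X\in\mathfrak{g}_0^{\sigma}$ with $\varphi^{-1}\theta'\varphi=e^{\operatorname{ad}X}\theta e^{-\operatorname{ad}X}$.

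The final step assembles the equivalence. Set $\psi:=\varphi\circ e^{\operatorname{ad}X}:\mathfrak{g}_0\to\mathfrak{g}_0'$. I would verify $\psi\theta=\theta'\psi$ directly from the displayed relation above, and verify $\psi\sigma=\sigma'\psi$ using that $X\in\mathfrak{g}_0^{\sigma}$ forces $\sigma e^{\operatorname{ad}X}=e^{\operatorname{ad}X}\sigma$ — precisely the content established in Lemma \ref{lem:conjugate-associated}, applied here with $\varphi\sigma=\sigma'\varphi$. These two intertwining relations show $(\mathfrak{g}_0,\sigma;\theta)\equiv(\mathfrak{g}_0',\sigma';\theta')$, giving injectivity.

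I expect the main obstacle to be bookkeeping rather than conceptual: one must be careful that the $X$ produced by Fact \ref{fact:loos} genuinely lies in $\mathfrak{g}_0^{\sigma}$ (not merely in $\mathfrak{g}_0$) so that the commutation $\sigma e^{\operatorname{ad}X}=e^{\operatorname{ad}X}\sigma$ is available, and that $\varphi^{-1}\theta'\varphi$ really is a Cartan involution commuting with $\sigma$ before invoking that Fact. Once these compatibility checks are in place, the composition $\psi=\varphi e^{\operatorname{ad}X}$ resolves the matter with no further difficulty.
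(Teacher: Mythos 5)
Your proposal is correct and follows essentially the same route as the paper's proof: surjectivity from the existence of a commuting Cartan involution, then injectivity by transporting $\theta'$ to $\widetilde{\theta}=\varphi^{-1}\theta'\varphi$, checking it is a Cartan involution commuting with $\sigma$, invoking Fact \ref{fact:loos} to produce $X\in\mathfrak{g}_0^{\sigma}$, and concluding with $\psi=\varphi e^{\operatorname{ad}X}$ together with the commutation $\sigma e^{\operatorname{ad}X}=e^{\operatorname{ad}X}\sigma$ from Lemma \ref{lem:conjugate-associated}. The compatibility checks you flag as potential obstacles are exactly the ones the paper carries out, so there is no gap.
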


\begin{proof}
The surjectivity of $\widetilde{p}$ follows from that of $p$. 
Then, let us show $\widetilde{p}$ is injective. 

Assume that 
$\widetilde{p}(\mathfrak{g}_0,\sigma ;\theta )\equiv \widetilde{p}(\mathfrak{g}_0',\sigma ';\theta ')$ 
for $(\mathfrak{g}_0,\sigma ;\theta ),(\mathfrak{g}_0',\sigma ';\theta ')\in \mathfrak{P}_c$, 
namely, $(\mathfrak{g}_0,\sigma )\equiv (\mathfrak{g}_0',\sigma ')$. 
We take a Lie algebra isomorphism $\varphi :\mathfrak{g}_0\to \mathfrak{g}_0'$ 
satisfying $\varphi \sigma =\sigma '\varphi $. 
We define an involution $\widetilde{\theta }$ on $\mathfrak{g}_0$ 
by $\widetilde{\theta }=\varphi ^{-1}\theta '\varphi $. 
Then, $\widetilde{\theta }$ commutes with $\sigma$ because 
\begin{align*}
\widetilde{\theta }\sigma 
=(\varphi ^{-1}\theta '\varphi )\sigma 
=\varphi ^{-1}\theta '(\sigma '\varphi )
=\varphi ^{-1}(\sigma '\theta ')\varphi 
=\sigma (\varphi ^{-1}\theta '\varphi )
=\sigma \widetilde{\theta }. 
\end{align*}
Further, the fixed point set $\mathfrak{g}_0^{\widetilde{\theta }}$ 
is equal to $\varphi ^{-1}((\mathfrak{g}_0')^{\theta '})$, 
equivalently, 
$\varphi (\mathfrak{g}_0^{\widetilde{\theta }})=(\mathfrak{g}_0')^{\theta '}$. 
Since $\varphi $ is a Lie algebra isomorphism 
and $(\mathfrak{g}_0')^{\theta '}$ is a maximal compact subalgebra of $\mathfrak{g}_0'$, 
the set $\mathfrak{g}_0^{\widetilde{\theta }}$ is a maximal compact subalgebra of $\mathfrak{g}_0$. 
Hence, $\widetilde{\theta }$ is also a Cartan involution of $\mathfrak{g}_0$ 
with $\widetilde{\theta }\sigma =\sigma \widetilde{\theta }$. 
It follows from Fact \ref{fact:loos} that 
there exists $X\in \mathfrak{g}_0^{\sigma }$ such that 
\begin{align}
\varphi ^{-1}\theta '\varphi 
=e^{\operatorname{ad}X}\theta e^{-\operatorname{ad}X}. 
\label{eq:psi}
\end{align}

Here, we set $\psi :=\varphi e^{\operatorname{ad}X}$. 
Then, $\psi $ is a Lie algebra isomorphism 
from $\mathfrak{g}_0$ to $\mathfrak{g}_0'$. 
By the condition (\ref{eq:commute sigma}) and the equality $\varphi \sigma =\sigma '\varphi $, 
we have 
\begin{align*}
\psi \sigma 
=(\varphi e^{\operatorname{ad}X})\sigma 
=\varphi (\sigma e^{\operatorname{ad}X})
=(\sigma '\varphi )e^{\operatorname{ad}X}
=\sigma '\psi . 
\end{align*}
On the other hand, 
the equality (\ref{eq:psi}) implies $\psi \theta =\theta '\psi $. 
Hence, we have shown $(\mathfrak{g}_0,\sigma ;\theta )\equiv (\mathfrak{g}'_0,\sigma ';\theta ')$, 
from which $\widetilde{p}$ is injective. 

Therefore, we have proved Lemma \ref{lem:cartan-involution}. 
\end{proof}

Let us assume that $(\mathfrak{g}_0,\sigma )\equiv (\mathfrak{g}_0',\sigma ')$. 
Combining Lemma \ref{lem:cartan-involution} with Lemma \ref{lem:conjugate-associated}, 
we have $(\mathfrak{g}_0,\sigma ;\theta )\equiv (\mathfrak{g}_0',\sigma ';\theta ')$ 
for any Cartan involution $\theta $ of $\mathfrak{g}_0$ commuting with $\sigma$ 
and any Cartan involution $\theta '$ of $\mathfrak{g}_0'$ commuting with $\sigma '$. 
Hence, it follows from Lemma \ref{lem:associated} that 
$(\mathfrak{g}_0,\sigma ;\theta )^a\equiv (\mathfrak{g}_0',\sigma ';\theta ')^a$ 
and $(\mathfrak{g}_0,\sigma ;\theta )^d\equiv (\mathfrak{g}_0',\sigma ';\theta ')^d$. 

\begin{define}
\label{def:a-d}
We write $[(\mathfrak{g}_0,\sigma )]\in \mathfrak{P}/{\equiv }$ 
for the equivalence class containing $(\mathfrak{g}_0,\sigma )\in \mathfrak{P}$. 
We define $[(\mathfrak{g}_0,\sigma )]^a$ and $[(\mathfrak{g}_0,\sigma )]^d$ by 
\begin{align}
[(\mathfrak{g}_0,\sigma )]^a:=[p((\mathfrak{g}_0,\sigma ;\theta )^a)],\quad 
[(\mathfrak{g}_0,\sigma )]^d:=[p((\mathfrak{g}_0,\sigma ;\theta )^d)]. 
\end{align}
Moreover, 
we say that $[(\mathfrak{g}_0,\sigma )]$ is {\it self-associated} 
if $[(\mathfrak{g}_0,\sigma )]^a=[(\mathfrak{g}_0,\sigma )]$, 
and {\it self-dual} if $[(\mathfrak{g}_0,\sigma )]^d=[(\mathfrak{g}_0,\sigma )]$. 
\end{define}

\subsection{Correspondence between $\mathfrak{P}_c$ and $\mathfrak{T}$}
\label{subsec:correspondence}

In this subsection, 
we construct a map 
$\Phi :\mathfrak{T}\to \mathfrak{P}_c$ and $\Psi :\mathfrak{P}_c\to \mathfrak{T}$ explicitly. 

First, we give a map $\Phi :\mathfrak{T}\to \mathfrak{P}_c$ as follows. 
Let $(\mathfrak{g},\theta _1,\theta _2)$ be a commutative compact semisimple symmetric triad. 
We will use the notation as in Section \ref{subsec:triad}. 
Let $\mu$ be the complex conjugation of $\mathfrak{g}_{\mathbb{C}}=\mathfrak{g}+\sqrt{-1}\mathfrak{g}$ 
with respect to $\mathfrak{g}$. 
We extend $\theta _1$ and $\theta _2$ to $\mathbb{C}$-linear automorphisms 
on $\mathfrak{g}_{\mathbb{C}}$, respectively. 
Then, $\theta _1$ commutes with $\mu$, 
from which $\tau :=\mu \theta _1$ defines another anti-linear involution 
on $\mathfrak{g}_{\mathbb{C}}$. 

The fixed point set $\mathfrak{g}_0:=\mathfrak{g}_{\mathbb{C}}^{\tau }$ 
is a non-compact real form of $\mathfrak{g}_{\mathbb{C}}$ 
and is expressed as follows. 

\begin{lemma}
\label{lem:g0}
$\mathfrak{g}_0
=\mathfrak{g}^{\theta _1}+\sqrt{-1}\mathfrak{g}^{-\theta _1}
=\mathfrak{k}_1+\sqrt{-1}\mathfrak{p}_1$. 
\end{lemma}

\begin{proof}
$\mathfrak{g}_0
=\mathfrak{g}_{\mathbb{C}}^{\tau }
=(\mathfrak{g}_{\mathbb{C}}^{\mu })^{\theta _1}+(\mathfrak{g}_{\mathbb{C}}^{-\mu })^{-\theta _1}
=\mathfrak{g}^{\theta _1}+\sqrt{-1}\mathfrak{g}^{-\theta _1}$. 
\end{proof}

The restriction of the $\mathbb{C}$-linear map $\theta _1$ 
on $\mathfrak{g}_{\mathbb{C}}$ to $\mathfrak{g}_0$ 
becomes an involution on $\mathfrak{g}_0$. 
We put $\theta :=\theta _1|_{\mathfrak{g}_0}$. 
Then, $\mathfrak{k}_0:=\mathfrak{g}_0^{\theta }$ is 
a maximal compact subalgebra of $\mathfrak{g}_0$. 
This means that $\theta $ is a Cartan involution of $\mathfrak{g}_0$ 
and $\mathfrak{k}_0$ coincides with $\mathfrak{k}_1=\mathfrak{g}^{\theta _1}$. 

Similarly, $\sigma :=\theta _2|_{\mathfrak{g}_0}$ is also an involution on $\mathfrak{g}_0$. 
Clearly, $\sigma $ commutes with $\theta $. 
Therefore, 
$(\mathfrak{g}_0,\sigma ;\theta )$ 
is a non-compact semisimple symmetric pair equipped with a Cartan involution. 
In this sense, we define a map $\Phi :\mathfrak{T}\to \mathfrak{P}_c $ by 
\begin{align}
\label{eq:pair}
\Phi (\mathfrak{g},\theta _1,\theta _2)
&=(\mathfrak{g}_0,\sigma ;\theta )
=(\mathfrak{g}^{\theta _1}+\sqrt{-1}\mathfrak{g}^{-\theta _1},
	\theta _2|_{\mathfrak{g}_0};\theta _1|_{\mathfrak{g}_0}). 
\end{align}

Hereafter, 
we use the same letters $\theta _1,\theta _2$ to denote the restrictions 
$\theta _1|_{\mathfrak{g}_0},\theta _2|_{\mathfrak{g}_0}$, respectively. 
Note that the fixed point set $\mathfrak{h}_0:=\mathfrak{g}_0^{\sigma }$ is written as follows. 

\begin{lemma}
\label{lem:h0}
$\mathfrak{h}_0=\mathfrak{k}_1\cap \mathfrak{k}_2+\sqrt{-1}(\mathfrak{p}_1\cap \mathfrak{k}_2)$. 
\end{lemma}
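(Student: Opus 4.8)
The plan is to compute the fixed point set $\mathfrak{h}_0 = \mathfrak{g}_0^{\sigma}$ directly by exploiting the decomposition already established. Recall that $\sigma = \theta_2|_{\mathfrak{g}_0}$ and, by Lemma \ref{lem:g0}, $\mathfrak{g}_0 = \mathfrak{k}_1 + \sqrt{-1}\,\mathfrak{p}_1$. Since $\theta_1\theta_2 = \theta_2\theta_1$, the involution $\theta_2$ preserves both eigenspaces $\mathfrak{k}_1 = \mathfrak{g}^{\theta_1}$ and $\mathfrak{p}_1 = \mathfrak{g}^{-\theta_1}$, so I can diagonalize $\theta_2$ separately on each piece using the refined decomposition \eqref{eq:decomp}.

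First I would note that $\theta_2$ acts $\mathbb{C}$-linearly on $\mathfrak{g}_{\mathbb{C}}$, hence it acts on the real subspace $\mathfrak{k}_1$ and on the imaginary part $\sqrt{-1}\,\mathfrak{p}_1$ in the same way it acts on $\mathfrak{k}_1$ and $\mathfrak{p}_1$ respectively (the scalar $\sqrt{-1}$ is fixed by $\theta_2$). Therefore an element $X + \sqrt{-1}\,Y$ with $X \in \mathfrak{k}_1$, $Y \in \mathfrak{p}_1$ is fixed by $\sigma = \theta_2$ if and only if $\theta_2 X = X$ and $\theta_2 Y = Y$ separately. Using \eqref{eq:decomp}, the $+1$-eigenspace of $\theta_2$ within $\mathfrak{k}_1$ is $\mathfrak{k}_1 \cap \mathfrak{k}_2$, while the $+1$-eigenspace of $\theta_2$ within $\mathfrak{p}_1$ is $\mathfrak{p}_1 \cap \mathfrak{k}_2$. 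Collecting these gives
\begin{align*}
\mathfrak{h}_0 = \mathfrak{g}_0^{\sigma} = (\mathfrak{k}_1 \cap \mathfrak{k}_2) + \sqrt{-1}\,(\mathfrak{p}_1 \cap \mathfrak{k}_2),
\end{align*}
which is exactly the claimed formula.

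The computation is essentially bookkeeping once the key observation is in place, so there is no serious obstacle here; the only point requiring care is that I should confirm $\theta_2$ genuinely stabilizes each summand $\mathfrak{k}_1$ and $\mathfrak{p}_1$, which follows immediately from the commutativity $\theta_1\theta_2 = \theta_2\theta_1$ (if $\theta_1 X = \pm X$ then $\theta_1(\theta_2 X) = \theta_2(\theta_1 X) = \pm \theta_2 X$). Thus the potential pitfall — that $\sigma$ might mix the real and imaginary parts of $\mathfrak{g}_0$ — is ruled out precisely by the commutativity hypothesis that defines a \emph{commutative} compact semisimple symmetric triad, making the eigenspace intersection argument valid. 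This parallels exactly the structure of Lemma \ref{lem:g0}, where $\theta_1$ played the role that $\theta_2$ plays here.
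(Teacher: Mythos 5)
Your proposal is correct and follows essentially the same route as the paper: the paper's proof is the one-line computation $\mathfrak{g}_0^{\sigma}=(\mathfrak{g}^{\theta_1}+\sqrt{-1}\mathfrak{g}^{-\theta_1})^{\sigma}=\mathfrak{g}^{\theta_1}\cap\mathfrak{g}^{\theta_2}+\sqrt{-1}(\mathfrak{g}^{-\theta_1}\cap\mathfrak{g}^{\theta_2})$, whose validity rests exactly on the points you spell out (that $\theta_2$ is $\mathbb{C}$-linear and, by commutativity with $\theta_1$, preserves $\mathfrak{k}_1$ and $\mathfrak{p}_1$, so fixing $X+\sqrt{-1}Y$ forces fixing each part separately). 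You have simply made explicit the justification the paper leaves implicit.
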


\begin{proof}
$\mathfrak{g}_0^{\sigma }
=(\mathfrak{g}^{\theta _1}+\sqrt{-1}\mathfrak{g}^{-\theta _1})^{\sigma }
=\mathfrak{g}^{\theta _1}\cap \mathfrak{g}^{\theta _2}
	+\sqrt{-1}(\mathfrak{g}^{-\theta _1}\cap \mathfrak{g}^{\theta _2})
=\mathfrak{k}_1\cap \mathfrak{k}_2+\sqrt{-1}(\mathfrak{p}_1\cap \mathfrak{k}_2)$. 
\end{proof}

By the observation of Lemma \ref{lem:h0}, 
$\mathfrak{k}_1\cap \mathfrak{k}_2$ is a maximal compact subalgebra of $\mathfrak{h}_0$. 

Next, we give a map $\Psi :\mathfrak{P}_c\to \mathfrak{T}$. 
Let $(\mathfrak{g}_0,\sigma ;\theta )$ be a non-compact semisimple symmetric pair 
equipped with a Cartan involution. 
Retain the notation as in Section \ref{subsec:pair}. 
We extend $\theta$ and $\sigma$ to $\mathbb{C}$-linear automorphisms on 
the complexification $\mathfrak{g}_{\mathbb{C}}=\mathfrak{g}_0+\sqrt{-1}\mathfrak{g}_0$. 
As we have seen in (\ref{eq:compact real form}), 
$\mathfrak{g}:=\mathfrak{g}_0^{\theta }+\sqrt{-1}\mathfrak{g}_0^{-\theta }
=\mathfrak{k}_0+\sqrt{-1}\mathfrak{p}_0$ is a compact semisimple Lie algebra. 
Then, the restrictions of $\theta $ and $\sigma$ to $\mathfrak{g}$ become involutions on $\mathfrak{g}$. 

We set $\theta _1:=\theta |_{\mathfrak{g}}$ and $\theta _2:=\sigma |_{\mathfrak{g}}$. 
Clearly, $\theta _1$ commutes with $\theta _2$. 
Thus, we get a commutative compact semisimple symmetric triad $(\mathfrak{g},\theta _1,\theta _2)$. 
This yields the map $\Psi :\mathfrak{P}_c\to \mathfrak{T}$ defined by 
\begin{align}
\Psi (\mathfrak{g}_0,\sigma ;\theta )
&=(\mathfrak{g},\theta _1,\theta _2)
=(\mathfrak{g}_0^{\theta }+\sqrt{-1}\mathfrak{g}_0^{-\theta },
	\theta |_{\mathfrak{g}},\sigma |_{\mathfrak{g}}). 
\label{eq:triad}
\end{align}

From now, 
we use the same letters $\theta,\sigma $ to denote the restrictions 
$\theta |_{\mathfrak{g}},\sigma |_{\mathfrak{g}}$ to $\mathfrak{g}$, respectively. 

The fixed point set $\mathfrak{k}_1:=\mathfrak{g}^{\theta _1}$ equals to $\mathfrak{k}_0$, 
and $\mathfrak{k}_2:=\mathfrak{g}^{\theta _2}$ is given by 
\begin{align}
\label{eq:g2}
\mathfrak{g}^{\theta _2}=\mathfrak{k}_0^{\sigma }+\sqrt{-1}\mathfrak{p}_0^{\sigma }
=\mathfrak{k}_0\cap \mathfrak{h}_0+\sqrt{-1}(\mathfrak{p}_0\cap \mathfrak{h}_0). 
\end{align}

\subsection{One-to-one correspondence between $\mathfrak{P}/{\equiv }$ and $\mathfrak{T}/{\equiv }$}
\label{subsec:duality}

Two maps $\Phi ,\Psi $ given in Section \ref{subsec:correspondence} 
are inversed correspondence to each other 
(see the proof of Theorem \ref{thm:duality-thm} below). 
Using them, we will construct a one-to-one correspondence between 
$\mathfrak{P}_c/{\equiv }$ and $\mathfrak{T}/{\equiv }$. 

For this, we prepare: 

\begin{lemma}
\label{lem:well-defined}
The map $\Phi $ defined by (\ref{eq:pair}) induces the map 
from $\mathfrak{T}/{\equiv }$ to $\mathfrak{P}_c/{\equiv }$. 
Similarly, the map $\Psi $ defined by (\ref{eq:triad}) induces the map 
from $\mathfrak{P}_c/{\equiv }$ to $\mathfrak{T}/{\equiv }$. 
\end{lemma}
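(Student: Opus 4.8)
The plan is to show that each of $\Phi$ and $\Psi$ respects the equivalence relation $\equiv$, so that each descends to a well-defined map on equivalence classes. Since the constructions of $\Phi$ and $\Psi$ are essentially symmetric (one complexifies $\theta_1$ via $\tau=\mu\theta_1$ and restricts, the other complexifies $\theta$ and restricts to the compact real form $\mathfrak{g}$), I would treat the two assertions separately but in parallel fashion, and I expect the arguments to mirror the proofs of Lemmas \ref{lem:associated} and \ref{lem:dual} already in the paper.

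For $\Phi$, I would start with two equivalent triads $(\mathfrak{g},\theta_1,\theta_2)\equiv(\mathfrak{g}',\theta_1',\theta_2')$, so there is a Lie algebra isomorphism $\psi:\mathfrak{g}\to\mathfrak{g}'$ with $\psi\theta_1=\theta_1'\psi$ and $\psi\theta_2=\theta_2'\psi$. I would extend $\psi$ to a $\mathbb{C}$-linear isomorphism $\mathfrak{g}_{\mathbb{C}}\to\mathfrak{g}_{\mathbb{C}}'$. Because $\psi$ maps the compact real form $\mathfrak{g}$ onto $\mathfrak{g}'$, it intertwines the corresponding conjugations, $\psi\mu=\mu'\psi$; combined with $\psi\theta_1=\theta_1'\psi$ this gives $\psi\tau=\tau'\psi$ where $\tau=\mu\theta_1$, $\tau'=\mu'\theta_1'$. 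Hence $\psi$ carries $\mathfrak{g}_0=\mathfrak{g}_{\mathbb{C}}^{\tau}$ isomorphically onto $\mathfrak{g}_0'=(\mathfrak{g}_{\mathbb{C}}')^{\tau'}$, and the intertwining relations for $\theta_1,\theta_2$ restrict to $\psi\theta=\theta'\psi$ and $\psi\sigma=\sigma'\psi$ on $\mathfrak{g}_0$. By \eqref{eq:pair} this is exactly $\Phi(\mathfrak{g},\theta_1,\theta_2)\equiv\Phi(\mathfrak{g}',\theta_1',\theta_2')$.

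For $\Psi$, I would run the dual argument: given $(\mathfrak{g}_0,\sigma;\theta)\equiv(\mathfrak{g}_0',\sigma';\theta')$ with isomorphism $\varphi:\mathfrak{g}_0\to\mathfrak{g}_0'$ satisfying $\varphi\sigma=\sigma'\varphi$ and $\varphi\theta=\theta'\varphi$, I would $\mathbb{C}$-linearly extend $\varphi$ to $\mathfrak{g}_{\mathbb{C}}\to\mathfrak{g}_{\mathbb{C}}'$. From $\varphi\theta=\theta'\varphi$ I get $\varphi(\mathfrak{g}_0^{\pm\theta})=(\mathfrak{g}_0')^{\pm\theta'}$, so $\varphi$ maps the compact real form $\mathfrak{g}=\mathfrak{g}_0^{\theta}+\sqrt{-1}\mathfrak{g}_0^{-\theta}$ onto $\mathfrak{g}'$; this is the same computation as in the proof of Lemma \ref{lem:dual}. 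Restricting $\varphi$ to $\mathfrak{g}$ then yields a Lie algebra isomorphism onto $\mathfrak{g}'$ intertwining the restrictions $\theta_1,\theta_2$ with $\theta_1',\theta_2'$, which by \eqref{eq:triad} means $\Psi(\mathfrak{g}_0,\sigma;\theta)\equiv\Psi(\mathfrak{g}_0',\sigma';\theta')$.

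The one step that genuinely needs care, rather than being a formal restriction argument, is verifying that the extended $\mathbb{C}$-linear isomorphism intertwines the conjugations $\mu$ and $\mu'$ (equivalently, that it maps one compact real form onto the other). This is the crux in the $\Phi$ case, since $\mu$ is not defined directly from the data but through the compact real form, and it is handled by observing that a $\mathbb{C}$-linear isomorphism carrying $\mathfrak{g}$ onto $\mathfrak{g}'$ automatically satisfies $\psi\mu=\mu'\psi$ on all of $\mathfrak{g}_{\mathbb{C}}$. Everything else is a routine check that eigenspace decompositions and restrictions are preserved under an intertwining isomorphism, so the main obstacle is simply to phrase the conjugation-intertwining cleanly in each direction.
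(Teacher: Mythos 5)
Your proposal is correct and follows essentially the same route as the paper: extend the intertwining isomorphism $\mathbb{C}$-linearly, check that it carries one real form onto the other (the paper does this for $\Phi$ via the eigenspace description $\mathfrak{g}_0=\mathfrak{g}^{\theta_1}+\sqrt{-1}\mathfrak{g}^{-\theta_1}$, which is the same computation as your conjugation-intertwining $\psi\mu=\mu'\psi$, $\psi\tau=\tau'\psi$), and then restrict to get the required equivalence. The paper likewise treats the $\Psi$ direction as the mirror-image argument (cf.\ the proof of Lemma \ref{lem:dual}) and omits it, so no gap.
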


In the following, 
$[(\mathfrak{g},\theta _1,\theta _2)]\in \mathfrak{T}/{\equiv }$ denotes 
the equivalence class containing $(\mathfrak{g},\theta _1,\theta _2)\in \mathfrak{T}$. 

\begin{proof}
Let us assume $[(\mathfrak{g},\theta _1,\theta _2)]=[(\mathfrak{g}',\theta _1',\theta _2')]\in 
\mathfrak{T}/{\equiv }$. 
By definition, 
we obtain $(\mathfrak{g},\theta _1,\theta _2)\equiv (\mathfrak{g}',\theta _1',\theta _2')$, 
from which we take a Lie algebra isomorphism $\varphi :\mathfrak{g}\to \mathfrak{g}'$ 
satisfying $\varphi \theta _i=\theta _i'\varphi $ for $i=1,2$. 
We extend $\varphi$ to a complex Lie algebra isomorphism 
from $\mathfrak{g}_{\mathbb{C}}$ to $\mathfrak{g}_{\mathbb{C}}'$. 
Since 
$(\mathfrak{g}')^{\theta _1'}$ coincides with $\varphi (\mathfrak{g}^{\theta _1})$ 
and $(\mathfrak{g}')^{-\theta _1'}$ with $\varphi (\mathfrak{g}^{-\theta _1})$, 
we have 
\begin{align*}
\mathfrak{g}_0'
&=(\mathfrak{g}')^{\theta _1'}+\sqrt{-1}(\mathfrak{g}')^{-\theta _1'}
=\varphi (\mathfrak{g}^{\theta _1}+\sqrt{-1}\mathfrak{g}^{-\theta _1})
=\varphi (\mathfrak{g}_0). 
\end{align*}
Hence, we conclude 
\begin{align*}
(\mathfrak{g}_0',\theta _2';\theta _1')
=(\varphi (\mathfrak{g}_0),\varphi \theta _2\varphi ^{-1};\varphi \theta _1\varphi ^{-1})
\equiv (\mathfrak{g}_0,\theta _2;\theta _1). 
\end{align*}
This means $\Phi (\mathfrak{g}',\theta _1',\theta _2')\equiv \Phi (\mathfrak{g},\theta _1,\theta _2)$, 
from which this gives rise to the map $\Phi :\mathfrak{T}/{\equiv }\to \mathfrak{P}_c/{\equiv }$. 

Similarly, 
we can induce the map $\Psi :\mathfrak{P}_c/{\equiv }\to \mathfrak{T}/{\equiv }$. 
Then, we omit its detail. 
\end{proof}

\begin{theorem}[Duality theorem]
\label{thm:duality-thm}
$\widetilde{p}\circ \Phi: \mathfrak{T}/{\equiv }\to \mathfrak{P}/{\equiv }$ is a bijective map. 
Moreover, $\Psi \circ (\widetilde{p})^{-1}$ is the inversed map of $\widetilde{p}\circ \Phi$. 
Hence, we get a one-to-one correspondence between $\mathfrak{T}/{\equiv }$ and $\mathfrak{P}/{\equiv }$. 
\end{theorem}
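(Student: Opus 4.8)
The plan is to reduce the statement to showing that the induced map $\Phi:\mathfrak{T}/{\equiv}\to\mathfrak{P}_c/{\equiv}$ is a bijection. Indeed, Lemma \ref{lem:cartan-involution} already gives that $\widetilde{p}:\mathfrak{P}_c/{\equiv}\to\mathfrak{P}/{\equiv}$ is bijective, so once $\Phi$ is known to be bijective with inverse $\Psi$, the composite $\widetilde{p}\circ\Phi$ is a composite of two bijections and hence bijective, and its inverse is $\Phi^{-1}\circ(\widetilde{p})^{-1}=\Psi\circ(\widetilde{p})^{-1}$, which is exactly the second assertion. To prove $\Phi$ is a bijection I would in fact establish the stronger fact that $\Phi$ and $\Psi$ are mutually inverse already as maps of sets $\mathfrak{T}\leftrightarrows\mathfrak{P}_c$, that is $\Psi\circ\Phi=\operatorname{id}_{\mathfrak{T}}$ and $\Phi\circ\Psi=\operatorname{id}_{\mathfrak{P}_c}$ on representatives; passing to quotients via Lemma \ref{lem:well-defined} then finishes everything.

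First I would compute $\Psi\circ\Phi$. Starting from $(\mathfrak{g},\theta_1,\theta_2)\in\mathfrak{T}$, set $(\mathfrak{g}_0,\sigma;\theta)=\Phi(\mathfrak{g},\theta_1,\theta_2)$, so that $\mathfrak{g}_0=\mathfrak{g}^{\theta_1}+\sqrt{-1}\,\mathfrak{g}^{-\theta_1}$, $\theta=\theta_1|_{\mathfrak{g}_0}$ and $\sigma=\theta_2|_{\mathfrak{g}_0}$, all viewed inside $\mathfrak{g}_{\mathbb{C}}$. The key observation is that the $\mathbb{C}$-linear involution $\theta_1$ acts by $+1$ on $\mathfrak{g}^{\theta_1}$ and by $-1$ on $\sqrt{-1}\,\mathfrak{g}^{-\theta_1}$, so the Cartan decomposition of $\mathfrak{g}_0$ for $\theta$ reads $\mathfrak{g}_0^{\theta}=\mathfrak{g}^{\theta_1}$ and $\mathfrak{g}_0^{-\theta}=\sqrt{-1}\,\mathfrak{g}^{-\theta_1}$. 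Applying $\Psi$ and using $\sqrt{-1}\cdot\sqrt{-1}=-1$, I obtain
\begin{align*}
\mathfrak{g}_0^{\theta}+\sqrt{-1}\,\mathfrak{g}_0^{-\theta}
=\mathfrak{g}^{\theta_1}+\sqrt{-1}\bigl(\sqrt{-1}\,\mathfrak{g}^{-\theta_1}\bigr)
=\mathfrak{g}^{\theta_1}+\mathfrak{g}^{-\theta_1}=\mathfrak{g},
\end{align*}
recovering $\mathfrak{g}$ as a subalgebra of $\mathfrak{g}_{\mathbb{C}}$.

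Finally I would note that the $\mathbb{C}$-linear extension of $\theta$ (resp.\ $\sigma$) used inside $\Psi$ agrees with $\theta_1$ (resp.\ $\theta_2$) on the real form $\mathfrak{g}_0$ and is $\mathbb{C}$-linear, hence agrees with $\theta_1$ (resp.\ $\theta_2$) on all of $\mathfrak{g}_{\mathbb{C}}$; restricting to $\mathfrak{g}$ returns $\theta_1,\theta_2$ exactly. This yields $\Psi\circ\Phi(\mathfrak{g},\theta_1,\theta_2)=(\mathfrak{g},\theta_1,\theta_2)$ on the nose. The opposite composite $\Phi\circ\Psi=\operatorname{id}_{\mathfrak{P}_c}$ is entirely symmetric: for $(\mathfrak{g}_0,\sigma;\theta)\in\mathfrak{P}_c$ the triad $\Psi(\mathfrak{g}_0,\sigma;\theta)$ has $\mathfrak{g}^{\theta_1}=\mathfrak{k}_0$ and $\mathfrak{g}^{-\theta_1}=\sqrt{-1}\,\mathfrak{p}_0$, and the same $\sqrt{-1}$-cancellation returns $\mathfrak{k}_0+\mathfrak{p}_0=\mathfrak{g}_0$ together with $\theta,\sigma$.

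I do not expect a genuine obstacle here: the content is the bookkeeping above together with the two facts already in hand, namely that $\widetilde{p}$ is bijective (Lemma \ref{lem:cartan-involution}) and that $\Phi,\Psi$ descend to the quotients (Lemma \ref{lem:well-defined}). The one point that needs care is the repeated identification of a real involution with its $\mathbb{C}$-linear extension and the consistent tracking of the factors of $\sqrt{-1}$, so that the eigenspaces $\mathfrak{g}^{\pm\theta_1}$ and $\mathfrak{g}_0^{\pm\theta}$ are matched correctly; once that is pinned down, both round-trips collapse to the identity on representatives and the theorem follows at once.
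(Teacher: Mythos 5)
Your proposal is correct and follows essentially the same route as the paper: both establish that $\Phi$ and $\Psi$ are mutually inverse on representatives via the $\sqrt{-1}$-cancellation computation (the paper writes out $\Phi\circ\Psi=\operatorname{id}_{\mathfrak{P}_c}$, you write out $\Psi\circ\Phi=\operatorname{id}_{\mathfrak{T}}$, each leaving the symmetric composite implicit), then descend to quotients via Lemma \ref{lem:well-defined} and compose with the bijection $\widetilde{p}$ of Lemma \ref{lem:cartan-involution}. Your explicit remark that the $\mathbb{C}$-linear extensions of $\theta,\sigma$ agree with $\theta_1,\theta_2$ on all of $\mathfrak{g}_{\mathbb{C}}$ is a point the paper leaves tacit, but it is not a different argument.
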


\begin{proof}
We take a non-compact semisimple symmetric pair 
equipped with a Cartan involution $(\mathfrak{g}_0,\sigma ;\theta )$. 
Combining (\ref{eq:pair}) with (\ref{eq:triad}), 
we have 
\begin{align*}
\Phi \circ \Psi (\mathfrak{g}_0,\sigma ;\theta )
&=\Phi (\mathfrak{g}_0^{\theta }+\sqrt{-1}\mathfrak{g}_0^{-\theta },\theta ,\sigma )\\
&=((\mathfrak{g}_0^{\theta }+\sqrt{-1}\mathfrak{g}_0^{-\theta })^{\theta }
	+\sqrt{-1}(\mathfrak{g}_0^{\theta }+\sqrt{-1}\mathfrak{g}_0^{-\theta })^{-\theta },
	\sigma ;\theta )\\
&=(\mathfrak{g}_0^{\theta }+\mathfrak{g}_0^{-\theta },\sigma ;\theta )\\
&=(\mathfrak{g}_0,\sigma ;\theta ). 
\end{align*}
This means that $\Psi$ is the inversed map of $\Phi$. 
Hence, we obtain a bijection from $\mathfrak{P}_c/{\equiv} $ to $\mathfrak{T}/{\equiv }$. 
As Lemma \ref{lem:cartan-involution}, we conclude Theorem \ref{thm:duality-thm}. 
\end{proof}

\begin{notation}
\label{notation:*}
For simplicity, 
we usually use the notation as follows: 
\begin{align*}
(\mathfrak{g},\theta _1,\theta _2)^*
&=\Phi (\mathfrak{g},\theta _1,\theta _2)\quad ((\mathfrak{g},\theta _1,\theta _2)\in \mathfrak{T}),\\
(\mathfrak{g}_0,\sigma ;\theta )^*
&=(\mathfrak{g}_0,\sigma )^*=\Psi (\mathfrak{g}_0,\sigma ;\theta )\quad 
((\mathfrak{g}_0,\sigma ;\theta )\in \mathfrak{P}_c). 
\end{align*}
\end{notation}

\begin{eg}
\label{eg:duality-so-u}
Let $(\mathfrak{g},\theta _1,\theta _2)$ be the commutative compact semisimple symmetric triad 
which has been considered in Example \ref{eg:so-so-u}, 
namely, $\mathfrak{g}=\mathfrak{so}(2p+2q)$, 
$\theta _1(X)=I_{2p,2q}XI_{2p,2q}$ and $\theta _2(X)=J_{p,q}XJ_{p,q}^{-1}$ ($X\in \mathfrak{g}$). 
We recall that $\mathfrak{g}^{\theta _1}=\mathfrak{so}(2p)+\mathfrak{so}(2q)$ 
and $\mathfrak{g}^{\theta _2}\simeq \mathfrak{u}(p+q)$. 
In this setting, we will clarify the dual 
$(\mathfrak{g}_0,\sigma ;\theta )=(\mathfrak{g},\theta _1,\theta _2)^*$ 
of $(\mathfrak{g},\theta _1,\theta _2)$. 

The complexification $\mathfrak{g}$ of $\mathfrak{so}(2p,2q)$ equals $\mathfrak{so}(2p+2q,\mathbb{C})
=\{ X\in M(2p+2q,\mathbb{C}):{}^tX=-X\} $. 
The non-compact real form 
$\mathfrak{g}_0=\mathfrak{g}^{\theta _1}+\sqrt{-1}\mathfrak{g}^{-\theta _1}$ of $\mathfrak{g}$ is given 
as follows. 
The fixed point set $\mathfrak{g}^{\theta _1}=\mathfrak{g}_0^{\theta }=\mathfrak{so}(2p)+\mathfrak{so}(2q)$ 
is given by (\ref{eq:so(2p)+so(2q)}) and $\mathfrak{g}^{-\theta _1}$ by 
\begin{align*}
\mathfrak{g}^{-\theta _1}
=\left\{ 
	\left( 
		\begin{array}{cc}
		O & -B \\
		{}^tB & O
		\end{array}
	\right) :B\in M(2p,2q\,;\mathbb{R})
\right\} . 
\end{align*}
Hence, we have 
\begin{align*}
\mathfrak{g}_0=\left\{ 
	\left( 
		\begin{array}{cc}
		A & -\sqrt{-1}B \\
		\sqrt{-1}\,{}^tB & D
		\end{array}
	\right) :
	\begin{array}{c}
	A\in \mathfrak{so}(2p),~D\in \mathfrak{so}(2q),\\
	B\in M(2p,2q\,;\mathbb{R}). 
	\end{array}
\right\} . 
\end{align*}

Here, we set 
\begin{align}
\label{eq:I_{2p,2q}'}
I_{2p,2q}':=\left( 
	\begin{array}{cc}
	I_{2p} & O \\
	O & -\sqrt{-1}I_{2q}
	\end{array}
\right) 
\end{align}
and define a map $\varphi :\mathfrak{g}_0\to \mathfrak{so}(2p+2q,\mathbb{C})$ by 
\begin{align}
\label{eq:phi}
\varphi (X):=I_{2p,2q}'X(I_{2p,2q}')^{-1}\quad (X\in \mathfrak{g}_0). 
\end{align}
Then, $\varphi$ gives rise to the Lie algebra isomorphism from $\mathfrak{g}_0$ to $\mathfrak{so}(2p,2q)$ 
which is defined by (\ref{eq:so(2p,2q)}). 

Now, we put $\mathfrak{g}_0':=\mathfrak{so}(2p,2q)$ 
and $\theta ':=\varphi \theta \varphi ^{-1}$, $\sigma ':=\varphi \sigma \varphi ^{-1}$. 
Then, $(\mathfrak{g}_0',\sigma ';\theta ')$ is a non-compact semisimple symmetric pair 
equipped with a Cartan involution 
and satisfies $(\mathfrak{g}_0',\sigma ';\theta ')\equiv (\mathfrak{g}_0,\sigma ;\theta )$. 
In particular, two involutions $\theta '$ and $\sigma '$ on $\mathfrak{g}_0'$ are 
expressed as $\theta '(X)=I_{2p,2q}XI_{2p,2q}$ and $\sigma '(X)=J_{p,q}XJ_{p,q}^{-1}$, respectively 
($X\in \mathfrak{g}_0'$). 
Hence, $(\mathfrak{g}_0',\sigma ';\theta ')$ is the same as the non-compact semisimple symmetric pair 
in Example \ref{eg:so-u}, 
namely, $(\mathfrak{g}_0')^{\theta '}=\mathfrak{so}(2p)+\mathfrak{so}(2q)$ 
and $(\mathfrak{g}_0')^{\sigma '}\simeq \mathfrak{u}(p,q)$. 
\end{eg}

\begin{eg}
\label{eg:duality-so-gl}
Let $(\mathfrak{g},\theta _1,\theta _2)$ be a commutative compact semisimple symmetric triad 
with $\mathfrak{g}=\mathfrak{so}(4p)$ and 
$\theta _1(X)=I_{2p,2p}XI_{2p,2p}$, $\theta _2(X)=J_{2p}XJ_{2p}^{-1}$ ($X\in \mathfrak{g}$). 
As mentioned in Example \ref{eg:so-so-u'}, 
we obtain $\mathfrak{g}^{\theta _1}=\mathfrak{so}(2p)+\mathfrak{so}(2p)$ 
and $\mathfrak{g}^{\theta _2}\simeq \mathfrak{u}(2p)$. 

The dual $(\mathfrak{g}_0,\sigma ;\theta ):=(\mathfrak{g},\theta _1,\theta _2)^*$ is given as follows. 
By the same argument as in Example \ref{eg:duality-so-u}, 
a non-compact semisimple real Lie algebra $\mathfrak{g}_0$ is isomorphic to $\mathfrak{so}(2p,2p)$ 
and the fixed point set $\mathfrak{g}_0^{\theta }$ of the Cartan involution 
$\theta :=\theta _1|_{\mathfrak{g}_0}$ equals $\mathfrak{so}(2p)+\mathfrak{so}(2p)$. 

Next, we define a map $\varphi $ by (\ref{eq:phi}). 
We set $\mathfrak{g}_0':=\varphi (\mathfrak{g}_0)=\mathfrak{so}(2p,2q)$ 
and $\sigma '=\varphi \sigma \varphi ^{-1}$. 
Then, an explicit description of $\sigma '$ on $\mathfrak{g}_0'$ is of the form 
$\sigma '(X)=J_{2p}'XJ_{2p}'$ where $J_{2p}'$ is given by (\ref{eq:J_{2p}'}). 
This means that $(\mathfrak{g}_0',\sigma ';\theta ')$ is the same as the non-compact semisimple symmetric 
pair equipped with a Cartan involution treated in Example \ref{eg:so-gl}, 
and then 
$(\mathfrak{g}_0')^{\sigma '}$ is isomorphic to $\mathfrak{gl}(2p,\mathbb{R})$. 
\end{eg}

The following corollary is an immediate consequence of Theorem \ref{thm:duality-thm} 
(see also Section \ref{subsec:correspondence}). 
Here is a remark that 
$\mathfrak{g}_0^{\sigma }$ is reductive since it is $\theta $-invariant, 
however, it is not always semisimple. 
In this case, we would also say that 
$\mathfrak{g}'$ is a compact real form of the complexification $(\mathfrak{g}_0^{\sigma })_{\mathbb{C}}$ 
of $\mathfrak{g}_0^{\sigma }$ 
if $\mathfrak{g}'$ is compact and a real form of $(\mathfrak{g}_0^{\sigma })_{\mathbb{C}}$. 

\begin{corollary}
\label{cor:duality-thm}
Let $(\mathfrak{g}_0,\sigma ;\theta )$ be a non-compact semisimple symmetric pair 
equipped with a Cartan involution. 
We set $(\mathfrak{g},\theta _1,\theta _2):=(\mathfrak{g}_0,\sigma ;\theta )^*$. 
Then, we have: 
\begin{enumerate}
	\item The compact semisimple Lie algebra $\mathfrak{g}$ is a compact real form 
	of $\mathfrak{g}_{\mathbb{C}}$. 
	\item The fixed point set $\mathfrak{g}^{\theta _1}$ coincides with $\mathfrak{g}_0^{\theta }$. 
	\item The fixed point set $\mathfrak{g}^{\theta _2}$ is a compact real form 
	of $(\mathfrak{g}_0^{\sigma })_{\mathbb{C}}$. 
\end{enumerate}
\end{corollary}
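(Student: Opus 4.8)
The plan is to read off all three items directly from the explicit formula defining $\Psi$ in Section~\ref{subsec:correspondence}, since by Notation~\ref{notation:*} and \eqref{eq:triad} we have $(\mathfrak{g},\theta _1,\theta _2)=\Psi (\mathfrak{g}_0,\sigma ;\theta )=(\mathfrak{g}_0^{\theta }+\sqrt{-1}\mathfrak{g}_0^{-\theta },\theta |_{\mathfrak{g}},\sigma |_{\mathfrak{g}})$, so that $\mathfrak{g}=\mathfrak{k}_0+\sqrt{-1}\mathfrak{p}_0$ with $\theta _1=\theta |_{\mathfrak{g}}$ and $\theta _2=\sigma |_{\mathfrak{g}}$. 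For item~(1) I would simply invoke \eqref{eq:compact real form}: attached to the Cartan decomposition $\mathfrak{g}_0=\mathfrak{k}_0+\mathfrak{p}_0$ is the compact real form $\mathfrak{g}=\mathfrak{k}_0+\sqrt{-1}\mathfrak{p}_0$ of $\mathfrak{g}_{\mathbb{C}}$, which is the standard fact that any Cartan involution produces a compact form. For item~(2) I would extend $\theta $ $\mathbb{C}$-linearly and compute its eigenspaces inside $\mathfrak{g}$: as $\theta =+1$ on $\mathfrak{k}_0$ while $\theta (\sqrt{-1}X)=-\sqrt{-1}X$ for $X\in \mathfrak{p}_0$, the $(+1)$-eigenspace of $\theta _1$ is exactly $\mathfrak{k}_0=\mathfrak{g}_0^{\theta }$; this is the identity $\mathfrak{k}_1=\mathfrak{k}_0$ already recorded after \eqref{eq:triad}.

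For item~(3) the starting point is \eqref{eq:g2}, which gives $\mathfrak{g}^{\theta _2}=\mathfrak{k}_0\cap \mathfrak{h}_0+\sqrt{-1}(\mathfrak{p}_0\cap \mathfrak{h}_0)$ where $\mathfrak{h}_0=\mathfrak{g}_0^{\sigma }$. Since $\theta $ and $\sigma $ commute, $\mathfrak{h}_0$ is $\theta $-stable, so $\mathfrak{h}_0=\mathfrak{k}_0\cap \mathfrak{h}_0+\mathfrak{p}_0\cap \mathfrak{h}_0$ is its Cartan decomposition as a reductive Lie algebra. I would then verify that $\mathfrak{g}^{\theta _2}$ is a real form of $(\mathfrak{h}_0)_{\mathbb{C}}$ by three quick checks carried out inside $\mathfrak{g}_{\mathbb{C}}$: its complex span is $\mathbb{C}(\mathfrak{k}_0\cap \mathfrak{h}_0)+\mathbb{C}(\mathfrak{p}_0\cap \mathfrak{h}_0)=(\mathfrak{h}_0)_{\mathbb{C}}$; its real dimension equals $\dim _{\mathbb{R}}\mathfrak{h}_0=\dim _{\mathbb{C}}(\mathfrak{h}_0)_{\mathbb{C}}$; and $\mathfrak{g}^{\theta _2}\cap \sqrt{-1}\,\mathfrak{g}^{\theta _2}=0$, the last following because comparing real and imaginary components forces $\mathfrak{k}_0$-parts to meet $\mathfrak{p}_0$-parts and $\mathfrak{k}_0\cap \mathfrak{p}_0=0$. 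Finally, $\mathfrak{g}^{\theta _2}$ is the fixed-point subalgebra of the involution $\theta _2$ on the compact Lie algebra $\mathfrak{g}$, hence compact, so by the notion of compact real form for reductive Lie algebras recalled just before the statement it is a compact real form of $(\mathfrak{g}_0^{\sigma })_{\mathbb{C}}$.

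Since each item is an immediate bookkeeping consequence of the formulas defining $\Psi $, there is no serious obstacle here. The only point demanding a little care is item~(3): because $\mathfrak{g}_0^{\sigma }$ is only reductive and not necessarily semisimple, one must use the relaxed definition of \emph{compact real form} stated before the corollary rather than appeal to semisimplicity, and one must confirm that it is precisely the Cartan decomposition of $\mathfrak{h}_0$ supplied by the commuting involution $\theta $ that yields the compact form $\mathfrak{g}^{\theta _2}$.
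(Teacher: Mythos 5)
Your proposal is correct and follows exactly the route the paper intends: the paper states this corollary as an immediate consequence of Theorem \ref{thm:duality-thm} and the construction of $\Psi$ in Section \ref{subsec:correspondence}, i.e.\ reading items (1)--(3) off from (\ref{eq:triad}), (\ref{eq:compact real form}), the identity $\mathfrak{k}_1=\mathfrak{k}_0$, and (\ref{eq:g2}). Your extra verifications for item (3) --- that $\mathfrak{g}^{\theta _2}$ spans $(\mathfrak{g}_0^{\sigma })_{\mathbb{C}}$, meets $\sqrt{-1}\,\mathfrak{g}^{\theta _2}$ trivially, and is compact as a subalgebra of the compact Lie algebra $\mathfrak{g}$, using the relaxed notion of compact real form for the reductive algebra $\mathfrak{g}_0^{\sigma }$ --- are precisely the routine details the paper leaves implicit.
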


\subsection{Compatible condition}
\label{subsec:compatible}

Our duality theorem (Theorem \ref{thm:duality-thm}) satisfies compatible conditions 
in the sense as follows. 
Here, we will write $(\mathfrak{g},\theta _1,\theta _2)^{a*}$ for 
$\Phi ((\mathfrak{g},\theta _1,\theta _2)^a)=\Phi (\mathfrak{g},\theta _1,\theta _1\theta _2)$ and 
$(\mathfrak{g},\theta _1,\theta _2)^{*a}$ for $(\Phi (\mathfrak{g},\theta _1,\theta _2))^a$, 
and so on. 

\begin{proposition}
\label{prop:compatible-c}
The following relations hold 
for $(\mathfrak{g},\theta _1,\theta _2)\in \mathfrak{T}$: 
\begin{align*}
(\mathfrak{g},\theta _1,\theta _2)^{a*}=(\mathfrak{g},\theta _1,\theta _2)^{*a},\quad 
(\mathfrak{g},\theta _1,\theta _2)^{d*}=(\mathfrak{g},\theta _1,\theta _2)^{*d}. 
\end{align*}
\end{proposition}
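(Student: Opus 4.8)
The plan is to prove both identities by directly unwinding the definition of the map $\Phi$ in \eqref{eq:pair} together with the definitions of the associated and dual operations on $\mathfrak{T}$ (Definitions \ref{def:associated-triad} and \ref{def:dual-triad}) and on $\mathfrak{P}_c$ (Definitions \ref{def:associated} and \ref{def:dual}), and then checking that the two sides of each identity yield literally the same non-compact semisimple symmetric pair equipped with a Cartan involution. Throughout I would keep the notation $\mathfrak{k}_i=\mathfrak{g}^{\theta_i}$, $\mathfrak{p}_i=\mathfrak{g}^{-\theta_i}$ and write $(\mathfrak{g}_0,\sigma;\theta):=\Phi(\mathfrak{g},\theta_1,\theta_2)$, so that by \eqref{eq:pair} and Lemma \ref{lem:g0} one has $\mathfrak{g}_0=\mathfrak{k}_1+\sqrt{-1}\mathfrak{p}_1$, $\theta=\theta_1|_{\mathfrak{g}_0}$ and $\sigma=\theta_2|_{\mathfrak{g}_0}$.

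For the associated identity I would compute both sides explicitly. On the one hand, since $(\mathfrak{g},\theta_1,\theta_2)^a=(\mathfrak{g},\theta_1,\theta_1\theta_2)$ shares the same first involution $\theta_1$, applying $\Phi$ gives $(\mathfrak{g},\theta_1,\theta_2)^{a*}=(\mathfrak{g}_0,(\theta_1\theta_2)|_{\mathfrak{g}_0};\theta)$, with the same underlying real form $\mathfrak{g}_0$ and Cartan involution $\theta$. On the other hand, by \eqref{eq:associated noncompact} one has $(\mathfrak{g},\theta_1,\theta_2)^{*a}=(\mathfrak{g}_0,\theta\sigma;\theta)$, and since $\theta\sigma=(\theta_1\theta_2)|_{\mathfrak{g}_0}$ on $\mathfrak{g}_0$, the two triples coincide verbatim. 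This step is routine once the definitions are aligned.

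The dual identity is the substantive one. The left-hand side $(\mathfrak{g},\theta_1,\theta_2)^{d*}=\Phi(\mathfrak{g},\theta_2,\theta_1)$ is computed by running the construction of $\Phi$ with the roles of $\theta_1$ and $\theta_2$ interchanged: by Lemma \ref{lem:g0} its real form is $\mathfrak{g}_{\mathbb{C}}^{\mu\theta_2}$, with Cartan involution $\theta_2|$ and remaining involution $\theta_1|$. For the right-hand side, $(\mathfrak{g},\theta_1,\theta_2)^{*d}=(\mathfrak{g}_0,\sigma;\theta)^d=(\mathfrak{g}_{\mathbb{C}}^{\mu'\sigma},\theta;\sigma)$ by \eqref{eq:dual noncompact} and Definition \ref{def:dual}, where $\mu'$ is the complex conjugation with respect to the compact real form $\mathfrak{g}_0^{\theta}+\sqrt{-1}\mathfrak{g}_0^{-\theta}$ attached to $(\mathfrak{g}_0,\theta)$ as in \eqref{eq:compact real form}. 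The crux is to identify this $\mu'$ with the original conjugation $\mu$: since $\mathfrak{g}_0^{\theta}=\mathfrak{k}_1$ and $\mathfrak{g}_0^{-\theta}=\sqrt{-1}\mathfrak{p}_1$, one gets $\mathfrak{g}_0^{\theta}+\sqrt{-1}\mathfrak{g}_0^{-\theta}=\mathfrak{k}_1+\mathfrak{p}_1=\mathfrak{g}$, because $\sqrt{-1}\cdot\sqrt{-1}\mathfrak{p}_1=\mathfrak{p}_1$ as a real subspace; hence $\mu'=\mu$. Noting further that the $\mathbb{C}$-linear extensions of $\theta=\theta_1|_{\mathfrak{g}_0}$ and $\sigma=\theta_2|_{\mathfrak{g}_0}$ back to $\mathfrak{g}_{\mathbb{C}}$ are exactly $\theta_1$ and $\theta_2$, I would conclude $\mathfrak{g}_{\mathbb{C}}^{\mu'\sigma}=\mathfrak{g}_{\mathbb{C}}^{\mu\theta_2}$ and that the two sides again agree verbatim, the Cartan involution being $\theta_2|$ and the other involution $\theta_1|$ on both.

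The main obstacle is precisely the bookkeeping isolated in the previous paragraph: one must check that the complex conjugation $\mu'$ produced inside the dual construction of Definition \ref{def:dual}, which a priori depends on the Cartan involution $\theta$ of $\mathfrak{g}_0$, recovers the original conjugation $\mu$ of $\mathfrak{g}_{\mathbb{C}}$ with respect to the compact form $\mathfrak{g}$ we started from, and that all involutions are compared on the correct ambient algebra. Once this identification is in place, everything reduces to a verbatim matching of symbols; since the real forms are literally equal as subalgebras of $\mathfrak{g}_{\mathbb{C}}$ and the involutions are literal restrictions of $\theta_1$ and $\theta_2$, the identities hold as genuine equalities in $\mathfrak{P}_c$ (and a fortiori modulo $\equiv$), with no appeal to Lemma \ref{lem:conjugate-associated} or Fact \ref{fact:loos} needed.
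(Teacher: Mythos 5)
Your proof is correct and follows essentially the same route as the paper's: both identities are established by unwinding the definitions of $\Phi$, $(\cdot)^a$ and $(\cdot)^d$ via (\ref{eq:pair}), (\ref{eq:associated noncompact}) and (\ref{eq:dual noncompact}), and matching the two sides verbatim. Your identification $\mu'=\mu$ (through $\mathfrak{g}_0^{\theta}+\sqrt{-1}\mathfrak{g}_0^{-\theta}=\mathfrak{k}_1+\mathfrak{p}_1=\mathfrak{g}$, together with uniqueness of $\mathbb{C}$-linear extensions) is exactly the content of the paper's appeal to equation (\ref{eq:dual}), which yields $(\mathfrak{g}^{\theta_1}+\sqrt{-1}\mathfrak{g}^{-\theta_1})^d=\mathfrak{g}^{\theta_2}+\sqrt{-1}\mathfrak{g}^{-\theta_2}$.
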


\begin{proposition}
\label{prop:compatible-n}
The following relations hold 
for $(\mathfrak{g}_0,\sigma ;\theta )\in \mathfrak{P}_c$: 
\begin{align*}
(\mathfrak{g}_0,\sigma ;\theta )^{a*}=(\mathfrak{g}_0,\sigma ;\theta )^{*a},\quad 
(\mathfrak{g}_0,\sigma ;\theta )^{d*}=(\mathfrak{g}_0,\sigma ;\theta )^{*d}.  
\end{align*}\end{proposition}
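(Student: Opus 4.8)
The plan is to unwind all three operations and compare the two sides directly as elements of $\mathfrak{T}$. Throughout I would set $(\mathfrak{g},\theta_1,\theta_2):=\Psi(\mathfrak{g}_0,\sigma;\theta)$, so that by (\ref{eq:triad}) one has $\mathfrak{g}=\mathfrak{k}_0+\sqrt{-1}\mathfrak{p}_0$, $\theta_1=\theta|_{\mathfrak{g}}$ and $\theta_2=\sigma|_{\mathfrak{g}}$, all understood as $\mathbb{C}$-linear extensions restricted to the relevant compact real form inside $\mathfrak{g}_{\mathbb{C}}$. Because the proposition asserts genuine equalities rather than equivalences, the task is to check that each side yields the same compact real form as a subspace of $\mathfrak{g}_{\mathbb{C}}$ together with the same ordered pair of restricted involutions.

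The associated identity is almost formal. By (\ref{eq:associated noncompact}) we have $(\mathfrak{g}_0,\sigma;\theta)^a=(\mathfrak{g}_0,\theta\sigma;\theta)$, which shares the Cartan involution $\theta$ with $(\mathfrak{g}_0,\sigma;\theta)$; hence $\Psi$ attaches the same compact real form $\mathfrak{g}=\mathfrak{k}_0+\sqrt{-1}\mathfrak{p}_0$, with first involution $\theta|_{\mathfrak{g}}=\theta_1$ and second involution $(\theta\sigma)|_{\mathfrak{g}}=\theta_1\theta_2$. Thus $(\mathfrak{g}_0,\sigma;\theta)^{a*}=(\mathfrak{g},\theta_1,\theta_1\theta_2)$, which is exactly $(\mathfrak{g},\theta_1,\theta_2)^a=(\mathfrak{g}_0,\sigma;\theta)^{*a}$ by Definition \ref{def:associated-triad}.

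The dual identity is the substantive part. Here $(\mathfrak{g}_0,\sigma;\theta)^d=(\mathfrak{g}_0^d,\theta;\sigma)$ with $\mathfrak{g}_0^d=\mathfrak{g}_{\mathbb{C}}^{\mu\sigma}$, so now $\sigma$ plays the role of Cartan involution and $\theta$ that of the symmetric involution; note $\theta$ does restrict to $\mathfrak{g}_0^d$ since it commutes with both $\mu$ and $\sigma$, hence with $\mu\sigma$. Applying $\Psi$ to $(\mathfrak{g}_0^d,\theta;\sigma)$ produces the compact real form $(\mathfrak{g}_0^d)^{\sigma}+\sqrt{-1}(\mathfrak{g}_0^d)^{-\sigma}$, and the crux is to verify that this equals $\mathfrak{g}$. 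Using the explicit decomposition (\ref{eq:dual}) of $\mathfrak{g}_0^d$ and the fact that the $\mathbb{C}$-linear $\sigma$ acts by $+1$ on $\mathfrak{h}_0$ and by $-1$ on $\mathfrak{q}_0$, I would read off
\begin{align*}
(\mathfrak{g}_0^d)^{\sigma}&=\mathfrak{k}_0\cap\mathfrak{h}_0+\sqrt{-1}(\mathfrak{p}_0\cap\mathfrak{h}_0),\\
(\mathfrak{g}_0^d)^{-\sigma}&=\sqrt{-1}(\mathfrak{k}_0\cap\mathfrak{q}_0)+\mathfrak{p}_0\cap\mathfrak{q}_0,
\end{align*}
the first line agreeing with Lemma \ref{lem:dual-cartan}. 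Multiplying the second line by $\sqrt{-1}$ and using $\sqrt{-1}\cdot\sqrt{-1}=-1$ turns $\sqrt{-1}(\mathfrak{k}_0\cap\mathfrak{q}_0)$ back into $\mathfrak{k}_0\cap\mathfrak{q}_0$ and sends $\mathfrak{p}_0\cap\mathfrak{q}_0$ to $\sqrt{-1}(\mathfrak{p}_0\cap\mathfrak{q}_0)$, so the total reassembles to $\mathfrak{k}_0+\sqrt{-1}\mathfrak{p}_0=\mathfrak{g}$. The two restricted involutions are then $\sigma|_{\mathfrak{g}}=\theta_2$ (now Cartan) and $\theta|_{\mathfrak{g}}=\theta_1$, giving $(\mathfrak{g}_0,\sigma;\theta)^{d*}=(\mathfrak{g},\theta_2,\theta_1)$, which is precisely $(\mathfrak{g},\theta_1,\theta_2)^d=(\mathfrak{g}_0,\sigma;\theta)^{*d}$ by (\ref{eq:dual compact}).

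The main obstacle I expect is this last piece of bookkeeping: one must confirm that the compact real form of the dual pair is \emph{literally} $\mathfrak{g}$ inside $\mathfrak{g}_{\mathbb{C}}$, not merely an isomorphic copy, and this rests on correctly tracking which of the four summands $\mathfrak{k}_0\cap\mathfrak{h}_0$, $\mathfrak{k}_0\cap\mathfrak{q}_0$, $\mathfrak{p}_0\cap\mathfrak{h}_0$, $\mathfrak{p}_0\cap\mathfrak{q}_0$ carries a factor $\sqrt{-1}$ in $\mathfrak{g}_0^d$ and then in its $\Psi$-image. Once this eigenspace computation is in hand, the identification of the ordered involutions is immediate. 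I note finally that the companion Proposition \ref{prop:compatible-c} is the same assertion read through the inverse bijection $\Phi=\Psi^{-1}$ of Theorem \ref{thm:duality-thm}, so it follows from the identities above by conjugating with $\Phi$ and $\Psi$.
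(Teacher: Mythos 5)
Your proof is correct, but it takes the opposite route from the paper. The paper does not prove Proposition \ref{prop:compatible-n} by direct computation at all: it first proves Proposition \ref{prop:compatible-c} directly (and there the key step is exactly the eigenspace identity you isolate, namely that $(\mathfrak{g}^{\theta _1}+\sqrt{-1}\mathfrak{g}^{-\theta _1})^d=\mathfrak{g}^{\theta _2}+\sqrt{-1}\mathfrak{g}^{-\theta _2}$ via (\ref{eq:dual})), and then deduces Proposition \ref{prop:compatible-n} purely formally: writing $(\mathfrak{g}_0,\sigma ;\theta )=(\mathfrak{g},\theta _1,\theta _2)^*$ and using $(\mathfrak{g},\theta _1,\theta _2)^{**}=(\mathfrak{g},\theta _1,\theta _2)$ from the proof of Theorem \ref{thm:duality-thm}, it computes $(\mathfrak{g}_0,\sigma ;\theta )^{a*}=(\mathfrak{g},\theta _1,\theta _2)^{*a*}=(\mathfrak{g},\theta _1,\theta _2)^{a**}=(\mathfrak{g},\theta _1,\theta _2)^{a}=(\mathfrak{g}_0,\sigma ;\theta )^{*a}$, and similarly for $d$. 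You instead prove \ref{prop:compatible-n} directly on the non-compact side — the associated case by noting the Cartan involution is unchanged, the dual case by tracking which of the four summands $\mathfrak{k}_0\cap \mathfrak{h}_0$, $\mathfrak{k}_0\cap \mathfrak{q}_0$, $\mathfrak{p}_0\cap \mathfrak{h}_0$, $\mathfrak{p}_0\cap \mathfrak{q}_0$ acquires $\sqrt{-1}$ — and then observe that \ref{prop:compatible-c} would follow formally; this is precisely the paper's argument run in reverse, with the roles of the two propositions swapped. Both are valid: the paper's ordering avoids redoing the $\sigma$-eigenspace bookkeeping inside $\mathfrak{g}_{\mathbb{C}}$ (it is done once, in \ref{prop:compatible-c}), while yours makes explicit the pleasant fact that the compact real form attached to $(\mathfrak{g}_0^d,\theta ;\sigma )$ is \emph{literally} the same subspace $\mathfrak{g}\subset \mathfrak{g}_{\mathbb{C}}$, which the formal argument leaves hidden. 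One small point you gloss over, though it is harmless and the paper glosses over it too: when you apply $\Psi$ to $(\mathfrak{g}_0^d,\theta ;\sigma )$ you identify $(\mathfrak{g}_0^d)_{\mathbb{C}}$ with $\mathfrak{g}_{\mathbb{C}}$ and use that the $\mathbb{C}$-linear extensions of $\sigma |_{\mathfrak{g}_0^d}$ and $\theta |_{\mathfrak{g}_0^d}$ agree with the original extensions of $\sigma$ and $\theta$; this holds because a $\mathbb{C}$-linear map is determined by its restriction to a real form and $\mathfrak{g}_0^d$ is invariant under both extensions, and it is worth a sentence if you write this up.
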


\begin{proof}[Proof of Proposition \ref{prop:compatible-c}]
As $(\mathfrak{g},\theta _1,\theta _2)^a=(\mathfrak{g},\theta _1,\theta _1\theta _2)$, 
it follows from (\ref{eq:pair}) that 
$\Phi ((\mathfrak{g},\theta _1,\theta _2)^a)=\Phi (\mathfrak{g},\theta _1,\theta _1\theta _2)
=(\mathfrak{g}^{\theta _1}+\sqrt{-1}\mathfrak{g}^{-\theta _1},\theta _1\theta _2;\theta _1)$. 
On the other hand, 
the associated non-compact semisimple symmetric pair $(\Phi (\mathfrak{g},\theta _1,\theta _2))^a$ 
of $\Phi (\mathfrak{g},\theta _1,\theta _2)
=(\mathfrak{g}^{\theta _1}+\sqrt{-1}\mathfrak{g}^{-\theta _1},\theta _2;\theta _1)$ equals 
$(\mathfrak{g}^{\theta _1}+\sqrt{-1}\mathfrak{g}^{-\theta _1},\theta _1\theta _2;\theta _1)$ 
(see Definition \ref{def:associated}). 
Hence, we obtain 
\begin{align*}
(\mathfrak{g},\theta _1,\theta _2)^{a*}
=(\mathfrak{g}^{\theta _1}+\sqrt{-1}\mathfrak{g}^{-\theta _1},\theta _1\theta _2;\theta _1)
=(\mathfrak{g},\theta _1,\theta _2)^{*a}. 
\end{align*}

Next, a direct computation shows 
\begin{align*}
(\mathfrak{g},\theta _1,\theta _2)^{d*}
=(\mathfrak{g},\theta _2,\theta _1)^*
=(\mathfrak{g}^{\theta _2}+\sqrt{-1}\mathfrak{g}^{-\theta _2},\theta _1;\theta _2). 
\end{align*}
On the other hand, we have 
\begin{align*}
(\mathfrak{g},\theta _1,\theta _2)^{*d}
&=(\mathfrak{g}^{\theta _1}+\sqrt{-1}\mathfrak{g}^{-\theta _1},\theta _2;\theta _1)^d
=((\mathfrak{g}^{\theta _1}+\sqrt{-1}\mathfrak{g}^{-\theta _1})^d,\theta _1;\theta _2). 
\end{align*}
By (\ref{eq:dual}), we have 
$(\mathfrak{g}^{\theta _1}+\sqrt{-1}\mathfrak{g}^{-\theta _1})^d
=\mathfrak{g}^{\theta _2}+\sqrt{-1}\mathfrak{g}^{-\theta _2}$, 
from which 
\begin{align*}
(\mathfrak{g},\theta _1,\theta _2)^{*d}
=(\mathfrak{g}^{\theta _2}+\sqrt{-1}\mathfrak{g}^{-\theta _2},\theta _1;\theta _2). 
\end{align*}
Hence, we have proved $(\mathfrak{g},\theta _1,\theta _2)^{d*}=
(\mathfrak{g},\theta _1,\theta _2)^{*d}$. 
\end{proof}

\begin{proof}[Proof of Proposition \ref{prop:compatible-n}]
Our proof of the first equality is provided as follows. 
According to Theorem \ref{thm:duality-thm}, 
we write $(\mathfrak{g}_0,\sigma ;\theta )=(\mathfrak{g},\theta _1,\theta _2)^*$ 
for some $(\mathfrak{g},\theta _1,\theta _2)\in \mathfrak{T}$. 
Recall from the proof of Theorem \ref{thm:duality-thm} that 
$(\mathfrak{g},\theta _1,\theta _2)^{**}=\Psi \circ \Phi (\mathfrak{g},\theta _1,\theta _2)
=(\mathfrak{g},\theta _1,\theta _2)$. 
By Proposition \ref{prop:compatible-c}, we have 
\begin{align*}
(\mathfrak{g}_0,\sigma ;\theta )^{a*}
&=(\mathfrak{g},\theta _1,\theta _2)^{*a*}
=(\mathfrak{g},\theta _1,\theta _2)^{a**}
=(\mathfrak{g},\theta _1,\theta _2)^{a}. 
\end{align*}
On the other hand, 
since $(\mathfrak{g}_0,\sigma ;\theta )^*=(\mathfrak{g},\theta _1,\theta _2)$, 
we have 
$(\mathfrak{g}_0,\sigma ;\theta )^{*a}=(\mathfrak{g},\theta _1,\theta _2)^a$. 
Hence, we have proved $(\mathfrak{g}_0,\sigma ;\theta )^{a*}=(\mathfrak{g}_0,\sigma ;\theta )^{*a}$. 

The second statement can be proved similarly to the first one, 
hence we omit its proof. 
\end{proof}

\subsection{Duality for Riemannian symmetric pairs}
\label{subsec:riemannian}

The duality for Riemannian semisimple symmetric pairs \`a la \'E. Cartan 
means a one-to-one correspondence between Riemannian semisimple symmetric pairs of non-compact type 
and those of compact type. 
More precisely, 
a pair $(\mathfrak{g}_0,\theta )$ of a non-compact real semisimple Lie algebra $\mathfrak{g}_0$ 
and its Cartan involution $\theta $ corresponds to the pair 
$(\mathfrak{g}_0^{\theta }+\sqrt{-1}\mathfrak{g}_0^{-\theta },\theta )$, 
and a compact semisimple symmetric pair $(\mathfrak{g},\theta _1)$ 
to the pair $(\mathfrak{g}^{\theta _1}+\sqrt{-1}\mathfrak{g}^{-\theta _1},\theta _1)$ 
(cf. \cite[Section 2 in Chapter V]{helgason}). 
On the other hand, 
$(\mathfrak{g}_0,\theta )$ and $(\mathfrak{g},\theta _1)$ have the following correspondences 
in the sense of our duality theorem given by Theorem \ref{thm:duality-thm}: 
\begin{align*}
(\mathfrak{g}_0,\theta )^*
&=(\mathfrak{g}_0,\theta ;\theta )^*
=(\mathfrak{g}_0^{\theta }+\sqrt{-1}\mathfrak{g}_0^{-\theta },\theta ,\theta ), \\
(\mathfrak{g},\theta _1)^*
&=(\mathfrak{g},\theta _1,\theta _1)^*
=(\mathfrak{g}^{\theta _1}+\sqrt{-1}\mathfrak{g}^{-\theta _1},\theta _1;\theta _1) 
\end{align*}
where $(\mathfrak{g},\theta _1)$ is regarded as the commutative compact semisimple symmetric triad 
$(\mathfrak{g},\theta _1,\theta _1)$. 
Then, 
it is reasonable to write $(\mathfrak{g},\theta _1)=(\mathfrak{g}_0,\theta )^*$ 
as the duality for Riemannian semisimple symmetric pairs. 
In this context, 
our duality in Theorem \ref{thm:duality-thm} is a kind of generalizations of the duality 
for Riemannian symmetric pairs. 

We want to understand our duality in Theorem \ref{thm:duality-thm} 
from the viewpoint of the duality for Riemannian symmetric pairs. 
For this, we have to extend the duality for Riemannian semisimple symmetric pairs 
to that for Riemannian reductive symmetric pairs. 
Then, let us assume only in this subsection 
that a non-compact real Lie algebra $\mathfrak{g}_0$ is reductive. 

We write $\mathfrak{g}_0=[\mathfrak{g}_0,\mathfrak{g}_0]\oplus \mathfrak{z}_0$ 
for the decomposition of $\mathfrak{g}_0$ into its semisimple part and its center. 
Here, $[\mathfrak{g}_0,\mathfrak{g}_0]$ is the derived ideal and 
$\mathfrak{z}_0$ is the center of $\mathfrak{g}_0$. 

Let $\nu$ be an involution on $\mathfrak{g}_0$. 
The restriction of $\nu $ 
to the semisimple part $[\mathfrak{g}_0,\mathfrak{g}_0]$ defines an involution 
on $[\mathfrak{g}_0,\mathfrak{g}_0]$ because 
\begin{align*}
\nu ([\mathfrak{g}_0,\mathfrak{g}_0])
=[\nu (\mathfrak{g}_0),\nu (\mathfrak{g}_0)]
=[\mathfrak{g}_0,\mathfrak{g}_0]. 
\end{align*}
Moreover, $\nu |_{\mathfrak{z}_0}$ is also an involution on $\mathfrak{z}_0$ since 
\begin{align*}
[\nu (\mathfrak{z}_0),\mathfrak{g}_0]
=\nu ([\mathfrak{z}_0,\nu (\mathfrak{g}_0)])
=\nu ([\mathfrak{z}_0,\mathfrak{g}_0])
=\{ 0\} . 
\end{align*}

An involution $\theta$ on a real reductive Lie algebra $\mathfrak{g}_0$ is said to be a Cartan involution 
if the restriction $\theta |_{[\mathfrak{g}_0,\mathfrak{g}_0]}$ 
is a Cartan involution of the semisimple part $[\mathfrak{g}_0,\mathfrak{g}_0]$ (see \cite{rossmann}). 
In this sense, Cartan involutions on a real reductive Lie algebra are not unique. 

Given a Cartan involution $\theta $ of a reductive Lie algebra $\mathfrak{g}_0$, 
we say that $(\mathfrak{g}_0,\theta )$ is a Riemannian reductive symmetric pair. 
According to the decomposition $\mathfrak{g}_0=[\mathfrak{g}_0,\mathfrak{g}_0]\oplus \mathfrak{z}_0$, 
the sets $\mathfrak{g}_0^{\theta }$ and $\mathfrak{g}_0^{-\theta }$ can be written 
by $\mathfrak{g}_0^{\theta }
=[\mathfrak{g}_0,\mathfrak{g}_0]^{\theta }\oplus \mathfrak{z}_0^{\theta }$ 
and $\mathfrak{g}_0^{-\theta }
=[\mathfrak{g}_0,\mathfrak{g}_0]^{-\theta }\oplus \mathfrak{z}_0^{-\theta }$, respectively. 
Then, the Lie algebra 
$\mathfrak{g}:=\mathfrak{g}_0^{\theta }+\sqrt{-1}\mathfrak{g}_0^{-\theta }$ is given by 
\begin{align}
\label{eq:riemann-g}
\mathfrak{g}
&=([\mathfrak{g}_0,\mathfrak{g}_0]^{\theta }+\sqrt{-1}[\mathfrak{g}_0,\mathfrak{g}_0]^{-\theta })
	\oplus (\mathfrak{z}_0^{\theta }+\sqrt{-1}\mathfrak{z}_0^{-\theta }). 
\end{align}
In particular, $\mathfrak{g}^{\theta }$ coincides with 
$\mathfrak{g}_0^{\theta }=[\mathfrak{g}_0,\mathfrak{g}_0]^{\theta }\oplus \mathfrak{z}_0^{\theta }$. 

Here, $[\mathfrak{g}_0,\mathfrak{g}_0]^{\theta }+\sqrt{-1}[\mathfrak{g}_0,\mathfrak{g}_0]^{-\theta }$ 
is a compact semisimple Lie algebra 
because $[\mathfrak{g}_0,\mathfrak{g}_0]$ is semisimple 
and $\theta $ is a Cartan involution of $[\mathfrak{g}_0,\mathfrak{g}_0]$, 
in particular, this coincides with $[\mathfrak{g},\mathfrak{g}]$. 
On the other hand, 
$\mathfrak{z}:=\mathfrak{z}_0^{\theta }+\sqrt{-1}\mathfrak{z}_0^{-\theta }$ 
is the center of $\mathfrak{g}$. 
Hence, $\mathfrak{g}$ is a compact Lie algebra. 
Further, $\mathfrak{g}$ is semisimple if and only if $\mathfrak{z}_0=\{ 0\} $. 
In this case, 
we have $\mathfrak{g}_0=[\mathfrak{g}_0,\mathfrak{g}_0]$, 
and then (\ref{eq:riemann-g}) is $\mathfrak{g}=\mathfrak{g}_0^{\theta }+\sqrt{-1}\mathfrak{g}_0^{-\theta }$. 
Henceforth, 
we may also write $(\mathfrak{g},\theta )=(\mathfrak{g}_0,\theta )^*$ 
for a real reductive Lie algebra $\mathfrak{g}_0$. 

Moreover, it follows from (\ref{eq:riemann-g}) that $(\mathfrak{g},\theta )$ is of the form 
\begin{align*}
(\mathfrak{g},\theta )=([\mathfrak{g},\mathfrak{g}],\theta )\oplus 
	(\mathfrak{z},\theta ). 
\end{align*}
The semisimple part $([\mathfrak{g},\mathfrak{g}],\theta )$ is written by 
$([\mathfrak{g},\mathfrak{g}],\theta )=([\mathfrak{g}_0,\mathfrak{g}_0],\theta )^*
$ 
in the sense of Theorem \ref{thm:duality-thm}. 
Concerning the center, we set $(\mathfrak{z}_0,\theta )^*:=(\mathfrak{z},\theta )$. 
Based on the above observation, we define 
\begin{align}
\label{eq:duality-reductive}
(\mathfrak{g}_0,\theta )^*:=([\mathfrak{g}_0,\mathfrak{g}_0],\theta )^*
\oplus (\mathfrak{z}_0,\theta )^* 
\end{align}
for a real reductive Lie algebra $\mathfrak{g}_0$ and a Cartan involution $\theta $ of $\mathfrak{g}_0$. 

Now, we return to our duality theorem. 
Let $(\mathfrak{g}_0,\sigma ;\theta )\in \mathfrak{P}_c$ 
and $(\mathfrak{g},\theta _1,\theta _2)=(\mathfrak{g}_0,\sigma ;\theta )^*\in \mathfrak{T}$. 
Then, $(\mathfrak{g}_0,\theta )$ and $(\mathfrak{g}_0^{\sigma },\theta )$ are 
non-compact Riemannian symmetric pairs, 
and $(\mathfrak{g},\theta _1)$ and $(\mathfrak{g}^{\theta _2},\theta _1)$ are 
compact Riemannian symmetric pairs. 
They have the following relations: 

\begin{theorem}
\label{thm:duality-g}
Let $(\mathfrak{g}_0,\sigma ;\theta )$ be a non-compact pseudo-Riemannian semisimple symmetric pair 
equipped with a Cartan involution. 
We set $(\mathfrak{g},\theta _1,\theta _2):=(\mathfrak{g}_0,\sigma ;\theta )^*$. 
Then, $(\mathfrak{g},\theta _1)$ equals $(\mathfrak{g}_0,\theta )^*$ and 
$(\mathfrak{g}^{\theta _2},\theta _1)$ equals $(\mathfrak{g}_0^{\sigma },\theta )^*$. 
\end{theorem}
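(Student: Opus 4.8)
The plan is to read off both identities directly from the explicit formula (\ref{eq:triad}) for $\Psi$ and from the (extended) Cartan duality recalled at the start of this subsection, so that each identity reduces to matching an eigenspace decomposition inside $\mathfrak{g}_{\mathbb{C}}$.

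The first identity requires essentially no computation. Since $(\mathfrak{g}_0,\theta)$ is a non-compact Riemannian semisimple symmetric pair, Cartan duality gives $(\mathfrak{g}_0,\theta)^*=(\mathfrak{g}_0^{\theta}+\sqrt{-1}\mathfrak{g}_0^{-\theta},\theta)$. On the other hand, by the definition (\ref{eq:triad}) of $\Psi$, the underlying compact Lie algebra of $(\mathfrak{g},\theta_1,\theta_2)=(\mathfrak{g}_0,\sigma;\theta)^*$ is exactly $\mathfrak{g}=\mathfrak{g}_0^{\theta}+\sqrt{-1}\mathfrak{g}_0^{-\theta}$ and $\theta_1=\theta|_{\mathfrak{g}}$. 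Hence $(\mathfrak{g},\theta_1)$ is literally $(\mathfrak{g}_0,\theta)^*$, and I would dispatch this in a single sentence.

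For the second identity, the first step is to observe that $(\mathfrak{g}_0^{\sigma},\theta)$ is a Riemannian reductive symmetric pair. Because $\theta\sigma=\sigma\theta$, the Cartan involution $\theta$ preserves $\mathfrak{h}_0:=\mathfrak{g}_0^{\sigma}$, which is reductive, and the Cartan decomposition of $\mathfrak{g}_0$ restricts to $\mathfrak{h}_0=(\mathfrak{k}_0\cap\mathfrak{h}_0)+(\mathfrak{p}_0\cap\mathfrak{h}_0)$. Applying the extended duality (\ref{eq:duality-reductive}) and computing the eigenspaces $\mathfrak{h}_0^{\theta}=\mathfrak{k}_0\cap\mathfrak{h}_0$ and $\mathfrak{h}_0^{-\theta}=\mathfrak{p}_0\cap\mathfrak{h}_0$ then yields
\[
(\mathfrak{g}_0^{\sigma},\theta)^*=\mathfrak{k}_0\cap\mathfrak{h}_0+\sqrt{-1}(\mathfrak{p}_0\cap\mathfrak{h}_0),
\]
equipped with the $\mathbb{C}$-linear extension of $\theta$.

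The last step is to match this against $\mathfrak{g}^{\theta_2}$. By (\ref{eq:g2}) we have $\mathfrak{g}^{\theta_2}=\mathfrak{k}_0\cap\mathfrak{h}_0+\sqrt{-1}(\mathfrak{p}_0\cap\mathfrak{h}_0)$, so the two underlying Lie algebras coincide as subalgebras of $\mathfrak{g}_{\mathbb{C}}$; it then remains only to note that $\theta_1|_{\mathfrak{g}^{\theta_2}}$ and the dual involution on $(\mathfrak{g}_0^{\sigma},\theta)^*$ are both the restriction of one and the same $\mathbb{C}$-linear extension of $\theta$ to this common subalgebra, hence identical. I expect the sole point needing genuine care to be the verification that $\theta|_{\mathfrak{h}_0}$ is a Cartan involution in the reductive sense, i.e.\ that its restriction to the semisimple part $[\mathfrak{h}_0,\mathfrak{h}_0]$ is Cartan; this follows from the compatibility of the Cartan decomposition with passage to a $\theta$-stable reductive subalgebra, but it is the one place where the reductive (rather than semisimple) nature of $\mathfrak{g}_0^{\sigma}$ must be addressed explicitly.
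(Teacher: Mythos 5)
Your proposal is correct and takes essentially the same route as the paper: the paper's own proof is just the two-line remark that, by (\ref{eq:triad}) and (\ref{eq:g2}), $\mathfrak{g}=\mathfrak{g}_0^{\theta}+\sqrt{-1}\mathfrak{g}_0^{-\theta}$ and $\mathfrak{g}^{\theta_2}=\mathfrak{k}_0\cap\mathfrak{h}_0+\sqrt{-1}(\mathfrak{p}_0\cap\mathfrak{h}_0)$, leaving the comparison with (\ref{eq:duality-reductive}) implicit. Your extra care in checking that $\theta|_{\mathfrak{g}_0^{\sigma}}$ is a Cartan involution in the reductive sense merely spells out what the paper takes for granted when it calls $(\mathfrak{g}_0^{\sigma},\theta)$ a Riemannian reductive symmetric pair.
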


\begin{proof}
As mentioned in (\ref{eq:triad}), we have 
$\mathfrak{g}=\mathfrak{g}_0^{\theta }+\sqrt{-1}\mathfrak{g}_0^{-\theta }$ 
and 
$\mathfrak{g}^{\theta _2}=\mathfrak{g}_0^{\theta }\cap \mathfrak{g}_0^{\sigma }
+\sqrt{-1}\mathfrak{g}_0^{-\theta }\cap \mathfrak{g}_0^{\sigma }$. 
\end{proof}

\begin{eg}
Let us again consider the non-compact semisimple symmetric pair equipped with a Cartan involution 
$(\mathfrak{g}_0,\sigma ;\theta )$ 
which has been treated in Example \ref{eg:so-gl}, namely, 
$\mathfrak{g}_0=\mathfrak{so}(2p,2p)$, $\sigma (X)=J_{2p}'XJ_{2p}'$ 
and $\theta (X)=I_{2p,2p}XI_{2p,2p}$ ($X\in \mathfrak{g}_0$). 
Then, $\mathfrak{g}_0^{\theta }=\mathfrak{so}(2p)+\mathfrak{so}(2p)$, 
and $\mathfrak{g}_0^{\sigma }\simeq \mathfrak{gl}(2p,\mathbb{R})$. 

We have already explained in Example \ref{eg:duality-so-gl} that 
$(\mathfrak{g}_0,\sigma ;\theta )$ is equivalent to the dual 
of the commutative compact semisimple symmetric triad $(\mathfrak{g},\theta _1,\theta _2)$ 
with $\mathfrak{g}=\mathfrak{so}(4p)$, $\theta _1(X)=I_{2p,2p}XI_{2p,2p}$ and 
$\theta _2(X)=J_{2p}XJ_{2p}^{-1}$ ($X\in \mathfrak{g}$), 
from which $\mathfrak{g}^{\theta _1}=\mathfrak{so}(2p)+\mathfrak{so}(2p)$ 
$\mathfrak{g}^{\theta _2}\simeq \mathfrak{u}(2p)$. 
Here is the observation of the relation 
$(\mathfrak{g}_0,\sigma ;\theta )^*\equiv (\mathfrak{g},\theta _1,\theta _2)$ 
from the view point of Theorem \ref{thm:duality-g}. 
In the following, 
we shall use the classification of Riemannian semisimple symmetric pairs. 

Clearly, $(\mathfrak{g},\theta _1)=(\mathfrak{so}(4p),\theta _1)$ is the dual of 
$(\mathfrak{g}_0,\theta )=(\mathfrak{so}(2p,2p),\theta )$ for Riemannian semisimple symmetric pairs. 

Next, the subalgebra $\mathfrak{g}_0^{\sigma }$ is not semisimple and decomposed as 
$\mathfrak{g}_0^{\sigma }
=[\mathfrak{g}_0^{\sigma },\mathfrak{g}_0^{\sigma }]\oplus \mathfrak{z}(\mathfrak{g}_0^{\sigma })$ 
where the semisimple part is 
\begin{align*}
[\mathfrak{g}_0^{\sigma },\mathfrak{g}_0^{\sigma }]
=\left\{ 
	\left( 
		\begin{array}{cc}
		A & B \\
		B & A 
		\end{array}
	\right) :A\in \mathfrak{so}(2p),B\in \operatorname{Sym}(2p,\mathbb{R}),\operatorname{Tr}B=0
\right\} 
\end{align*}
which is isomorphic to $\mathfrak{sl}(2p,\mathbb{R})$ 
via the isomorphism $\iota $ given by (\ref{eq:gl(2p)-isom}), and the center is 
\begin{align*}
\mathfrak{z}(\mathfrak{g}_0^{\sigma })
=\left\{ 
	\left( 
		\begin{array}{cc}
		O & rI_{2p} \\
		rI_{2p} & O 
		\end{array}
	\right) :r\in \mathbb{R}
\right\} 
\end{align*}
which is isomorphic to $\iota (\mathfrak{z}(\mathfrak{g}_0^{\sigma }))=\mathbb{R}I_{2p}$. 
According to (\ref{eq:duality-reductive}), 
the dual $(\mathfrak{g}_0^{\sigma },\theta )^*$ is given by 
\begin{align*}
(\mathfrak{g}_0^{\sigma },\theta )^*
&=([\mathfrak{g}_0^{\sigma },\mathfrak{g}_0^{\sigma }],\theta )^*
\oplus (\mathfrak{z}(\mathfrak{g}_0^{\sigma }),\theta )^*\\
&\equiv 
(\mathfrak{sl}(2p,\mathbb{R}),\iota \theta \iota ^{-1})^*\oplus (\mathbb{R}I_{2p},\iota \theta \iota ^{-1})^*. 
\end{align*}
Here, the Cartan involution $\theta ':=\iota \theta \iota ^{-1}$ 
on $\iota (\mathfrak{g}_0^{\sigma })=\mathfrak{gl}(2p,\mathbb{R})$ 
is written as $\theta '(X')=-\,{}^tX'$ ($X'\in \mathfrak{gl}(2p,\mathbb{R})$). 
The dual $(\mathfrak{sl}(2p,\mathbb{R}),\theta ')^*$ 
for Riemannian semisimple symmetric pair coincides with $(\mathfrak{su}(2p),\theta ')$. 
On the other hand, 
$\mathbb{R}I_{2p}$ coincides with $(\mathbb{R}I_{2p})^{-\theta '}$. 
By (\ref{eq:riemann-g}), we obtain $(\mathbb{R}I_{2p},\theta ')^*=(\sqrt{-1}\mathbb{R}I_{2p},\theta ')$. 
Therefore, we conclude 
\begin{align*}
(\mathfrak{g}_0^{\sigma },\theta )^*
\equiv (\mathfrak{su}(2p),\theta ')\oplus (\sqrt{-1}\mathbb{R}I_{2p},\theta ')
=(\mathfrak{u}(2p),\theta '). 
\end{align*}
\end{eg}


\section{Correspondence between various properties via duality theorem}
\label{sec:property}

In the previous section, 
we have provided an explicit description of a one-to-one correspondence 
between the set of equivalence classes of non-compact semisimple symmetric pairs 
and the set of equivalence classes of commutative compact semisimple symmetric triads 
(Theorem \ref{thm:duality-thm}), 
and have observed that this correspondence is a kind of generalization of the duality 
for Riemannian symmetric pairs (see Section \ref{subsec:riemannian}). 

Our next concern on our duality theorem is to seek a correspondence between various properties of 
non-compact semisimple symmetric pairs and those of commutative compact semisimple symmetric triads. 

The first half of this section is devoted to the study 
that our duality theorem preserves irreducibility, namely, 
a non-compact semisimple symmetric pair $(\mathfrak{g}_0,\sigma )$ is irreducible 
(see Definition \ref{def:irr-pair}) if and only if 
the corresponding commutative compact semisimple symmetric triad $(\mathfrak{g},\theta _1,\theta _2)
=(\mathfrak{g}_0,\sigma )^*$ is irreducible (see Definition \ref{def:irr-triad}), 
which is given by Theorem \ref{thm:corresp-irreducible}. 
Further, 
we see in Lemma \ref{lem:duality-irr-decomp} that the irreducible decomposition of $(\mathfrak{g}_0,\sigma )$ 
corresponds to the irreducible decomposition of the dual 
$(\mathfrak{g},\theta _1,\theta _2)=(\mathfrak{g}_0,\sigma )^*$. 
Moreover, our duality also gives rise to a one-to-one correspondence 
between `types' of irreducible non-compact semisimple symmetric pairs 
and `types' of irreducible commutative compact semisimple symmetric triads. 
For example, 
if $\mathfrak{g}_0$ is simple and has no complex structures, 
then $(\mathfrak{g}_0,\sigma )$ is irreducible and the corresponding $(\mathfrak{g},\theta _1,\theta _2)$ 
satisfies that $\mathfrak{g}$ is simple, in particular, $(\mathfrak{g},\theta _1,\theta _2)$ is irreducible, 
and vice versa (see Proposition \ref{prop:simple}). 
If $\mathfrak{g}_0$ is not simple or has a complex structure, 
then we divide irreducible $(\mathfrak{g}_0,\sigma )\in \mathfrak{P}$ 
into four types 
named by (P-\ref{item:cpx-anti})--(P-\ref{item:non-linear}) (see in Corollary \ref{cor:irr-pair}), 
whereas, if a compact semisimple Lie algebra $\mathfrak{g}$ is not simple, 
then we divide irreducible $(\mathfrak{g},\theta _1,\theta _2)\in \mathfrak{T}$ into four types 
named by (T-\ref{type:i2})--(T-\ref{type:ii2d}) (see in Proposition \ref{prop:irr-triad}). 
Then, we prove that irreducible $(\mathfrak{g}_0,\sigma )$ is of type (P-i) 
if and only if the corresponding $(\mathfrak{g},\theta _1,\theta _2)$ is of type (T-i) 
(i $=$ a, b, c, d) (see Theorem \ref{thm:irreducible}). 

Owing to this result, 
one can give an alternative proof for Berger's classification of non-compact semisimple symmetric pairs 
from the viewpoint of that of compact semisimple symmetric triads via our duality theorem. 
The detail will be explained in the forthcoming paper \cite{bis1}. 

The latter half of this section is to provide a new characterization 
of non-compact semisimple symmetric pairs of type $K_{\varepsilon }$ (see Definition \ref{def:typeK}) 
in terms of the corresponding commutative compact semisimple symmetric triads $(\mathfrak{g}_0,\sigma )^*$ 
via our duality theorem (see Theorem \ref{thm:typeK}). 
To explain it, 
we shall introduce an equivalence relation between two involutions $\theta _1,\theta _2$ 
on a compact semisimple Lie algebra, denoted by $\theta _1\sim \theta _2$ (see Definition \ref{def:sim}). 

First of all, 
we fix the notation as follows. 
Let $\mathfrak{l}_0$ be a real semisimple Lie algebra. 
Then, the direct sum $\mathfrak{l}_0\oplus \mathfrak{l}_0$ is a semisimple Lie algebra. 
However, it is not simple even if $\mathfrak{l}_0$ is simple. 
For two automorphisms $\nu _1,\nu _2$ on $\mathfrak{l}_0$, 
we write $\nu _1\oplus \nu _2$ for the automorphism on $\mathfrak{l}_0\oplus \mathfrak{l}_0$ 
given by 
\begin{align}
\label{eq:oplus}
(\nu _1\oplus \nu _2)(X,Y):=(\nu _1(X),\nu _2(Y))\quad (X,Y\in \mathfrak{l}_0). 
\end{align}
On the other hand, 
it would be useful to define 
\begin{align}
\label{eq:twist}
\rho (X,Y)=(Y,X)\quad ((X,Y)\in \mathfrak{l}_0\oplus \mathfrak{l}_0). 
\end{align}
Then, $\rho $ is an involution on $\mathfrak{l}_0\oplus \mathfrak{l}_0$. 

\subsection{Irreducible semisimple symmetric pair}
\label{subsec:irr}

Let us begin with irreducibility for non-compact semisimple symmetric pairs 
(see \cite[Chapter XI, Section 5]{kobayashi-nomizu}). 

\begin{define}
\label{def:irr-pair}
A non-compact semisimple symmetric pair $(\mathfrak{g}_0,\sigma )$ is {\it irreducible} 
if it does not admit non-trivial $\sigma$-invariant ideals of $\mathfrak{g}_0$. 
\end{define}

When $\sigma$ is a Cartan involution $\theta$, 
then $(\mathfrak{g}_0,\theta )$ is irreducible in the sense of Definition \ref{def:irr-pair} 
if and only if $\mathfrak{g}_0^{\theta }$ acts irreducibly on $\mathfrak{g}_0^{-\theta }$, 
namely, 
$(\mathfrak{g}_0,\theta )$ is irreducible as a non-compact Riemannian symmetric pair, 
which will be discussed in Section \ref{sec:appendix} separated from this section. 

Let us consider a non-compact semisimple symmetric pair $(\mathfrak{g}_0,\sigma )$ 
which is not irreducible, namely, 
there exists a non-trivial $\sigma$-invariant ideal $\mathfrak{l}_0$ of $\mathfrak{g}_0$. 
The restriction $\sigma |_{\mathfrak{l}_0}$ of $\sigma$ to $\mathfrak{l}_0$ is 
an involution on $\mathfrak{l}_0$. 
This gives rise to a non-compact semisimple symmetric pair $(\mathfrak{l}_0,\sigma |_{\mathfrak{l}_0})$. 
Let $\mathfrak{l}_0'$ be the complementary ideal of $\mathfrak{l}_0$ in $\mathfrak{g}_0$, 
namely, 
$\mathfrak{g}_0=\mathfrak{l}_0\oplus \mathfrak{l}_0'$. 
Then, $\sigma |_{\mathfrak{l}_0'}$ becomes an involution of $\mathfrak{l}_0'$. 
Hence, $(\mathfrak{g}_0,\sigma )$ is decomposed into two semisimple symmetric pairs 
$(\mathfrak{l}_0,\sigma |_{\mathfrak{l}_0})$, $(\mathfrak{l}_0',\sigma |_{\mathfrak{l}_0'})$ as follows 
\begin{align}
\label{eq:irr-decomp}
(\mathfrak{g}_0,\sigma )
=(\mathfrak{l}_0\oplus \mathfrak{l}_0',\sigma |_{\mathfrak{l}_0}\oplus \sigma |_{\mathfrak{l}_0'})
=(\mathfrak{l}_0,\sigma |_{\mathfrak{l}_0})\oplus (\mathfrak{l}_0',\sigma |_{\mathfrak{l}_0'}). 
\end{align}

Let us see that (\ref{eq:irr-decomp}) leads us a decomposition of a non-compact semisimple symmetric pair 
equipped with a Cartan involution $(\mathfrak{g}_0,\sigma ;\theta )$. 
Let $\theta _0$ (resp. $\theta _0')$ be 
a Cartan involution of $\mathfrak{l}_0$ (resp. $\mathfrak{l}_0'$) commuting with 
$\sigma |_{\mathfrak{l}_0}$ (resp. $\sigma |_{\mathfrak{l}_0'}$). 
Then, $\theta _0\oplus \theta _0'$ 
is a Cartan involution of $\mathfrak{g}_0=\mathfrak{l}_0\oplus \mathfrak{l}_0'$ commuting with $\sigma $. 
Hence, $(\mathfrak{g}_0,\sigma ;\theta _0\oplus \theta _0')$ is 
a non-compact semisimple symmetric pair equipped with a Cartan involution 
and has a decomposition 
\begin{align*}
(\mathfrak{g}_0,\sigma ;\theta _0\oplus \theta _0')
=(\mathfrak{l}_0,\sigma |_{\mathfrak{l}_0};\theta _0)
\oplus (\mathfrak{l}_0',\sigma |_{\mathfrak{l}_0'};\theta _0'). 
\end{align*}

\begin{lemma}
\label{lem:inv-cartan}
Suppose that $(\mathfrak{g}_0,\sigma )$ is not irreducible and decomposed into (\ref{eq:irr-decomp}). 
Then, any Cartan involution $\theta $ of $\mathfrak{g}_0$ commuting with $\sigma$ satisfies 
$\theta (\mathfrak{l}_0)=\mathfrak{l}_0$ and $\theta (\mathfrak{l}_0')=\mathfrak{l}_0'$. 
\end{lemma}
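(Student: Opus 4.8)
The plan is to reduce the statement to the basic structural fact that a Cartan involution of a real semisimple Lie algebra must preserve each of its simple ideals, and then to observe that $\mathfrak{l}_0$ and $\mathfrak{l}_0'$ are sums of simple ideals. Since $\mathfrak{g}_0$ is semisimple, I would first write $\mathfrak{g}_0=\mathfrak{s}_1\oplus \cdots \oplus \mathfrak{s}_r$ as the direct sum of its simple ideals. Every ideal of $\mathfrak{g}_0$ is the sum of those $\mathfrak{s}_k$ that it contains; in particular the $\sigma$-invariant ideal $\mathfrak{l}_0$ of (\ref{eq:irr-decomp}) and its complement $\mathfrak{l}_0'$ are each such a partial sum. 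Hence it suffices to prove $\theta (\mathfrak{s}_k)=\mathfrak{s}_k$ for every $k$, from which $\theta (\mathfrak{l}_0)=\mathfrak{l}_0$ and $\theta (\mathfrak{l}_0')=\mathfrak{l}_0'$ follow immediately.

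To establish the key fact I would use the characterization of a Cartan involution via the Killing form $B$ of $\mathfrak{g}_0$, namely that the symmetric bilinear form $B_\theta (X,Y):=-B(X,\theta Y)$ is positive definite. Being an automorphism, $\theta$ permutes the simple ideals $\mathfrak{s}_1,\dots ,\mathfrak{s}_r$. Suppose, for contradiction, that $\theta (\mathfrak{s}_k)=\mathfrak{s}_l$ with $l\neq k$. Distinct simple ideals of a semisimple Lie algebra are $B$-orthogonal, so for any nonzero $X\in \mathfrak{s}_k$ we would have $\theta X\in \mathfrak{s}_l$ and therefore $B_\theta (X,X)=-B(X,\theta X)=0$, contradicting the positive-definiteness of $B_\theta$. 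Thus $\theta$ fixes each simple ideal setwise, which finishes the argument.

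This route needs neither the commutativity of $\theta$ with $\sigma$ nor the $\sigma$-invariance of $\mathfrak{l}_0$; it only uses that $\theta$ is a Cartan involution. If one prefers to stay closer to the machinery already set up, an alternative is available through Fact \ref{fact:loos}: the adapted Cartan involution $\theta _0\oplus \theta _0'$ constructed just before the lemma manifestly preserves $\mathfrak{l}_0$ and $\mathfrak{l}_0'$, and any Cartan involution $\theta$ commuting with $\sigma$ equals $e^{\operatorname{ad}X}(\theta _0\oplus \theta _0')e^{-\operatorname{ad}X}$ for some $X\in \mathfrak{g}_0^{\sigma }$. Since $\mathfrak{l}_0$ and $\mathfrak{l}_0'$ are ideals they are stable under $\operatorname{ad}X$, hence under the inner automorphism $e^{\operatorname{ad}X}$, so $\theta$ preserves them as well.

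The one point requiring genuine care, and the main obstacle, is ruling out that $\theta$ permutes two distinct simple ideals nontrivially. Everything hinges on the positivity of $B_\theta$, which is precisely what distinguishes a Cartan involution from an arbitrary order-two automorphism, together with the standard $B$-orthogonality of distinct simple ideals; once those two facts are in place the conclusion is forced.
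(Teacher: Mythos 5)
Your proof is correct, and your main argument takes a genuinely different route from the paper's. The paper proves the lemma via Fact \ref{fact:loos}: it writes $\theta =e^{\operatorname{ad}Z}(\theta _0\oplus \theta _0')e^{-\operatorname{ad}Z}$ for some $Z\in \mathfrak{g}_0^{\sigma }$, where $\theta _0\oplus \theta _0'$ is the product Cartan involution constructed just before the lemma, then splits $Z=(Z_0,Z_0')$ along $\mathfrak{g}_0^{\sigma }=\mathfrak{l}_0^{\sigma }\oplus (\mathfrak{l}_0')^{\sigma }$ and checks componentwise that the conjugation preserves both ideals; your second, ``alternative'' argument is essentially this proof, and in fact streamlines it, since observing that an ideal is $(\operatorname{ad}X)$-stable and hence $e^{\operatorname{ad}X}$-stable makes the decomposition of $Z$ unnecessary. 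Your primary argument, by contrast, bypasses Fact \ref{fact:loos} entirely: decomposing $\mathfrak{g}_0$ into simple ideals, noting that every ideal is a partial sum of them, and playing the $B$-orthogonality of distinct simple ideals against the positive definiteness of $B_{\theta }(X,Y)=-B(X,\theta Y)$, you obtain the strictly stronger statement that \emph{any} Cartan involution of a semisimple Lie algebra preserves \emph{every} ideal --- with no use of the commutativity $\theta \sigma =\sigma \theta $ or of the $\sigma $-invariance of $\mathfrak{l}_0$. The trade-offs are mild: your route invokes the characterization of Cartan involutions by positive definiteness of $B_{\theta }$, whereas the paper works with the maximal-compact-subalgebra formulation (the equivalence is classical, e.g.\ in Knapp), while the paper's route stays inside its own machinery and exhibits $\theta |_{\mathfrak{l}_0}$ explicitly as an inner conjugate of the Cartan involution $\theta _0$, which is what the text right after the lemma uses to conclude that $\theta |_{\mathfrak{l}_0}$ and $\theta |_{\mathfrak{l}_0'}$ are Cartan involutions of the factors; from your argument that follow-up also takes only one extra line, since the Killing form of an ideal is the restriction of $B$, so positive definiteness of $B_{\theta }$ passes to $\mathfrak{l}_0$ and $\mathfrak{l}_0'$.
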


\begin{proof}
Retain the notation as above. 
By Fact \ref{fact:loos}, we have 
$\theta =e^{\operatorname{ad}Z}(\theta _0\oplus \theta _0')e^{-\operatorname{ad}Z}$ 
for some $Z\in \mathfrak{g}_0^{\sigma }$. 
Here, $\mathfrak{g}_0^{\sigma }$ is of the form 
$\mathfrak{g}_0^{\sigma }=\mathfrak{l}_0^{\sigma }\oplus (\mathfrak{l}_0')^{\sigma} $, 
from which we write $Z=(Z_0,Z_0')$ for some $Z_0\in \mathfrak{l}_0^{\sigma }$ 
and $Z_0'\in (\mathfrak{l}_0')^{\sigma }$. 
Then, we have $e^{\operatorname{ad}Z}=e^{\operatorname{ad}Z_0}\oplus e^{\operatorname{ad}Z_0'}$ and 
\begin{align*}
\theta 
&=(e^{\operatorname{ad}Z_0}\oplus e^{\operatorname{ad}Z_0'})(\theta _0\oplus \theta _0')
	(e^{-\operatorname{ad}Z_0}\oplus e^{-\operatorname{ad}Z_0'})\\
&=(e^{\operatorname{ad}Z_0}\theta _0e^{-\operatorname{ad}Z_0})\oplus 
	(e^{\operatorname{ad}Z_0'}\theta _0'e^{-\operatorname{ad}Z_0'}). 
\end{align*}
Hence, $\theta |_{\mathfrak{l}_0}=e^{\operatorname{ad}Z_0}\theta _0e^{-\operatorname{ad}Z_0}$ 
and $\theta |_{\mathfrak{l}_0'}=e^{\operatorname{ad}Z_0'}\theta _0'e^{-\operatorname{ad}Z_0'}$ 
become automorphisms on $\mathfrak{l}_0$ and $\mathfrak{l}_0'$, respectively. 
This means that both $\mathfrak{l}_0$ and $\mathfrak{l}_0'$ are $\theta $-invariant. 
\end{proof}

Lemma \ref{lem:inv-cartan} implies that 
$\theta |_{\mathfrak{l}_0},\theta |_{\mathfrak{l}_0'}$ are Cartan involutions of 
$\mathfrak{l}_0,\mathfrak{l}_0'$, respectively. 
Therefore, we conclude: 

\begin{proposition}
\label{prop:irr-decomp}
Let $(\mathfrak{g}_0,\sigma ;\theta )$ be a non-compact semisimple symmetric pair 
equipped with a Cartan involution. 
If $(\mathfrak{g}_0,\sigma )$ is not irreducible, 
then there exist $\sigma$-invariant ideals $\mathfrak{l}_0,\mathfrak{l}_0'$ such that 
\begin{align*}
(\mathfrak{g}_0,\sigma ;\theta )
=(\mathfrak{l}_0,\sigma |_{\mathfrak{l}_0};\theta |_{\mathfrak{l}_0})
\oplus (\mathfrak{l}_0',\sigma |_{\mathfrak{l}_0'};\theta |_{\mathfrak{l}_0'}). 
\end{align*}
\end{proposition}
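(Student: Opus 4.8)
The plan is to combine the $\sigma$-decomposition already recorded in~(\ref{eq:irr-decomp}) with the $\theta$-invariance supplied by Lemma~\ref{lem:inv-cartan}, so that essentially all the work has been done and only the bookkeeping remains. First I would invoke Definition~\ref{def:irr-pair} together with the discussion preceding Lemma~\ref{lem:inv-cartan}: since $(\mathfrak{g}_0,\sigma)$ is not irreducible there is a non-trivial $\sigma$-invariant ideal $\mathfrak{l}_0$, with complementary ideal $\mathfrak{l}_0'$, giving $\mathfrak{g}_0=\mathfrak{l}_0\oplus \mathfrak{l}_0'$ and the decomposition~(\ref{eq:irr-decomp}) of $(\mathfrak{g}_0,\sigma)$.

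Next, because $(\mathfrak{g}_0,\sigma;\theta)\in\mathfrak{P}_c$ the Cartan involution $\theta$ commutes with $\sigma$, so Lemma~\ref{lem:inv-cartan} applies and yields $\theta(\mathfrak{l}_0)=\mathfrak{l}_0$ and $\theta(\mathfrak{l}_0')=\mathfrak{l}_0'$. Consequently $\theta|_{\mathfrak{l}_0}$ and $\theta|_{\mathfrak{l}_0'}$ are well-defined involutions of $\mathfrak{l}_0$ and $\mathfrak{l}_0'$. The one point that genuinely needs checking---and which I expect to be the main (mild) obstacle---is that these restrictions are again \emph{Cartan} involutions, not merely involutions. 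For this I would reuse the explicit formula obtained in the proof of Lemma~\ref{lem:inv-cartan}, namely $\theta|_{\mathfrak{l}_0}=e^{\operatorname{ad}Z_0}\theta_0 e^{-\operatorname{ad}Z_0}$ with $Z_0\in \mathfrak{l}_0^{\sigma}\subseteq \mathfrak{l}_0$ and $\theta_0$ a Cartan involution of $\mathfrak{l}_0$; since $e^{\operatorname{ad}Z_0}$ is an inner automorphism of $\mathfrak{l}_0$ and an inner conjugate of a Cartan involution is again a Cartan involution, $\theta|_{\mathfrak{l}_0}$ is a Cartan involution of $\mathfrak{l}_0$, and likewise $\theta|_{\mathfrak{l}_0'}$ of $\mathfrak{l}_0'$.

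Finally I would assemble the triplet decomposition. Restricting the relation $\theta\sigma=\sigma\theta$ to each ideal shows $\theta|_{\mathfrak{l}_0}$ commutes with $\sigma|_{\mathfrak{l}_0}$ and $\theta|_{\mathfrak{l}_0'}$ with $\sigma|_{\mathfrak{l}_0'}$, so each of $(\mathfrak{l}_0,\sigma|_{\mathfrak{l}_0};\theta|_{\mathfrak{l}_0})$ and $(\mathfrak{l}_0',\sigma|_{\mathfrak{l}_0'};\theta|_{\mathfrak{l}_0'})$ is a non-compact semisimple symmetric pair equipped with a Cartan involution. Combining the $\sigma$- and $\theta$-decompositions coordinatewise on $\mathfrak{g}_0=\mathfrak{l}_0\oplus\mathfrak{l}_0'$ then gives
\begin{align*}
(\mathfrak{g}_0,\sigma;\theta)
=(\mathfrak{l}_0,\sigma|_{\mathfrak{l}_0};\theta|_{\mathfrak{l}_0})
\oplus (\mathfrak{l}_0',\sigma|_{\mathfrak{l}_0'};\theta|_{\mathfrak{l}_0'}),
\end{align*}
which is the assertion.
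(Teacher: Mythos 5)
Your proposal is correct and follows essentially the same route as the paper: it combines the decomposition (\ref{eq:irr-decomp}) with Lemma \ref{lem:inv-cartan}, and justifies that the restrictions $\theta |_{\mathfrak{l}_0},\theta |_{\mathfrak{l}_0'}$ are Cartan involutions via the explicit formula $e^{\operatorname{ad}Z_0}\theta _0e^{-\operatorname{ad}Z_0}$ from that lemma's proof, which is precisely how the paper deduces the proposition. Your treatment is in fact slightly more explicit than the paper's, which leaves the inner-conjugation remark and the restricted commutativity of $\sigma$ and $\theta$ implicit.
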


Due to Proposition \ref{prop:irr-decomp}, 
a non-compact semisimple symmetric pair equipped with a Cartan involution $(\mathfrak{g}_0,\sigma ;\theta )$ 
is decomposed into irreducible ones, namely, 
\begin{align*}
(\mathfrak{g}_0,\sigma ;\theta )
=(\mathfrak{l}_0,\sigma |_{\mathfrak{l}_0};\theta |_{\mathfrak{l}_0})\oplus 
(\mathfrak{l}_0',\sigma |_{\mathfrak{l}_0'};\theta |_{\mathfrak{l}_0'})\oplus \cdots \oplus 
(\mathfrak{l}_0^{(k)},\sigma |_{\mathfrak{l}_0^{(k)}};\theta |_{\mathfrak{l}_0^{(k)}})
\end{align*}
where $\mathfrak{l}_0,\mathfrak{l}_0',\ldots ,\mathfrak{l}_0^{(k)}$ are 
non-trivial $\sigma$-invariant ideals of $\mathfrak{g}_0$ such that 
$\mathfrak{g}_0=\mathfrak{l}_0\oplus \mathfrak{l}_0'\oplus \cdots \oplus \mathfrak{l}_0^{(k)}$ 
and all non-compact semisimple symmetric pairs $(\mathfrak{l}_0^{(i)},\sigma |_{\mathfrak{l}_0^{(i)}})$ 
are irreducible. 

In view of the following lemma, 
it suffices to consider only irreducible symmetric pairs 
in order to see a one-to-one correspondence between various properties of 
non-compact semisimple symmetric pairs and those of commutative compact semisimple symmetric triads. 

\begin{lemma}
\label{lem:duality-irr-decomp}
Retain the setting of Proposition \ref{prop:irr-decomp}. 
Then, the dual $(\mathfrak{g}_0,\sigma ;\theta )^*$ is given by 
\begin{align*}
(\mathfrak{g}_0,\sigma ;\theta )^*=(\mathfrak{l}_0,\sigma |_{\mathfrak{l}_0};\theta |_{\mathfrak{l}_0})^*
\oplus (\mathfrak{l}_0',\sigma |_{\mathfrak{l}_0'};\theta |_{\mathfrak{l}_0'})^*. 
\end{align*}
\end{lemma}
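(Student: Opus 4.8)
The plan is to unwind the definition of the dual map $\Psi $ from (\ref{eq:triad}) and to verify that each ingredient in its construction respects the direct sum decomposition of $(\mathfrak{g}_0,\sigma ;\theta )$ supplied by Proposition \ref{prop:irr-decomp}. Throughout I write $\mathfrak{l}_0^{\pm \theta }$ for the $(\pm 1)$-eigenspaces of $\theta |_{\mathfrak{l}_0}$, and similarly for $\mathfrak{l}_0'$.

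First I would record that $\mathfrak{l}_0$ and $\mathfrak{l}_0'$ are simultaneously $\theta $-invariant and $\sigma $-invariant: the former is Lemma \ref{lem:inv-cartan}, the latter is the defining property of the decomposition in Proposition \ref{prop:irr-decomp}. Passing to the complexification $\mathfrak{g}_{\mathbb{C}}=(\mathfrak{l}_0)_{\mathbb{C}}\oplus (\mathfrak{l}_0')_{\mathbb{C}}$, the $\mathbb{C}$-linear extensions of $\theta $ and $\sigma $ preserve each summand. In particular the $\theta $-eigenspaces split as $\mathfrak{g}_0^{\pm \theta }=\mathfrak{l}_0^{\pm \theta }\oplus (\mathfrak{l}_0')^{\pm \theta }$, and substituting this into the compact real form (\ref{eq:compact real form}) gives
\[
\mathfrak{g}=\mathfrak{g}_0^{\theta }+\sqrt{-1}\mathfrak{g}_0^{-\theta }
=\big(\mathfrak{l}_0^{\theta }+\sqrt{-1}\mathfrak{l}_0^{-\theta }\big)
\oplus \big((\mathfrak{l}_0')^{\theta }+\sqrt{-1}(\mathfrak{l}_0')^{-\theta }\big)
=:\mathfrak{m}\oplus \mathfrak{m}'.
\]
By (\ref{eq:triad}), $\mathfrak{m}$ is exactly the compact real form attached by $\Psi $ to $(\mathfrak{l}_0,\sigma |_{\mathfrak{l}_0};\theta |_{\mathfrak{l}_0})$, and likewise $\mathfrak{m}'$ to the other summand.

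It then remains to match the involutions. Since $\mathfrak{g}=\mathfrak{m}\oplus \mathfrak{m}'$ is a decomposition into ideals and both $\theta _1=\theta |_{\mathfrak{g}}$ and $\theta _2=\sigma |_{\mathfrak{g}}$ preserve $\mathfrak{m}$ and $\mathfrak{m}'$, each of $\theta _1,\theta _2$ restricts componentwise, and its restriction to $\mathfrak{m}$ is literally the restriction of $\theta |_{\mathfrak{l}_0}$ (resp. $\sigma |_{\mathfrak{l}_0}$). By (\ref{eq:triad}) these are precisely the two involutions of $(\mathfrak{l}_0,\sigma |_{\mathfrak{l}_0};\theta |_{\mathfrak{l}_0})^*$, and symmetrically on $\mathfrak{m}'$; since the direct sum of triads is taken componentwise in the sense of (\ref{eq:oplus}), the stated equality follows. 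I do not expect a genuine obstacle: the construction of $\Psi $ is built entirely from eigenspaces and restrictions, each of which commutes with the ideal splitting. The only point requiring care is the bookkeeping that confirms every restriction is equal on the nose, rather than merely equivalent, which is what justifies writing $=$ instead of $\equiv $ in the conclusion.
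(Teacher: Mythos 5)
Your proposal is correct and follows essentially the same route as the paper's own proof: both split the $\theta$-eigenspaces along the ideal decomposition $\mathfrak{g}_0=\mathfrak{l}_0\oplus \mathfrak{l}_0'$, rewrite $\mathfrak{g}=\mathfrak{g}_0^{\theta }+\sqrt{-1}\mathfrak{g}_0^{-\theta }$ as the direct sum of the compact real forms of the two summands, and check that $\theta _1=\theta |_{\mathfrak{g}}$ and $\theta _2=\sigma |_{\mathfrak{g}}$ restrict componentwise to the involutions produced by (\ref{eq:triad}) on each summand. Your closing remark about obtaining equality on the nose, rather than mere equivalence, is exactly the point the paper's computation establishes.
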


\begin{proof}
We set $(\mathfrak{g},\theta _1,\theta _2):=(\mathfrak{g}_0,\sigma ;\theta )^*$. 
By (\ref{eq:triad}), we obtain 
$\mathfrak{g}=
\mathfrak{g}_0^{\theta }+\sqrt{-1}\mathfrak{g}_0^{-\theta }$, 
$\theta _1=\theta |_{\mathfrak{g}}$ and $\theta _2=\sigma |_{\mathfrak{g}}$. 
Since $\mathfrak{g}_0^{\theta }=\mathfrak{l}_0^{\theta }\oplus (\mathfrak{l}_0')^{\theta }$ 
and $\mathfrak{g}_0^{-\theta }=\mathfrak{l}_0^{-\theta }\oplus (\mathfrak{l}_0')^{-\theta }$, 
the compact Lie algebra $\mathfrak{g}$ is written as 
\begin{align*}
\mathfrak{g}
&=(\mathfrak{l}_0^{\theta }\oplus (\mathfrak{l}_0')^{\theta })
	+\sqrt{-1}(\mathfrak{l}_0^{-\theta }\oplus (\mathfrak{l}_0')^{-\theta })\\
&=(\mathfrak{l}_0^{\theta }+\sqrt{-1}\mathfrak{l}_0^{-\theta })\oplus 
	((\mathfrak{l}_0')^{\theta }+\sqrt{-1}(\mathfrak{l}_0')^{-\theta }). 
\end{align*}
We set $\mathfrak{l}:=\mathfrak{l}_0^{\theta }+\sqrt{-1}\mathfrak{l}_0^{-\theta }$ and 
$\mathfrak{l}':=(\mathfrak{l}_0')^{\theta }+\sqrt{-1}(\mathfrak{l}_0')^{-\theta }$. 
Then, we have 
\begin{align*}
\theta _1|_{\mathfrak{l}}=\theta |_{\mathfrak{l}},\quad 
\theta _1|_{\mathfrak{l}'}=\theta |_{\mathfrak{l}'},\quad 
\theta _2|_{\mathfrak{l}}=\sigma |_{\mathfrak{l}},\quad 
\theta _2|_{\mathfrak{l}'}=\sigma |_{\mathfrak{l}'}. 
\end{align*}
This implies that 
\begin{align*}
(\mathfrak{g},\theta _1,\theta _2)
&=(\mathfrak{l}\oplus \mathfrak{l}',\theta |_{\mathfrak{l}\oplus \mathfrak{l}'},
	\sigma |_{\mathfrak{l}\oplus \mathfrak{l}'})
=(\mathfrak{l},\theta |_{\mathfrak{l}},\sigma |_{\mathfrak{l}})
	\oplus (\mathfrak{l}',\theta |_{\mathfrak{l}'},\sigma |_{\mathfrak{l}'}). 
\end{align*}
As 
$(\mathfrak{l},\theta |_{\mathfrak{l}},\sigma |_{\mathfrak{l}})
=(\mathfrak{l}_0,\sigma |_{\mathfrak{l}_0};\theta |_{\mathfrak{l}_0})^*$ 
and $(\mathfrak{l}',\theta |_{\mathfrak{l}'},\sigma |_{\mathfrak{l}'})
=(\mathfrak{l}_0',\sigma |_{\mathfrak{l}_0'};\theta |_{\mathfrak{l}_0'})^*$, we obtain 
\begin{align*}
(\mathfrak{g}_0,\sigma ;\theta )^*
=(\mathfrak{l}_0,\sigma |_{\mathfrak{l}_0};\theta |_{\mathfrak{l}_0})^*
	\oplus (\mathfrak{l}_0',\sigma |_{\mathfrak{l}_0'};\theta |_{\mathfrak{l}_0'})^*. 
\end{align*}

Therefore, 
we have verified Lemma \ref{lem:duality-irr-decomp}. 
\end{proof}

Henceforth, 
we deal with irreducible non-compact semisimple symmetric pairs. 

First, let us consider a non-compact semisimple symmetric pair $(\mathfrak{g}_0,\sigma )$ 
where $\mathfrak{g}_0$ is not simple. 

We take a non-trivial simple ideal $\mathfrak{g}_0'$ in $\mathfrak{g}_0$. 
Then, $\sigma (\mathfrak{g}_0')$ is also an ideal of $\mathfrak{g}_0$ because 
\begin{align*}
[\sigma (\mathfrak{g}_0'),\mathfrak{g}_0]
=[\sigma (\mathfrak{g}_0'),\sigma (\mathfrak{g}_0)]
=\sigma ([\mathfrak{g}_0',\mathfrak{g}_0])\subset \sigma (\mathfrak{g}_0'). 
\end{align*}
Hence, 
$\mathfrak{g}_0'\cap \sigma (\mathfrak{g}_0')$ is a $\sigma$-invariant ideal of $\mathfrak{g}_0$. 
Since the sequence 
$\mathfrak{g}_0'\cap \sigma (\mathfrak{g}_0')\subset \mathfrak{g}_0'\subsetneq \mathfrak{g}_0$ holds 
and $(\mathfrak{g}_0,\sigma )$ is irreducible, 
$\mathfrak{g}_0'\cap \sigma (\mathfrak{g}_0')$ has to be $\{ 0\}$. 

Now, we consider the ideal $\mathfrak{g}_0'+\sigma (\mathfrak{g}_0')
=\{ X+\sigma (Y):X,Y\in \mathfrak{g}_0'\} $ of $\mathfrak{g}_0$. 
This is $\sigma$-invariant because $\sigma (X+\sigma (Y))=Y+\sigma (X)$ for $X,Y\in \mathfrak{g}_0'$. 
Then, $\mathfrak{g}_0'+\sigma (\mathfrak{g}_0')$ coincides with $\mathfrak{g}_0$ 
because of the inclusion $\mathfrak{g}_0'\subset \mathfrak{g}_0'+\sigma (\mathfrak{g}_0')$ 
and the irreducibility of $(\mathfrak{g}_0,\sigma )$. 
We define a map $\varphi $ from $\mathfrak{g}_0$ to $\mathfrak{g}_0'\oplus \mathfrak{g}_0'$ by 
\begin{align}
\label{eq:f-isom}
\varphi :\mathfrak{g}_0=\mathfrak{g}_0'+\sigma (\mathfrak{g}_0')
\to \mathfrak{g}_0'\oplus \mathfrak{g}_0',~X+\sigma (Y)\mapsto (X,Y). 
\end{align}
Then, $\varphi $ is a Lie algebra isomorphism. 
This yields the relation $\sigma =\varphi ^{-1}\rho \varphi $ where $\rho $ is given by (\ref{eq:twist}). 
Hence, we obtain 
\begin{align}
\label{eq:irr-not-simple}
(\mathfrak{g}_0,\sigma )\equiv (\mathfrak{g}_0'\oplus \mathfrak{g}_0',\rho ). 
\end{align}

Let $\theta '$ be a Cartan involution of a real simple Lie algebra $\mathfrak{g}_0'$. 
Then, $\theta '\oplus \theta '$ is a Cartan involution of $\mathfrak{g}_0'\oplus \mathfrak{g}_0'$. 
It is obvious that $(\theta '\oplus \theta ')\rho =\rho (\theta '\oplus \theta ')$. 
Hence, $(\mathfrak{g}_0'\oplus \mathfrak{g}_0',\rho ;\theta '\oplus \theta ')$ is a 
non-compact semisimple symmetric pair equipped with a Cartan involution. 

Here, we set $\widetilde{\theta }:=\varphi ^{-1}(\theta '\oplus \theta ')\varphi $. 
This is a Cartan involution of $\mathfrak{g}_0$ and 
\begin{align*}
\sigma \widetilde{\theta }
=(\varphi ^{-1}\rho \varphi )(\varphi ^{-1}(\theta '\oplus \theta ')\varphi )
=\varphi ^{-1}\rho (\theta '\oplus \theta ')\varphi 
=\varphi ^{-1}(\theta '\oplus \theta ')\rho \varphi 
=\widetilde{\theta }\sigma. 
\end{align*}
Hence, we obtain 
\begin{align}
\label{eq:irr-not-simple-cartan}
(\mathfrak{g}_0,\sigma ;\widetilde{\theta })
\equiv (\mathfrak{g}_0'\oplus \mathfrak{g}_0',\rho ;\theta '\oplus \theta '). 
\end{align}

Moreover, we prove: 

\begin{lemma}
\label{lem:irr-not-simple}
Suppose that $\mathfrak{g}_0$ is not simple and $(\mathfrak{g}_0,\sigma )$ is irreducible. 
Under the setting of (\ref{eq:irr-not-simple}), 
any Cartan involution $\theta $ of $\mathfrak{g}_0$ commuting with $\sigma$ 
satisfies $\theta (\mathfrak{g}_0')=\mathfrak{g}_0'$. 
\end{lemma}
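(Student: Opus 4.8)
The plan is to mimic the strategy of Lemma~\ref{lem:inv-cartan}: isolate one distinguished Cartan involution that manifestly preserves $\mathfrak{g}_0'$, and then argue that every Cartan involution commuting with $\sigma$ differs from it by an inner automorphism, which automatically respects the simple ideals. Concretely, I would first record that the Cartan involution $\widetilde{\theta}=\varphi ^{-1}(\theta '\oplus \theta ')\varphi$ appearing in (\ref{eq:irr-not-simple-cartan}) satisfies $\widetilde{\theta}(\mathfrak{g}_0')=\mathfrak{g}_0'$. This is immediate from the definition (\ref{eq:f-isom}) of $\varphi$: taking $Y=0$ shows $\varphi (\mathfrak{g}_0')=\mathfrak{g}_0'\oplus \{0\}$, and $\theta '\oplus \theta '$ preserves the factor $\mathfrak{g}_0'\oplus \{0\}$, so conjugating back by $\varphi ^{-1}$ gives the claim (and likewise $\widetilde{\theta}(\sigma (\mathfrak{g}_0'))=\sigma (\mathfrak{g}_0')$).

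Next I would pass from $\widetilde{\theta}$ to an arbitrary Cartan involution $\theta$ of $\mathfrak{g}_0$ commuting with $\sigma$. Since both $\widetilde{\theta}$ and $\theta$ are Cartan involutions commuting with $\sigma$, Fact~\ref{fact:loos} supplies an element $X\in \mathfrak{g}_0^{\sigma }$ with $\theta =e^{\operatorname{ad}X}\widetilde{\theta}\,e^{-\operatorname{ad}X}$. Because $\mathfrak{g}_0'$ is an ideal, $\operatorname{ad}X$ maps $\mathfrak{g}_0'$ into itself, so the inner automorphisms $e^{\pm \operatorname{ad}X}$ each carry $\mathfrak{g}_0'$ onto $\mathfrak{g}_0'$. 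Combining this with $\widetilde{\theta}(\mathfrak{g}_0')=\mathfrak{g}_0'$ yields $\theta (\mathfrak{g}_0')=e^{\operatorname{ad}X}\widetilde{\theta}\,e^{-\operatorname{ad}X}(\mathfrak{g}_0')=\mathfrak{g}_0'$, as desired.

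The only points needing care are entirely routine---that an inner automorphism preserves each simple ideal, and that $\widetilde{\theta}$ restricts correctly under $\varphi$---so neither presents a genuine obstacle. If one prefers an argument not routed through Fact~\ref{fact:loos}, here is the main alternative. An automorphism of $\mathfrak{g}_0$ permutes its two minimal ideals $\mathfrak{g}_0'$ and $\sigma (\mathfrak{g}_0')$, so it remains only to exclude the case $\theta (\mathfrak{g}_0')=\sigma (\mathfrak{g}_0')$. In that case the map $X\mapsto X+\theta (X)$ would be an injective Lie algebra homomorphism of $\mathfrak{g}_0'$ into the fixed-point algebra $\mathfrak{g}_0^{\theta }$: injective because $\mathfrak{g}_0'\cap \sigma (\mathfrak{g}_0')=\{0\}$, and multiplicative because the ideals $\mathfrak{g}_0'$ and $\sigma (\mathfrak{g}_0')$ commute. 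Since $\mathfrak{g}_0^{\theta }$ is a maximal compact subalgebra, its subalgebra isomorphic to $\mathfrak{g}_0'$ would be compact, forcing $\mathfrak{g}_0'$ itself to be compact; but $\mathfrak{g}_0'$ is non-compact (otherwise $\mathfrak{g}_0=\mathfrak{g}_0'\oplus \sigma (\mathfrak{g}_0')$ would be compact), a contradiction. The main conceptual step in either route is recognizing that the problem reduces to how $\theta $ acts on the pair of simple ideals.
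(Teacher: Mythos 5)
Your main argument is correct and is essentially the paper's own proof: the paper likewise applies Fact~\ref{fact:loos} to write $\theta =e^{\operatorname{ad}Z}\widetilde{\theta }e^{-\operatorname{ad}Z}$ with $Z\in \mathfrak{g}_0^{\sigma }$ and then checks that this conjugation preserves $\mathfrak{g}_0'$. The only difference lies in that verification: where you observe that $\operatorname{ad}Z$, hence $e^{\pm \operatorname{ad}Z}$, preserves every ideal, the paper writes $Z=Z_0+\sigma (Z_0)$ with $Z_0\in \mathfrak{g}_0'$ (using $\mathfrak{g}_0^{\sigma }=\{ X+\sigma (X):X\in \mathfrak{g}_0'\} $) and computes $\theta (X+\sigma (Y))=(e^{\operatorname{ad}Z_0}\theta 'e^{-\operatorname{ad}Z_0})X+\sigma ((e^{\operatorname{ad}Z_0}\theta 'e^{-\operatorname{ad}Z_0})Y)$ explicitly; that extra work yields the formula $\theta |_{\mathfrak{g}_0'}=e^{\operatorname{ad}Z_0}\theta 'e^{-\operatorname{ad}Z_0}$, which the paper exploits immediately after the lemma to see that the restriction is a Cartan involution of $\mathfrak{g}_0'$, whereas from your streamlined version this must be recovered by a (standard) separate remark. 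Your alternative argument, by contrast, is genuinely different from the paper's: it bypasses Fact~\ref{fact:loos} altogether, using only that an automorphism permutes the two minimal ideals $\mathfrak{g}_0'$ and $\sigma (\mathfrak{g}_0')$, and that a swap would embed $\mathfrak{g}_0'$ into the maximal compact subalgebra $\mathfrak{g}_0^{\theta }$ via $X\mapsto X+\theta (X)$ (injective since $\mathfrak{g}_0'\cap \sigma (\mathfrak{g}_0')=\{ 0\} $, a homomorphism since $[\mathfrak{g}_0',\sigma (\mathfrak{g}_0')]=\{ 0\} $), forcing the non-compact algebra $\mathfrak{g}_0'$ to be compact. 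That route is more elementary and proves a stronger statement: every Cartan involution of $\mathfrak{g}_0$, whether or not it commutes with $\sigma$, preserves each simple ideal.
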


\begin{proof}
By Fact \ref{fact:loos}, 
there exists $Z\in \mathfrak{g}_0^{\sigma }$ such that 
$\theta =e^{\operatorname{ad}Z}\widetilde{\theta }e^{-\operatorname{ad}Z}$. 
Since $e^{\operatorname{ad}Z}\sigma =\sigma e^{\operatorname{ad}Z}$ 
(see Lemma \ref{lem:conjugate-associated}), we have 
$(\mathfrak{g}_0,\sigma ;\theta )\equiv (\mathfrak{g}_0,\sigma ;\widetilde{\theta })$. 

As $\mathfrak{g}_0^{\sigma }=\{ X+\sigma (X):X\in \mathfrak{g}_0'\} $, 
we write $Z=Z_0+\sigma (Z_0)$ for some $Z_0\in \mathfrak{g}_0'$. 
Then, we have 
\begin{align*}
\theta (X+\sigma (Y))=(e^{\operatorname{ad}Z_0}\theta 'e^{-\operatorname{ad}Z_0})X+
	\sigma ((e^{\operatorname{ad}Z_0}\theta 'e^{-\operatorname{ad}Z_0})Y). 
\end{align*}
This means that $\mathfrak{g}_0'$ is $\theta $-invariant 
and $\theta |_{\mathfrak{g}_0'}=e^{\operatorname{ad}Z_0}\theta 'e^{-\operatorname{ad}Z_0}$ 
is its Cartan involution. 
\end{proof}

Lemma \ref{lem:irr-not-simple} implies that 
the restriction $\theta |_{\mathfrak{g}_0'}$ becomes a Cartan involution of $\mathfrak{g}_0'$. 
By replacing $\theta '$ with $\theta |_{\mathfrak{g}_0'}$ in (\ref{eq:irr-not-simple}), 
we conclude: 

\begin{proposition}
\label{prop:irr-not-simple}
Let $(\mathfrak{g}_0,\sigma ;\theta )$ be a non-compact semisimple symmetric pair 
equipped with a Cartan involution. 
If $(\mathfrak{g}_0,\sigma )$ is irreducible and $\mathfrak{g}_0$ is not simple, 
then there exists a real simple Lie algebra $\mathfrak{g}_0'$ such that 
\begin{align*}
(\mathfrak{g}_0,\sigma ;\theta )\equiv 
(\mathfrak{g}_0'\oplus \mathfrak{g}_0',\rho 
	;\theta |_{\mathfrak{g}_0'}\oplus \theta |_{\mathfrak{g}_0'}). 
\end{align*}
\end{proposition}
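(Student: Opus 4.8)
The plan is to promote the Lie-algebra isomorphism $\varphi$ of (\ref{eq:f-isom}), which already realizes the equivalence $(\mathfrak{g}_0,\sigma)\equiv(\mathfrak{g}_0'\oplus\mathfrak{g}_0',\rho)$ of (\ref{eq:irr-not-simple}), into an equivalence that \emph{simultaneously} intertwines the prescribed Cartan involution $\theta$ with the diagonal one. Recall that $\varphi$ sends $X+\sigma(Y)\mapsto(X,Y)$ for $X,Y\in\mathfrak{g}_0'$ and satisfies $\varphi\sigma=\rho\varphi$. Thus the only thing left to verify is that $\varphi\theta\varphi^{-1}$ coincides with $\theta|_{\mathfrak{g}_0'}\oplus\theta|_{\mathfrak{g}_0'}$ on $\mathfrak{g}_0'\oplus\mathfrak{g}_0'$.

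First I would invoke Lemma \ref{lem:irr-not-simple}, which guarantees $\theta(\mathfrak{g}_0')=\mathfrak{g}_0'$ and, more importantly, that $\theta_0':=\theta|_{\mathfrak{g}_0'}$ is itself a Cartan involution of $\mathfrak{g}_0'$. Since $\theta$ commutes with $\sigma$, it also preserves $\sigma(\mathfrak{g}_0')$, and for $Y\in\mathfrak{g}_0'$ one computes $\theta(\sigma(Y))=\sigma(\theta(Y))=\sigma(\theta_0'(Y))$. Combining this with $\theta(X)=\theta_0'(X)$ for $X\in\mathfrak{g}_0'$ yields $\theta(X+\sigma(Y))=\theta_0'(X)+\sigma(\theta_0'(Y))$, exactly the formula already recorded in the proof of Lemma \ref{lem:irr-not-simple}.

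Applying $\varphi$ then gives
\[
\varphi\theta\varphi^{-1}(X,Y)
=\varphi\big(\theta(X+\sigma(Y))\big)
=\varphi\big(\theta_0'(X)+\sigma(\theta_0'(Y))\big)
=(\theta_0'(X),\theta_0'(Y)),
\]
so that $\varphi\theta\varphi^{-1}=\theta_0'\oplus\theta_0'$ in the notation (\ref{eq:oplus}). Together with $\varphi\sigma\varphi^{-1}=\rho$, this establishes the desired equivalence $(\mathfrak{g}_0,\sigma;\theta)\equiv(\mathfrak{g}_0'\oplus\mathfrak{g}_0',\rho;\theta|_{\mathfrak{g}_0'}\oplus\theta|_{\mathfrak{g}_0'})$, with $\mathfrak{g}_0'$ the simple ideal fixed at the very beginning.

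The substantive work has already been done upstream: extracting a simple ideal $\mathfrak{g}_0'$, showing $\mathfrak{g}_0=\mathfrak{g}_0'+\sigma(\mathfrak{g}_0')$ with $\mathfrak{g}_0'\cap\sigma(\mathfrak{g}_0')=\{0\}$, and proving $\theta$-invariance of $\mathfrak{g}_0'$ in Lemma \ref{lem:irr-not-simple}. Consequently I do not anticipate a genuine obstacle here; the only point requiring attention is that the Cartan involution appearing on the right-hand side should be $\theta|_{\mathfrak{g}_0'}$ itself rather than the auxiliary $\theta'$ used to manufacture $\widetilde{\theta}$ in (\ref{eq:irr-not-simple-cartan}). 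This replacement is legitimate precisely because Lemma \ref{lem:irr-not-simple} certifies $\theta|_{\mathfrak{g}_0'}$ as a Cartan involution of $\mathfrak{g}_0'$, so no appeal to Fact \ref{fact:loos} is needed a second time.
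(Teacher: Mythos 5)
Your proposal is correct and follows essentially the same route as the paper: the paper also reduces everything to Lemma \ref{lem:irr-not-simple} (that $\theta$ preserves $\mathfrak{g}_0'$ and restricts to a Cartan involution there) and then concludes by replacing the auxiliary $\theta'$ with $\theta|_{\mathfrak{g}_0'}$ in the equivalence built from $\varphi$, which is exactly your intertwining computation $\varphi\theta\varphi^{-1}=\theta|_{\mathfrak{g}_0'}\oplus\theta|_{\mathfrak{g}_0'}$. Your direct derivation of $\theta(X+\sigma(Y))=\theta|_{\mathfrak{g}_0'}(X)+\sigma(\theta|_{\mathfrak{g}_0'}(Y))$ from $\theta\sigma=\sigma\theta$ is a slightly cleaner restatement of the formula already appearing in the proof of that lemma, so no new ideas are involved.
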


Next, let us consider the case where $\mathfrak{g}_0$ is simple. 
Clearly, $(\mathfrak{g}_0,\sigma )$ is irreducible for any involution $\sigma $. 
In the following, 
we focus on the setting that $\mathfrak{g}_0$ is a complex Lie algebra and the property of its involutions. 

\begin{remark}
\label{rem:c-linear}
Suppose that a non-compact real simple Lie algebra $\mathfrak{g}_0$ equips with a complex structure $J$. 
Then, any involution on a complex simple Lie algebra $(\mathfrak{g}_0,J)$ 
is either $\mathbb{C}$-linear or anti-linear. 
In fact, for an involution $\sigma$ on $\mathfrak{g}_0$, 
we set $J_{\sigma }:=\sigma J\sigma$. 
Then, $J_{\sigma}$ becomes a complex structure on $\mathfrak{g}_0$ because 
$J_{\sigma }^2 =\sigma J\sigma ^2J\sigma =\sigma J^2\sigma =-\sigma ^2=-\operatorname{id}$. 
Since $\mathfrak{g}_0$ is simple, this implies $J_{\sigma }$ has to be either $J$ or $-J$. 
If $J_{\sigma }=J$ then 
$J\sigma =\sigma J$, from which $\sigma$ is $\mathbb{C}$-linear. 
On the other hand, if $J_{\sigma }=-J$ then $J\sigma =-\sigma J$, from which $\sigma$ is anti-linear. 
If $\sigma$ coincides with a Cartan involution $\theta $, 
then it is anti-linear and its fixed point set in $\mathfrak{g}_0$ is a compact real form of 
the complex simple Lie algebra $(\mathfrak{g}_0,J)$. 
\end{remark}

As a consequence of Proposition \ref{prop:irr-not-simple} and Remark \ref{rem:c-linear}, 
we get all types of irreducible non-compact semisimple symmetric pairs. 
More precisely, 
irreducible $(\mathfrak{g}_0,\sigma )$ forms as follows 
when $\mathfrak{g}_0$ is not simple or has a complex structure: 

\begin{corollary}
\label{cor:irr-pair}
Let $(\mathfrak{g}_0,\sigma ;\theta )$ be a non-compact semisimple symmetric pair 
equipped with a Cartan involution 
except the case where $\mathfrak{g}_0$ is simple without complex structures. 
If $(\mathfrak{g}_0,\sigma )$ is irreducible, 
then $(\mathfrak{g}_0,\sigma ;\theta )$ satisfies one of the followings: 
\begin{enumerate}
	\renewcommand{\labelenumi}{\textup{(P-\theenumi)} }
	\renewcommand{\theenumi}{\alph{enumi}}
	\item 
	\label{item:cpx-anti}
	$\mathfrak{g}_0$ is real simple Lie algebra with a complex structure 
	and $\sigma$ is anti-linear on the complex Lie algebra $\mathfrak{g}_0$. 
	\item 
	\label{item:cpx-linear}
	$\mathfrak{g}_0$ is real simple Lie algebra with a complex structure 
	and $\sigma$ is $\mathbb{C}$-linear on the complex Lie algebra $\mathfrak{g}_0$. 
	\item 
	\label{item:non-anti}
	$(\mathfrak{g}_0,\sigma ;\theta )\equiv 
	(\mathfrak{g}_0'\oplus \mathfrak{g}_0',\rho ;
	\theta |_{\mathfrak{g}_0'}\oplus \theta |_{\mathfrak{g}_0'})$ 
	for some real simple Lie algebra $\mathfrak{g}_0'$ with a complex structure. 
	\item 
	\label{item:non-linear}
	$(\mathfrak{g}_0,\sigma ;\theta )\equiv 
	(\mathfrak{g}_0'\oplus \mathfrak{g}_0',\rho ;
	\theta |_{\mathfrak{g}_0'}\oplus \theta |_{\mathfrak{g}_0'})$ 
	for some real simple Lie algebra $\mathfrak{g}_0'$ without complex structures. 
\end{enumerate}
\end{corollary}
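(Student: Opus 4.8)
The plan is to run a case analysis based solely on whether the Lie algebra $\mathfrak{g}_0$ is simple, and in each case to invoke one of the two structural results already established. Since the hypothesis explicitly excludes the situation where $\mathfrak{g}_0$ is simple without a complex structure, the two cases ``$\mathfrak{g}_0$ simple'' and ``$\mathfrak{g}_0$ not simple'' are the only ones to treat, and each will immediately bifurcate further into two of the four listed types.

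First I would treat the case where $\mathfrak{g}_0$ is simple. By the exclusion in the statement, $\mathfrak{g}_0$ must then be a real simple Lie algebra carrying a complex structure $J$. By Remark \ref{rem:c-linear}, any involution $\sigma$ on the complex simple Lie algebra $(\mathfrak{g}_0,J)$ is either $\mathbb{C}$-linear or anti-linear with respect to $J$. The anti-linear alternative places $(\mathfrak{g}_0,\sigma;\theta)$ in case (P-\ref{item:cpx-anti}), and the $\mathbb{C}$-linear alternative places it in case (P-\ref{item:cpx-linear}). No further compatibility with the Cartan involution $\theta$ has to be verified beyond what Remark \ref{rem:c-linear} already records.

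Next I would treat the case where $\mathfrak{g}_0$ is not simple. Here the assumption that $(\mathfrak{g}_0,\sigma)$ is irreducible lets me apply Proposition \ref{prop:irr-not-simple} directly, yielding a real simple Lie algebra $\mathfrak{g}_0'$ with $(\mathfrak{g}_0,\sigma;\theta)\equiv(\mathfrak{g}_0'\oplus\mathfrak{g}_0',\rho;\theta|_{\mathfrak{g}_0'}\oplus\theta|_{\mathfrak{g}_0'})$. To finish I would split according to whether this $\mathfrak{g}_0'$ admits a complex structure: if it does, the pair falls into case (P-\ref{item:non-anti}); if it does not, it falls into case (P-\ref{item:non-linear}). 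Since a real simple Lie algebra either carries a complex structure or does not, these two subcases are exhaustive.

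The argument is essentially a bookkeeping of the dichotomies supplied by Proposition \ref{prop:irr-not-simple} and Remark \ref{rem:c-linear}, so there is no genuine analytic obstacle. The only point requiring care is exhaustiveness: I would confirm that the four listed cases cover every irreducible pair not of the excluded type, which amounts to checking that ``simple versus non-simple'' and, within each, ``complex structure versus none'' (respectively ``anti-linear versus $\mathbb{C}$-linear'') partition all remaining possibilities. That verification is immediate from the quoted results.
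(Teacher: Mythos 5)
Your proof is correct and follows essentially the same route as the paper, which presents Corollary \ref{cor:irr-pair} precisely as an immediate consequence of Remark \ref{rem:c-linear} (for the simple case with complex structure, giving the dichotomy (P-\ref{item:cpx-anti})/(P-\ref{item:cpx-linear})) and Proposition \ref{prop:irr-not-simple} (for the non-simple irreducible case, giving (P-\ref{item:non-anti})/(P-\ref{item:non-linear}) according to whether $\mathfrak{g}_0'$ carries a complex structure). Your explicit bookkeeping of the case split is exactly the argument the paper leaves implicit.
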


\subsection{Irreducible compact semisimple symmetric triad}
\label{subsec:irr-triad}

In contrast to irreducible non-compact semisimple symmetric pairs, 
we introduce irreducibility of commutative compact semisimple symmetric triads. 

\begin{define}
\label{def:irr-triad}
A compact semisimple symmetric triad $(\mathfrak{g},\theta _1,\theta _2)$ is {\it irreducible} 
if it does not admit non-trivial $\theta _1$- and $\theta _2$-invariant 
ideals of $\mathfrak{g}$ (cf. \cite[Section 2]{matsuki}). 
\end{define}

By definition, 
$(\mathfrak{g},\theta _1,\theta _2)$ is irreducible if $\mathfrak{g}$ is simple. 
Further, we have: 

\begin{lemma}
\label{lem:irreducible}
For a commutative compact semisimple symmetric triad $(\mathfrak{g},\theta _1,\theta _2)$, 
following three conditions are equivalent: 
\begin{enumerate}
	\renewcommand{\theenumi}{\roman{enumi}}
	\item $(\mathfrak{g},\theta _1,\theta _2)$ is irreducible. 
	\label{item:pair}
	\item $(\mathfrak{g},\theta _1,\theta _2)^d$ is irreducible. 
	\label{item:dual}
	\item $(\mathfrak{g},\theta _1,\theta _2)^a$ is irreducible. 
	\label{item:associated}
\end{enumerate}
\end{lemma}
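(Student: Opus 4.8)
The plan is to observe that irreducibility of a commutative compact semisimple symmetric triad is controlled entirely by the collection of ideals of $\mathfrak{g}$ that are simultaneously invariant under the two involutions defining the triad, and then to verify that this collection is literally unchanged when one passes to the dual or to the associated triad. For a triad with involutions $\nu_1,\nu_2$ I will write $\mathcal{I}(\mathfrak{g},\nu_1,\nu_2)$ for the set of ideals $\mathfrak{a}$ of $\mathfrak{g}$ with $\nu_1(\mathfrak{a})=\mathfrak{a}$ and $\nu_2(\mathfrak{a})=\mathfrak{a}$. By Definition \ref{def:irr-triad}, a triad is irreducible exactly when $\mathcal{I}(\mathfrak{g},\nu_1,\nu_2)$ contains no non-trivial ideal, so it suffices to establish the two equalities $\mathcal{I}(\mathfrak{g},\theta_1,\theta_2)=\mathcal{I}(\mathfrak{g},\theta_2,\theta_1)$ and $\mathcal{I}(\mathfrak{g},\theta_1,\theta_2)=\mathcal{I}(\mathfrak{g},\theta_1,\theta_1\theta_2)$, which yield (\ref{item:pair})$\Leftrightarrow$(\ref{item:dual}) and (\ref{item:pair})$\Leftrightarrow$(\ref{item:associated}).

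First I would treat (\ref{item:pair})$\Leftrightarrow$(\ref{item:dual}). By (\ref{eq:dual compact}) the dual $(\mathfrak{g},\theta_1,\theta_2)^d=(\mathfrak{g},\theta_2,\theta_1)$ has the same underlying Lie algebra and is defined by the very same two involutions, only listed in the opposite order. Since the condition ``$\mathfrak{a}$ is invariant under both $\theta_1$ and $\theta_2$'' is symmetric in the two involutions, the equality $\mathcal{I}(\mathfrak{g},\theta_1,\theta_2)=\mathcal{I}(\mathfrak{g},\theta_2,\theta_1)$ is immediate, and hence so is the equivalence of (\ref{item:pair}) and (\ref{item:dual}).

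Next I would treat (\ref{item:pair})$\Leftrightarrow$(\ref{item:associated}), where $(\mathfrak{g},\theta_1,\theta_2)^a=(\mathfrak{g},\theta_1,\theta_1\theta_2)$; this is the only step with any content. Let $\mathfrak{a}$ be an ideal of $\mathfrak{g}$. If $\theta_1(\mathfrak{a})=\mathfrak{a}$ and $\theta_2(\mathfrak{a})=\mathfrak{a}$, then $(\theta_1\theta_2)(\mathfrak{a})=\theta_1(\theta_2(\mathfrak{a}))=\theta_1(\mathfrak{a})=\mathfrak{a}$, so $\mathfrak{a}\in\mathcal{I}(\mathfrak{g},\theta_1,\theta_1\theta_2)$. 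Conversely, using $\theta_1^2=\operatorname{id}$ one has the identity of maps $\theta_2=\theta_1(\theta_1\theta_2)$, so if $\theta_1(\mathfrak{a})=\mathfrak{a}$ and $(\theta_1\theta_2)(\mathfrak{a})=\mathfrak{a}$ then $\theta_2(\mathfrak{a})=\theta_1((\theta_1\theta_2)(\mathfrak{a}))=\theta_1(\mathfrak{a})=\mathfrak{a}$, so $\mathfrak{a}\in\mathcal{I}(\mathfrak{g},\theta_1,\theta_2)$. This gives $\mathcal{I}(\mathfrak{g},\theta_1,\theta_2)=\mathcal{I}(\mathfrak{g},\theta_1,\theta_1\theta_2)$, so the two triads possess exactly the same non-trivial simultaneously invariant ideals.

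Combining the two equalities of ideal-collections with Definition \ref{def:irr-triad} shows that (\ref{item:pair}), (\ref{item:dual}) and (\ref{item:associated}) are equivalent. Because the whole argument reduces to the order-independence of simultaneous invariance together with the elementary identities $\theta_1^2=\operatorname{id}$ and $\theta_2=\theta_1(\theta_1\theta_2)$, there is no substantial obstacle; the only point requiring care is that $\theta_1\theta_2$ is genuinely an involution commuting with $\theta_1$ (so that the associated triad is itself a commutative triad), and this is precisely the standing commutativity $\theta_1\theta_2=\theta_2\theta_1$ noted just before Definition \ref{def:associated-triad}.
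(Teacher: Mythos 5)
Your proposal is correct and takes essentially the same route as the paper's proof: the (i)$\Leftrightarrow$(ii) equivalence via the order-symmetry of simultaneous invariance, and the (i)$\Leftrightarrow$(iii) equivalence via the observation that an ideal is $\theta_1$- and $(\theta_1\theta_2)$-invariant if and only if it is $\theta_1$- and $\theta_2$-invariant. You merely spell out in detail (using $\theta_1^2=\operatorname{id}$ and $\theta_2=\theta_1(\theta_1\theta_2)$) the step the paper states in one line.
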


\begin{proof}
As $(\mathfrak{g},\theta _1,\theta _2)^d=(\mathfrak{g},\theta _2,\theta _1)$, 
the equivalence of (\ref{item:pair}) and (\ref{item:dual}) is clear. 
Further, an ideal of $\mathfrak{g}$ is $\theta _1$- and $(\theta _1\theta _2)$-invariant 
if and only if it is $\theta _1$- and $\theta _2$-invariant, 
from which the equivalence of (\ref{item:pair}) and (\ref{item:associated}) follows. 
\end{proof}

A compact semisimple symmetric triad $(\mathfrak{g},\theta _1,\theta _2)$ 
is decomposed into irreducible ones 
$(\mathfrak{g}_1,\theta _1|_{\mathfrak{g}_1},\theta _2|_{\mathfrak{g}_1}),
\ldots ,(\mathfrak{g}_k,\theta _1|_{\mathfrak{g}_k},\theta _2|_{\mathfrak{g}_k})$ by 
\begin{align*}
(\mathfrak{g},\theta _1,\theta _2)
=(\mathfrak{g}_1,\theta _1|_{\mathfrak{g}_1},\theta _2|_{\mathfrak{g}_1})
\oplus \cdots \oplus (\mathfrak{g}_k,\theta _1|_{\mathfrak{g}_k},\theta _2|_{\mathfrak{g}_k})
\end{align*}
where $\mathfrak{g}_1,\ldots ,\mathfrak{g}_k$ are non-trivial $\theta _1$- and $\theta _2$-invariant 
ideals of $\mathfrak{g}$ and 
$\mathfrak{g}$ has a Lie algebra decomposition 
$\mathfrak{g}=\mathfrak{g}_1\oplus \cdots \oplus \mathfrak{g}_k$. 
If $(\mathfrak{g},\theta _1,\theta _2)$ is commutative, then each irreducible component 
$(\mathfrak{g}_i,\theta _1|_{\mathfrak{g}_i},\theta _2|_{\mathfrak{g}_i})$ 
is commutative. 
By Lemma \ref{lem:duality-irr-decomp}, 
we have 
\begin{align*}
(\mathfrak{g},\theta _1,\theta _2)^*
=(\mathfrak{g}_1,\theta _1|_{\mathfrak{g}_1},\theta _2|_{\mathfrak{g}_1})^*
\oplus \cdots \oplus (\mathfrak{g}_k,\theta _1|_{\mathfrak{g}_k},\theta _2|_{\mathfrak{g}_k})^*. 
\end{align*}

Our concern here is to understand irreducible commutative compact semisimple symmetric triads 
in the setting where $\mathfrak{g}$ is semisimple but not simple. 

Let $\mathfrak{u}$ be a compact simple Lie algebra 
and $\nu $ an involution on $\mathfrak{u}$. 
Then, the direct sum $\mathfrak{u}\oplus \mathfrak{u}$ is a compact semisimple Lie algebra 
which is not simple. 

The following proposition is due to Matsuki \cite[Proposition 2.2]{matsuki}. 

\begin{proposition}[\cite{matsuki}]
\label{prop:irr-triad}
Let $(\mathfrak{g},\theta _1,\theta _2)$ 
be an irreducible commutative compact semisimple symmetric triad. 
Suppose that $\mathfrak{g}$ is not simple. 
Then, there exists a compact simple Lie algebra $\mathfrak{u}$ such that 
$(\mathfrak{g},\theta _1,\theta _2)$ satisfies one of the followings: 
\begin{enumerate}
	\renewcommand{\labelenumi}{\textup{(T-\theenumi)} }
	\renewcommand{\theenumi}{\alph{enumi}}
	\item \label{type:i2}
	$(\mathfrak{g},\theta _1,\theta _2)
	\equiv (\mathfrak{u}\oplus \mathfrak{u},\rho ,\rho \circ (\nu \oplus \nu ))$ 
	for some involution $\nu $ on $\mathfrak{u}$. 
	
	\item \label{type:ii2}
	$(\mathfrak{g},\theta _1,\theta _2)\equiv 
	(\mathfrak{u}\oplus \mathfrak{u},\rho ,\nu \oplus \nu )$ 
	for some involution $\nu $ on $\mathfrak{u}$. 
	
	\item \label{type:i4}
	$(\mathfrak{g},\theta _1,\theta _2)\equiv 
	(\mathfrak{u}\oplus \mathfrak{u}\oplus \mathfrak{u}\oplus \mathfrak{u},\rho _{(12)(34)},
	\rho _{(14)(23)})$. 
	Here, involutions $\rho _{(12)(34)}$ and $\rho _{(14)(23)}$ 
	on $\mathfrak{u}\oplus \mathfrak{u}\oplus 
	\mathfrak{u}\oplus \mathfrak{u}$ 
	are given by 
	\begin{align*}
	\rho _{(12)(34)}(X,Y,Z,W)&=(Y,X,W,Z),\\
	\rho _{(14)(23)}(X,Y,Z,W)&=(W,Z,Y,X)
	\end{align*}
	for $(X,Y,Z,W)\in \mathfrak{u}\oplus \mathfrak{u}\oplus \mathfrak{u}\oplus \mathfrak{u}$, 
	respectively. 
	
	\item \label{type:ii2d}
	$(\mathfrak{g},\theta _1,\theta _2)\equiv (\mathfrak{u}\oplus \mathfrak{u},\nu \oplus \nu ,\rho )$ 
	for some involution $\nu $ on $\mathfrak{u}$. 
	We note that this is the dual compact symmetric triad of 
	$(\mathfrak{u}\oplus \mathfrak{u},\rho ,\nu \oplus \nu )$ given in (T-\ref{type:ii2}). 
\end{enumerate}
\end{proposition}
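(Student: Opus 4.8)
The plan is to reduce the classification to an analysis of the permutation action of the group generated by $\theta_1$ and $\theta_2$ on the simple factors of $\mathfrak{g}$. First I would decompose the compact semisimple Lie algebra into its simple ideals $\mathfrak{g}=\mathfrak{g}_1\oplus\cdots\oplus\mathfrak{g}_k$, where $k\ge 2$ because $\mathfrak{g}$ is not simple. Since any automorphism of a semisimple Lie algebra permutes its minimal (simple) ideals, the finite group $G:=\langle\theta_1,\theta_2\rangle$, which is abelian and generated by two commuting involutions and hence a quotient of $(\mathbb{Z}/2\mathbb{Z})^2$ with $|G|\mid 4$, acts on the index set $\{1,\dots,k\}$ via a homomorphism $G\to S_k$ with image $\bar{G}$. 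A sum of some of the factors is a $\theta_1$- and $\theta_2$-invariant ideal precisely when the corresponding subset is $\bar{G}$-invariant; therefore, by Definition \ref{def:irr-triad}, irreducibility of $(\mathfrak{g},\theta_1,\theta_2)$ is equivalent to transitivity of $\bar{G}$ on $\{1,\dots,k\}$.

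Next I would exploit the order constraint. Since $|\bar{G}|\mid 4$, transitivity together with the orbit--stabilizer theorem forces $k$ to divide $|\bar{G}|\le 4$, and $k\ge 2$ leaves exactly $k=2$ and $k=4$. For $k=2$ the transitive image must be all of $S_2$, so at least one of $\theta_1,\theta_2$ interchanges the two factors, and there are precisely three possibilities according to which of $\theta_1$, $\theta_2$, $\theta_1\theta_2$ does the swapping: only $\theta_1$ swaps (which I expect to give (T-\ref{type:ii2})), only $\theta_2$ swaps (giving the dual (T-\ref{type:ii2d})), or both $\theta_1$ and $\theta_2$ swap while $\theta_1\theta_2$ fixes each factor (giving (T-\ref{type:i2})). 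For $k=4$ a transitive action of a group of order at most $4$ on four points is necessarily simply transitive, so $\bar{G}\cong(\mathbb{Z}/2\mathbb{Z})^2$ acts regularly and each of $\theta_1$, $\theta_2$, $\theta_1\theta_2$ induces a fixed-point-free product of two transpositions; after relabelling the factors I would realize $\theta_1$ and $\theta_2$ as $(12)(34)$ and $(14)(23)$, the shape of (T-\ref{type:i4}).

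The normalization in each case is the technical heart of the argument. When $\theta_1$ swaps $\mathfrak{g}_1$ and $\mathfrak{g}_2$, I would set $\mathfrak{u}:=\mathfrak{g}_1$ and define $\varphi:\mathfrak{g}\to\mathfrak{u}\oplus\mathfrak{u}$ by $X+Y\mapsto(X,\theta_1 Y)$ for $X\in\mathfrak{g}_1$, $Y\in\mathfrak{g}_2$; a direct check gives $\varphi\theta_1=\rho\,\varphi$, so $\theta_1$ becomes $\rho$. If $\theta_2$ fixes both factors, then with $\nu:=\theta_2|_{\mathfrak{g}_1}$ the relation $\theta_1\theta_2=\theta_2\theta_1$ identifies $\theta_2|_{\mathfrak{g}_2}$ with $\nu$ transported through $\theta_1$, and I would verify $\varphi\theta_2\varphi^{-1}=\nu\oplus\nu$, giving (T-\ref{type:ii2}); the case where $\theta_2$ swaps is symmetric and gives (T-\ref{type:ii2d}). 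In the both-swap case I would apply the same computation to the factor-preserving involution $\mu:=\theta_1\theta_2$, obtaining $\mu\cong\nu\oplus\nu$ and hence $\theta_2=\theta_1\mu\cong\rho\circ(\nu\oplus\nu)$, which is (T-\ref{type:i2}). For $k=4$ I would use the three nontrivial group elements to identify $\mathfrak{g}_2,\mathfrak{g}_3,\mathfrak{g}_4$ coherently with $\mathfrak{u}:=\mathfrak{g}_1$; since the action is simply transitive, no factor is stabilized and no residual twisting involution survives, so both $\theta_1,\theta_2$ become pure permutations, yielding (T-\ref{type:i4}).

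The main obstacle I anticipate lies precisely in this bookkeeping: confirming in the both-swap case that a single involution $\nu$ suffices rather than two a priori distinct involutions on the two factors (for which the commutativity relation is essential), and confirming in the $k=4$ case that the coherent identification of the four factors introduces no inner twist. By contrast, the counting argument and the equivalence between irreducibility and transitivity are routine once the permutation action is in place; it is the explicit construction of the equivalences $\varphi$ and the verifications $\varphi\theta_i\varphi^{-1}=(\text{normal form})$ that demand care.
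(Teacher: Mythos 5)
Your proof is correct, but it cannot be "the same as the paper's" for a simple reason: the paper gives no proof of this proposition at all, attributing it to Matsuki \cite[Proposition 2.2]{matsuki}. So what you have produced is a self-contained elementary argument where the paper defers to a reference, and as such it is genuinely valuable. The skeleton is sound: since every ideal of a compact semisimple $\mathfrak{g}$ is a sum of simple ideals and automorphisms permute the simple ideals, irreducibility of $(\mathfrak{g},\theta_1,\theta_2)$ is indeed equivalent to transitivity of the image $\bar{G}$ of $\langle \theta_1,\theta_2\rangle$ on the index set, and orbit--stabilizer with $|\bar{G}|$ dividing $4$ forces $k=2$ or $k=4$. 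The normalizations you flag as the technical heart do go through exactly as you outline: for $k=2$ with $\theta_1$ swapping, $\varphi(X+Y)=(X,\theta_1 Y)$ satisfies $\varphi\theta_1\varphi^{-1}=\rho$, and the commutativity $\theta_1\theta_2=\theta_2\theta_1$ is precisely what identifies $\varphi\theta_2\varphi^{-1}$ with $\nu\oplus\nu$ (and, in the both-swap case, $\varphi(\theta_1\theta_2)\varphi^{-1}$ with $\nu\oplus\nu$, whence $\varphi\theta_2\varphi^{-1}=\rho\circ(\nu\oplus\nu)$); for $k=4$, taking $\varphi(X_1+X_2+X_3+X_4)=(X_1,\theta_1X_2,\theta_1\theta_2X_3,\theta_2X_4)$ one computes directly that $\varphi\theta_1\varphi^{-1}=\rho_{(12)(34)}$ and $\varphi\theta_2\varphi^{-1}=\rho_{(14)(23)}$, all residual twists cancelling because the identification maps are themselves restrictions of $\theta_1$, $\theta_2$, $\theta_1\theta_2$; and the relabelling $3\leftrightarrow 4$ fixes $(12)(34)$ while exchanging $(13)(24)$ with $(14)(23)$, so the normal form $(\rho_{(12)(34)},\rho_{(14)(23)})$ is always reachable. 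Two cosmetic points for a final write-up: note that in the both-swap case with $\theta_1=\theta_2$ your $\nu$ is the identity, which is still admissible under the paper's definition of involution ($\sigma^2=\operatorname{id}$); and state explicitly that the $\mathfrak{g}_i$, being simple ideals of a compact algebra, are compact simple, so $\mathfrak{u}$ is as required.
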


Commutative compact semisimple symmetric triads in (T-\ref{type:i2})--(T-\ref{type:ii2d}) 
have the following properties: 

\begin{lemma}
\label{lem:non-simple}
Let $\mathfrak{u}$ be a compact simple Lie algebra and 
$\nu $ an involution on $\mathfrak{u}$. 
Then, we have: 
\begin{enumerate}
	\item $(\mathfrak{u}\oplus \mathfrak{u},\rho ,\rho \circ (\nu \oplus \nu ))$ is self-dual. 
	\item $(\mathfrak{u}\oplus \mathfrak{u},\rho ,\nu \oplus \nu )^a
	=(\mathfrak{u}\oplus \mathfrak{u},\rho ,\rho \circ (\nu \oplus \nu ))$. 
	\item $(\mathfrak{u}\oplus \mathfrak{u}\oplus \mathfrak{u}\oplus \mathfrak{u},
	\rho _{(12)(34)},\rho _{(14)(23)})$ is self-dual and self-associated. 
	\item $(\mathfrak{u}\oplus \mathfrak{u},\nu \oplus \nu ,\rho )$ is self-associated. 
\end{enumerate}
\end{lemma}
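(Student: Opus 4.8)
The plan is to verify each of the four claims by producing an explicit Lie algebra automorphism that realizes the required equivalence, after first recording the relevant composition $\theta_1\theta_2$ via (\ref{eq:twist}) and (\ref{eq:oplus}).

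I would start with (2), which is an identity rather than an equivalence. Taking $\theta_1=\rho$ and $\theta_2=\nu\oplus\nu$, a one-line computation gives $\theta_1\theta_2(X,Y)=\rho(\nu(X),\nu(Y))=(\nu(Y),\nu(X))=\rho\circ(\nu\oplus\nu)(X,Y)$, so that $(\mathfrak{g},\theta_1,\theta_2)^a=(\mathfrak{g},\theta_1,\theta_1\theta_2)=(\mathfrak{u}\oplus\mathfrak{u},\rho,\rho\circ(\nu\oplus\nu))$ directly by Definition \ref{def:associated-triad}.

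For (1) and (4) the single automorphism $\psi:=\operatorname{id}\oplus\nu$ of $\mathfrak{u}\oplus\mathfrak{u}$ (an automorphism since $\nu$ is one and $\nu^2=\operatorname{id}$) does the work. For (1) I would check directly that $\psi\rho=(\rho\circ(\nu\oplus\nu))\psi$ and $\psi(\rho\circ(\nu\oplus\nu))=\rho\psi$, which exhibits $\psi$ as an equivalence between the triad of (1) and its dual (Definition \ref{def:dual-triad}), proving self-duality. For (4) one first notes $(\nu\oplus\nu)\rho=\rho(\nu\oplus\nu)$, so the associated triad of $(\mathfrak{u}\oplus\mathfrak{u},\nu\oplus\nu,\rho)$ is $(\mathfrak{u}\oplus\mathfrak{u},\nu\oplus\nu,\rho\circ(\nu\oplus\nu))$; the same $\psi$ then commutes with $\nu\oplus\nu$ and conjugates $\rho\circ(\nu\oplus\nu)$ to $\rho$, giving self-associatedness. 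Each verification is a short computation using $\nu^2=\operatorname{id}$.

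Claim (3) I would handle in the permutation picture. Writing $\sigma_1=(12)(34)$ and $\sigma_2=(14)(23)$ and computing $\sigma_1\sigma_2=(13)(24)=:\sigma_3$, the three involutions $\rho_{\sigma_1},\rho_{\sigma_2},\rho_{\sigma_3}$ together with the identity correspond to the Klein four subgroup $V\trianglelefteq S_4$, and conjugation by a factor-permuting automorphism $\rho_\pi$ of $\mathfrak{u}^{\oplus 4}$ sends $\rho_\tau$ to $\rho_{\pi\tau\pi^{-1}}$. Since the $S_4$-conjugation action on the three nontrivial elements of $V$ realizes the full symmetric group $S_3$, I can choose $\pi=(24)$, which interchanges $\sigma_1$ and $\sigma_2$ and hence realizes $(\mathfrak{g},\theta_1,\theta_2)\equiv(\mathfrak{g},\theta_2,\theta_1)$ (self-dual), and $\pi=(12)$, which fixes $\sigma_1$ and interchanges $\sigma_3$ with $\sigma_2$, hence realizes $(\mathfrak{g},\theta_1,\theta_1\theta_2)\equiv(\mathfrak{g},\theta_1,\theta_2)$ (self-associated). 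The computations are all elementary; the only step requiring care is (3), where one must pin down the convention by which $\rho_\pi$ conjugates $\rho_\tau$ — it is ordinary conjugation in $S_4$, possibly by $\pi^{-1}$, which is harmless here since $(24)$ and $(12)$ are involutions — and produce a single permutation that simultaneously fixes $\theta_1$ and sends $\theta_1\theta_2$ to $\theta_2$ for the self-associated half. Checking these permutation identities is where a bookkeeping slip is most likely, but the argument remains routine.
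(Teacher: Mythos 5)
Your proposal is correct, and for items (1)--(3) it is essentially the paper's own argument. Item (2) is the same one-line computation from Definition \ref{def:associated-triad}; for (1) your $\psi=\operatorname{id}\oplus\nu$ is the mirror image of the paper's intertwiner $\varphi_\nu(X,Y)=(\nu(X),Y)$, and for (3) your permutations $(24)$ and $(12)$ play exactly the roles of the paper's $\rho_{(13)}$ and $\rho_{(34)}$ (all four choices work: any permutation whose conjugation action on the Klein four-group $\{e,(12)(34),(14)(23),(13)(24)\}$ swaps, respectively fixes-and-swaps, the right pair of elements does the job, and your caveat about the conjugation convention is genuinely harmless because the conjugating permutations are involutions, so $\pi\tau\pi^{-1}=\pi^{-1}\tau\pi$). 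The one genuine divergence is (4): the paper proves nothing new there, instead writing $(\mathfrak{u}\oplus\mathfrak{u},\nu\oplus\nu,\rho)^a=(\mathfrak{u}\oplus\mathfrak{u},\rho,\rho\circ(\nu\oplus\nu))^{ada}=(\mathfrak{u}\oplus\mathfrak{u},\rho,\rho\circ(\nu\oplus\nu))^{dad}$ and then invoking the self-duality already established in (1) together with the formal identity $(\mathfrak{g},\theta_1,\theta_2)^{ada}=(\mathfrak{g},\theta_1,\theta_2)^{dad}$ noted after Definition \ref{def:dual-triad}, whereas you verify self-associatedness directly: $\theta_1\theta_2=\rho\circ(\nu\oplus\nu)$ and the same automorphism $\operatorname{id}\oplus\nu$ commutes with $\nu\oplus\nu$ while conjugating $\rho\circ(\nu\oplus\nu)$ to $\rho$. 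Your route is more self-contained and element-level; the paper's illustrates how the calculus of the operations $a$, $d$ and the relation $\equiv$ lets (4) be deduced from (1) without any further computation. Both are valid, and your computations check out.
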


\begin{proof}
\begin{enumerate}
	\item We define a Lie algebra isomorphism $\varphi _{\nu }:
	\mathfrak{u}\oplus \mathfrak{u}\to \mathfrak{u}\oplus \mathfrak{u}$ by 
	\begin{align*}
	\varphi _{\nu }(X,Y)=(\nu (X),Y)\quad ((X,Y)\in \mathfrak{u}\oplus \mathfrak{u}). 
	\end{align*}
	Then, the following equalities hold for any $(X,Y)\in \mathfrak{u}\oplus \mathfrak{u}$: 
	\begin{align*}
	\varphi _{\nu }\circ \rho (\nu \oplus \nu ) (X,Y)&=\varphi _{\nu }(\nu (Y),\nu (X))
	=(Y,\nu (X)), \\
	\rho \circ \varphi _{\nu }(X,Y)&=\rho (\nu (X),Y)=(Y,\nu (X)). 
	\end{align*}
	This implies $(\mathfrak{u}\oplus \mathfrak{u},\rho ,\rho \circ (\nu \oplus \nu ))^d
	=(\mathfrak{u}\oplus \mathfrak{u},\rho \circ (\nu \oplus \nu ),\rho )
	\equiv (\mathfrak{u}\oplus \mathfrak{u},\rho ,\rho \circ (\nu \oplus \nu ))$. 
	
	\item This follows from the definition. 
	
	\item We define an automorphism $\rho _{(13)} $ on 
	$\mathfrak{u}\oplus \mathfrak{u}\oplus \mathfrak{u}\oplus \mathfrak{u}$ by 
	\begin{align*}
	\rho _{(13)}(X,Y,Z,W)=(Z,Y,X,W)
	\end{align*}
	for $(X,Y,Z,W)\in \mathfrak{u}\oplus \mathfrak{u}\oplus \mathfrak{u}\oplus \mathfrak{u}$. 
	Then, we have 
	\begin{align*}
	\rho _{(13)}\circ \rho _{(14)(23)}=\rho _{(12)(34)}\circ \rho _{(13)}. 
	\end{align*}
	This implies that 
	\begin{align*}
	&(\mathfrak{u}\oplus \mathfrak{u}\oplus \mathfrak{u}\oplus \mathfrak{u},
	\rho _{(12)(34)},\rho _{(14)(23)})^d\\
	&=(\mathfrak{u}\oplus \mathfrak{u}\oplus \mathfrak{u}\oplus \mathfrak{u},
	\rho _{(14)(23)},\rho _{(12)(34)})\\
	&\equiv (\mathfrak{u}\oplus \mathfrak{u}\oplus \mathfrak{u}\oplus \mathfrak{u},
	\rho _{(12)(34)},\rho _{(14)(23)}). 
	\end{align*}
	
	Next, we set $\rho _{(13)(24)}:=\rho _{(12)(34)}\circ \rho _{(14)(23)}$. 
	This is an involution given by 
	\begin{align*}
	\rho _{(13)(24)}(X,Y,Z,W)=(Z,W,X,Y)
	\end{align*}
	for $(X,Y,Z,W)\in \mathfrak{u}\oplus \mathfrak{u}\oplus \mathfrak{u}\oplus \mathfrak{u}$. 
	Then, 
	$(\mathfrak{u}\oplus \mathfrak{u}\oplus \mathfrak{u}\oplus \mathfrak{u},
	\rho _{(12)(34)},\rho _{(14)(23)})^a$ is written as 
	$(\mathfrak{u}\oplus \mathfrak{u}\oplus \mathfrak{u}\oplus \mathfrak{u},
	\rho _{(12)(34)} ,\rho _{(13)(24)})$. 
	Here, let us define an involution $\rho _{(34)}$ on 
	$\mathfrak{u}\oplus \mathfrak{u}\oplus \mathfrak{u}\oplus \mathfrak{u}$ 
	by 
	\begin{align*}
	\rho _{(34)}(X,Y,Z,W)=(X,Y,W,Z). 
	\end{align*}
	Then, we have 
	\begin{align*}
	\rho _{(34)}\circ \rho _{(12)(34)}&=\rho _{(12)(34)}\circ \rho _{(34)},\\
	\rho _{(34)}\circ \rho _{(13)(24)}&=\rho _{(14)(23)}\circ \rho _{(34)}. 
	\end{align*}
	Hence, we find out that 
	$(\mathfrak{u}\oplus \mathfrak{u}\oplus \mathfrak{u}\oplus \mathfrak{u},
	\rho _{(12)(34)},\rho _{(14)(23)})^a$ is equivalent to 
	$(\mathfrak{u}\oplus \mathfrak{u}\oplus \mathfrak{u}\oplus \mathfrak{u},
	\rho _{(12)(34)} ,\rho _{(14)(23)})$. 
	
	\item A direct computation shows that 
	\begin{align*}
	(\mathfrak{u}\oplus \mathfrak{u},\nu \oplus \nu ,\rho )^a
	&=(\mathfrak{u}\oplus \mathfrak{u},\rho ,\rho \circ (\nu \oplus \nu ))^{ada}\\
	&=(\mathfrak{u}\oplus \mathfrak{u},\rho ,\rho \circ (\nu \oplus \nu ))^{dad}. 
	\end{align*}
	As mentioned above, we obtain $(\mathfrak{u}\oplus \mathfrak{u},\rho ,\rho _{\nu })^{d}
	\equiv (\mathfrak{u}\oplus \mathfrak{u},\rho ,\rho \circ (\nu \oplus \nu ))$, 
	from which we get 
	\begin{align*}
	(\mathfrak{u}\oplus \mathfrak{u},\rho ,\rho \circ (\nu \oplus \nu ))^{dad}
	&\equiv (\mathfrak{u}\oplus \mathfrak{u},\rho ,\rho \circ (\nu \oplus \nu ))^{ad}\\
	&=(\mathfrak{u}\oplus \mathfrak{u},\rho ^2\circ (\nu \oplus \nu ),\rho)\\
	&=(\mathfrak{u}\oplus \mathfrak{u},\nu \oplus \nu ,\rho). 
	\end{align*}
	Hence, we have verified 
	$(\mathfrak{u}\oplus \mathfrak{u},\nu \oplus \nu ,\rho )^a 
	\equiv (\mathfrak{u}\oplus \mathfrak{u},\nu \oplus \nu ,\rho ) $. 
\end{enumerate}

Therefore, Lemma \ref{lem:non-simple} has been proved. 
\end{proof}

The following corollary is an immediate consequence of Lemma \ref{lem:non-simple}. 
However, we shall use it in the proof of Proposition \ref{prop:non-simple-complex}. 
Thus, we will state: 

\begin{corollary}
\label{cor:i4}
For a compact simple Lie algebra $\mathfrak{u}$, 
we have 
$(\mathfrak{u}\oplus \mathfrak{u}\oplus \mathfrak{u}\oplus \mathfrak{u},
\rho _{(12)(34)} ,\rho _{(13)(24)})\equiv 
(\mathfrak{u}\oplus \mathfrak{u}\oplus \mathfrak{u}\oplus \mathfrak{u},
\rho _{(12)(34)} ,\rho _{(14)(23)})$. 
\end{corollary}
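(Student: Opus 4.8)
The plan is to read the claim off directly from the self-associatedness established in Lemma~\ref{lem:non-simple}(3), so that no new computation is really needed. First I would unwind the definition of the associated compact symmetric triad (Definition~\ref{def:associated-triad}): the associated triad of $(\mathfrak{u}\oplus\mathfrak{u}\oplus\mathfrak{u}\oplus\mathfrak{u},\rho_{(12)(34)},\rho_{(14)(23)})$ is obtained by replacing the second involution $\rho_{(14)(23)}$ with the product $\rho_{(12)(34)}\circ\rho_{(14)(23)}$. As already recorded in the proof of Lemma~\ref{lem:non-simple}(3), this product is exactly $\rho_{(13)(24)}$, so
\begin{align*}
(\mathfrak{u}\oplus\mathfrak{u}\oplus\mathfrak{u}\oplus\mathfrak{u},\rho_{(12)(34)},\rho_{(14)(23)})^a
=(\mathfrak{u}\oplus\mathfrak{u}\oplus\mathfrak{u}\oplus\mathfrak{u},\rho_{(12)(34)},\rho_{(13)(24)}).
\end{align*}

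The key step is then simply to invoke Lemma~\ref{lem:non-simple}(3), which asserts that the type~I$_4$ triad $(\mathfrak{u}\oplus\mathfrak{u}\oplus\mathfrak{u}\oplus\mathfrak{u},\rho_{(12)(34)},\rho_{(14)(23)})$ is self-associated, i.e.\ its associated triad is equivalent to itself. Combining this with the identification of the associated triad above yields precisely
\begin{align*}
(\mathfrak{u}\oplus\mathfrak{u}\oplus\mathfrak{u}\oplus\mathfrak{u},\rho_{(12)(34)},\rho_{(13)(24)})
\equiv
(\mathfrak{u}\oplus\mathfrak{u}\oplus\mathfrak{u}\oplus\mathfrak{u},\rho_{(12)(34)},\rho_{(14)(23)}),
\end{align*}
which is the assertion of the corollary. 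If a self-contained derivation is preferred, I would instead make the intertwining isomorphism explicit, namely the permutation automorphism $\rho_{(34)}(X,Y,Z,W)=(X,Y,W,Z)$ introduced in the proof of Lemma~\ref{lem:non-simple}(3); it is a Lie algebra isomorphism of $\mathfrak{u}\oplus\mathfrak{u}\oplus\mathfrak{u}\oplus\mathfrak{u}$ since it merely permutes the simple factors. A one-line evaluation on a general element gives the two relations $\rho_{(34)}\circ\rho_{(12)(34)}=\rho_{(12)(34)}\circ\rho_{(34)}$ and $\rho_{(34)}\circ\rho_{(13)(24)}=\rho_{(14)(23)}\circ\rho_{(34)}$, which say exactly that conjugation by $\rho_{(34)}$ fixes $\rho_{(12)(34)}$ and carries $\rho_{(13)(24)}$ to $\rho_{(14)(23)}$, establishing the equivalence by definition.

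There is essentially no obstacle here: the entire computational content was already carried out in the self-associated portion of Lemma~\ref{lem:non-simple}(3). The corollary is therefore merely a matter of isolating that intermediate equivalence and recording it separately, since it is the form in which the statement is actually needed in the proof of Proposition~\ref{prop:non-simple-complex}.
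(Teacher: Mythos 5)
Your proof is correct and follows exactly the paper's own argument: identifying $(\mathfrak{u}\oplus \mathfrak{u}\oplus \mathfrak{u}\oplus \mathfrak{u},\rho _{(12)(34)},\rho _{(13)(24)})$ as the associated triad of $(\mathfrak{u}\oplus \mathfrak{u}\oplus \mathfrak{u}\oplus \mathfrak{u},\rho _{(12)(34)},\rho _{(14)(23)})$ and then invoking the self-associatedness from Lemma \ref{lem:non-simple}. Your optional explicit argument via conjugation by $\rho _{(34)}$ simply re-runs the computation already contained in the proof of that lemma, so it is a valid but not genuinely different route.
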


\begin{proof}
The left-hand side is the associated compact semisimple symmetric triad of 
$(\mathfrak{u}\oplus \mathfrak{u}\oplus \mathfrak{u}\oplus \mathfrak{u},
\rho _{(12)(34)} ,\rho _{(14)(23)})$ 
which is self-associated by Lemma \ref{lem:non-simple}. 
Hence, we have obtained Corollary \ref{cor:i4}. 
\end{proof}

\begin{corollary}
\label{cor:ii2d}
For a compact simple Lie algebra $\mathfrak{u}$ and 
an involution $\nu$ on $\mathfrak{u}$. 
we have 
$(\mathfrak{u}\oplus \mathfrak{u},\nu \oplus \nu ,\rho )
\equiv (\mathfrak{u}\oplus \mathfrak{u},\nu \oplus \nu ,\rho \circ (\nu \oplus \nu ))$. 
\end{corollary}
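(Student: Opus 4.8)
The plan is to recognize the right-hand triad as the associated triad of the left-hand one and then invoke the self-associatedness already established in Lemma \ref{lem:non-simple}. First I would compute the composition of $\nu\oplus\nu$ with $\rho$. Since $\rho(X,Y)=(Y,X)$ and $(\nu\oplus\nu)(X,Y)=(\nu(X),\nu(Y))$, a one-line check gives $(\nu\oplus\nu)\circ\rho=\rho\circ(\nu\oplus\nu)$, which merely reflects the commutativity of the triad $(\mathfrak{u}\oplus\mathfrak{u},\nu\oplus\nu,\rho)$. Consequently, by the definition of the associated compact symmetric triad (Definition \ref{def:associated-triad}),
\[
(\mathfrak{u}\oplus\mathfrak{u},\nu\oplus\nu,\rho)^a
=(\mathfrak{u}\oplus\mathfrak{u},\nu\oplus\nu,(\nu\oplus\nu)\circ\rho)
=(\mathfrak{u}\oplus\mathfrak{u},\nu\oplus\nu,\rho\circ(\nu\oplus\nu)),
\]
so the right-hand side of the asserted equivalence is precisely the associated triad of the left-hand side.

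Next I would apply Lemma \ref{lem:non-simple}(4), which asserts that $(\mathfrak{u}\oplus\mathfrak{u},\nu\oplus\nu,\rho)$ is self-associated, that is, $(\mathfrak{u}\oplus\mathfrak{u},\nu\oplus\nu,\rho)^a\equiv(\mathfrak{u}\oplus\mathfrak{u},\nu\oplus\nu,\rho)$. Combining this with the identification above immediately yields
\[
(\mathfrak{u}\oplus\mathfrak{u},\nu\oplus\nu,\rho\circ(\nu\oplus\nu))
\equiv(\mathfrak{u}\oplus\mathfrak{u},\nu\oplus\nu,\rho),
\]
which is exactly the claim of Corollary \ref{cor:ii2d}. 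This mirrors the way Corollary \ref{cor:i4} was extracted from Lemma \ref{lem:non-simple}.

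Alternatively, and entirely self-containedly, I would exhibit the explicit intertwiner $\varphi_{\nu}(X,Y)=(\nu(X),Y)$ already used in the proof of Lemma \ref{lem:non-simple}(1). Using $\nu^2=\operatorname{id}$, a direct computation shows both $\varphi_{\nu}\circ(\nu\oplus\nu)=(\nu\oplus\nu)\circ\varphi_{\nu}$ and $\varphi_{\nu}\circ\rho=(\rho\circ(\nu\oplus\nu))\circ\varphi_{\nu}$, so $\varphi_{\nu}$ is a Lie algebra isomorphism realizing the desired equivalence of triads on the nose.

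Since every step reduces to a short algebraic verification, I do not expect any substantial obstacle here. The only point requiring a bit of care is the bookkeeping of the composition order of $\rho$ and $\nu\oplus\nu$, but this is harmless precisely because the two involutions commute, which is the starting observation of the argument.
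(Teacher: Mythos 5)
Your main argument is exactly the paper's proof: you identify the right-hand side as the associated triad $(\mathfrak{u}\oplus\mathfrak{u},\nu\oplus\nu,\rho)^a$ (using that $(\nu\oplus\nu)\circ\rho=\rho\circ(\nu\oplus\nu)$, so the composition order is immaterial) and then invoke the self-associatedness established in Lemma \ref{lem:non-simple} (4), which is precisely how the paper deduces Corollary \ref{cor:ii2d}. Your alternative argument via the explicit intertwiner $\varphi_{\nu}(X,Y)=(\nu(X),Y)$ is also correct and has the small merit of being self-contained (it bypasses Lemma \ref{lem:non-simple} entirely), but it is an optional extra rather than a different route for the main proof.
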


\begin{proof}
This follows from 
$(\mathfrak{u}\oplus \mathfrak{u},\nu \oplus \nu ,\rho )^a
=(\mathfrak{u}\oplus \mathfrak{u},\nu \oplus \nu ,\rho \circ (\nu \oplus \nu ))$ and 
$(\mathfrak{u}\oplus \mathfrak{u},\nu \oplus \nu ,\rho )$ is self-associated 
by Lemma \ref{lem:non-simple}. 
\end{proof}

\subsection{Correspondence of invariant ideals via duality theorem}
\label{subsec:invariant ideal}

Let $(\mathfrak{g}_0,\sigma ;\theta )$ be a non-compact semisimple symmetric pair 
equipped with a Cartan involution 
and $(\mathfrak{g},\theta _1,\theta _2):=(\mathfrak{g}_0,\sigma ;\theta )^*$ 
the corresponding commutative compact semisimple symmetric triad. 
In this subsection, 
we consider the correspondence between the set of $\sigma$-invariant ideals of $\mathfrak{g}_0$ 
and that of $\theta _1$- and $\theta _2$-invariant ideals of $\mathfrak{g}$ 
via our duality in Theorem \ref{thm:duality-thm}. 

Thanks to Lemma \ref{lem:inv-cartan}, 
any $\sigma$-invariant ideal of $\mathfrak{g}_0$ is automatically $\theta $-invariant. 
Then, we shall consider the set $\mathcal{I}(\mathfrak{g}_0,\sigma ;\theta )$ 
of all $\sigma$- and $\theta $-invariant ideals of $\mathfrak{g}_0$. 
Similarly, we denote by $\mathcal{I}(\mathfrak{g},\theta _1,\theta _2)$ 
the set of all $\theta _1$- and $\theta _2$-invariant ideals of $\mathfrak{g}$. 

\begin{theorem}
\label{thm:ideal}
The duality given by Theorem \ref{thm:duality-thm} gives rise to a one-to-one correspondence 
between $\mathcal{I}(\mathfrak{g}_0,\sigma ;\theta )$ and $\mathcal{I}(\mathfrak{g},\theta _1,\theta _2)$. 
\end{theorem}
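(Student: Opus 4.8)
The plan is to pass to the common complexification $\mathfrak{g}_{\mathbb{C}}$ and use the standard dictionary between ideals of a real form and conjugation-invariant ideals of the complexification. Recall from Section~\ref{subsec:correspondence} that both $\mathfrak{g}$ and $\mathfrak{g}_0$ are real forms of $\mathfrak{g}_{\mathbb{C}}$, with respective complex conjugations $\mu$ (with respect to $\mathfrak{g}$) and $\tau$ (with respect to $\mathfrak{g}_0$), and that $\theta_1=\theta|_{\mathfrak{g}}$, $\theta_2=\sigma|_{\mathfrak{g}}$ are the restrictions of the $\mathbb{C}$-linear involutions of $\mathfrak{g}_{\mathbb{C}}$ which we continue to denote by $\theta$ and $\sigma$. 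First I would record the single relation $\tau=\mu\theta$ on $\mathfrak{g}_{\mathbb{C}}$; this follows at once from $\mathfrak{g}=\mathfrak{k}_0+\sqrt{-1}\mathfrak{p}_0$ and $\mathfrak{g}_0=\mathfrak{k}_0+\mathfrak{p}_0$ by evaluating both sides on each of the four summands $\mathfrak{k}_0,\ \sqrt{-1}\mathfrak{k}_0,\ \mathfrak{p}_0,\ \sqrt{-1}\mathfrak{p}_0$ of $\mathfrak{g}_{\mathbb{C}}$.

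Next I would invoke the standard fact that, for a real form $\mathfrak{g}$ of the complex semisimple Lie algebra $\mathfrak{g}_{\mathbb{C}}$ with conjugation $\mu$, the assignments $\mathfrak{a}\mapsto\mathfrak{a}_{\mathbb{C}}$ and $\mathfrak{b}\mapsto\mathfrak{b}\cap\mathfrak{g}=\mathfrak{b}^{\mu}$ are mutually inverse bijections between the ideals of $\mathfrak{g}$ and the $\mu$-invariant ideals of $\mathfrak{g}_{\mathbb{C}}$; here surjectivity uses $\mathfrak{b}=\mathfrak{b}^{\mu}+\sqrt{-1}\,\mathfrak{b}^{\mu}$ for any $\mu$-invariant complex ideal $\mathfrak{b}$. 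Since $\theta_1,\theta_2$ are the restrictions to $\mathfrak{g}$ of the $\mathbb{C}$-linear $\theta,\sigma$, an ideal $\mathfrak{a}$ of $\mathfrak{g}$ lies in $\mathcal{I}(\mathfrak{g},\theta_1,\theta_2)$ precisely when $\mathfrak{a}_{\mathbb{C}}$ is invariant under $\theta$ and $\sigma$. Thus $\mathcal{I}(\mathfrak{g},\theta_1,\theta_2)$ is in natural bijection with the set of ideals of $\mathfrak{g}_{\mathbb{C}}$ invariant under $\mu$, $\theta$ and $\sigma$. Applying the same argument to the real form $\mathfrak{g}_0$, whose conjugation is $\tau$ and on which $\theta,\sigma$ restrict to the defining involutions of $(\mathfrak{g}_0,\sigma;\theta)$, identifies $\mathcal{I}(\mathfrak{g}_0,\sigma;\theta)$ with the set of ideals of $\mathfrak{g}_{\mathbb{C}}$ invariant under $\tau$, $\theta$ and $\sigma$.

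It then remains only to identify these two families of complex ideals. Because $\tau=\mu\theta$, any $\theta$-invariant ideal of $\mathfrak{g}_{\mathbb{C}}$ is $\mu$-invariant if and only if it is $\tau$-invariant, so
\[
\{\,\mathfrak{b}:\mathfrak{b}\text{ is a }\mu\text{-, }\theta\text{-, }\sigma\text{-invariant ideal of }\mathfrak{g}_{\mathbb{C}}\,\}
=\{\,\mathfrak{b}:\mathfrak{b}\text{ is a }\tau\text{-, }\theta\text{-, }\sigma\text{-invariant ideal of }\mathfrak{g}_{\mathbb{C}}\,\},
\]
which yields the desired bijection $\mathcal{I}(\mathfrak{g}_0,\sigma;\theta)\leftrightarrow\mathcal{I}(\mathfrak{g},\theta_1,\theta_2)$. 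Tracing through the identifications, this correspondence sends $\mathfrak{a}_0\in\mathcal{I}(\mathfrak{g}_0,\sigma;\theta)$ to $(\mathfrak{a}_0)_{\mathbb{C}}\cap\mathfrak{g}=\mathfrak{a}_0^{\theta}+\sqrt{-1}\,\mathfrak{a}_0^{-\theta}$, which is exactly the real-form construction defining $\Phi$ and $\Psi$ in Section~\ref{subsec:correspondence} applied to the sub-ideal; hence it is the bijection induced by the duality of Theorem~\ref{thm:duality-thm}. I expect the only mildly delicate point to be the bookkeeping needed to confirm that these ideal-level maps are genuinely restrictions of $\Phi,\Psi$ and are mutually inverse, which reduces to the identity $(\mathfrak{a}_0)_{\mathbb{C}}\cap\mathfrak{g}_0=\mathfrak{a}_0$ together with its $\mathfrak{g}$-analogue; the actual content of the argument is concentrated in the single relation $\tau=\mu\theta$.
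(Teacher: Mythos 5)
Your proof is correct, and it reaches the theorem by a genuinely different route than the paper. The paper stays entirely inside the real forms: for $\mathfrak{l}_0\in\mathcal{I}(\mathfrak{g}_0,\sigma;\theta)$ it sets $\mathfrak{l}_0^{*}:=\mathfrak{l}_0^{\theta}+\sqrt{-1}\,\mathfrak{l}_0^{-\theta}$, checks $\theta_1$- and $\theta_2$-invariance directly, proves the ideal property by splitting $[\mathfrak{l}_0,\mathfrak{g}_0]\subset\mathfrak{l}_0$ into $\theta$-eigenspace components and computing $[\mathfrak{l}_0^{*},\mathfrak{g}]$ by hand, and then obtains mutual inverseness by regarding the ideal-level maps as $\Psi$ and $\Phi$ applied to the sub-pairs $(\mathfrak{l}_0,\sigma|_{\mathfrak{l}_0};\theta|_{\mathfrak{l}_0})$ and $(\mathfrak{l},\theta_1|_{\mathfrak{l}},\theta_2|_{\mathfrak{l}})$ (Lemma \ref{lem:inv-cartan} guaranteeing these are again objects of $\mathfrak{P}_c$, resp.\ $\mathfrak{T}$) and quoting Theorem \ref{thm:duality-thm}. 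You instead push everything into $\mathfrak{g}_{\mathbb{C}}$: the standard dictionary identifies $\mathcal{I}(\mathfrak{g},\theta_1,\theta_2)$ with the $\mu$-, $\theta$-, $\sigma$-invariant complex ideals and $\mathcal{I}(\mathfrak{g}_0,\sigma;\theta)$ with the $\tau$-, $\theta$-, $\sigma$-invariant ones, and the single relation $\tau=\mu\theta$ (which is essentially definitional in the paper's construction of $\Phi$ in Section \ref{subsec:correspondence}) makes the two families coincide, since $\mu$- and $\tau$-invariance are equivalent for a $\theta$-invariant ideal. Your tracing-through identity $(\mathfrak{a}_0)_{\mathbb{C}}\cap\mathfrak{g}=\mathfrak{a}_0^{\theta}+\sqrt{-1}\,\mathfrak{a}_0^{-\theta}$ (valid precisely because $\mathfrak{a}_0$ is $\theta$-invariant) confirms that your bijection is the one induced by Theorem \ref{thm:duality-thm}, so the statement as phrased is fully proved. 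What your argument buys is conceptual economy and extra structure for free: no bracket bookkeeping, and the correspondence is visibly a lattice isomorphism, compatible with inclusions, sums and intersections. What the paper's computation buys is that it stays in the real semisimple category in the computational style of the surrounding sections and exhibits the ideal maps literally as restrictions of $\Psi$ and $\Phi$ to sub-symmetric-pairs, which is the form in which the result is reused afterwards (e.g.\ in Theorem \ref{thm:corresp-irreducible}).
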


\begin{proof}
Let us take any $\mathfrak{l}_0\in \mathcal{I}(\mathfrak{g}_0,\sigma ;\theta )$. 
By Lemma \ref{lem:inv-cartan}, $(\mathfrak{l}_0,\sigma |_{\mathfrak{l}_0};\theta |_{\mathfrak{l}_0})$ 
is a non-compact semisimple symmetric pair equipped with a Cartan involution. 
We write $\mathfrak{l}_0=\mathfrak{l}_0^{\theta }+\mathfrak{l}_0^{-\theta }$ 
for the corresponding Cartan decomposition of $\theta |_{\mathfrak{l}_0}$. 
We set $\mathfrak{l}_0^*:=\mathfrak{l}_0^{\theta }+\sqrt{-1}\mathfrak{l}_0^{-\theta }$. 
As $\theta _1=\theta |_{\mathfrak{g}}$, this is a $\theta _1$-invariant compact semisimple Lie algebra. 
Further, the relation $\sigma \theta =\theta \sigma$ shows that 
both $\mathfrak{l}_0^{\theta }$ and $\mathfrak{l}_0^{-\theta }$ are $\sigma$-invariant. 
As $\theta _2=\sigma |_{\mathfrak{g}}$, we have 
\begin{align*}
\theta _2 (\mathfrak{l}_0^*)
=\sigma (\mathfrak{l}_0^{\theta })+\sqrt{-1}\sigma (\mathfrak{l}_0^{-\theta })
=\mathfrak{l}_0^{\theta }+\sqrt{-1}\mathfrak{l}_0^{-\theta }
=\mathfrak{l}_0^*. 
\end{align*}
Hence, $\mathfrak{l}_0^*$ is $\theta _2$-invariant. 
Therefore, $(\mathfrak{l}_0^*,\theta _1|_{\mathfrak{l}_0^*},\theta _2|_{\mathfrak{l}_0^*})$ 
is a commutative compact semisimple symmetric triad 
and it coincides with the dual $(\mathfrak{l}_0,\sigma |_{\mathfrak{l}_0};\theta |_{\mathfrak{l}_0})^*$, 
namely, 
\begin{align}
\label{eq:duality-ideal1}
(\mathfrak{l}_0^*,\theta _1|_{\mathfrak{l}_0^*},\theta _2|_{\mathfrak{l}_0^*})
=(\mathfrak{l}_0,\sigma |_{\mathfrak{l}_0};\theta |_{\mathfrak{l}_0})^*
=\Psi (\mathfrak{l}_0,\sigma |_{\mathfrak{l}_0};\theta |_{\mathfrak{l}_0}). 
\end{align}

Conversely, 
$(\mathfrak{l},\theta _1|_{\mathfrak{l}},\theta _2|_{\mathfrak{l}})$ is a commutative compact 
semisimple symmetric triad for $\mathfrak{l}\in \mathcal{I}(\mathfrak{g},\theta _1,\theta _2)$. 
We set $\mathfrak{l}^*:=\mathfrak{l}^{\theta _1}+\sqrt{-1}\mathfrak{l}^{-\theta _1}$. 
Then, it is $\theta $- and $\sigma$-invariant 
and we obtain 
\begin{align}
\label{eq:duality-ideal2}
(\mathfrak{l}^*,\sigma |_{\mathfrak{l}^*};\theta |_{\mathfrak{l}^*})
=(\mathfrak{l},\theta _1|_{\mathfrak{l}},\theta _2|_{\mathfrak{l}})^*
=\Phi (\mathfrak{l},\theta _1|_{\mathfrak{l}},\theta _2|_{\mathfrak{l}}). 
\end{align}

Next, we will verify that $\mathfrak{l}_0^*=\mathfrak{l}_0^{\theta }+\sqrt{-1}\mathfrak{l}_0^{-\theta }$ 
is an ideal of $\mathfrak{g}$ if $\mathfrak{l}_0\in \mathcal{I}(\mathfrak{g}_0,\sigma ;\theta )$ 
as follows. 
The inclusion $[\mathfrak{l}_0,\mathfrak{g}_0]\subset \mathfrak{l}_0$ implies 
\begin{align*}
([\mathfrak{l}_0^{\theta },\mathfrak{g}_0^{\theta }]+[\mathfrak{l}_0^{-\theta },\mathfrak{g}_0^{-\theta }])
+([\mathfrak{l}_0^{\theta },\mathfrak{g}_0^{-\theta }]+[\mathfrak{l}_0^{-\theta },\mathfrak{g}_0^{\theta }])
\subset \mathfrak{l}_0^{\theta }+\mathfrak{l}_0^{-\theta }. 
\end{align*}
Then, we obtain 
\begin{gather*}
[\mathfrak{l}_0^{\theta },\mathfrak{g}_0^{\theta }]+[\mathfrak{l}_0^{-\theta },\mathfrak{g}_0^{-\theta }]
\subset \mathfrak{l}_0^{\theta },\quad 
[\mathfrak{l}_0^{\theta },\mathfrak{g}_0^{-\theta }]+[\mathfrak{l}_0^{-\theta },\mathfrak{g}_0^{\theta }]
\subset \mathfrak{l}_0^{-\theta }. 
\end{gather*}
Under the setting, $[\mathfrak{l}_0^*,\mathfrak{g}]$ is written as 
\begin{align*}
[\mathfrak{l}_0^*,\mathfrak{g}]
&=
([\mathfrak{l}_0^{\theta },\mathfrak{g}_0^{\theta }]+[\mathfrak{l}_0^{-\theta },\mathfrak{g}_0^{-\theta }])
+\sqrt{-1}
([\mathfrak{l}_0^{\theta },\mathfrak{g}_0^{-\theta }]+[\mathfrak{l}_0^{\theta },\mathfrak{g}_0^{-\theta }]).
\end{align*}
Hence, $[\mathfrak{l}_0^*,\mathfrak{g}]$ is contained in 
$\mathfrak{l}_0^{\theta }+\sqrt{-1}\mathfrak{l}_0^{-\theta }=\mathfrak{l}_0^*$, from which 
$\mathfrak{l}_0^*$ is an ideal of $\mathfrak{g}$. 
By the same argument as above, 
$\mathfrak{l}^*=\mathfrak{l}^{\theta _1}+\sqrt{-1}\mathfrak{l}^{-\theta _1}$ is an ideal of $\mathfrak{g}_0$ 
if $\mathfrak{l}\in \mathcal{I}(\mathfrak{g},\theta _1,\theta _2)$. 

Therefore, 
the map 
$\Psi :\mathcal{I}(\mathfrak{g}_0,\sigma ;\theta )\to \mathcal{I}(\mathfrak{g},\theta _1,\theta _2)$, 
$\mathfrak{l}_0\mapsto \mathfrak{l}_0^*$ is defined by (\ref{eq:duality-ideal1}) 
and 
$\Phi :\mathcal{I}(\mathfrak{g},\theta _1,\theta _2)\to \mathcal{I}(\mathfrak{g}_0,\sigma ;\theta )$, 
$\mathfrak{l}\mapsto \mathfrak{l}^*$ by (\ref{eq:duality-ideal2}). 
Due to Theorem \ref{thm:duality-thm}, two maps are bijections and $\Phi ^{-1}=\Psi$. 

As a conclusion, Theorem \ref{thm:ideal} has been proved. 
\end{proof}

\subsection{Equivalence of irreducibility via duality theorem}
\label{subsec:equivalence}

In the following, 
we give a correspondence between irreducible non-compact semisimple symmetric pairs 
and irreducible commutative compact semisimple symmetric triads via our duality. 

First, our duality preserves the irreducibility, namely, we prove: 

\begin{theorem}
\label{thm:corresp-irreducible}
Let $(\mathfrak{g}_0,\sigma ;\theta )$ be a non-compact semisimple symmetric pair 
equipped with a Cartan involution 
and $(\mathfrak{g},\theta _1,\theta _2)=(\mathfrak{g}_0,\sigma ;\theta )^*$ 
the corresponding commutative compact semisimple symmetric triad via Theorem \ref{thm:duality-thm}. 
Then, $(\mathfrak{g}_0,\sigma )$ is irreducible (see Definition \ref{def:irr-pair}) 
if and only if $(\mathfrak{g},\theta _1,\theta _2)$ is irreducible (see Definition \ref{def:irr-triad}). 
\end{theorem}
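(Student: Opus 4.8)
The plan is to reduce the statement to the bijection of invariant ideals established in Theorem \ref{thm:ideal}, so that irreducibility becomes a statement about triviality of the corresponding sets of ideals. First I would reformulate both notions of irreducibility in terms of the sets $\mathcal{I}(\mathfrak{g}_0,\sigma ;\theta )$ and $\mathcal{I}(\mathfrak{g},\theta _1,\theta _2)$. By Lemma \ref{lem:inv-cartan}, every $\sigma$-invariant ideal of $\mathfrak{g}_0$ is automatically $\theta$-invariant, so the $\sigma$-invariant ideals of $\mathfrak{g}_0$ are exactly the members of $\mathcal{I}(\mathfrak{g}_0,\sigma ;\theta )$. Hence $(\mathfrak{g}_0,\sigma )$ is irreducible in the sense of Definition \ref{def:irr-pair} precisely when $\mathcal{I}(\mathfrak{g}_0,\sigma ;\theta )$ consists only of the trivial ideals $\{ 0\} $ and $\mathfrak{g}_0$. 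On the triad side, $(\mathfrak{g},\theta _1,\theta _2)$ is irreducible in the sense of Definition \ref{def:irr-triad} exactly when $\mathcal{I}(\mathfrak{g},\theta _1,\theta _2)$ consists only of $\{ 0\} $ and $\mathfrak{g}$.

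Next I would invoke the one-to-one correspondence $\Psi :\mathcal{I}(\mathfrak{g}_0,\sigma ;\theta )\to \mathcal{I}(\mathfrak{g},\theta _1,\theta _2)$, $\mathfrak{l}_0\mapsto \mathfrak{l}_0^*=\mathfrak{l}_0^{\theta }+\sqrt{-1}\mathfrak{l}_0^{-\theta }$, provided by Theorem \ref{thm:ideal}, with inverse $\Phi $. The key observation is that this bijection respects triviality: one has $\{ 0\} ^*=\{ 0\} $ and $\mathfrak{g}_0^*=\mathfrak{g}_0^{\theta }+\sqrt{-1}\mathfrak{g}_0^{-\theta }=\mathfrak{g}$, so the two trivial ideals of $\mathfrak{g}_0$ correspond exactly to the two trivial ideals of $\mathfrak{g}$. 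Consequently $\Psi $ restricts to a bijection between the non-trivial members of $\mathcal{I}(\mathfrak{g}_0,\sigma ;\theta )$ and the non-trivial members of $\mathcal{I}(\mathfrak{g},\theta _1,\theta _2)$.

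Combining these two steps yields the desired equivalence: $\mathcal{I}(\mathfrak{g}_0,\sigma ;\theta )$ admits a non-trivial member if and only if $\mathcal{I}(\mathfrak{g},\theta _1,\theta _2)$ does, that is, $(\mathfrak{g}_0,\sigma )$ fails to be irreducible if and only if $(\mathfrak{g},\theta _1,\theta _2)$ fails to be irreducible. Taking contrapositives gives the theorem. Since Theorem \ref{thm:ideal} already carries out the substantive matching of invariant ideals across the duality, I do not expect a genuine obstacle; the only points requiring care are the bookkeeping that the duality sends trivial ideals to trivial ideals, and the use of Lemma \ref{lem:inv-cartan} to identify the $\sigma$-invariant ideals with $\mathcal{I}(\mathfrak{g}_0,\sigma ;\theta )$, so that the two definitions of irreducibility really are the conditions transported by the bijection.
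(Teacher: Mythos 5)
Your proposal is correct and follows essentially the same route as the paper's own proof: both reduce the statement to the bijection $\Psi :\mathcal{I}(\mathfrak{g}_0,\sigma ;\theta )\to \mathcal{I}(\mathfrak{g},\theta _1,\theta _2)$ of Theorem \ref{thm:ideal} and observe that irreducibility on either side is exactly triviality of the corresponding set of invariant ideals. Your write-up is in fact slightly more complete, since you make explicit the two points the paper leaves implicit (Lemma \ref{lem:inv-cartan} identifying $\sigma$-invariant ideals with $\mathcal{I}(\mathfrak{g}_0,\sigma ;\theta )$, which is what the omitted converse direction needs, and the fact that $\Psi$ sends $\{0\}$ to $\{0\}$ and $\mathfrak{g}_0$ to $\mathfrak{g}$).
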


\begin{proof}
Suppose $(\mathfrak{g}_0,\sigma )$ is irreducible. 
Then, the set $\mathcal{I}(\mathfrak{g}_0,\sigma ;\theta )$ contains only trivial ideals, 
namely, $\mathcal{I}(\mathfrak{g}_0,\sigma ;\theta )=\{ \{ 0\} ,\mathfrak{g}_0\} $. 
By Theorem \ref{thm:ideal}, 
the set $\mathcal{I}(\mathfrak{g},\theta _1,\theta _2)$ equals 
$\Psi (\mathcal{I}(\mathfrak{g}_0,\sigma ;\theta ))=\{ \{ 0\} ,\mathfrak{g}\} $. 
Hence, $(\mathfrak{g},\theta _1,\theta _2)$ is irreducible. 
The opposite is also true, from which we omit its proof. 
\end{proof}

In a case where a non-compact real Lie algebra $\mathfrak{g}_0$ is simple and 
has no complex structures, 
any $(\mathfrak{g}_0,\sigma )\in \mathfrak{P}$ is irreducible 
and then $(\mathfrak{g},\theta _1,\theta _2):=(\mathfrak{g}_0,\sigma )^*\in \mathfrak{T}$ 
is also irreducible. 
In particular, we know: 

\begin{proposition}
\label{prop:simple}
Let $(\mathfrak{g}_0,\sigma )$ be a non-compact semisimple symmetric pair 
and $(\mathfrak{g},\theta _1,\theta _2)$ the dual of $(\mathfrak{g}_0,\sigma )$ 
via Theorem \ref{thm:duality-thm}. 
Then, 
the compact Lie algebra $\mathfrak{g}$ is simple 
if and only if the non-compact real Lie algebra $\mathfrak{g}_0$ is simple and has no complex structures. 
\end{proposition}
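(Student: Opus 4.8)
The plan is to transport everything to the common complexification $\mathfrak{g}_{\mathbb{C}}$ and to apply the classical dichotomy for real forms twice. By Corollary \ref{cor:duality-thm}~(1) the compact semisimple Lie algebra $\mathfrak{g}$ is a compact real form of $\mathfrak{g}_{\mathbb{C}}$, so $\mathfrak{g}$ and $\mathfrak{g}_0$ are two real forms of one and the same complex semisimple Lie algebra $\mathfrak{g}_{\mathbb{C}}$. I will therefore characterise the simplicity of each real form through the single complex object $\mathfrak{g}_{\mathbb{C}}$, for which simplicity is an intrinsic property.

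The classical input I would use is the following dichotomy: for a real semisimple Lie algebra $\mathfrak{m}_0$ with complexification $\mathfrak{m}_{\mathbb{C}}$, the complex Lie algebra $\mathfrak{m}_{\mathbb{C}}$ is simple if and only if $\mathfrak{m}_0$ is simple and admits no complex structure (cf.\ \cite{helgason}). One direction is elementary: if $\mathfrak{m}_{\mathbb{C}}$ is complex-simple then every nonzero ideal $\mathfrak{a}$ of $\mathfrak{m}_0$ has $\mathfrak{a}+\sqrt{-1}\mathfrak{a}=\mathfrak{m}_{\mathbb{C}}$, whence $\dim_{\mathbb{R}}\mathfrak{a}=\dim_{\mathbb{R}}\mathfrak{m}_0$ and $\mathfrak{a}=\mathfrak{m}_0$, so $\mathfrak{m}_0$ is simple, and it carries no complex structure since such a structure would split $\mathfrak{m}_{\mathbb{C}}$ (see below). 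For the converse one uses that a real simple Lie algebra is either absolutely simple or the underlying real Lie algebra of a complex simple Lie algebra, the latter being exactly the case with a complex structure. Applying this dichotomy to $\mathfrak{m}_0=\mathfrak{g}$, and noting that the compact Lie algebra $\mathfrak{g}$ can carry no complex structure (its Killing form is negative definite, whereas the Killing form of the realification of a complex simple Lie algebra is indefinite), I would obtain that $\mathfrak{g}$ is simple if and only if $\mathfrak{g}_{\mathbb{C}}$ is complex-simple. Applying the dichotomy again to $\mathfrak{m}_0=\mathfrak{g}_0$ gives that $\mathfrak{g}_{\mathbb{C}}$ is complex-simple if and only if $\mathfrak{g}_0$ is simple with no complex structure. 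Chaining these two equivalences proves the proposition.

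The step that requires care — and which I regard as the main obstacle — is the complex-structure case, namely showing that if a simple $\mathfrak{g}_0$ carries a complex structure $J$ then $\mathfrak{g}_{\mathbb{C}}$ is not complex-simple. Here I would extend $J$ to a $\mathbb{C}$-linear endomorphism $J^{\mathbb{C}}$ of $\mathfrak{g}_{\mathbb{C}}$ with $(J^{\mathbb{C}})^2=-\operatorname{id}$ and decompose $\mathfrak{g}_{\mathbb{C}}$ into the two $\pm\sqrt{-1}$-eigenspaces of $J^{\mathbb{C}}$; these are nonzero complex subspaces whose direct sum is $\mathfrak{g}_{\mathbb{C}}$, and they are genuine ideals precisely because $J$ makes the bracket of $\mathfrak{g}_0$ complex-bilinear, as recorded in Remark \ref{rem:c-linear}. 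The remaining non-simple case is immediate, since a decomposition of $\mathfrak{g}_0$ into at least two simple ideals complexifies to a nontrivial decomposition of $\mathfrak{g}_{\mathbb{C}}$. All other steps are routine bookkeeping with ideals and complex conjugations.
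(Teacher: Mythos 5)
Your proof is correct, but it is packaged differently from the paper's. The paper disposes of the proposition in a single line: since $(\mathfrak{g},\theta _1)=(\mathfrak{g}_0,\theta )^*$ is the classical Riemannian dual (Theorem \ref{thm:duality-g}), the statement is read off from the classification of irreducible orthogonal symmetric Lie algebras, cited as \cite[Theorem 5.4 in Chapter VIII]{helgason} (the type I/III versus type II/IV dichotomy and the duality between them). You instead pivot on the common complexification: by Corollary \ref{cor:duality-thm}~(1) both $\mathfrak{g}$ and $\mathfrak{g}_0$ are real forms of $\mathfrak{g}_{\mathbb{C}}$, and you apply the real-form dichotomy (simple without complex structure $\Leftrightarrow$ absolutely simple) twice, once on each side, supplemented by the observation that a compact semisimple Lie algebra carries no complex structure --- your Killing-form argument is the right one, since for the realification of a complex Lie algebra one has $B(JX,JX)=-B(X,X)$, so the form cannot be definite. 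What your route buys is self-containedness: the only black box is the classical fact that a real simple Lie algebra is either absolutely simple or the realification of a complex simple one, and every other step (ideals of the real form complexify to ideals, the $\pm \sqrt{-1}$-eigenspaces of $J^{\mathbb{C}}$ are ideals because $\operatorname{ad}$ commutes with $J$, the non-simple case) is checked directly; in particular it makes explicit exactly where the hypothesis ``no complex structure'' enters, which the paper leaves inside the citation. What the paper's route buys is brevity and the conceptual point that the proposition is literally the classical duality statement transported through Theorem \ref{thm:duality-g}. Since Helgason's classification theorem is itself proved by the complexification dichotomy you use, the two arguments rest on the same structure theory; the difference is presentation rather than substance, and both are valid.
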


\begin{proof}
This follows from the dual $(\mathfrak{g}_0,\theta )^*$ for Riemannian symmetric pair 
(see \cite[Theorem 5.4 in Chapter VIII]{helgason}). 
\end{proof}

In what follows, 
we study the irreducible commutative compact semisimple symmetric triad 
$(\mathfrak{g},\theta _1,\theta _2)=(\mathfrak{g}_0,\sigma ;\theta )^*$ 
corresponding to an irreducible non-compact semisimple symmetric pair $(\mathfrak{g}_0,\sigma ;\theta )$ 
which is one of (P-\ref{item:cpx-anti})--(P-\ref{item:non-linear}) of Corollary \ref{cor:irr-pair}. 

\begin{theorem}
\label{thm:irreducible}
Let $(\mathfrak{g}_0,\sigma ;\theta )$ be an irreducible non-compact semisimple symmetric pair 
equipped with a Cartan involution. 
Then, the dual $(\mathfrak{g}_0,\sigma ;\theta )^*$ satisfies 
(T-\ref{type:i2}) (resp. (T-\ref{type:ii2}), (T-\ref{type:i4}), (T-\ref{type:ii2d})) 
if and only if $(\mathfrak{g}_0,\sigma )$ satisfies 
(P-\ref{item:cpx-anti}) (resp. (P-\ref{item:cpx-linear}), (P-\ref{item:non-anti}), 
(P-\ref{item:non-linear})). 
\end{theorem}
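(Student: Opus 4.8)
The plan is to reduce the whole statement to four explicit computations of the functor $\Phi$ on the model triads of Proposition \ref{prop:irr-triad}. Recall from Theorem \ref{thm:duality-thm} that $\Phi$ and $(\cdot)^*=\Psi$ are mutually inverse bijections on equivalence classes, that Theorem \ref{thm:corresp-irreducible} says they preserve irreducibility, and that Proposition \ref{prop:simple} identifies the regime ``$\mathfrak{g}$ not simple'' with ``$\mathfrak{g}_0$ not simple or carrying a complex structure''. Consequently $\Phi$ restricts to a bijection between the four classes (T-a)--(T-d) of Proposition \ref{prop:irr-triad} and the four classes (P-a)--(P-d) of Corollary \ref{cor:irr-pair}. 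Since the four $\textrm{P}$-types are pairwise disjoint --- (P-a) versus (P-b) are separated by Remark \ref{rem:c-linear} (an involution of a complex simple Lie algebra is either $\mathbb{C}$-linear or anti-linear, never both), (P-c) versus (P-d) by whether $\mathfrak{g}_0'$ admits a complex structure, and $\{$(P-a),(P-b)$\}$ from $\{$(P-c),(P-d)$\}$ by simplicity of $\mathfrak{g}_0$ --- it suffices to prove just the forward implications $\Phi(\textrm{T-x})\in(\textrm{P-x})$ for $\textrm{x}\in\{\textrm{a,b,c,d}\}$; the converse then follows formally from bijectivity and disjointness.

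The engine for the four computations is the description $\mathfrak{g}_0=\mathfrak{g}^{\theta_1}+\sqrt{-1}\,\mathfrak{g}^{-\theta_1}$ from (\ref{eq:pair}), together with $\sigma=\theta_2|_{\mathfrak{g}_0}$. I will treat the two shapes of $\theta_1$ separately. When $\theta_1=\rho$ on $\mathfrak{u}\oplus\mathfrak{u}$ (the swap (\ref{eq:twist})), the fixed and anti-fixed spaces are the diagonal and anti-diagonal, and the map $Z\mapsto(Z,\overline{Z})$ identifies $\mathfrak{g}_0$ with the realification of the complex simple Lie algebra $\mathfrak{u}_{\mathbb{C}}$; here $\overline{\ \cdot\ }$ denotes the conjugation $\mu$ of $\mathfrak{u}_{\mathbb{C}}$ with respect to $\mathfrak{u}$, and the complex structure is multiplication by $\sqrt{-1}$ on $\mathfrak{u}_{\mathbb{C}}$. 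When instead $\theta_1=\nu\oplus\nu$ preserves each summand, $\mathfrak{g}_0=\mathfrak{g}_0'\oplus\mathfrak{g}_0'$ with $\mathfrak{g}_0':=\mathfrak{u}^{\nu}+\sqrt{-1}\,\mathfrak{u}^{-\nu}$ a non-compact real form of $\mathfrak{u}_{\mathbb{C}}$, hence real simple without complex structure, and $\theta_1$ restricts to the Cartan involution of each factor. In both shapes I only need to track how $\theta_2$ acts after the identification, using that the complexification of $\nu$ commutes with $\mu$.

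Carrying this out case by case: for (T-b), $\theta_2=\nu\oplus\nu$ becomes the $\mathbb{C}$-linear extension of $\nu$ on $\mathfrak{u}_{\mathbb{C}}$, so $(\mathfrak{g}_0,\sigma)$ is (P-b); for (T-a), $\theta_2=\rho\circ(\nu\oplus\nu)$ becomes $\nu\circ\mu$, which anti-commutes with the complex structure and hence is anti-linear, giving (P-a). For (T-d), $\theta_2=\rho$ is literally the swap of the two copies of $\mathfrak{g}_0'$, so $(\mathfrak{g}_0,\sigma;\theta)\equiv(\mathfrak{g}_0'\oplus\mathfrak{g}_0',\rho;\theta|_{\mathfrak{g}_0'}\oplus\theta|_{\mathfrak{g}_0'})$ with $\mathfrak{g}_0'$ simple without complex structure, i.e. (P-d). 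For (T-c), the diagonal identification gives $\mathfrak{g}_0\cong\mathfrak{u}_{\mathbb{C}}\oplus\mathfrak{u}_{\mathbb{C}}$ on which $\theta_2=\rho_{(14)(23)}$ reads as $(P,Q)\mapsto(\overline{Q},\overline{P})$; conjugating by the real automorphism $\operatorname{id}\oplus\mu$ (which commutes with the induced Cartan involution $\mu\oplus\mu$) turns this into the plain swap $\rho$, exhibiting (P-c) with $\mathfrak{g}_0'$ the realification of $\mathfrak{u}_{\mathbb{C}}$.

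I expect the main obstacle to be the anti-linear cases (T-a) and (T-c): there the naive restriction of $\theta_2$ comes out as $\nu$ or the swap \emph{twisted by the conjugation} $\mu$, and one must verify both that the twist is genuinely present (it encodes anti-linearity, precisely the feature separating (P-a) from (P-b) and, after the $\operatorname{id}\oplus\mu$ correction, placing (T-c) in (P-c)) and that the coordinate changes used respect the Cartan involution, so that the equivalences are equivalences of pairs equipped with Cartan involutions rather than merely of pairs. The bookkeeping that the compact form $\mathfrak{u}_{\mathbb{C}}^{\mu}=\mathfrak{u}$ is simple --- so that $\mathfrak{u}_{\mathbb{C}}$ is simple, its realification is real simple with a complex structure, and its real forms are real simple without one --- is what pins down the simple-versus-complex dichotomy underlying the disjointness of the target types.
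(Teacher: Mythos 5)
Your proposal is correct: all four computations of $\Phi$ on Matsuki's model triads check out, and your formal reduction (forward inclusions, plus bijectivity of $\Phi$ on equivalence classes, exhaustiveness of the two type lists, and pairwise disjointness of the P-types) does yield both directions of the theorem. The route is the mirror image of the paper's, which runs the duality the other way: it takes a pair of each type (P-\ref{item:cpx-anti})--(P-\ref{item:non-linear}) and computes its dual triad (Propositions \ref{prop:complex-linear}, \ref{prop:complex-antilinear}, \ref{prop:non-simple-real}, \ref{prop:non-simple-complex}), constructing the compact side via the isomorphism $\eta$ of (\ref{eq:eta}) and Lemmas \ref{lem:isom}, \ref{lem:k+k}, \ref{lem:k}; your identification $Z\mapsto (Z,\overline{Z})$ of $\mathfrak{g}^{\rho}+\sqrt{-1}\,\mathfrak{g}^{-\rho}$ with the realification of $\mathfrak{u}_{\mathbb{C}}$ is precisely the inverse of that $\eta$, so the underlying algebra is the same. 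Your direction buys two things: in the quadruple case, the conjugation by $\operatorname{id}\oplus\mu$ (correctly checked to commute with the induced Cartan involution $\mu\oplus\mu$) lands directly on the plain swap of two copies of $\mathfrak{u}_{\mathbb{C}}^{\mathbb{R}}$, i.e.\ on type (P-\ref{item:non-anti}), so you never need the paper's detour through $\rho_{(13)(24)}$ and Corollary \ref{cor:i4} (self-associatedness from Lemma \ref{lem:non-simple}); and you make explicit the formal step deducing the biconditional from one-directional computations, which the paper leaves implicit when it labels each proposition as a proof of an equivalence. What the paper's direction buys is ready-made formulas for $(\mathfrak{g}_0,\sigma;\theta )^*$ attached to an arbitrary pair of each P-type, e.g.\ $(\mathfrak{k}_0\oplus \mathfrak{k}_0,\rho ,\sigma \oplus \sigma )$ versus $(\mathfrak{k}_0\oplus \mathfrak{k}_0,\rho ,\rho \circ (\sigma \oplus \sigma ))$, which is the form used in the classification application \cite{bis1}. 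One point to spell out when you write this up: disjointness of (P-\ref{item:cpx-anti}) and (P-\ref{item:cpx-linear}) at the level of equivalence classes requires that any equivalence $\varphi$ between simple pairs with complex structures is itself $\mathbb{C}$-linear or anti-linear (the centroid argument of Remark \ref{rem:c-linear} applied to $\varphi ^{-1}J'\varphi$), so that conjugation preserves the linear/anti-linear dichotomy; that is the precise content behind your phrase ``separated by Remark \ref{rem:c-linear}''.
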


Our proof of Theorem \ref{thm:irreducible} 
will be given in Sections \ref{subsec:complex} and \ref{subsec:non-simple}. 

Theorem \ref{thm:irreducible} provides an alternative proof of the classification of 
irreducible non-compact semisimple symmetric pairs $(\mathfrak{g}_0,\sigma )$ 
via Theorem \ref{thm:duality-thm} when $\mathfrak{g}_0$ is either not simple or has a complex structure. 
In fact, 
their classification can be reduced to that of compact semisimple symmetric pairs, 
which is well-known (see \cite{helgason} for example). 
More precisely, 
$(\mathfrak{u}\oplus \mathfrak{u},\rho ,\rho \circ (\nu \oplus \nu ))
\equiv (\mathfrak{u}'\oplus \mathfrak{u}',\rho ,\rho \circ (\nu '\oplus \nu '))$ 
(see (T-\ref{type:i2}) of Proposition \ref{prop:irr-triad}) 
if $(\mathfrak{u},\nu )\equiv (\mathfrak{u}',\nu ')$ and so on. 

\subsection{Complex simple case}
\label{subsec:complex}

In this subsection, 
we treat the case where 
a real Lie algebra is simple and has a complex structure. 

Let $\mathfrak{g}_0$ be a real simple Lie algebra equipped with a complex structure $J$. 
We write $\mathfrak{s}=(\mathfrak{g}_0,J)$ for the complex simple Lie algebra. 
The notation $\mathfrak{s}^{\mathbb{R}}$ stands for the real semisimple Lie algebra 
by restricting the coefficient field to $\mathbb{R}$, 
equivalently, $\mathfrak{g}_0=\mathfrak{s}^{\mathbb{R}}$. 
In this setting, 
we give a description of the dual $(\mathfrak{g},\theta _1,\theta _2)$ 
of a non-compact semisimple symmetric pair equipped with a Cartan involution 
$(\mathfrak{g}_0,\sigma ;\theta )$. 

\subsubsection{Corresponding compact Lie algebra}

First of all, 
we find a description of the corresponding compact semisimple Lie algebra $\mathfrak{g}$. 
This $\mathfrak{g}$ is determined by the Riemannian symmetric pair 
$(\mathfrak{g}_0,\theta )$ with (\ref{eq:compact real form}). 
Then, we will consider the dual $(\mathfrak{g},\theta _1)=(\mathfrak{g}_0,\theta )^*$ 
for Riemannian symmetric pairs (see Theorem \ref{thm:duality-g}). 

The fixed point set $\mathfrak{k}_0:=\mathfrak{g}_0^{\theta }=(\mathfrak{s}^{\mathbb{R}})^{\theta }$ 
of the Cartan involution $\theta $ in $\mathfrak{g}_0$ 
is a compact real form of $\mathfrak{s}$. 
Then, $\mathfrak{p}_0=\mathfrak{g}_0^{-\theta }=(\mathfrak{s}^{\mathbb{R}})^{-\theta }$ 
is written by $\mathfrak{p}_0=J\mathfrak{k}_0$, 
from which we can write $\mathfrak{g}_0=\mathfrak{k}_0+J\mathfrak{k}_0$ for the Cartan decomposition. 
We denote by $\overline{X}$ the complex conjugate of $X\in \mathfrak{s}$ 
with respect to $\mathfrak{k}_0$, namely, 
$\overline{X}=X_1-JX_2$ for $X=X_1+JX_2$ with $X_1,X_2\in \mathfrak{k}_0$. 

Let $\mathfrak{g}_{\mathbb{C}}=\mathfrak{g}_0+\sqrt{-1}\mathfrak{g}_0$ 
be the complexification of the real Lie algebra $\mathfrak{g}_0$. 
Now, we define a map 
$\eta :\mathfrak{g}_0+\sqrt{-1}\mathfrak{g}_0\to \mathfrak{s}\oplus \mathfrak{s}$ 
by 
\begin{align}
\eta (X+\sqrt{-1}Y):=(X+JY,\overline{X-JY})\quad 
(X,Y\in \mathfrak{g}_0). 
\label{eq:eta}
\end{align}
The following lemma is due to \cite[Theorem 6.94]{knapp}. 

\begin{lemma}
\label{lem:isom}
The complexification 
$\mathfrak{g}_{\mathbb{C}}=\mathfrak{g}_0+\sqrt{-1}\mathfrak{g}_0$ is 
isomorphic to $\mathfrak{s}\oplus \mathfrak{s}$ as a complex Lie algebra 
via $\eta $. 
\end{lemma}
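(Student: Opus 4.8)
The plan is to verify directly that $\eta$ is a $\mathbb{C}$-linear bijective Lie algebra homomorphism, exploiting the single structural fact that $J$ acts on $\mathfrak{s}$ as multiplication by $\sqrt{-1}$, so that the bracket of $\mathfrak{s}$ satisfies $J[A,B]=[JA,B]=[A,JB]$ for all $A,B\in\mathfrak{s}$, together with $J^2=-\operatorname{id}$. The cleanest organization is to split $\eta$ into its two components, writing $\eta=(\pi_1,\pi_2)$ with $\pi_1(X+\sqrt{-1}Y)=X+JY$ and $\pi_2(X+\sqrt{-1}Y)=\overline{X-JY}$, and to establish that each $\pi_i\colon\mathfrak{g}_{\mathbb{C}}\to\mathfrak{s}$ is a $\mathbb{C}$-linear Lie homomorphism separately; then $\eta$ inherits these properties coordinatewise.

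First I would handle $\pi_1$, which is just the substitution $\sqrt{-1}\mapsto J$. A one-line computation gives $\pi_1(\sqrt{-1}Z)=J\pi_1(Z)$, so $\pi_1$ is $\mathbb{C}$-linear with respect to the complex structure $J$ on the target $\mathfrak{s}$. For the bracket, writing $Z=X_1+\sqrt{-1}Y_1$ and $W=X_2+\sqrt{-1}Y_2$, I expand the complexified bracket $[Z,W]=([X_1,X_2]-[Y_1,Y_2])+\sqrt{-1}([X_1,Y_2]+[Y_1,X_2])$ and compare $\pi_1([Z,W])$ with $[\pi_1 Z,\pi_1 W]=[X_1+JY_1,\,X_2+JY_2]$; the two agree precisely because $[X_1,JY_2]=J[X_1,Y_2]$, $[JY_1,X_2]=J[Y_1,X_2]$, and $[JY_1,JY_2]=-[Y_1,Y_2]$, all instances of the identity above.

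Next I would treat $\pi_2$ by factoring it as $\pi_2(Z)=\overline{\pi_2'(Z)}$, where $\pi_2'(X+\sqrt{-1}Y)=X-JY$ is the substitution $\sqrt{-1}\mapsto -J$ and the overline is conjugation of $\mathfrak{s}$ with respect to the compact real form $\mathfrak{k}_0=\mathfrak{g}_0^{\theta}$. The map $\pi_2'$ preserves brackets by the same computation as for $\pi_1$ (with $-J$, which also squares to $-\operatorname{id}$ and commutes with the bracket in the same sense), and it is anti-linear, satisfying $\pi_2'(\sqrt{-1}Z)=-J\pi_2'(Z)$. Since the conjugation is an anti-linear real Lie algebra automorphism obeying $\overline{JW}=-J\overline{W}$, composing the two anti-linear maps yields a $\mathbb{C}$-linear Lie homomorphism $\pi_2$: brackets are preserved because conjugation preserves them as a real map, and $\pi_2(\sqrt{-1}Z)=\overline{-J\pi_2'(Z)}=J\overline{\pi_2'(Z)}=J\pi_2(Z)$. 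I expect this bracket computation for $\pi_2$, and in particular keeping the conjugation together with the sign in $\overline{JW}=-J\overline{W}$ straight, to be the only delicate point; dropping a sign here is the natural pitfall.

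Finally, $\eta=(\pi_1,\pi_2)\colon\mathfrak{g}_{\mathbb{C}}\to\mathfrak{s}\oplus\mathfrak{s}$ is then a $\mathbb{C}$-linear Lie homomorphism, and it remains to show bijectivity. I would exhibit the inverse explicitly: given $(U,V)\in\mathfrak{s}\oplus\mathfrak{s}$, the element $X+\sqrt{-1}Y$ with $X=\frac{1}{2}(U+\overline{V})$ and $Y=-\frac{1}{2}J(U-\overline{V})$ satisfies $X+JY=U$ and $\overline{X-JY}=V$, hence $\eta(X+\sqrt{-1}Y)=(U,V)$, and $X,Y\in\mathfrak{g}_0=\mathfrak{s}$ since $J$ and the conjugation preserve $\mathfrak{s}$. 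Alternatively, injectivity is immediate by adding and subtracting the two coordinate equations $X+JY=0$ and $X-JY=0$, after which surjectivity follows from the equality of complex dimensions $\dim_{\mathbb{C}}\mathfrak{g}_{\mathbb{C}}=2\dim_{\mathbb{C}}\mathfrak{s}=\dim_{\mathbb{C}}(\mathfrak{s}\oplus\mathfrak{s})$. Everything outside the $\pi_2$ bracket check is routine linear algebra.
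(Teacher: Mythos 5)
Your proof is correct. It is worth noting that the paper gives no proof of this lemma at all: it simply cites Knapp, Theorem 6.94, which says that for a complex Lie algebra $\mathfrak{s}$ the complexification of the underlying real Lie algebra $\mathfrak{s}^{\mathbb{R}}$ is isomorphic to $\mathfrak{s}\oplus\overline{\mathfrak{s}}$, where the second factor is the conjugate algebra; the conjugation with respect to the compact real form $\mathfrak{k}_0$ is exactly what allows the paper to replace $\overline{\mathfrak{s}}$ by $\mathfrak{s}$ in the second component of $\eta$. Your argument is a self-contained verification of precisely this statement in the form (\ref{eq:eta}): your $\pi _1$ and $\pi _2'$ are the substitutions $\sqrt{-1}\mapsto J$ and $\sqrt{-1}\mapsto -J$ (equivalently, up to identification, the projections onto the two eigenspaces of the $\mathbb{C}$-linearly extended $J$, which is how the argument in Knapp runs), and post-composing the anti-linear homomorphism $\pi _2'$ with the anti-linear automorphism $X\mapsto \overline{X}$ restores $\mathbb{C}$-linearity. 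The computations you single out all check: $[JY_1,JY_2]=-[Y_1,Y_2]$ follows from $J[A,B]=[JA,B]=[A,JB]$ and $J^2=-\operatorname{id}$; the sign rule $\overline{JW}=-J\overline{W}$ is used consistently in both the linearity and bracket checks for $\pi _2$; and the explicit inverse $X=\tfrac{1}{2}(U+\overline{V})$, $Y=-\tfrac{1}{2}J(U-\overline{V})$ does satisfy $X+JY=U$ and $\overline{X-JY}=V$. What your route buys is independence from the external reference and a verification of the specific map $\eta$ the paper actually uses later (e.g.\ in Lemmas \ref{lem:k+k} and \ref{lem:k}); what the paper's route buys is brevity.
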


Now, we will construct a compact real form $\mathfrak{g}$ of $\mathfrak{g}_{\mathbb{C}}$. 
Let $\tau $ be the complex conjugation on $\mathfrak{g}_{\mathbb{C}}$ 
with respect to $\mathfrak{g}_0$, namely, 
\begin{align*}
\tau (X+\sqrt{-1}Y)=X-\sqrt{-1}Y\quad (X,Y\in \mathfrak{g}_0). 
\end{align*}
We extend $\theta$ to a $\mathbb{C}$-linear involution on $\mathfrak{g}_{\mathbb{C}}$. 
Obviously, $\theta $ commutes with $\tau$, 
from which $\mu :=\tau \theta$ is an anti-linear involution on $\mathfrak{g}_{\mathbb{C}}$. 
Then, $\mathfrak{g}:=\mathfrak{g}_{\mathbb{C}}^{\mu }$ is a compact real form 
of $\mathfrak{g}_{\mathbb{C}}$. 

\begin{lemma}
\label{lem:k+k}
The compact Lie algebra $\mathfrak{g}$ is given by 
$\mathfrak{g}=\mathfrak{k}_0+\sqrt{-1}J\mathfrak{k}_0$ 
and is isomorphic to $\mathfrak{k}_0\oplus \mathfrak{k}_0$. 
\end{lemma}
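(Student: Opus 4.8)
The plan is to compute the fixed-point set $\mathfrak{g}_{\mathbb{C}}^{\mu}$ directly for the first assertion, and then transport it through the isomorphism $\eta$ of Lemma \ref{lem:isom} for the second. Everything reduces to careful bookkeeping between the two imaginary units: the complex structure $J$ on $\mathfrak{g}_0$ and the unit $\sqrt{-1}$ coming from the complexification.

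First I would determine $\mathfrak{g}$ explicitly. For $Z=X+\sqrt{-1}Y$ with $X,Y\in \mathfrak{g}_0$, the $\mathbb{C}$-linearity of $\theta$ together with the anti-linearity of $\tau$ gives $\mu(Z)=\tau\theta(Z)=\theta(X)-\sqrt{-1}\,\theta(Y)$. Hence $\mu(Z)=Z$ holds if and only if $\theta(X)=X$ and $\theta(Y)=-Y$, that is, $X\in \mathfrak{g}_0^{\theta}=\mathfrak{k}_0$ and $Y\in \mathfrak{g}_0^{-\theta}=\mathfrak{p}_0=J\mathfrak{k}_0$. This yields $\mathfrak{g}=\mathfrak{k}_0+\sqrt{-1}\mathfrak{p}_0=\mathfrak{k}_0+\sqrt{-1}J\mathfrak{k}_0$, which is the first claimed description.

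Next I would restrict $\eta$ to $\mathfrak{g}$. Writing a typical element of $\mathfrak{g}$ as $X+\sqrt{-1}JW$ with $X,W\in \mathfrak{k}_0$, and using $J^2=-\operatorname{id}$, I compute $X+J(JW)=X-W$ and $X-J(JW)=X+W$, so that by the definition \eqref{eq:eta} of $\eta$,
\begin{align*}
\eta(X+\sqrt{-1}JW)=(X-W,\overline{X+W}).
\end{align*}
Since $X+W\in \mathfrak{k}_0$ and the conjugation $\overline{\phantom{X}}$ is taken with respect to $\mathfrak{k}_0$, one has $\overline{X+W}=X+W$, whence $\eta(X+\sqrt{-1}JW)=(X-W,X+W)$.

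Finally I would observe that the $\mathbb{R}$-linear map $(X,W)\mapsto (X-W,X+W)$ is a bijection of $\mathfrak{k}_0\oplus \mathfrak{k}_0$ onto itself, so that $\eta(\mathfrak{g})=\mathfrak{k}_0\oplus \mathfrak{k}_0$, a real Lie subalgebra of $\mathfrak{s}\oplus \mathfrak{s}$. As $\eta$ is a complex (hence real) Lie algebra isomorphism by Lemma \ref{lem:isom}, its restriction is a Lie algebra isomorphism $\mathfrak{g}\stackrel{\sim}{\to}\mathfrak{k}_0\oplus \mathfrak{k}_0$, giving the second assertion. The only genuine obstacle is the evaluation of $\eta$ on $\mathfrak{g}$: one must keep $J$ and $\sqrt{-1}$ strictly separate and recognize that the second component simplifies because $X+W$ lies in the fixed set $\mathfrak{k}_0$ of the conjugation; the rest is routine linear algebra.
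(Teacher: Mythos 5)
Your proposal is correct and follows essentially the same route as the paper: compute $\mathfrak{g}=\mathfrak{g}_{\mathbb{C}}^{\mu}=\mathfrak{k}_0+\sqrt{-1}J\mathfrak{k}_0$ directly, then evaluate $\eta$ on elements $X+\sqrt{-1}JW$ to get $(X-W,X+W)$ and conclude via Lemma \ref{lem:isom}. The only (inessential) difference is at the last step: the paper deduces $\eta(\mathfrak{g})=\mathfrak{k}_0\oplus\mathfrak{k}_0$ by a dimension count, while you note that $(X,W)\mapsto(X-W,X+W)$ is already a bijection of $\mathfrak{k}_0\oplus\mathfrak{k}_0$ onto itself, which is equally valid.
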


\begin{proof}
A direct commutation shows that 
$\mathfrak{g}
=\mathfrak{g}_0^{\theta }+\sqrt{-1}\mathfrak{g}_0^{-\theta }
=\mathfrak{k}_0+\sqrt{-1}J\mathfrak{k}_0$. 
For an element $X\in \mathfrak{s}$, 
the relation $\overline{X}=X$ holds if and only if $X$ lies in $\mathfrak{k}_0$. 
For $X,Y\in \mathfrak{k}_0$, we have 
\begin{align*}
\eta (X+\sqrt{-1}JY)
&=(X+J(JY),\overline{X-J(JY)})\\
&=(X-Y,\overline{X+Y}) \notag \\
&=(X-Y,X+Y). \notag
\end{align*}
Hence, $\eta (\mathfrak{g})$ is contained in $\mathfrak{k}_0\oplus \mathfrak{k}_0$. 
As $\dim \mathfrak{g}=\dim (\mathfrak{k}_0\oplus \mathfrak{k}_0)$, 
we have $\eta (\mathfrak{g})=\mathfrak{k}_0\oplus \mathfrak{k}_0$. 
Since $\eta$ is an isomorphism, 
we conclude $\mathfrak{g}\simeq \eta (\mathfrak{g})=\mathfrak{k}_0\oplus \mathfrak{k}_0$. 
\end{proof}

Second, 
the restriction of the $\mathbb{C}$-linear involution 
$\theta \in \operatorname{Aut}\mathfrak{g}_{\mathbb{C}}$ to $\mathfrak{g}$ 
becomes an involutive automorphism on $\mathfrak{g}$. 

\begin{lemma}
\label{lem:k}
The fixed point set $\mathfrak{g}^{\theta }$ coincides with $\mathfrak{k}_0$ 
and is isomorphic to 
$\operatorname{diag}(\mathfrak{k}_0)
=\{ (X,X):X\in \mathfrak{k}_0\} $ in $\mathfrak{k}_0\oplus \mathfrak{k}_0$. 
Further, $\mathfrak{g}^{-\theta }$ equals $\sqrt{-1}J\mathfrak{k}_0
\simeq \{ (X,-X):X\in \mathfrak{k}_0\}$. 
\end{lemma}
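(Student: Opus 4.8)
The plan is to verify Lemma \ref{lem:k} directly by computing the fixed point sets of the $\mathbb{C}$-linear involution $\theta$ restricted to $\mathfrak{g} = \mathfrak{k}_0 + \sqrt{-1}J\mathfrak{k}_0$ (the explicit form of $\mathfrak{g}$ established in Lemma \ref{lem:k+k}), and then transporting these computations through the isomorphism $\eta$ of Lemma \ref{lem:isom} to identify them inside $\mathfrak{k}_0 \oplus \mathfrak{k}_0$. The first claim, $\mathfrak{g}^\theta = \mathfrak{k}_0$, should follow almost immediately: since $\theta$ acts as $+1$ on $\mathfrak{k}_0 = \mathfrak{g}_0^\theta$ and $\mathfrak{p}_0 = \mathfrak{g}_0^{-\theta} = J\mathfrak{k}_0$ carries the eigenvalue $-1$, the $\mathbb{C}$-linear extension of $\theta$ acts as $-1$ on $\sqrt{-1}J\mathfrak{k}_0$. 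Hence the $+1$-eigenspace of $\theta$ in $\mathfrak{g}$ is exactly $\mathfrak{k}_0$ and the $-1$-eigenspace is exactly $\sqrt{-1}J\mathfrak{k}_0$, giving both the expression for $\mathfrak{g}^\theta$ and for $\mathfrak{g}^{-\theta}$.

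The substantive part is identifying these spaces under $\eta$. From the proof of Lemma \ref{lem:k+k} we already have the key computation $\eta(X + \sqrt{-1}JY) = (X - Y,\, X + Y)$ for $X, Y \in \mathfrak{k}_0$. For the fixed point set $\mathfrak{g}^\theta = \mathfrak{k}_0$ I would set $Y = 0$, obtaining
\begin{align*}
\eta(X) = (X, X) \qquad (X \in \mathfrak{k}_0),
\end{align*}
which shows at once that $\eta(\mathfrak{k}_0) = \operatorname{diag}(\mathfrak{k}_0) = \{(X,X) : X \in \mathfrak{k}_0\}$. For the space $\mathfrak{g}^{-\theta} = \sqrt{-1}J\mathfrak{k}_0$ I would set $X = 0$, giving
\begin{align*}
\eta(\sqrt{-1}JY) = (-Y, Y) \qquad (Y \in \mathfrak{k}_0),
\end{align*}
so that $\eta(\sqrt{-1}J\mathfrak{k}_0) = \{(-Y, Y) : Y \in \mathfrak{k}_0\} = \{(X, -X) : X \in \mathfrak{k}_0\}$ after replacing $Y$ by $-Y$. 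Since $\eta$ is a Lie algebra isomorphism, these identifications give the claimed isomorphisms $\mathfrak{g}^\theta \simeq \operatorname{diag}(\mathfrak{k}_0)$ and $\mathfrak{g}^{-\theta} \simeq \{(X,-X) : X \in \mathfrak{k}_0\}$.

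I anticipate no genuine obstacle here; the lemma is essentially a bookkeeping corollary of the explicit formula for $\eta|_{\mathfrak{g}}$ derived in the previous lemma. The only point requiring a small amount of care is confirming that the $\mathbb{C}$-linear extension of $\theta$ indeed sends $\sqrt{-1}J\mathfrak{k}_0$ to $-\sqrt{-1}J\mathfrak{k}_0$ — that is, that $\theta$ really acts by $-1$ on this summand of $\mathfrak{g}$. This is where I would be most explicit: for $Z \in J\mathfrak{k}_0 = \mathfrak{p}_0$ we have $\theta(Z) = -Z$, and $\mathbb{C}$-linearity then gives $\theta(\sqrt{-1}Z) = \sqrt{-1}\theta(Z) = -\sqrt{-1}Z$, confirming the eigenvalue. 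With that check in place, the remaining statements are immediate reformulations through $\eta$, and the proof closes by invoking that $\eta$ is an isomorphism.
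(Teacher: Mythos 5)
Your proof is correct and takes essentially the same route as the paper's: both determine the $\pm 1$-eigenspaces of the $\mathbb{C}$-linear extension of $\theta$ on $\mathfrak{g}=\mathfrak{k}_0+\sqrt{-1}J\mathfrak{k}_0$ and then transport them through the formula $\eta (X+\sqrt{-1}JY)=(X-Y,X+Y)$ from the proof of Lemma \ref{lem:k+k}. The only cosmetic difference is that the paper verifies the eigenvalue on $\sqrt{-1}J\mathfrak{k}_0$ via the anti-commutation $\theta \circ J=-J\circ \theta$ (Remark \ref{rem:c-linear}), whereas you use $J\mathfrak{k}_0=\mathfrak{p}_0=\mathfrak{g}_0^{-\theta }$ directly; these amount to the same computation.
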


\begin{proof}
Recall that $\theta $ is anti-linear as a map on $\mathfrak{s}=(\mathfrak{g}_0,J)$, 
namely, $\theta \circ J=-J\circ \theta$ 
and $\mathbb{C}$-linear as a map on $\mathfrak{g}_{\mathbb{C}}$, 
namely, $\theta \circ \sqrt{-1}=\sqrt{-1}\circ \theta $. 
Let $X+\sqrt{-1}JY\in \mathfrak{g}=\mathfrak{k}_0+\sqrt{-1}J\mathfrak{k}_0$. 
By $\mathfrak{k}_0=\mathfrak{g}_0^{\theta }$ and Remark \ref{rem:c-linear}, we have 
\begin{align}
\theta (X+\sqrt{-1}JY)=\theta (X)-\sqrt{-1}J\theta (Y)=X-\sqrt{-1}JY. 
\label{eq:theta}
\end{align}
Then, $\theta (X+\sqrt{-1}JY)=X+\sqrt{-1}JY$ if and only if $Y=0$. 
Thus, we obtain $\mathfrak{g}^{\theta }=\mathfrak{k}_0$. 
It follows from the proof of Lemma \ref{lem:k+k} that 
$\mathfrak{k}_0\simeq \eta (\mathfrak{k}_0)
=\operatorname{diag}(\mathfrak{k}_0)$. 

Similarly, 
$\mathfrak{g}^{-\theta }=\sqrt{-1}J\mathfrak{k}_0\simeq \{ (X,-X):X\in \mathfrak{k}_0\}$ 
can be verified, which we omit its detail. 
\end{proof}

We remark that 
$\operatorname{diag}(\mathfrak{k}_0)$ is a symmetric subalgebra 
of $\mathfrak{k}_0\oplus \mathfrak{k}_0$ 
which is realized as the fixed point set of $\rho$ (see (\ref{eq:twist}) for definition). 
This corresponds to $\theta $ on $\mathfrak{g}$ in the sense of $\eta \theta =\tau \eta$. 
Hence, we get 

\begin{lemma}
\label{lem:complex-riemannian}
$(\mathfrak{g}_0,\theta )^*\equiv (\mathfrak{k}_0\oplus \mathfrak{k}_0,\rho )$. 
\end{lemma}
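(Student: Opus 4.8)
The plan is to produce a single Lie algebra isomorphism that simultaneously realises the equivalence and transports $\theta$ to $\rho$, namely the map $\eta$ of (\ref{eq:eta}) restricted to $\mathfrak{g}$. First I would unwind the notation: under the convention of Section \ref{subsec:riemannian}, the dual $(\mathfrak{g}_0,\theta)^*$ denotes the compact Riemannian symmetric pair $(\mathfrak{g},\theta)$ regarded as the triad $(\mathfrak{g},\theta,\theta)$, where $\mathfrak{g}=\mathfrak{k}_0+\sqrt{-1}J\mathfrak{k}_0$ as in Lemma \ref{lem:k+k}; likewise $(\mathfrak{k}_0\oplus\mathfrak{k}_0,\rho)$ abbreviates the triad $(\mathfrak{k}_0\oplus\mathfrak{k}_0,\rho,\rho)$. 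Consequently it suffices to exhibit an isomorphism $\psi\colon\mathfrak{g}\to\mathfrak{k}_0\oplus\mathfrak{k}_0$ satisfying $\psi\theta=\rho\psi$, since then both slots of the two triads are matched at once.

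For this I would take $\psi:=\eta|_{\mathfrak{g}}$. By Lemma \ref{lem:k+k} this is already a Lie algebra isomorphism of $\mathfrak{g}$ onto $\mathfrak{k}_0\oplus\mathfrak{k}_0$, so only the intertwining relation remains. The cleanest route is to match eigenspaces: Lemma \ref{lem:k} gives $\eta(\mathfrak{g}^{\theta})=\operatorname{diag}(\mathfrak{k}_0)$ and $\eta(\mathfrak{g}^{-\theta})=\{(X,-X):X\in\mathfrak{k}_0\}$, and these are precisely the $(+1)$- and $(-1)$-eigenspaces of $\rho$ on $\mathfrak{k}_0\oplus\mathfrak{k}_0$. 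Since an involution is determined by its $\pm1$-eigenspace decomposition and $\eta$ carries the two eigenspaces of $\theta|_{\mathfrak{g}}$ onto the corresponding eigenspaces of $\rho$, we conclude $\eta\theta=\rho\eta$ on $\mathfrak{g}$, which is exactly the required relation; hence $(\mathfrak{g},\theta,\theta)\equiv(\mathfrak{k}_0\oplus\mathfrak{k}_0,\rho,\rho)$. As a sanity check one may instead verify the relation directly: for $A,B\in\mathfrak{k}_0$ formula (\ref{eq:theta}) gives $\theta(A+\sqrt{-1}JB)=A-\sqrt{-1}JB$, while the computation in the proof of Lemma \ref{lem:k+k} yields $\eta(A+\sqrt{-1}JB)=(A-B,A+B)$, so both $\eta\theta$ and $\rho\eta$ send $A+\sqrt{-1}JB$ to $(A+B,A-B)$.

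I do not expect a genuine obstacle here, since Lemmas \ref{lem:isom}, \ref{lem:k+k} and \ref{lem:k} have already carried out the structural work. The only point demanding care is bookkeeping: one must keep in mind that the Riemannian-duality symbol $(\,\cdot\,)^*$ and the notation $(\mathfrak{k}_0\oplus\mathfrak{k}_0,\rho)$ both stand for triads with a repeated involution, so that the single identity $\eta\theta=\rho\eta$ is enough and there is no second condition to check.
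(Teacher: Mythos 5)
Your proposal is correct and takes essentially the same route as the paper: the paper likewise uses the isomorphism $\eta$ of Lemma \ref{lem:k+k}, notes via Lemma \ref{lem:k} that it carries the $\pm 1$-eigenspaces of $\theta$ onto the diagonal and anti-diagonal of $\mathfrak{k}_0\oplus\mathfrak{k}_0$ (the eigenspaces of $\rho$), and concludes the intertwining relation (recorded there, slightly abusively, as $\eta\theta=\tau\eta$). Your direct verification $\eta\theta(A+\sqrt{-1}JB)=(A+B,A-B)=\rho\eta(A+\sqrt{-1}JB)$ is exactly the computation implicit in the paper's remark.
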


In the following, 
we consider an irreducible non-compact semisimple symmetric pair equipped with a Cartan involution 
$(\mathfrak{g}_0,\sigma ;\theta )$. 
As mentioned in Remark \ref{rem:c-linear}, 
an involution $\sigma$ on the complex simple Lie algebra $\mathfrak{s}$ 
is either $\mathbb{C}$-linear or anti-linear. 
Then, we deal with $\sigma$ as a $\mathbb{C}$-linear involution 
in Section \ref{subsubsec:complex linear} 
and as an anti-linear one in Section \ref{subsubsec:anti-linear}. 
For this, 
we extend $\theta ,\sigma \in \operatorname{Aut}\mathfrak{g}_0$ 
to $\mathbb{C}$-linear involutions on $\mathfrak{g}_{\mathbb{C}}$. 

\subsubsection{Complex linear case}
\label{subsubsec:complex linear}

Let us assume here that $\sigma$ is $\mathbb{C}$-linear on $\mathfrak{s}$, 
namely, $\sigma \circ J=J\circ \sigma $. 
As $\theta \sigma =\sigma \theta $, 
$\mathfrak{k}_0$ is $\sigma$-invariant, and then we have 
\begin{align*}
\sigma (X+\sqrt{-1}JY)=\sigma (X)+\sqrt{-1}J\sigma (Y)\quad (X,Y\in \mathfrak{k}_0). 
\end{align*}
Hence, $\mathfrak{g}^{\sigma }$ is expressed as 
$\mathfrak{g}^{\sigma }=\mathfrak{k}_0^{\sigma }+\sqrt{-1}J\mathfrak{k}_0^{\sigma}
\simeq \eta (\mathfrak{g}^{\sigma })
=\mathfrak{k}_0^{\sigma }\oplus \mathfrak{k}_0^{\sigma }$. 
Here, the subalgebra $\mathfrak{k}_0^{\sigma }\oplus \mathfrak{k}_0^{\sigma }$ 
is the fixed point set of the involution 
$\sigma \oplus \sigma $ on $\mathfrak{k}_0\oplus \mathfrak{k}_0$. 
Since $\mathfrak{s}$ is simple, 
the compact Lie algebra $\mathfrak{k}_0$ is also simple. 
Consequently, 
we have proved: 

\begin{proposition}[Proof of (P-\ref{item:cpx-linear}) $\Leftrightarrow $ (T-\ref{type:ii2}) 
of Theorem \ref{thm:irreducible}]
\label{prop:complex-linear}
Let $\mathfrak{g}_0$ be a real simple Lie algebra with a complex structure $J$. 
If an involution $\sigma$ is $\mathbb{C}$-linear on $\mathfrak{s}=(\mathfrak{g}_0,J)$, 
then we have 
\begin{align*}
(\mathfrak{g}_0,\sigma ;\theta )^*
\equiv (\mathfrak{k}_0\oplus \mathfrak{k}_0,\rho ,\sigma \oplus \sigma ). 
\end{align*}
\end{proposition}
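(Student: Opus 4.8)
The plan is to transport the dual $(\mathfrak{g},\theta _1,\theta _2)=(\mathfrak{g}_0,\sigma ;\theta )^*$ through the isomorphism $\eta $ of Lemma \ref{lem:isom} and read off the two involutions induced on $\mathfrak{k}_0\oplus \mathfrak{k}_0$. Recall from (\ref{eq:triad}) that $\mathfrak{g}=\mathfrak{k}_0+\sqrt{-1}J\mathfrak{k}_0$ (Lemma \ref{lem:k+k}), $\theta _1=\theta |_{\mathfrak{g}}$ and $\theta _2=\sigma |_{\mathfrak{g}}$. Lemma \ref{lem:k} (equivalently Lemma \ref{lem:complex-riemannian}) already identifies $\eta (\mathfrak{g})=\mathfrak{k}_0\oplus \mathfrak{k}_0$ and shows that $\eta $ intertwines $\theta _1$ with $\rho $, so the first slot of the triad is settled. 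It therefore remains to check that $\eta $ carries $\theta _2=\sigma |_{\mathfrak{g}}$ to $\sigma \oplus \sigma $.

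First I would confirm that $\sigma $ genuinely restricts to $\mathfrak{g}$. Since $\sigma $ commutes with $\theta $ it preserves $\mathfrak{k}_0=\mathfrak{g}_0^{\theta }$; being $\mathbb{C}$-linear on $\mathfrak{s}$ it commutes with $J$, and its $\mathbb{C}$-linear extension to $\mathfrak{g}_{\mathbb{C}}$ commutes with $\sqrt{-1}$. Hence for $X,Y\in \mathfrak{k}_0$,
\[
\sigma (X+\sqrt{-1}JY)=\sigma (X)+\sqrt{-1}J\sigma (Y),
\]
with $\sigma (X),\sigma (Y)\in \mathfrak{k}_0$, so $\sigma $ preserves $\mathfrak{g}$ and $\theta _2$ is a well-defined involution.

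The key step is the intertwining computation. Using the formula $\eta (X+\sqrt{-1}JY)=(X-Y,X+Y)$ established in the proof of Lemma \ref{lem:k+k} (valid for $X,Y\in \mathfrak{k}_0$), I would compute
\[
\eta (\sigma (X+\sqrt{-1}JY))=(\sigma (X)-\sigma (Y),\sigma (X)+\sigma (Y))=(\sigma \oplus \sigma )(X-Y,X+Y),
\]
which is precisely $(\sigma \oplus \sigma )(\eta (X+\sqrt{-1}JY))$. Thus $\eta \sigma =(\sigma \oplus \sigma )\eta $ on $\mathfrak{g}$, and combined with $\eta \theta _1=\rho \eta $ this yields $(\mathfrak{g},\theta _1,\theta _2)\equiv (\mathfrak{k}_0\oplus \mathfrak{k}_0,\rho ,\sigma \oplus \sigma )$, as claimed; the simplicity of $\mathfrak{k}_0$ forced by that of $\mathfrak{s}$ then places this triad in type (T-\ref{type:ii2}).

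I do not expect a genuine obstacle here: all the structural work was done in Lemmas \ref{lem:k+k}--\ref{lem:complex-riemannian}, and the content reduces to a one-line intertwining identity. The only point demanding care is the bookkeeping between the two commuting complex structures, namely the intrinsic $J$ on $\mathfrak{s}=(\mathfrak{g}_0,J)$ and the scalar $\sqrt{-1}$ on the complexification $\mathfrak{g}_{\mathbb{C}}$, since $\sigma $ must be shown to commute with both; it is exactly the $\mathbb{C}$-linearity hypothesis on $\mathfrak{s}$ (cf. Remark \ref{rem:c-linear}) that supplies commutation with $J$.
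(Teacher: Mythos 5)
Your proof is correct and follows essentially the same route as the paper: both transport the dual triad through the isomorphism $\eta$ of Lemma \ref{lem:isom}, use the formula $\eta (X+\sqrt{-1}JY)=(X-Y,X+Y)$ from Lemma \ref{lem:k+k} together with Lemma \ref{lem:k}, and exploit $\mathbb{C}$-linearity via $\sigma (X+\sqrt{-1}JY)=\sigma (X)+\sqrt{-1}J\sigma (Y)$. If anything, your write-up is slightly more complete: the paper only records that $\eta$ carries $\mathfrak{g}^{\sigma}$ onto the fixed point set $\mathfrak{k}_0^{\sigma}\oplus \mathfrak{k}_0^{\sigma}$ of $\sigma \oplus \sigma$, whereas you verify the full intertwining identity $\eta \sigma =(\sigma \oplus \sigma )\eta$, which is what the equivalence $\equiv$ of triads literally requires.
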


\subsubsection{Anti-linear case}
\label{subsubsec:anti-linear}

On the other hand, we assume that $\sigma$ is anti-linear on $\mathfrak{s}$, 
namely, $\sigma \circ J=-J\circ \sigma$. 
Equivalently, 
$\mathfrak{s}^{\sigma }$ is a real form of $\mathfrak{s}$. 
Then, 
\begin{align*}
\sigma (X+\sqrt{-1}JY)=\sigma (X)-\sqrt{-1}J\sigma (Y)\quad (X,Y\in \mathfrak{k}_0). 
\end{align*}
This implies that 
$\mathfrak{g}^{\sigma }=\mathfrak{k}_0^{\sigma }+\sqrt{-1}J\mathfrak{k}_0^{-\sigma }$ 
and 
\begin{align*}
\eta (\mathfrak{g}^{\sigma })
&=\{ (X-Y,X+Y):	X\in \mathfrak{k}_0^{\sigma },Y\in \mathfrak{k}_0^{-\sigma }\} 
=\{ (X,\sigma (X)):X\in \mathfrak{k}_0\} . 
\end{align*}
This is a symmetric subalgebra of $\mathfrak{k}_0\oplus \mathfrak{k}_0$ 
which is realized as the fixed point set of $\rho \circ (\sigma \oplus \sigma )$. 
Therefore, we obtain: 

\begin{proposition}[Proof of (P-\ref{item:cpx-anti}) $\Leftrightarrow $ (T-\ref{type:i2}) 
of Theorem \ref{thm:irreducible}]
\label{prop:complex-antilinear}
Let $\mathfrak{g}_0$ be a real simple Lie algebra with a complex structure $J$. 
If an involution $\sigma$ is anti-linear on $\mathfrak{s}=(\mathfrak{g}_0,J)$, 
then we have 
\begin{align*}
(\mathfrak{g}_0,\sigma ;\theta )^*
&\equiv (\mathfrak{k}_0\oplus \mathfrak{k}_0,\rho ,\rho \circ (\sigma \oplus \sigma )). 
\end{align*}
\end{proposition}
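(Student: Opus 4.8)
The plan is to exhibit the complex-linear Lie algebra isomorphism $\eta$ of (\ref{eq:eta}) as an explicit equivalence between the dual triad $(\mathfrak{g}_0,\sigma;\theta)^*=(\mathfrak{g},\theta_1,\theta_2)$ and $(\mathfrak{k}_0\oplus\mathfrak{k}_0,\rho,\rho\circ(\sigma\oplus\sigma))$. By (\ref{eq:triad}) the dual triad has underlying algebra $\mathfrak{g}=\mathfrak{k}_0+\sqrt{-1}J\mathfrak{k}_0$ (Lemma \ref{lem:k+k}) with $\theta_1=\theta|_{\mathfrak{g}}$ and $\theta_2=\sigma|_{\mathfrak{g}}$. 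Since $\eta$ is an isomorphism onto $\mathfrak{k}_0\oplus\mathfrak{k}_0$ by Lemma \ref{lem:k+k}, it suffices to verify the two intertwining relations $\eta\theta_1=\rho\,\eta$ and $\eta\theta_2=\bigl(\rho\circ(\sigma\oplus\sigma)\bigr)\eta$ on $\mathfrak{g}$. The first is exactly the content of Lemma \ref{lem:complex-riemannian} (whose underlying formula for $\theta$ on $\mathfrak{g}$ is Lemma \ref{lem:k}), so the genuine work is the second relation.

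For that relation I would take a generic element $X+\sqrt{-1}JY$ of $\mathfrak{g}$ with $X,Y\in\mathfrak{k}_0$ and carefully track the two distinct linearities at play: $\sigma$ is \emph{anti-linear} for the complex structure $J$ of $\mathfrak{s}$, i.e.\ $\sigma J=-J\sigma$ (Remark \ref{rem:c-linear}), whereas $\sigma$ has been extended $\mathbb{C}$-linearly in the complexification variable $\sqrt{-1}$. Because $\sigma$ commutes with $\theta$ it preserves $\mathfrak{k}_0=\mathfrak{g}_0^{\theta}$, so $\sigma(X),\sigma(Y)\in\mathfrak{k}_0$ and
\[
\sigma(X+\sqrt{-1}JY)=\sigma(X)-\sqrt{-1}J\sigma(Y),
\]
which already lies in $\mathfrak{g}$, confirming $\theta_2$-invariance. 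Applying $\eta(A+\sqrt{-1}JB)=(A-B,A+B)$ to the right-hand side (with $A=\sigma(X)$, $B=-\sigma(Y)$) and comparing with $\rho\circ(\sigma\oplus\sigma)$ applied to $\eta(X+\sqrt{-1}JY)=(X-Y,X+Y)$ reduces the claim to the identity $(\sigma(X)+\sigma(Y),\sigma(X)-\sigma(Y))=(\sigma(X+Y),\sigma(X-Y))$, which is immediate from the linearity of $\sigma$ on $\mathfrak{k}_0$. As a consistency check one recovers $\eta(\mathfrak{g}^{\sigma})=\{(X,\sigma(X)):X\in\mathfrak{k}_0\}$, the fixed-point algebra of $\rho\circ(\sigma\oplus\sigma)$, exactly as recorded above the statement.

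The one real obstacle is the bookkeeping of the two imaginary units: the sign flip in the displayed formula comes solely from the $J$-antilinearity of $\sigma$, and it is precisely this sign that distinguishes the present case from the $\mathbb{C}$-linear one (Proposition \ref{prop:complex-linear}), converting $\sigma\oplus\sigma$ into $\rho\circ(\sigma\oplus\sigma)$ after transport by $\eta$. Once $\eta\theta_2=\bigl(\rho\circ(\sigma\oplus\sigma)\bigr)\eta$ is established, combining it with $\eta\theta_1=\rho\,\eta$ shows that $\eta$ is an isomorphism of commutative compact symmetric triads, whence $(\mathfrak{g}_0,\sigma;\theta)^*\equiv(\mathfrak{k}_0\oplus\mathfrak{k}_0,\rho,\rho\circ(\sigma\oplus\sigma))$, which is type (T-\ref{type:i2}) as claimed.
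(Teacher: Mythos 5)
Your proof is correct and takes essentially the same route as the paper: both transport the dual triad through the isomorphism $\eta$ of (\ref{eq:eta}) onto $\mathfrak{k}_0\oplus\mathfrak{k}_0$, use Lemma \ref{lem:complex-riemannian} for $\theta_1\leftrightarrow\rho$, and hinge on the same key formula $\sigma (X+\sqrt{-1}JY)=\sigma (X)-\sqrt{-1}J\sigma (Y)$ forced by the $J$-antilinearity of $\sigma$. The only (harmless) difference is that you verify the intertwining relation $\eta\sigma=\bigl(\rho\circ(\sigma\oplus\sigma)\bigr)\eta$ pointwise, whereas the paper identifies $\eta(\mathfrak{g}^{\sigma})=\{(X,\sigma(X)):X\in\mathfrak{k}_0\}$ as the fixed-point set of $\rho\circ(\sigma\oplus\sigma)$; your version is, if anything, slightly more explicit.
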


\begin{remark}
$(\mathfrak{k}_0\oplus \mathfrak{k}_0,\rho ,\rho \circ (\sigma \oplus \sigma ))$ is not equivalent to 
$(\mathfrak{k}_0\oplus \mathfrak{k}_0,\rho ,\rho )$ 
unless $\sigma $ is a Cartan involution of $\mathfrak{g}_0$. 
\end{remark}

\subsection{Non-simple case}
\label{subsec:non-simple}

In this subsection, 
we deal with the case 
where a semisimple real Lie algebra is not simple. 

Let $(\mathfrak{g}_0,\sigma ;\theta )$ be an irreducible non-compact semisimple symmetric pair 
equipped with a Cartan involution. 
Suppose that $\mathfrak{g}_0$ is not simple. 
Recall from Proposition \ref{prop:irr-not-simple} that 
it is equivalent to $(\mathfrak{g}_0'\oplus \mathfrak{g}_0',\rho ;\theta |_{\mathfrak{g}_0'}\oplus 
\theta |_{\mathfrak{g}_0'})$ for some real simple Lie algebra $\mathfrak{g}_0'$. 
Thus, it suffices to consider $(\mathfrak{g}_0'\oplus \mathfrak{g}_0',\rho ;
\theta |_{\mathfrak{g}_0'}\oplus \theta |_{\mathfrak{g}_0'})$ in this case. 
For convenience, we shall write $\theta ':=\theta |_{\mathfrak{g}_0'}$. 

If $\mathfrak{g}_0'$ has no complex structures, we have: 

\begin{proposition}[Proof of (P-\ref{item:non-linear}) $\Leftrightarrow $ (T-\ref{type:ii2d}) 
of Theorem \ref{thm:irreducible}]
\label{prop:non-simple-real}
Let $\mathfrak{g}_0'$ be a non-compact real simple Lie algebra without complex structures. 
Then, the dual of $(\mathfrak{g}_0'\oplus \mathfrak{g}_0',\rho ;\theta '\oplus \theta ')$ 
is given by 
\begin{align*}
(\mathfrak{g}_0'\oplus \mathfrak{g}_0',\rho ;\theta '\oplus \theta ')^*
\equiv (\mathfrak{g}'\oplus \mathfrak{g}',\theta '\oplus \theta ',\rho ). 
\end{align*}
Here, the compact simple Lie algebra $\mathfrak{g}'$ is characterized by 
$(\mathfrak{g}',\theta ')=(\mathfrak{g}_0,\theta ')^*$. 
\end{proposition}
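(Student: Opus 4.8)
The plan is to compute the dual directly from the definition of the map $\Psi$ in (\ref{eq:triad}). Note first that the two factors of $\mathfrak{g}_0'\oplus \mathfrak{g}_0'$ are interchanged by $\rho$ and are therefore \emph{not} $\sigma$-invariant; in particular Lemma \ref{lem:duality-irr-decomp} does not apply, and the computation cannot be reduced to a componentwise one. So the essential work is a careful unwinding of $\Psi$ that tracks how it interacts with the direct-sum structure.

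First I would pass to the complexification. Since $\mathfrak{g}_0=\mathfrak{g}_0'\oplus \mathfrak{g}_0'$, its complexification is $\mathfrak{g}_{\mathbb{C}}=(\mathfrak{g}_0')_{\mathbb{C}}\oplus (\mathfrak{g}_0')_{\mathbb{C}}$, and the $\mathbb{C}$-linear extensions of $\theta=\theta '\oplus \theta '$ and $\sigma=\rho$ act on this direct sum as the diagonal involution $\theta '\oplus \theta '$ and as the swap $\rho$, respectively. Because $\theta$ acts diagonally, its $(\pm 1)$-eigenspaces split over the two summands, so that $\mathfrak{g}_0^{\theta }=(\mathfrak{g}_0')^{\theta '}\oplus (\mathfrak{g}_0')^{\theta '}$ and $\mathfrak{g}_0^{-\theta }=(\mathfrak{g}_0')^{-\theta '}\oplus (\mathfrak{g}_0')^{-\theta '}$. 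Hence the compact real form produced by $\Psi$ is
\begin{align*}
\mathfrak{g}=\mathfrak{g}_0^{\theta }+\sqrt{-1}\mathfrak{g}_0^{-\theta }
=\mathfrak{g}'\oplus \mathfrak{g}',
\end{align*}
where $\mathfrak{g}':=(\mathfrak{g}_0')^{\theta '}+\sqrt{-1}(\mathfrak{g}_0')^{-\theta '}$ is exactly the compact real form dual to $(\mathfrak{g}_0',\theta ')$ in the Riemannian sense, i.e. $(\mathfrak{g}',\theta ')=(\mathfrak{g}_0',\theta ')^*$.

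Next I would identify the two involutions $\theta _1=\theta|_{\mathfrak{g}}$ and $\theta _2=\sigma|_{\mathfrak{g}}$. Restricting the diagonal involution $\theta=\theta '\oplus \theta '$ to $\mathfrak{g}'\oplus \mathfrak{g}'$ gives $\theta _1=\theta '\oplus \theta '$, where each copy of $\theta '$ now denotes the restriction of the $\mathbb{C}$-linear $\theta '$ to the compact form $\mathfrak{g}'$ (the same identification used for the Riemannian duality in Theorem \ref{thm:duality-g}). For $\theta _2$, the swap $\rho$ preserves $\mathfrak{g}'\oplus \mathfrak{g}'$, interchanging the two copies of $\mathfrak{g}'$, so its restriction is again $\rho$. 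Collecting these gives
\begin{align*}
(\mathfrak{g}_0'\oplus \mathfrak{g}_0',\rho ;\theta '\oplus \theta ')^*
=\Psi (\mathfrak{g}_0'\oplus \mathfrak{g}_0',\rho ;\theta '\oplus \theta ')
=(\mathfrak{g}'\oplus \mathfrak{g}',\theta '\oplus \theta ',\rho ),
\end{align*}
which is precisely the form (T-\ref{type:ii2d}) with $\mathfrak{u}=\mathfrak{g}'$ and $\nu=\theta '$. Finally, since $\mathfrak{g}_0'$ is simple without complex structure, Proposition \ref{prop:simple} (equivalently the Riemannian duality) guarantees that $\mathfrak{g}'$ is compact simple, so the right-hand side is genuinely of type (T-\ref{type:ii2d}).

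I do not expect a serious obstacle here: the whole argument is a direct unwinding of $\Psi$. The only points requiring care are bookkeeping ones — verifying that taking $\theta$-eigenspaces and complexifying commute with the direct-sum decomposition, tracking the double use of the symbol $\theta '$ for the Cartan involution of $\mathfrak{g}_0'$ and for its restriction to $\mathfrak{g}'$, and flagging at the outset that the factors are not $\sigma$-invariant so that the componentwise Lemma \ref{lem:duality-irr-decomp} is unavailable and the computation must go through $\Psi$ directly.
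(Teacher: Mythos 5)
Your proof is correct and takes essentially the same approach as the paper: the paper's own proof consists of the single remark that the proposition is an immediate consequence of Theorem \ref{thm:duality-thm}, and your argument is precisely the detailed unwinding of the map $\Psi$ from (\ref{eq:triad}) that makes this immediacy explicit. The points of care you flag (the diagonal splitting of the $\theta '\oplus \theta '$-eigenspaces, the fact that $\rho $ preserves the compact form $\mathfrak{g}'\oplus \mathfrak{g}'$, and simplicity of $\mathfrak{g}'$ via Proposition \ref{prop:simple}) are exactly the checks implicit in the paper's one-line proof.
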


\begin{proof}
This is an immediate consequence of Theorem \ref{thm:duality-thm}. 
\end{proof}

Next, let us consider the case where $\mathfrak{g}_0'$ has a complex structure $J'$, 
namely, $\mathfrak{s}'=(\mathfrak{g}_0',J')$ is a complex simple Lie algebra. 
By Lemma \ref{lem:isom}, 
the complexification $(\mathfrak{g}_0')_{\mathbb{C}}$ of $\mathfrak{g}_0'$ is 
isomorphic to $\mathfrak{s}_0'\oplus \mathfrak{s}_0'$ as a complex Lie algebra 
via $\eta ':\mathfrak{g}_0'+\sqrt{-1}\mathfrak{g}_0'\to \mathfrak{s}_0'\oplus \mathfrak{s}_0'$ 
(see (\ref{eq:eta})). 
Since $\mathfrak{g}_{\mathbb{C}}=(\mathfrak{g}_0'\oplus \mathfrak{g}_0')_{\mathbb{C}} 
=(\mathfrak{g}_0')_{\mathbb{C}}\oplus (\mathfrak{g}_0')_{\mathbb{C}}$, 
we obtain 
\begin{align*}
\eta '\oplus \eta ':\mathfrak{g}_{\mathbb{C}}
=(\mathfrak{g}_0'\oplus \mathfrak{g}_0')_{\mathbb{C}} 
\simeq \mathfrak{s}_0'\oplus \mathfrak{s}_0'\oplus \mathfrak{s}_0'\oplus \mathfrak{s}_0'. 
\end{align*}
The maximal compact subalgebra $\mathfrak{k}_0':=(\mathfrak{g}_0)^{\theta '}$ of $\mathfrak{g}_0'$ 
is simple and a compact real form of $\mathfrak{s}'$. 
Then, it follows from Lemma \ref{lem:k+k} that 
\begin{align*}
\mathfrak{g}'\oplus \mathfrak{g}'\simeq 
\mathfrak{k}_0'\oplus \mathfrak{k}_0'\oplus \mathfrak{k}_0'\oplus \mathfrak{k}_0'. 
\end{align*}
Applying Lemma \ref{lem:k}, 
the fixed point set $(\mathfrak{g}'\oplus \mathfrak{g}')^{\theta '\oplus \theta '}
=\mathfrak{k}_0'\oplus \mathfrak{k}_0'$ is isomorphic to 
\begin{align*}
(\mathfrak{g}'\oplus \mathfrak{g}')^{\theta '\oplus \theta '}&\simeq 
\operatorname{diag}(\mathfrak{k}_0')\oplus \operatorname{diag}(\mathfrak{k}_0')
=(\mathfrak{k}_0'\oplus \mathfrak{k}_0'\oplus \mathfrak{k}_0'\oplus \mathfrak{k}_0')^{\rho _{(12)(34)}},
\end{align*}
and $(\mathfrak{g}'\oplus \mathfrak{g}')^{\rho }=\operatorname{diag}\mathfrak{g}'$ is 
\begin{align*}
(\mathfrak{g}'\oplus \mathfrak{g}')^{\rho }
&\simeq 
(\mathfrak{k}_0'\oplus \mathfrak{k}_0'\oplus \mathfrak{k}_0'\oplus \mathfrak{k}_0')^{\rho _{(13)(24)}}.
\end{align*}
This implies that 
\begin{align*}
(\mathfrak{g}'\oplus \mathfrak{g}',\theta '\oplus \theta ',\rho )
\equiv (\mathfrak{k}_0'\oplus \mathfrak{k}_0'\oplus \mathfrak{k}_0'\oplus \mathfrak{k}_0',
\rho _{(12)(34)},\rho _{(13)(24)}). 
\end{align*}
By Corollary \ref{cor:i4}, we conclude: 

\begin{proposition}[Proof of (P-\ref{item:non-anti}) $\Leftrightarrow $ (T-\ref{type:i4}) 
of Theorem \ref{thm:irreducible}]
\label{prop:non-simple-complex}
Let $\mathfrak{g}_0'$ be a real simple Lie algebra with a complex structure. 
Then, we have 
\begin{align*}
(\mathfrak{g}_0'\oplus \mathfrak{g}_0',\rho ;\theta '\oplus \theta ')^*\equiv 
(\mathfrak{k}_0'\oplus \mathfrak{k}_0'\oplus \mathfrak{k}_0'\oplus \mathfrak{k}_0',
\rho _{(12)(34)},\rho _{(14)(23)}). 
\end{align*}
with $\mathfrak{k}_0'=(\mathfrak{g}_0')^{\theta '}$. 
\end{proposition}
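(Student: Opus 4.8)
The plan is to compute the dual triad abstractly from the duality theorem, then transport every piece of data through the complexification isomorphism $\eta'$ that is available because $\mathfrak{g}_0'$ is complex simple, and finally invoke Corollary~\ref{cor:i4} to reach the normal form of type (T-\ref{type:i4}). Throughout I will write $\mathfrak{k}_0':=(\mathfrak{g}_0')^{\theta'}$ and $\mathfrak{g}':=(\mathfrak{g}_0')^{\theta'}+\sqrt{-1}(\mathfrak{g}_0')^{-\theta'}$ for the compact real form attached to the Riemannian pair $(\mathfrak{g}_0',\theta')$.

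First I would evaluate $(\mathfrak{g}_0'\oplus\mathfrak{g}_0',\rho;\theta'\oplus\theta')^*$ directly from formula (\ref{eq:triad}). The $(\theta'\oplus\theta')$-eigenspace decomposition of $\mathfrak{g}_0'\oplus\mathfrak{g}_0'$ shows at once that the underlying compact Lie algebra is $\mathfrak{g}=\mathfrak{g}'\oplus\mathfrak{g}'$, while the restrictions of $\theta'\oplus\theta'$ and of $\rho$ become the first and second involutions of the triad. This gives $(\mathfrak{g}_0'\oplus\mathfrak{g}_0',\rho;\theta'\oplus\theta')^*\equiv(\mathfrak{g}'\oplus\mathfrak{g}',\theta'\oplus\theta',\rho)$, exactly as in Proposition~\ref{prop:non-simple-real}; note that this step is purely the duality theorem and has not yet used the complex structure.

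Next I would bring in the complex structure $J'$ on $\mathfrak{g}_0'$. By Lemma~\ref{lem:k+k} the compact real form $\mathfrak{g}'$ is isomorphic to $\mathfrak{k}_0'\oplus\mathfrak{k}_0'$ with $\mathfrak{k}_0'$ compact simple, so applying $\eta'\oplus\eta'$ (see (\ref{eq:eta}), Lemma~\ref{lem:isom}) identifies $\mathfrak{g}'\oplus\mathfrak{g}'$ with $(\mathfrak{k}_0')^{\oplus 4}$, where factors $1,2$ come from the first copy of $\mathfrak{g}_0'$ and factors $3,4$ from the second. The crucial bookkeeping is to locate the two involutions. By Lemma~\ref{lem:k} (equivalently Lemma~\ref{lem:complex-riemannian}) the involution $\theta'$ on each copy becomes the swap $\rho$ of the two $\mathfrak{k}_0'$-factors, so $\theta'\oplus\theta'$ interchanges $1\leftrightarrow 2$ and $3\leftrightarrow 4$, i.e.\ it transports to $\rho_{(12)(34)}$; meanwhile $\rho$ interchanges the two outer blocks, i.e.\ factors $1\leftrightarrow 3$ and $2\leftrightarrow 4$, so it transports to $\rho_{(13)(24)}$. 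Because these permutation involutions are determined by their $\pm1$-eigenspaces, matching the fixed-point algebras $\operatorname{diag}(\mathfrak{k}_0')\oplus\operatorname{diag}(\mathfrak{k}_0')$ and $\operatorname{diag}(\mathfrak{g}')$ read off from Lemma~\ref{lem:k} suffices to pin them down, yielding $(\mathfrak{g}'\oplus\mathfrak{g}',\theta'\oplus\theta',\rho)\equiv((\mathfrak{k}_0')^{\oplus 4},\rho_{(12)(34)},\rho_{(13)(24)})$.

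Finally I would apply Corollary~\ref{cor:i4}, which rewrites $((\mathfrak{k}_0')^{\oplus 4},\rho_{(12)(34)},\rho_{(13)(24)})$ as the standard form $((\mathfrak{k}_0')^{\oplus 4},\rho_{(12)(34)},\rho_{(14)(23)})$ of type (T-\ref{type:i4}); chaining the three equivalences gives the claim. I expect the main obstacle to be the previous paragraph: one must check not merely that the fixed-point subalgebras agree abstractly, but that the two involutions transport to exactly $\rho_{(12)(34)}$ and $\rho_{(13)(24)}$ under the single concrete isomorphism $\eta'\oplus\eta'$, keeping the labelling of the four $\mathfrak{k}_0'$-factors consistent across both computations. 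The final passage from $\rho_{(13)(24)}$ to $\rho_{(14)(23)}$ through Corollary~\ref{cor:i4} (ultimately the self-associatedness established in Lemma~\ref{lem:non-simple}) is precisely what reconciles the two presentations of type (T-\ref{type:i4}).
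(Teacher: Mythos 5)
Your proposal is correct and follows essentially the same route as the paper: compute the dual via the duality theorem to get $(\mathfrak{g}'\oplus\mathfrak{g}',\theta'\oplus\theta',\rho)$, transport both involutions through $\eta'\oplus\eta'$ using Lemmas \ref{lem:k+k} and \ref{lem:k} to obtain $(\mathfrak{k}_0'\oplus\mathfrak{k}_0'\oplus\mathfrak{k}_0'\oplus\mathfrak{k}_0',\rho_{(12)(34)},\rho_{(13)(24)})$, and then invoke Corollary \ref{cor:i4} (i.e.\ self-associatedness from Lemma \ref{lem:non-simple}) to pass to $\rho_{(14)(23)}$. The bookkeeping point you flag as the main obstacle is exactly what the paper checks, with the fixed-point identifications $\operatorname{diag}(\mathfrak{k}_0')\oplus\operatorname{diag}(\mathfrak{k}_0')$ and $\operatorname{diag}\mathfrak{g}'$ pinning down the two permutation involutions.
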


By Propositions \ref{prop:complex-linear}, \ref{prop:complex-antilinear}, \ref{prop:non-simple-real} and 
\ref{prop:non-simple-real}, 
Theorem \ref{thm:irreducible} has been completely proved.

\subsection{Semisimple symmetric pair of type $K_{\varepsilon }$}
\label{subsec:ke}

In this subsection, 
we deal with a certain class in non-compact semisimple symmetric pairs, 
namely, symmetric pairs of type $K_{\varepsilon}$. 

\subsubsection{Symmetric pair of type $K_{\varepsilon }$} 
\label{subsubsec:typeK}

The original definition of symmetric pairs of type $K_{\varepsilon}$ is given by 
Oshima--Sekiguchi in \cite{oshima-sekiguchi-invent}, 
and after that a necessary and sufficient condition on a non-compact semisimple symmetric pair 
to be of type $K_{\varepsilon }$ is provided by Kaneyuki in \cite{kaneyuki96}. 
This paper would adopt Kaneyuki's criterion as a definition. 

Let $\mathfrak{g}_0$ be a non-compact real semisimple Lie algebra. 
Suppose we are given a $\mathbb{Z}$-grading of $m$-th kind, 
namely, $\mathfrak{g}_0$ is decomposed into the sum of $2m+1$ subspaces for some positive integer $m$ as 
\begin{align}
\mathfrak{g}_0=\sum _{k=-m}^m \mathfrak{g}_0(k) 
\label{eq:grading}
\end{align}
under the relations 
$[\mathfrak{g}_0(k),\mathfrak{g}_0(l)]\subset \mathfrak{g}_0(k+l)$ for $-m\leq k,l\leq m$, 
$\mathfrak{g}_0(\pm m)\neq \{ 0\} $ 
and $\mathfrak{g}_0(k)=\{ 0\} $ for $|k|>m$. 
We note that 
$\mathfrak{g}_0(0)$ is a reductive Lie algebra. 

The next lemma is well-known, 
however its proof might not be written in any paper. 
Then, we will explain the proof below. 

\begin{lemma}
\label{lem:ch}
Retain the setting as above. 
Then, there exists $Z\in \mathfrak{g}_0(0)$ uniquely such that 
$\operatorname{ad}(Z)|_{\mathfrak{g}_0(k)}=k\operatorname{id}_{\mathfrak{g}_0(k)}$ 
for $-m\leq k\leq m$. 
\end{lemma}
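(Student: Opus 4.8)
The plan is to realize $Z$ as the unique inner derivation implementing the grading and then to verify that it lies in the degree-zero piece $\mathfrak{g}_0(0)$. First I would introduce the \emph{grading derivation}: define a linear map $D\colon\mathfrak{g}_0\to\mathfrak{g}_0$ by $D(X)=kX$ for $X\in\mathfrak{g}_0(k)$, extended linearly via the decomposition (\ref{eq:grading}). The defining relations $[\mathfrak{g}_0(k),\mathfrak{g}_0(l)]\subset\mathfrak{g}_0(k+l)$ make $D$ a derivation: for $X\in\mathfrak{g}_0(k)$ and $Y\in\mathfrak{g}_0(l)$ we have $[X,Y]\in\mathfrak{g}_0(k+l)$, so
\begin{align*}
D[X,Y]=(k+l)[X,Y]=[DX,Y]+[X,DY],
\end{align*}
and the general Leibniz identity follows by bilinearity.

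Next I would use that $\mathfrak{g}_0$ is semisimple, so every derivation of $\mathfrak{g}_0$ is inner. This yields an element $Z\in\mathfrak{g}_0$ with $\operatorname{ad}(Z)=D$, and in particular $\operatorname{ad}(Z)|_{\mathfrak{g}_0(k)}=k\operatorname{id}_{\mathfrak{g}_0(k)}$ for every $k$. Uniqueness is then immediate: if $Z'$ also satisfies $\operatorname{ad}(Z')=D$, then $\operatorname{ad}(Z-Z')=0$, so $Z-Z'$ lies in the center of $\mathfrak{g}_0$, which is trivial by semisimplicity; hence $Z'=Z$.

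It remains to check $Z\in\mathfrak{g}_0(0)$, which is the only step requiring a short graded argument. Writing $Z=\sum_j Z_j$ with $Z_j\in\mathfrak{g}_0(j)$, for any $X\in\mathfrak{g}_0(k)$ the bracket $[Z_j,X]$ lies in $\mathfrak{g}_0(j+k)$, whereas $\operatorname{ad}(Z)X=DX=kX\in\mathfrak{g}_0(k)$; comparing the $\mathfrak{g}_0(j+k)$-components forces $[Z_j,X]=0$ for all $j\neq0$. As $k$ and $X\in\mathfrak{g}_0(k)$ were arbitrary, each $Z_j$ with $j\neq0$ is central, hence zero, so $Z=Z_0\in\mathfrak{g}_0(0)$. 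The main obstacle here is purely the invocation that derivations of the real semisimple Lie algebra $\mathfrak{g}_0$ are inner (equivalently $H^1(\mathfrak{g}_0,\mathfrak{g}_0)=0$); once this standard fact is in hand, existence, uniqueness, and the membership $Z\in\mathfrak{g}_0(0)$ all reduce to the triviality of the center together with elementary graded bookkeeping.
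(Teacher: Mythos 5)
Your proposal is correct and follows essentially the same route as the paper's own proof: define the grading map as a derivation, invoke the fact that all derivations of a semisimple Lie algebra are inner to obtain $Z$ with $\operatorname{ad}(Z)=D$, get uniqueness from the triviality of the center, and use the graded decomposition of $Z$ together with semisimplicity to force $Z\in\mathfrak{g}_0(0)$. The only cosmetic difference is that you spell out the uniqueness argument explicitly, whereas the paper absorbs it into the citation that derivations are inner.
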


\begin{proof}
Let $\varphi$ be a linear transformation on $\mathfrak{g}_0$ satisfying 
$\varphi |_{\mathfrak{g}_0(k)}=k\operatorname{id}_{\mathfrak{g}_0(k)}$ for any $-m\leq k\leq m$. 
We take arbitrary elements $X_k\in \mathfrak{g}_0(k)$ and $X_l\in \mathfrak{g}_0(l)$. 
In view of $[X_k,X_l]\in \mathfrak{g}_0(k+l)$, 
we have $\varphi ([X_k,X_l])=(k+l)[X_k,X_l]$. 
On the other hand, the direct computation shows 
\begin{align*}
[\varphi (X_k),X_l]+[X_k,\varphi (X_l)]
=[kX_k,X_l]+[X_k,lX_l]
=(k+l)[X_k,X_l]. 
\end{align*}
Thus, we obtain $\varphi ([X_k,X_l])=[\varphi (X_k),X_l]+[X_k,\varphi (X_l)]$ 
for any $k,l$. 
Hence, $\varphi $ is a derivation on $\mathfrak{g}_0$. 
As $\mathfrak{g}_0$ is semisimple, any derivation is an inner automorphism on $\mathfrak{g}_0$ 
(cf. \cite[Proposition 6.4 in Chapter II]{helgason}). 
Hence, there exists $Z\in \mathfrak{g}_0$ uniquely such that $\varphi =\operatorname{ad}Z$. 

Next, let us show $Z$ that lies in $\mathfrak{g}_0(0)$. 
For this, we write $Z=\sum _{k=-m}^m Z_k$ along the $\mathbb{Z}$-grading (\ref{eq:grading}) 
($Z_k\in \mathfrak{g}_0(k)$). 
Since $[Z_k,\mathfrak{g}_0(l)]$ is contained in $\mathfrak{g}_0(k+l)$, 
the following inclusion holds for $-m\leq l\leq m$: 
\begin{align*}
\varphi (\mathfrak{g}_0(l))
=(\operatorname{ad}Z)(\mathfrak{g}_0(l))
=\sum _{k=-m}^m[Z_k,\mathfrak{g}_0(l)]
\subset \sum _{k=-m}^m\mathfrak{g}_0(k+l). 
\end{align*}
On the other hand, 
the definition $\varphi |_{\mathfrak{g}_0(l)}=l\operatorname{id}_{\mathfrak{g}_0(l)}$ implies 
$\varphi (\mathfrak{g}_0(l))=\mathfrak{g}_0(l)$. 
Then, $[Z_k,\mathfrak{g}_0(l)]$ must be $\{ 0\} $ for any $l$ and $k\neq 0$. 
Thus, $[Z_k,\mathfrak{g}_0]=\{ 0\} $, for $k\neq 0$. 
As $\mathfrak{g}_0$ is semisimple, we get $Z_k=0$ for $k\neq 0$. 
Consequently, $Z=Z_0\in \mathfrak{g}_0(0)$. 
\end{proof}

\begin{define}
\label{def:ch}
We say that the element $Z\in \mathfrak{g}_0$ satisfying Lemma \ref{lem:ch} 
is the {\it characteristic element} of the $\mathbb{Z}$-grading (\ref{eq:grading}). 
\end{define}

By Lemma \ref{lem:ch}, 
the subspace $\mathfrak{g}_0(k)$ is characterized by $Z$, namely, it is of the form: 
\begin{align}
\label{eq:k-eigenspace}
\mathfrak{g}_0(k)=\{ X\in \mathfrak{g}_0:(\operatorname{ad}Z)X=kX\}\quad (-m\leq k\leq m). 
\end{align}
Then, it follows from \cite[Theorem I.2.3 in Part II]{fkklr} that 
there exists a Cartan involution $\theta $ of $\mathfrak{g}_0$ such that 
\begin{gather}
\label{eq:reversing}
\theta (Z)=-Z, 
\end{gather}
and also follows from \cite[Lemma 1.4]{kaneyuki96} that 
such $\theta $ are unique up to conjugation by inner automorphisms on $\mathfrak{g}_0(0)$. 
This implies 
$\theta (\mathfrak{g}_0(k))=\mathfrak{g}_0(-k)$ for any $k$. 
In this sense, $\theta$ is called a {\it grade-reversing} Cartan involution 
associated with (\ref{eq:grading}). 

\begin{define}
\label{def:associated pair}
We call the pair $(Z,\theta )$ with (\ref{eq:k-eigenspace}) and (\ref{eq:reversing}) 
the {\it associated pair} of the $\mathbb{Z}$-grading (\ref{eq:grading}). 
\end{define}

In this setting, we define $\sigma _Z$ by 
\begin{align}
\sigma _Z:=e^{\pi \sqrt{-1}\operatorname{ad}Z} .
\label{eq:sigma_Z}
\end{align}
Then, we have 
$\sigma _Z(X_k)=e^{\pi \sqrt{-1}k}X_k=(-1)^kX_k$ for any $X_k\in \mathfrak{g}_0(k)$. 
This shows that $\sigma _Z$ defines an involution on $\mathfrak{g}_0$. 

\begin{lemma}
\label{lem:sigma_Z}
Let $(Z,\theta )$ be the associated pair of a $\mathbb{Z}$-grading of $\mathfrak{g}_0$. 
Then, the involution $\sigma _Z$ is commutes with $\theta$. 
Hence, $\sigma _Z\theta$ is an involution on $\mathfrak{g}_0$ commuting with $\theta$. 
\end{lemma}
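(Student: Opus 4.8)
The plan is to verify commutativity by evaluating both composites $\sigma _Z\theta$ and $\theta \sigma _Z$ on each graded piece $\mathfrak{g}_0(k)$, since these subspaces span $\mathfrak{g}_0$ by the $\mathbb{Z}$-grading (\ref{eq:grading}). The only input I need beyond the explicit action $\sigma _Z(X_k)=(-1)^kX_k$ is the grade-reversing property $\theta (\mathfrak{g}_0(k))=\mathfrak{g}_0(-k)$, which was already recorded as a consequence of $\theta (Z)=-Z$ in (\ref{eq:reversing}).

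First I would fix $X_k\in \mathfrak{g}_0(k)$ and compute $\theta \sigma _Z(X_k)=\theta ((-1)^kX_k)=(-1)^k\theta (X_k)$. On the other hand, since $\theta (X_k)\in \mathfrak{g}_0(-k)$, the map $\sigma _Z$ acts on it by the scalar $(-1)^{-k}$, so $\sigma _Z\theta (X_k)=(-1)^{-k}\theta (X_k)$. The crucial elementary observation is that $(-1)^{-k}=(-1)^k$ for every integer $k$, whence the two expressions coincide: $\sigma _Z\theta (X_k)=\theta \sigma _Z(X_k)$. As the subspaces $\mathfrak{g}_0(k)$ span $\mathfrak{g}_0$, this yields $\sigma _Z\theta =\theta \sigma _Z$ on all of $\mathfrak{g}_0$.

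Finally I would deduce the two asserted properties of $\sigma _Z\theta$ purely formally from this commutativity together with $\sigma _Z^2=\theta ^2=\operatorname{id}$. For involutivity, $(\sigma _Z\theta )^2=\sigma _Z\theta \sigma _Z\theta =\sigma _Z^2\theta ^2=\operatorname{id}$; and $\sigma _Z\theta$ commutes with $\theta$ because $(\sigma _Z\theta )\theta =\sigma _Z=\theta (\sigma _Z\theta )$, where the last equality uses $\theta \sigma _Z=\sigma _Z\theta$.

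I do not anticipate a genuine obstacle: the content reduces to the parity identity $(-1)^{-k}=(-1)^k$ combined with the already-established grade-reversing behaviour of $\theta$. The only point requiring a little care is to treat $\sigma _Z$ and $\theta$ as honest $\mathbb{R}$-linear automorphisms of $\mathfrak{g}_0$, which is legitimate because $\sigma _Z$ acts by the real scalars $(-1)^k$ on each $\mathfrak{g}_0(k)$ despite the factor $\sqrt{-1}$ in its definition (\ref{eq:sigma_Z}), as noted immediately before the statement.
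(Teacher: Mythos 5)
Your proof is correct and follows essentially the same route as the paper: both work eigenspace by eigenspace, using the grade-reversing property $\theta (\mathfrak{g}_0(k))=\mathfrak{g}_0(-k)$ together with the parity identity $(-1)^{-k}=(-1)^k$ to get commutativity, and then deduce the second statement formally. Your explicit verification that $(\sigma _Z\theta )^2=\operatorname{id}$ and $(\sigma _Z\theta )\theta =\theta (\sigma _Z\theta )$ simply spells out what the paper dismisses as obvious.
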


\begin{proof}
It suffices to show Lemma \ref{lem:sigma_Z} on each eigenspace $\mathfrak{g}_0(k)$ $(k\in \mathbb{Z})$. 

Let us give a $\mathbb{Z}$-grading of $\mathfrak{g}_0$ by (\ref{eq:grading}) 
which is characterized by $Z$. 
As $\theta (\mathfrak{g}_0(k))=\mathfrak{g}_0(-k)$, 
we have $\sigma _Z\theta (X_k)=(-1)^{-k}\theta (X_k)
=(-1)^k\theta (X_k)=\theta \sigma _Z(X_k)$, from which $\sigma _Z\theta (X_k)=\theta \sigma _Z(X_k)$ 
for any element $X_k\in \mathfrak{g}_0(k)$. 
Hence, $\sigma _Z\theta =\theta \sigma _Z$ on $\mathfrak{g}_0$. 

The second statement is obvious from the first one. 
\end{proof}

Lemma \ref{lem:sigma_Z} explains that 
$(\mathfrak{g}_0,\sigma _Z\theta ;\theta )$ is a non-compact semisimple symmetric pair 
equipped with a Cartan involution. 

\begin{define}[{\cite[Proposition 2.1]{kaneyuki96}}]
\label{def:typeK}
We say that 
a non-compact semisimple symmetric pair $(\mathfrak{g}_0,\sigma )$ 
is of {\it type $K_{\varepsilon }$} if (and only if) 
there exists a $\mathbb{Z}$-grading of $\mathfrak{g}_0$ and 
its associated pair $(Z,\theta )$ such that 
$\sigma =\sigma _Z\theta $. 
\end{define}

Our equivalence relation $\equiv $ on $\mathfrak{P}$ preserves Definition \ref{def:typeK}. 
More precisely, we show: 

\begin{lemma}
\label{lem:equiv-typeK}
Let $(\mathfrak{g}_0,\sigma ),(\mathfrak{g}_0',\sigma ')$ be non-compact semisimple symmetric pairs. 
If $(\mathfrak{g}_0,\sigma )$ is of type $K_{\varepsilon }$ and 
$(\mathfrak{g}_0,\sigma )\equiv (\mathfrak{g}_0',\sigma ')$, 
then $(\mathfrak{g}_0',\sigma ')$ is of type $K_{\varepsilon }$. 
\end{lemma}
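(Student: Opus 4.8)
The plan is to transport the entire type-$K_{\varepsilon}$ structure of $(\mathfrak{g}_0,\sigma )$ across the intertwining isomorphism. First I would fix a Lie algebra isomorphism $\varphi :\mathfrak{g}_0\to \mathfrak{g}_0'$ with $\varphi \sigma =\sigma '\varphi $, and, since $(\mathfrak{g}_0,\sigma )$ is of type $K_{\varepsilon }$ (Definition \ref{def:typeK}), a $\mathbb{Z}$-grading $\mathfrak{g}_0=\sum _{k=-m}^m\mathfrak{g}_0(k)$ with associated pair $(Z,\theta )$ such that $\sigma =\sigma _Z\theta $. Setting $\mathfrak{g}_0'(k):=\varphi (\mathfrak{g}_0(k))$, the fact that $\varphi $ preserves brackets gives at once $[\mathfrak{g}_0'(k),\mathfrak{g}_0'(l)]=\varphi ([\mathfrak{g}_0(k),\mathfrak{g}_0(l)])\subset \varphi (\mathfrak{g}_0(k+l))=\mathfrak{g}_0'(k+l)$, together with $\mathfrak{g}_0'(\pm m)=\varphi (\mathfrak{g}_0(\pm m))\neq \{ 0\} $ and $\mathfrak{g}_0'(k)=\{ 0\} $ for $|k|>m$. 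Hence $\mathfrak{g}_0'=\sum _{k=-m}^m\mathfrak{g}_0'(k)$ is a $\mathbb{Z}$-grading of $\mathfrak{g}_0'$ of the same kind.

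Next I would produce the associated pair of this new grading. Put $Z':=\varphi (Z)$ and $\theta ':=\varphi \theta \varphi ^{-1}$. The identity $\varphi \operatorname{ad}(Z)\varphi ^{-1}=\operatorname{ad}(\varphi (Z))=\operatorname{ad}(Z')$ shows that $\operatorname{ad}(Z')|_{\mathfrak{g}_0'(k)}=k\operatorname{id}$, so $Z'$ is the characteristic element of the grading in the sense of (\ref{eq:k-eigenspace}); and $\theta '(Z')=\varphi \theta \varphi ^{-1}\varphi (Z)=\varphi \theta (Z)=-\varphi (Z)=-Z'$ is precisely the grade-reversing condition (\ref{eq:reversing}). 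Moreover $\theta '$ is again a Cartan involution of $\mathfrak{g}_0'$: it is an involution, and its fixed point set $(\mathfrak{g}_0')^{\theta '}=\varphi (\mathfrak{g}_0^{\theta })$ is a maximal compact subalgebra because $\varphi $ is a Lie algebra isomorphism and $\mathfrak{g}_0^{\theta }$ is maximal compact, exactly the argument already used in the proof of Lemma \ref{lem:cartan-involution}. Therefore $(Z',\theta ')$ is an associated pair of the $\mathbb{Z}$-grading of $\mathfrak{g}_0'$ in the sense of Definition \ref{def:associated pair}.

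Finally I would identify $\sigma '$. Conjugating the exponential by the linear map $\varphi $ gives $\varphi \sigma _Z\varphi ^{-1}=\varphi e^{\pi \sqrt{-1}\operatorname{ad}Z}\varphi ^{-1}=e^{\pi \sqrt{-1}\operatorname{ad}Z'}=\sigma _{Z'}$; equivalently, both sides act on $\mathfrak{g}_0'(k)=\varphi (\mathfrak{g}_0(k))$ as multiplication by $(-1)^k$. Combining this with $\sigma =\sigma _Z\theta $ and $\sigma '=\varphi \sigma \varphi ^{-1}$ yields $\sigma '=(\varphi \sigma _Z\varphi ^{-1})(\varphi \theta \varphi ^{-1})=\sigma _{Z'}\theta '$, which is exactly the condition for $(\mathfrak{g}_0',\sigma ')$ to be of type $K_{\varepsilon }$ relative to the grading $\mathfrak{g}_0'=\sum _k\mathfrak{g}_0'(k)$ and associated pair $(Z',\theta ')$.

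I do not expect a serious obstacle here, since every piece of structure is simply pushed forward by $\varphi $. The only two points deserving a word of care are the verification that $\theta '$ is a genuine Cartan involution, which is handled by the maximal compactness of $\varphi (\mathfrak{g}_0^{\theta })$, and the conjugation identity $\varphi e^{A}\varphi ^{-1}=e^{\varphi A\varphi ^{-1}}$ applied to $A=\pi \sqrt{-1}\operatorname{ad}Z$; both are routine, and the commuting relation $\sigma _{Z'}\theta '=\theta '\sigma _{Z'}$ follows automatically from Lemma \ref{lem:sigma_Z} applied on $\mathfrak{g}_0'$.
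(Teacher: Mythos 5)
Your proposal is correct, and every step checks out: the pushed-forward grading $\mathfrak{g}_0'(k)=\varphi (\mathfrak{g}_0(k))$, the identification of $Z'=\varphi (Z)$ as its characteristic element, the grade-reversing property $\theta '(Z')=-Z'$, and the identity $\varphi \sigma _Z\varphi ^{-1}=\sigma _{Z'}$ together give $\sigma '=\sigma _{Z'}\theta '$, which is exactly Definition \ref{def:typeK} for $(\mathfrak{g}_0',\sigma ')$. The one place where you diverge from the paper is in how the Cartan involution on $\mathfrak{g}_0'$ is produced. The paper first fixes an \emph{arbitrary} Cartan involution $\theta '$ of $\mathfrak{g}_0'$ commuting with $\sigma '$ and then invokes Lemma \ref{lem:cartan-involution} (whose proof rests on Fact \ref{fact:loos}, the conjugacy of commuting Cartan involutions) to obtain an isomorphism $\varphi $ intertwining both $\sigma $ with $\sigma '$ and $\theta $ with that prescribed $\theta '$; the transport of the grading then proceeds as in your argument. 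You instead start from any $\varphi $ intertwining only $\sigma $ with $\sigma '$ and \emph{define} $\theta ':=\varphi \theta \varphi ^{-1}$, verifying directly that it is a Cartan involution via the maximal-compact-subalgebra criterion. Your route is more self-contained, since it needs neither Lemma \ref{lem:cartan-involution} nor Fact \ref{fact:loos}; the paper's route yields the marginally stronger conclusion that the type-$K_{\varepsilon }$ structure on $\mathfrak{g}_0'$ can be arranged compatibly with \emph{any} prescribed Cartan involution commuting with $\sigma '$, which is the form best adapted to their bookkeeping with triplets in $\mathfrak{P}_c$. Since the lemma only asserts existence, both arguments are complete proofs.
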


\begin{proof}
Let us give a $\mathbb{Z}$-grading (\ref{eq:grading}) of $\mathfrak{g}_0$ 
and its associated pair $(Z,\theta )$ satisfying $\sigma =\sigma _Z\theta $. 
By Lemma \ref{lem:cartan-involution}, 
$(\mathfrak{g}_0,\sigma ;\theta )\in \mathfrak{P}_c$ is equivalent to $(\mathfrak{g}_0',\sigma ';\theta ')$ 
for any Cartan involution $\theta '$ of $\mathfrak{g}_0$ commuting with $\sigma '$. 
We take a Lie algebra isomorphism $\varphi :\mathfrak{g}\to \mathfrak{g}'$ 
satisfying $\varphi \theta =\theta '\varphi $ and $\varphi \sigma =\sigma '\varphi $. 

We set $\mathfrak{g}_0'(k):=\varphi (\mathfrak{g}_0(k))$ for each $k$. 
Then, $\mathfrak{g}_0'$ is decomposed as 
\begin{align}
\mathfrak{g}_0'
&=\varphi (\mathfrak{g}_0)
=\sum _{k=-m}^m \mathfrak{g}_0'(k), 
\label{eq:grading'}
\end{align}
which defines a $\mathbb{Z}$-grading of $\mathfrak{g}_0'$. 
Moreover, $Z':=\varphi (Z)$ is the characteristic element of (\ref{eq:grading'}) 
because $X_k\in \mathfrak{g}_0(k)$ satisfies 
$(\operatorname{ad}\varphi (Z))\varphi (X_k)=\varphi ((\operatorname{ad}Z)X_k)=\varphi (kX_k)
=k\varphi (X_k)$. 
Further, the condition $\varphi \theta =\theta '\varphi $ implies 
$\theta '(Z')=\varphi (\theta (Z))=\varphi (-Z)=-Z'$. 
Thus, $\theta '$ is a grade-reversing Cartan involution of $\mathfrak{g}_0$ 
and then $(Z',\theta ')$ is the associated pair of (\ref{eq:grading'}). 

Finally, the commutativity $\sigma '\varphi =\varphi \sigma$ and 
$\sigma =e^{\pi \sqrt{-1}\operatorname{ad}Z}\theta $ show 
\begin{align*}
\sigma '
=\varphi (e^{\pi \sqrt{-1}\operatorname{ad}Z}\theta )\varphi ^{-1}
=e^{\pi \sqrt{-1}\operatorname{ad}\varphi (Z)}\varphi \theta \varphi ^{-1}
=\sigma _{Z'}\theta '. 
\end{align*}
Therefore, the non-compact semisimple symmetric pair 
$(\mathfrak{g}_0',\sigma ')$ is of type $K_{\varepsilon }$. 
\end{proof}

The aim of this subsection is to clarify a certain class in commutative compact semisimple symmetric triads 
which corresponds to the class of non-compact semisimple symmetric pairs of type $K_{\varepsilon }$ 
via Theorem \ref{thm:duality-thm}. 

\subsubsection{Dual of symmetric pair of type $K_{\varepsilon }$}
\label{subsubsec:typeK-sim}

First, we give a characterization of the dual 
of a non-compact semisimple symmetric pair of type $K_{\varepsilon }$ 
in the sense of Theorem \ref{thm:duality-thm}, 
on which we will explain in Proposition \ref{prop:typeK-sim} after the next lemma. 

\begin{lemma}
\label{lem:e}
Let $\mathfrak{g}_0$ be a real Lie algebra and $\nu$ an involution on $\mathfrak{g}_0$. 
If an element $Y\in \mathfrak{g}_0$ satisfies $\nu (Y)=-Y$, 
then we have $e^{\operatorname{ad}Y}\nu =\nu e^{-\operatorname{ad}Y}$. 
\end{lemma}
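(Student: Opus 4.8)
The plan is to exploit that $\nu$ is a Lie algebra automorphism, so it intertwines with the adjoint representation in the form $\nu \circ \operatorname{ad}(Y) = \operatorname{ad}(\nu(Y))\circ \nu$ for every $Y\in\mathfrak{g}_0$; this is immediate from $\nu[Y,X]=[\nu Y,\nu X]$ applied to arbitrary $X$. Substituting the hypothesis $\nu(Y)=-Y$ then yields $\nu\operatorname{ad}Y = -(\operatorname{ad}Y)\nu$, that is, $\nu$ anticommutes with $\operatorname{ad}Y$.

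Next I would transfer this infinitesimal relation to the exponential, exactly in the spirit of the proof of Lemma \ref{lem:conjugate-associated}. Concretely, I would consider the one-parameter group $t\mapsto \nu e^{t\operatorname{ad}Y}\nu^{-1}$; differentiating at $t=0$ gives $\nu(\operatorname{ad}Y)\nu^{-1}=-\operatorname{ad}Y$, and since a one-parameter group is determined by its generator, we obtain $\nu e^{t\operatorname{ad}Y}\nu^{-1}=e^{-t\operatorname{ad}Y}$ for all $t$. Alternatively, one can expand $\nu e^{\operatorname{ad}Y}=\sum_{n}\frac{1}{n!}\nu(\operatorname{ad}Y)^n$ and push $\nu$ through each factor by the anticommutation relation, picking up a sign $(-1)^n$. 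Setting $t=1$ gives $\nu e^{\operatorname{ad}Y}\nu^{-1}=e^{-\operatorname{ad}Y}$, equivalently $\nu e^{\operatorname{ad}Y}=e^{-\operatorname{ad}Y}\nu$.

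Finally I would rewrite this into the asserted form. Since $\nu(-Y)=-\nu(Y)=Y=-(-Y)$, the element $-Y$ also satisfies the hypothesis, so applying the identity just obtained to $-Y$ gives $\nu e^{-\operatorname{ad}Y}=e^{\operatorname{ad}Y}\nu$, which is precisely $e^{\operatorname{ad}Y}\nu=\nu e^{-\operatorname{ad}Y}$; equivalently one uses $\nu^2=\operatorname{id}$ to conjugate the previous display. I do not expect a genuine obstacle here. The only point requiring care is the sign bookkeeping in the conjugation identity $\nu(\operatorname{ad}Y)\nu^{-1}=\operatorname{ad}(\nu Y)$, together with the observation that both $Y$ and $-Y$ are admissible, so that the two equivalent forms of the conclusion are interchangeable.
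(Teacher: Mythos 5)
Your proof is correct and rests on the same core identity as the paper's one-line proof, namely the automorphism intertwining relation $\nu e^{\operatorname{ad}Y}\nu^{-1}=e^{\operatorname{ad}\nu(Y)}$, which with $\nu(Y)=-Y$ gives the claim after elementary sign bookkeeping. The only difference is that you derive this relation from its infinitesimal form $\nu\,(\operatorname{ad}Y)\,\nu^{-1}=\operatorname{ad}(\nu Y)$ (by one-parameter groups or power series), whereas the paper simply invokes it; this is a matter of level of detail, not of method.
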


\begin{proof}
For any $X\in \mathfrak{g}_0$, we have 
\begin{align*}
e^{\operatorname{ad}Y}\nu e^{\operatorname{ad}Y}(X)
=e^{\operatorname{ad}Y}e^{\operatorname{ad}\nu (Y)}\nu (X)
=e^{\operatorname{ad}Y}e^{-\operatorname{ad}Y}\nu (X)
=\nu (X). 
\end{align*}
\end{proof}

\begin{proposition}
\label{prop:typeK-sim}
Let $(\mathfrak{g}_0,\sigma )$ be a non-compact semisimple symmetric pair of type $K_{\varepsilon }$ 
and $(\mathfrak{g},\theta _1,\theta _2)=(\mathfrak{g}_0,\sigma )^*$ 
the corresponding commutative compact semisimple symmetric triad via Theorem \ref{thm:duality-thm}. 
Then, there exists an element $Y\in \mathfrak{g}$ such that 
\begin{align}
\label{eq:sim}
\theta _2=e^{\operatorname{ad}Y} \theta _1e^{-\operatorname{ad}Y} . 
\end{align}
\end{proposition}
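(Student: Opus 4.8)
The plan is to produce the element $Y$ explicitly from the characteristic element $Z$ of the $\mathbb{Z}$-grading that witnesses $(\mathfrak{g}_0,\sigma)$ being of type $K_{\varepsilon}$. By Definition \ref{def:typeK} there is a $\mathbb{Z}$-grading of $\mathfrak{g}_0$ with associated pair $(Z,\theta)$ such that $\sigma=\sigma_Z\theta=e^{\pi\sqrt{-1}\operatorname{ad}Z}\theta$. Since $\sigma_Z$ commutes with $\theta$ by Lemma \ref{lem:sigma_Z}, this particular $\theta$ is a Cartan involution of $\mathfrak{g}_0$ commuting with $\sigma$, so by Lemma \ref{lem:cartan-involution} I may compute the dual using precisely this $\theta$; that is, I work with $(\mathfrak{g},\theta_1,\theta_2)=\Psi(\mathfrak{g}_0,\sigma;\theta)$, where $\mathfrak{g}=\mathfrak{k}_0+\sqrt{-1}\mathfrak{p}_0$, $\theta_1=\theta|_{\mathfrak{g}}$ and $\theta_2=\sigma|_{\mathfrak{g}}$ as in (\ref{eq:triad}).

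The candidate is $Y:=\frac{\pi\sqrt{-1}}{2}Z$. First I would verify that $Y\in\mathfrak{g}$. The grade-reversing condition (\ref{eq:reversing}) gives $\theta(Z)=-Z$, so $Z\in\mathfrak{g}_0^{-\theta}=\mathfrak{p}_0$; hence $\sqrt{-1}Z\in\sqrt{-1}\mathfrak{p}_0\subset\mathfrak{g}$, and $Y$, being a real multiple of $\sqrt{-1}Z$, lies in $\mathfrak{g}$. Extending $\theta$ $\mathbb{C}$-linearly to $\mathfrak{g}_{\mathbb{C}}$, the same relation yields $\theta(Y)=-Y$, which is exactly the hypothesis needed to invoke Lemma \ref{lem:e}.

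Next I would establish the conjugation identity on $\mathfrak{g}_{\mathbb{C}}$. Because $\operatorname{ad}$ is $\mathbb{C}$-linear, $e^{2\operatorname{ad}Y}=e^{\pi\sqrt{-1}\operatorname{ad}Z}=\sigma_Z$, so $\sigma=e^{2\operatorname{ad}Y}\theta$ on $\mathfrak{g}_{\mathbb{C}}$. Applying Lemma \ref{lem:e} with $2Y$ in place of $Y$ (legitimate since $\theta(2Y)=-2Y$) gives $e^{2\operatorname{ad}Y}\theta=\theta e^{-2\operatorname{ad}Y}$, whence $\sigma=\theta e^{-2\operatorname{ad}Y}=e^{\operatorname{ad}Y}\theta e^{-\operatorname{ad}Y}$ on $\mathfrak{g}_{\mathbb{C}}$. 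Finally, since $Y\in\mathfrak{g}$ and $\mathfrak{g}$ is a subalgebra, the inner automorphism $e^{\operatorname{ad}Y}$ preserves $\mathfrak{g}$; restricting the operator identity to $\mathfrak{g}$ then gives $\theta_2=\sigma|_{\mathfrak{g}}=e^{\operatorname{ad}Y}\theta_1 e^{-\operatorname{ad}Y}$, which is (\ref{eq:sim}).

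The computation is short, and the one point needing care—the only genuine obstacle—is the factor of $\frac{1}{2}$. The naive choice $Y=\pi\sqrt{-1}Z$ makes $e^{\operatorname{ad}Y}=\sigma_Z$, but the symmetric conjugation by it produces $\theta e^{-2\operatorname{ad}Y}\neq\sigma$; one must take half of the characteristic element so that $e^{\operatorname{ad}Y}\theta e^{-\operatorname{ad}Y}$ reproduces $\sigma=e^{2\operatorname{ad}Y}\theta$ via Lemma \ref{lem:e}. I would also make sure it is legitimate to compute the dual with the grade-reversing $\theta$ rather than an arbitrary commuting Cartan involution; this is precisely what Lemma \ref{lem:cartan-involution} guarantees up to equivalence, and since (\ref{eq:sim}) concerns a single representative $(\mathfrak{g},\theta_1,\theta_2)$ it is enough to verify it for this convenient choice.
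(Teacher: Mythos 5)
Your proof is correct and is essentially the paper's own argument: the same choice $Y=\frac{\pi\sqrt{-1}}{2}Z$, the same membership check $Y\in\sqrt{-1}\,\mathfrak{g}_0^{-\theta }\subset \mathfrak{g}$ coming from the grade-reversing property $\theta (Z)=-Z$, and the same application of Lemma \ref{lem:e} to convert $\sigma =e^{\operatorname{ad}(2Y)}\theta $ into $e^{\operatorname{ad}Y}\theta e^{-\operatorname{ad}Y}$. The only cosmetic differences are that you work on $\mathfrak{g}_{\mathbb{C}}$ and then restrict to $\mathfrak{g}$, and that you make explicit the appeal to Lemma \ref{lem:cartan-involution} justifying the use of the grade-reversing Cartan involution as the representative for computing the dual, both of which the paper leaves implicit.
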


\begin{proof}
We take a $\mathbb{Z}$-grading of $\mathfrak{g}_0$ and its associated pair $(Z,\theta )$ 
satisfying $\sigma =\sigma _Z\theta $. 
We set 
\begin{align*}
Y:=\frac{\pi \sqrt{-1}}{2}Z. 
\end{align*}
Then, it follows from (\ref{eq:sigma_Z}) that $\sigma _Z$ coincides with $e^{\operatorname{ad}(2Y)}$. 
Here, the relation (\ref{eq:reversing}) means 
the element $Z$ is contained in $\mathfrak{g}_0^{-\theta }$, 
in particular, $Y\in \sqrt{-1}\mathfrak{g}_0^{-\theta }
\subset \mathfrak{g}_0^{\theta }+\sqrt{-1}\mathfrak{g}_0^{-\theta }=\mathfrak{g}$. 

Two involutions $\theta _1$ and $\theta _2$ form 
$\theta _1=\theta $ and $\theta _2=\sigma =\sigma _Z\theta =e^{\operatorname{ad}(2Y)}\theta $, 
respectively. 
By Lemma \ref{lem:e}, 
$e^{\operatorname{ad}(2Y)}\theta $ equals $e^{\operatorname{ad}Y}\theta e^{-\operatorname{ad}Y}$. 
Hence, we get $\theta _2=e^{\operatorname{ad}Y}\theta _1e^{-\operatorname{ad}Y}$. 
\end{proof}

Our next concern is a commutative compact symmetric triad $(\mathfrak{g},\theta _1,\theta _2)$ 
with the property (\ref{eq:sim}). 
For our argument below, 
we introduce a relation between two involutions by (\ref{eq:sim}). 
Namely, 

\begin{define}
\label{def:sim}
Two involutions $\theta _1,\theta _2$ on $\mathfrak{g}$ satisfy $\theta _1\sim \theta _2$ 
if the condition (\ref{eq:sim}) holds for some $Y\in \mathfrak{g}$. 
\end{define}

This relation defines an equivalence relation on the set of involutions on $\mathfrak{g}$. 

\subsubsection{Compact symmetric triad with (\ref{eq:sim})}
\label{subsubsec:property-sim}

In this subsection, 
we study commutative compact semisimple symmetric triads with property (\ref{eq:sim}). 

Let $\mathfrak{g}$ be a compact semisimple Lie algebra and $\theta _1$ an involution on $\mathfrak{g}$. 
We write $\mathfrak{g}=\mathfrak{k}_1+\mathfrak{p}_1$ for the eigenspace decomposition 
with $(+1)$-eigenspace $\mathfrak{k}_1=\mathfrak{g}^{\theta _1}$ 
and $(-1)$-eigenspace $\mathfrak{p}_1=\mathfrak{g}^{-\theta _1}$. 

\begin{lemma}
\label{lem:yz}
\begin{enumerate}
	\item $e^{\operatorname{ad}Y_1}\theta _1e^{-\operatorname{ad}Y_1}=\theta _1$ 
	for any $Y_1\in \mathfrak{k}_1$. 
	\item $e^{\operatorname{ad}Z_1}\theta _1e^{\operatorname{ad}Z_1}=\theta _1$ 
	for any $Z_1\in \mathfrak{p}_1$. 
\end{enumerate}
\end{lemma}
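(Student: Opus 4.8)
The plan is to prove both statements by direct computation, exploiting the fact that the eigenspace decomposition $\mathfrak{g}=\mathfrak{k}_1+\mathfrak{p}_1$ is precisely the $(\pm 1)$-eigenspace decomposition of $\theta_1$, so that conjugating $\theta_1$ by $e^{\operatorname{ad}Y}$ is governed entirely by whether $\theta_1(Y)=Y$ or $\theta_1(Y)=-Y$.

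For part (1), I would observe that any $Y_1\in\mathfrak{k}_1$ satisfies $\theta_1(Y_1)=Y_1$. The key identity is the general functorial relation $\theta_1\,e^{\operatorname{ad}Y_1}\,\theta_1^{-1}=e^{\operatorname{ad}\theta_1(Y_1)}$, which holds because $\theta_1$ is a Lie algebra automorphism and hence $\theta_1(\operatorname{ad}Y_1)\theta_1^{-1}=\operatorname{ad}(\theta_1(Y_1))$. Since $\theta_1(Y_1)=Y_1$, the right-hand side is just $e^{\operatorname{ad}Y_1}$, so $\theta_1\,e^{\operatorname{ad}Y_1}=e^{\operatorname{ad}Y_1}\,\theta_1$, giving
\begin{align*}
e^{\operatorname{ad}Y_1}\theta_1 e^{-\operatorname{ad}Y_1}
=\theta_1 e^{\operatorname{ad}Y_1}e^{-\operatorname{ad}Y_1}
=\theta_1.
\end{align*}

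For part (2), the element $Z_1\in\mathfrak{p}_1$ satisfies $\theta_1(Z_1)=-Z_1$, which is exactly the hypothesis $\nu(Y)=-Y$ of Lemma \ref{lem:e} applied with $\nu=\theta_1$ and $Y=Z_1$. Thus Lemma \ref{lem:e} gives $e^{\operatorname{ad}Z_1}\theta_1=\theta_1 e^{-\operatorname{ad}Z_1}$, and rearranging yields
\begin{align*}
e^{\operatorname{ad}Z_1}\theta_1 e^{\operatorname{ad}Z_1}
=\theta_1 e^{-\operatorname{ad}Z_1}e^{\operatorname{ad}Z_1}
=\theta_1,
\end{align*}
which is the claimed identity (note the sign difference from part (1): here both exponents are $+Z_1$, not $+Z_1$ and $-Z_1$).

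Neither step presents a genuine obstacle, since both are immediate consequences of the automorphism property of $\theta_1$ together with the eigenvalue of $\theta_1$ on the relevant subspace; part (2) is essentially a restatement of the already-proved Lemma \ref{lem:e}. The only point requiring mild care is keeping track of the signs of the exponents so that the two conclusions are stated correctly — conjugation by $e^{\operatorname{ad}Y_1}$ in part (1) versus the ``twisted'' product $e^{\operatorname{ad}Z_1}\theta_1 e^{+\operatorname{ad}Z_1}$ in part (2). I would present both computations in the order above, invoking the functorial identity $\theta_1(\operatorname{ad}Y)\theta_1^{-1}=\operatorname{ad}(\theta_1(Y))$ explicitly for part (1) and citing Lemma \ref{lem:e} directly for part (2).
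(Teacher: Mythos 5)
Your proof is correct and takes essentially the same route as the paper, whose entire proof is the single sentence that Lemma \ref{lem:yz} is an immediate consequence of Lemma \ref{lem:e}. You merely spell out what the paper leaves implicit: part (2) is a literal application of Lemma \ref{lem:e} with $\nu=\theta_1$, $Y=Z_1$, while part (1) needs the $+1$-eigenvalue analogue, which you correctly derive from the automorphism identity $\theta_1 e^{\operatorname{ad}Y}\theta_1^{-1}=e^{\operatorname{ad}\theta_1(Y)}$.
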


\begin{proof}
Lemma \ref{lem:yz} is an immediate consequence of Lemma \ref{lem:e}. 
\end{proof}

Let us denote by $\operatorname{Int}\mathfrak{g}$ the adjoint group of $\mathfrak{g}$, 
namely, the analytic subgroup of the general linear group $GL(\mathfrak{g})$ 
with Lie algebra $\operatorname{ad}\mathfrak{g}$. 
As $\mathfrak{g}$ is semisimple, we identify $\operatorname{ad}\mathfrak{g}$ with $\mathfrak{g}$. 
Since $\operatorname{Int}\mathfrak{g}$ is compact, we can write 
\begin{align}
\label{eq:intg}
\operatorname{Int}\mathfrak{g}
=\exp (\operatorname{ad}\mathfrak{g})
=\{ e^{\operatorname{ad}Y}:Y\in \mathfrak{g}\} . 
\end{align}

Let $G$ be $\operatorname{Int}\mathfrak{g}$. 
For an arbitrary involution $\nu $ on $\mathfrak{g}$, 
we can lift $\nu$ to an involution on $G$ via (\ref{eq:intg}), 
which we use the same letter $\nu$ to denote. 
We set $K_1$ as the identity component of the fixed point set $G^{\theta _1}$. 
Then, $K_1$ is a connected closed subgroup of $G$ with Lie algebra $\mathfrak{k}_1$. 
Let $\mathfrak{a}_1$ be a maximal abelian subspace in $\mathfrak{p}_1$. 
It is well-known fact that 
we have a compact Lie group decomposition (see \cite[Theorem 6.7 in Chapter V]{helgason}, for example) 
\begin{align}
G=K_1(\exp \mathfrak{a}_1)K_1. 
\label{eq:group-decomp}
\end{align}

We will fix a $G$-invariant inner product $\langle \cdot ,\cdot \rangle$ on $\mathfrak{g}$. 
We extend $\operatorname{ad}A\in \operatorname{ad}\mathfrak{g}$ 
for any $A\in \mathfrak{g}$ to a $\mathbb{C}$-linear transformation 
on the complexified $\mathfrak{g}_{\mathbb{C}}$. 
This is diagonalizable and its eigenvalues are pure imaginary numbers. 
For $\lambda \in \mathfrak{a}_1$, we write 
\begin{align*}
\mathfrak{g}_{\mathbb{C}}(\mathfrak{a}_1:\lambda )
:=\{ X\in \mathfrak{g}_{\mathbb{C}}:(\operatorname{ad}A)X=\sqrt{-1}\langle \lambda ,A\rangle X~
(\forall A\in \mathfrak{a}_1)\} 
\end{align*}
for the restricted root space with restricted root $\lambda $. 
We set 
\begin{align*}
\Sigma \equiv \Sigma (\mathfrak{g}_{\mathbb{C}},\mathfrak{a}_1)
:=\{ \lambda \in \mathfrak{a}_1-\{ 0\} 
:\mathfrak{g}_{\mathbb{C}}(\mathfrak{a}_1:\lambda )\neq \{ 0\} \} . 
\end{align*}
Then, $\Sigma$ satisfies the axiom of root systems. 
In particular, if $\lambda \in \Sigma$ then $-\lambda \in \Sigma$. 
Thus, 
$\mathfrak{g}_{\mathbb{C}}$ is decomposed into the restricted root spaces as follows: 
\begin{align}
\mathfrak{g}_{\mathbb{C}}=\mathfrak{z}(\mathfrak{a}_1)+
	\sum _{\lambda \in \Sigma }\mathfrak{g}_{\mathbb{C}}(\mathfrak{a}_1:\lambda ). 
\label{eq:root-space-decomp}
\end{align}
Here, we write $\mathfrak{z}(\mathfrak{a}_1)=\mathfrak{g}_{\mathbb{C}}(\mathfrak{a}_1:0)$ 
for the centralizer of $\mathfrak{a}_1$ in $\mathfrak{g}_{\mathbb{C}}$. 

Now, 
let $(\mathfrak{g},\theta _1,\theta _2)$ be a commutative compact semisimple symmetric triad 
with $\theta _1\sim \theta _2$. 
We write $\theta _2=e^{\operatorname{ad}Y} \theta _1e^{-\operatorname{ad}Y}$ 
for some $Y \in \mathfrak{g}$. 
Due to the decomposition (\ref{eq:group-decomp}), 
we can write $e^{\operatorname{ad}Y}\in G$ as 
\begin{align}
\label{eq:e^Y}
e^{\operatorname{ad}Y}=e^{\operatorname{ad}Y_1}e^{\operatorname{ad}Z_1}e^{\operatorname{ad}Y_2}\quad 
(Y_1,Y_2\in \mathfrak{k}_1,Z_1\in \mathfrak{a}_1). 
\end{align}

By Lemma \ref{lem:yz}, the involution $\theta _2$ forms 
\begin{align}
\label{eq:theta _2}
\theta _2&=e^{\operatorname{ad}Y_1}e^{\operatorname{ad}Z_1}(e^{\operatorname{ad}Y_2}\theta _1
	e^{-\operatorname{ad}Y_2})e^{-\operatorname{ad}Z_1}e^{-\operatorname{ad}Y_1} \\
&=e^{\operatorname{ad}Y_1}e^{\operatorname{ad}Z_1}\theta _1
	e^{-\operatorname{ad}Z_1}e^{-\operatorname{ad}Y_1}\notag \\
&=\theta _1e^{\operatorname{ad}Y_1}e^{-2\operatorname{ad}Z_1}e^{-\operatorname{ad}Y_1} 
=e^{\operatorname{ad}Y_1}e^{2\operatorname{ad}Z_1}e^{-\operatorname{ad}Y_1}\theta _1. 
\notag 
\end{align}
Then, we have 
$\theta _1\theta _2=e^{\operatorname{ad}Y_1}e^{-\operatorname{ad}(2Z_1)}e^{-\operatorname{ad}Y_1}$ 
and 
$\theta _2\theta _1
=e^{\operatorname{ad}Y_1}e^{\operatorname{ad}(2Z_1)}e^{-\operatorname{ad}Y_1}$. 
Hence, the commutativity $\theta _1\theta _2=\theta _2\theta _1$ implies 

\begin{lemma}
\label{lem:Z_1}
$e^{\operatorname{ad}(2Z_1)}=e^{-\operatorname{ad}(2Z_1)}$. 
In particular, $e^{\operatorname{ad}(4Z_1)}=\operatorname{id}_{\mathfrak{g}}$. 
\end{lemma}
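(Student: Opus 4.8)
The plan is to read the result off directly from the two expressions for $\theta _1\theta _2$ and $\theta _2\theta _1$ that were computed immediately before the statement in (\ref{eq:theta _2}), and then to feed in the standing commutativity hypothesis $\theta _1\theta _2=\theta _2\theta _1$.

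First I would record the two identities obtained there, namely
\begin{align*}
\theta _1\theta _2 &= e^{\operatorname{ad}Y_1}e^{-\operatorname{ad}(2Z_1)}e^{-\operatorname{ad}Y_1},\\
\theta _2\theta _1 &= e^{\operatorname{ad}Y_1}e^{\operatorname{ad}(2Z_1)}e^{-\operatorname{ad}Y_1}.
\end{align*}
Since $(\mathfrak{g},\theta _1,\theta _2)$ is commutative, the two left-hand sides coincide, so the two right-hand sides are equal. The map $e^{\operatorname{ad}Y_1}$ lies in $G=\operatorname{Int}\mathfrak{g}$ and is therefore invertible; hence I would cancel the conjugating factor $e^{\operatorname{ad}Y_1}$ on the left and $e^{-\operatorname{ad}Y_1}$ on the right, which leaves exactly $e^{-\operatorname{ad}(2Z_1)}=e^{\operatorname{ad}(2Z_1)}$. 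This is the first assertion of the lemma.

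For the second assertion I would compose this equality with $e^{\operatorname{ad}(2Z_1)}$. Because $2Z_1$ commutes with itself, one has $e^{\operatorname{ad}(2Z_1)}e^{\operatorname{ad}(2Z_1)}=e^{\operatorname{ad}(4Z_1)}$, whereas $e^{\operatorname{ad}(2Z_1)}e^{-\operatorname{ad}(2Z_1)}=\operatorname{id}_{\mathfrak{g}}$. Combining these two evaluations with the first assertion immediately yields $e^{\operatorname{ad}(4Z_1)}=\operatorname{id}_{\mathfrak{g}}$.

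There is essentially no obstacle here: the entire content of the lemma has already been prepared by the computation (\ref{eq:theta _2}), so the proof reduces to a one-line cancellation. The only point deserving a word of care is the legitimacy of cancelling $e^{\operatorname{ad}Y_1}$, but this is immediate since $e^{\operatorname{ad}Y_1}$ is an element of the group $G=\operatorname{Int}\mathfrak{g}$ and hence invertible.
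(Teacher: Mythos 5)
Your proposal is correct and is precisely the argument the paper intends: the lemma is stated as an immediate consequence of the two expressions for $\theta_1\theta_2$ and $\theta_2\theta_1$ derived from (\ref{eq:theta _2}), so that commutativity plus cancellation of the invertible conjugating factor $e^{\operatorname{ad}Y_1}$ gives $e^{\operatorname{ad}(2Z_1)}=e^{-\operatorname{ad}(2Z_1)}$, and multiplying by $e^{\operatorname{ad}(2Z_1)}$ yields $e^{\operatorname{ad}(4Z_1)}=\operatorname{id}_{\mathfrak{g}}$. You have simply made explicit the cancellation step the paper leaves implicit.
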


Let us take $\lambda \in \Sigma$ and 
$0\neq X_{\lambda }\in \mathfrak{g}_{\mathbb{C}}(\mathfrak{a}_1:\lambda )$. 
By Lemma \ref{lem:Z_1}, we have 
\begin{align*}
X_{\lambda }=
\operatorname{id}_{\mathfrak{g}_{\mathbb{C}}}X_{\lambda }=e^{\operatorname{ad}4Z_1}X_{\lambda }
=e^{\sqrt{-1}\langle \lambda ,4Z_1\rangle }X_{\lambda }. 
\end{align*}
This means that $e^{\sqrt{-1}\langle \lambda ,4Z_1\rangle }=1$, 
which obtains $\langle \lambda ,4Z_1\rangle \in 2\pi \mathbb{Z}$. 
In view of this observation, 
we set 
\begin{align}
\label{eq:gamma}
\Gamma :=\left\{ 
	A\in \mathfrak{a}_1:\langle \lambda ,A\rangle \in \frac{\pi}{2}\mathbb{Z}~
	(\forall \lambda \in \Sigma )
\right\} . 
\end{align}
Therefore, we conclude: 

\begin{proposition}
\label{prop:sim}
Let $(\mathfrak{g},\theta _1,\theta _2)$ be a commutative compact semisimple symmetric triad. 
If the relation $\theta _1\sim \theta _2$ holds, 
then there exists $Z_1\in \Gamma$ such that 
\begin{align*}
(\mathfrak{g},\theta _1,\theta _2)\equiv 
(\mathfrak{g},\theta _1,e^{\operatorname{ad}Z_1}\theta _1e^{-\operatorname{ad}Z_1}). 
\end{align*}
\end{proposition}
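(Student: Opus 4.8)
The plan is to exploit the Cartan-type decomposition $G=K_1(\exp\mathfrak{a}_1)K_1$ of (\ref{eq:group-decomp}) that has already been applied to $e^{\operatorname{ad}Y}$ in (\ref{eq:e^Y}), and to absorb the $K_1$-factors by conjugation. Writing $e^{\operatorname{ad}Y}=e^{\operatorname{ad}Y_1}e^{\operatorname{ad}Z_1}e^{\operatorname{ad}Y_2}$ with $Y_1,Y_2\in\mathfrak{k}_1$ and $Z_1\in\mathfrak{a}_1$, the factor $e^{\operatorname{ad}Y_2}$ was already removed in deriving (\ref{eq:theta _2}) by Lemma \ref{lem:yz}, leaving $\theta_2=e^{\operatorname{ad}Y_1}e^{2\operatorname{ad}Z_1}e^{-\operatorname{ad}Y_1}\theta_1$. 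The remaining factor $e^{\operatorname{ad}Y_1}$ will be the isomorphism furnishing the desired equivalence, and the candidate inner involution in the target triad is $\theta_2':=e^{\operatorname{ad}Z_1}\theta_1e^{-\operatorname{ad}Z_1}$.

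First I would rewrite $\theta_2'$ in exponential form. Since $Z_1\in\mathfrak{a}_1\subset\mathfrak{p}_1=\mathfrak{g}^{-\theta_1}$, we have $\theta_1(Z_1)=-Z_1$, so Lemma \ref{lem:e} (with $\nu=\theta_1$, $Y=Z_1$) yields $e^{\operatorname{ad}Z_1}\theta_1=\theta_1e^{-\operatorname{ad}Z_1}$, whence
\begin{align*}
\theta_2'=e^{\operatorname{ad}Z_1}\theta_1e^{-\operatorname{ad}Z_1}=e^{2\operatorname{ad}Z_1}\theta_1.
\end{align*}
Combining this with (\ref{eq:theta _2}) and the fact that $e^{-\operatorname{ad}Y_1}$ commutes with $\theta_1$ (because $Y_1\in\mathfrak{k}_1=\mathfrak{g}^{\theta_1}$), I obtain
\begin{align*}
\theta_2=e^{\operatorname{ad}Y_1}e^{2\operatorname{ad}Z_1}\theta_1e^{-\operatorname{ad}Y_1}=e^{\operatorname{ad}Y_1}\theta_2'e^{-\operatorname{ad}Y_1}.
\end{align*}

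Then I would set $\psi:=e^{-\operatorname{ad}Y_1}\in\operatorname{Int}\mathfrak{g}$, a Lie algebra automorphism of $\mathfrak{g}$, and verify the two intertwining relations. Applying Lemma \ref{lem:yz} with $-Y_1\in\mathfrak{k}_1$ gives $\psi\theta_1\psi^{-1}=e^{-\operatorname{ad}Y_1}\theta_1e^{\operatorname{ad}Y_1}=\theta_1$, while the displayed identity gives $\psi\theta_2\psi^{-1}=\theta_2'$; that is, $\psi\theta_1=\theta_1\psi$ and $\psi\theta_2=\theta_2'\psi$, which is exactly the assertion that $\psi$ realizes $(\mathfrak{g},\theta_1,\theta_2)\equiv(\mathfrak{g},\theta_1,\theta_2')=(\mathfrak{g},\theta_1,e^{\operatorname{ad}Z_1}\theta_1e^{-\operatorname{ad}Z_1})$. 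Finally, the membership $Z_1\in\Gamma$ is precisely what the commutativity hypothesis produced through Lemma \ref{lem:Z_1} in the paragraph preceding the statement (the restricted-root computation forcing $\langle\lambda,4Z_1\rangle\in2\pi\mathbb{Z}$ for every $\lambda\in\Sigma$, hence $\langle\lambda,Z_1\rangle\in\frac{\pi}{2}\mathbb{Z}$). Since both substantive inputs—the $K_1(\exp\mathfrak{a}_1)K_1$ normal form and the quantization $Z_1\in\Gamma$—are already in hand, I anticipate no genuine obstacle: the only remaining content is the intertwining check, and the single point deserving care is bookkeeping of signs, so that the commuting relation from Lemma \ref{lem:yz} is used for $Y_1\in\mathfrak{k}_1$ and the anticommuting relation from Lemma \ref{lem:e} for $Z_1\in\mathfrak{p}_1$.
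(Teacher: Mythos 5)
Your proposal is correct and follows essentially the same route as the paper: the paper's own argument is exactly the computation preceding the statement, namely the decomposition $e^{\operatorname{ad}Y}=e^{\operatorname{ad}Y_1}e^{\operatorname{ad}Z_1}e^{\operatorname{ad}Y_2}$ from (\ref{eq:group-decomp}), the reduction (\ref{eq:theta _2}) via Lemma \ref{lem:yz}, and the quantization $Z_1\in \Gamma$ forced by commutativity through Lemma \ref{lem:Z_1}. Your explicit verification that $\psi=e^{-\operatorname{ad}Y_1}$ intertwines $(\theta _1,\theta _2)$ with $(\theta _1,e^{\operatorname{ad}Z_1}\theta _1e^{-\operatorname{ad}Z_1})$ is precisely the step the paper leaves implicit in its concluding ``Therefore, we conclude.''
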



Using Proposition \ref{prop:sim}, it turns out that: 

\begin{proposition}
\label{prop:sim-dual}
Let $(\mathfrak{g},\theta _1,\theta _2)$ be a commutative compact semisimple symmetric triad. 
If $\theta _1\sim \theta _2$, then $(\mathfrak{g},\theta _1,\theta _2)$ is self-dual. 
\end{proposition}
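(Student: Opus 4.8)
The plan is to reduce to the normal form supplied by Proposition \ref{prop:sim} and then to exhibit an explicit inner automorphism realizing the duality. First I would record that self-duality is invariant under the equivalence relation $\equiv$: if $\psi$ realizes $(\mathfrak{g},\theta _1,\theta _2)\equiv (\mathfrak{g}',\theta _1',\theta _2')$, then the same $\psi$ realizes $(\mathfrak{g},\theta _2,\theta _1)\equiv (\mathfrak{g}',\theta _2',\theta _1')$, so the dual operation of Definition \ref{def:dual-triad} is compatible with $\equiv$. Consequently, if $(\mathfrak{g},\theta _1,\theta _2)\equiv (\mathfrak{g},\theta _1,\theta _2')$ and the latter is self-dual, so is the former. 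By Proposition \ref{prop:sim} it therefore suffices to prove the claim when $\theta _2=e^{\operatorname{ad}Z_1}\theta _1e^{-\operatorname{ad}Z_1}$ for some $Z_1\in \Gamma \subset \mathfrak{a}_1\subset \mathfrak{p}_1$, with $\Gamma$ as in (\ref{eq:gamma}).

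Next I would set $\psi :=e^{\operatorname{ad}Z_1}\in \operatorname{Int}\mathfrak{g}\subset \operatorname{Aut}\mathfrak{g}$ and claim that $\psi$ realizes $(\mathfrak{g},\theta _1,\theta _2)\equiv (\mathfrak{g},\theta _2,\theta _1)$. Since $Z_1\in \mathfrak{p}_1=\mathfrak{g}^{-\theta _1}$, we have $\theta _1(Z_1)=-Z_1$, so Lemma \ref{lem:e} gives $e^{\operatorname{ad}Z_1}\theta _1=\theta _1e^{-\operatorname{ad}Z_1}$ and hence $\theta _2=e^{\operatorname{ad}(2Z_1)}\theta _1$. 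The first required identity $\psi \theta _1=\theta _2\psi$ is then immediate, because $\psi \theta _1\psi ^{-1}=e^{\operatorname{ad}Z_1}\theta _1e^{-\operatorname{ad}Z_1}=\theta _2$ is precisely the defining relation of $\theta _2$.

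The crux is the second identity $\psi \theta _2=\theta _1\psi$, i.e. $\psi \theta _2\psi ^{-1}=\theta _1$. I would compute $\psi \theta _2\psi ^{-1}=e^{\operatorname{ad}Z_1}e^{\operatorname{ad}(2Z_1)}\theta _1e^{-\operatorname{ad}Z_1}=e^{\operatorname{ad}(3Z_1)}\theta _1e^{-\operatorname{ad}Z_1}$, and then apply $\theta _1e^{-\operatorname{ad}Z_1}=e^{\operatorname{ad}Z_1}\theta _1$ once more to reach $e^{\operatorname{ad}(4Z_1)}\theta _1$. The decisive point is that $e^{\operatorname{ad}(4Z_1)}=\operatorname{id}_{\mathfrak{g}}$: because $Z_1\in \Gamma$, for every $\lambda \in \Sigma$ one has $\langle \lambda ,4Z_1\rangle \in 2\pi \mathbb{Z}$, so, using the restricted root space decomposition (\ref{eq:root-space-decomp}), $e^{\operatorname{ad}(4Z_1)}$ acts as $e^{\sqrt{-1}\langle \lambda ,4Z_1\rangle }=1$ on each $\mathfrak{g}_{\mathbb{C}}(\mathfrak{a}_1:\lambda )$ and trivially on $\mathfrak{z}(\mathfrak{a}_1)$ since $Z_1\in \mathfrak{a}_1$ centralizes it; this is exactly the content already isolated in Lemma \ref{lem:Z_1}. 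Thus $\psi \theta _2\psi ^{-1}=\theta _1$.

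Combining the two displays, $\psi$ satisfies $\psi \theta _1=\theta _2\psi $ and $\psi \theta _2=\theta _1\psi $, which is precisely the statement that $(\mathfrak{g},\theta _1,\theta _2)\equiv (\mathfrak{g},\theta _2,\theta _1)=(\mathfrak{g},\theta _1,\theta _2)^d$; that is, $(\mathfrak{g},\theta _1,\theta _2)$ is self-dual. The only real obstacle I anticipate is not any single deep step but careful bookkeeping: tracking the signs in the exponents through the repeated use of Lemma \ref{lem:e}, and invoking $e^{\operatorname{ad}(4Z_1)}=\operatorname{id}$ cleanly from $Z_1\in \Gamma$. Once the normal form of Proposition \ref{prop:sim} is in hand, the remainder is a short verification.
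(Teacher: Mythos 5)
Your proof is correct and follows essentially the same route as the paper: both reduce, via Proposition \ref{prop:sim}, to the normal form $\theta _2=e^{\operatorname{ad}Z_1}\theta _1e^{-\operatorname{ad}Z_1}$ with $Z_1\in \Gamma$, and both realize the equivalence with the dual triad by conjugating with the inner automorphism $e^{\operatorname{ad}Z_1}$, the decisive input being $e^{\operatorname{ad}(4Z_1)}=\operatorname{id}_{\mathfrak{g}}$ (Lemma \ref{lem:Z_1}, which you re-derive directly from the definition of $\Gamma$). The paper packages the same computation slightly differently, by rewriting $\theta _2=e^{\operatorname{ad}(2Z_1)}\theta _1=e^{-\operatorname{ad}(2Z_1)}\theta _1=e^{-\operatorname{ad}Z_1}\theta _1e^{\operatorname{ad}Z_1}$ and then reading off the equivalence, which is exactly your pair of intertwining identities $\psi \theta _1=\theta _2\psi $ and $\psi \theta _2=\theta _1\psi $ for $\psi =e^{\operatorname{ad}Z_1}$.
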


\begin{proof}
By Proposition \ref{prop:sim} and Lemma \ref{lem:dual}, 
we shall show 
\begin{align*}
(\mathfrak{g},\theta _1,e^{\operatorname{ad}Z_1}\theta _1e^{-\operatorname{ad}Z_1})^d
\equiv (\mathfrak{g},\theta _1,e^{\operatorname{ad}Z_1}\theta _1e^{-\operatorname{ad}Z_1})
\end{align*}
for $Z_1\in \Gamma $. 
By Lemmas \ref{lem:yz} and \ref{lem:Z_1}, 
the involution $e^{\operatorname{ad}Z_1}\theta _1e^{-\operatorname{ad}Z_1}$ equals 
\begin{align*}
e^{\operatorname{ad}Z_1}\theta _1e^{-\operatorname{ad}Z_1}
&=e^{\operatorname{ad}2Z_1}\theta _1
=e^{-\operatorname{ad}2Z_1}\theta _1
=e^{-\operatorname{ad}Z_1}\theta _1e^{\operatorname{ad}Z_1}. 
\end{align*}
Then, we have 
\begin{align*}
(\mathfrak{g},\theta _1,e^{\operatorname{ad}Z_1}\theta _1e^{-\operatorname{ad}Z_1})^d
&=(\mathfrak{g},e^{\operatorname{ad}Z_1}\theta _1e^{-\operatorname{ad}Z_1},\theta _1)\\
&=(\mathfrak{g},e^{-\operatorname{ad}Z_1}\theta _1e^{\operatorname{ad}Z_1},\theta _1)\\
&\equiv (\mathfrak{g},\theta _1,e^{\operatorname{ad}Z_1}\theta _1e^{-\operatorname{ad}Z_1}). 
\end{align*}

Hence, Proposition \ref{prop:sim-dual} has been proved. 
\end{proof}

\subsubsection{Dual of commutative compact symmetric triad with $\theta _1\sim \theta _2$}
\label{subsubsec:duality-sim}

The next theorem is a key for the study of the dual 
of a commutative compact semisimple symmetric triad $(\mathfrak{g},\theta _1,\theta _2)$ 
with $\theta _1\sim \theta _2$. 
The following theorem is converse to Proposition \ref{prop:typeK-sim}. 

\begin{theorem}
\label{thm:sim-typeK}
Let $\mathfrak{g}$ be a compact semisimple Lie algebra, 
$\theta _1$ an involution on $\mathfrak{g}$ and $Z_1$ an element of $\Gamma$ (see (\ref{eq:gamma})). 
For the commutative compact semisimple symmetric pair 
$(\mathfrak{g},\theta _1,e^{\operatorname{ad}Z_1}\theta _1e^{-\operatorname{ad}Z_1})$, 
the dual $(\mathfrak{g},\theta _1,e^{\operatorname{ad}Z_1}\theta _1e^{-\operatorname{ad}Z_1})^*$ 
is of type $K_{\varepsilon }$. 
\end{theorem}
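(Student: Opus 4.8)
The plan is to unwind the duality explicitly and then exhibit, on the dual pair, a $\mathbb{Z}$-grading whose associated involution recovers $\sigma$. By \eqref{eq:pair}, writing $\mathfrak{k}_1=\mathfrak{g}^{\theta_1}$, $\mathfrak{p}_1=\mathfrak{g}^{-\theta_1}$ and $\theta_2=e^{\operatorname{ad}Z_1}\theta_1e^{-\operatorname{ad}Z_1}$, the dual is
\[
(\mathfrak{g}_0,\sigma;\theta)=(\mathfrak{k}_1+\sqrt{-1}\mathfrak{p}_1,\ \theta_2|_{\mathfrak{g}_0};\ \theta_1|_{\mathfrak{g}_0}),
\]
so that $\mathfrak{g}_0^{-\theta}=\sqrt{-1}\mathfrak{p}_1$. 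To check that $(\mathfrak{g}_0,\sigma)$ is of type $K_{\varepsilon}$ (Definition \ref{def:typeK}) I must produce a $\mathbb{Z}$-grading of $\mathfrak{g}_0$ whose associated pair $(Z,\theta)$ satisfies $\sigma=\sigma_Z\theta$. Guided by Proposition \ref{prop:typeK-sim}, I would set
\[
Z:=-\tfrac{2\sqrt{-1}}{\pi}Z_1,\qquad\text{equivalently}\quad \tfrac{\pi\sqrt{-1}}{2}Z=Z_1,
\]
and note $Z=\sqrt{-1}\bigl(-\tfrac{2}{\pi}Z_1\bigr)\in\sqrt{-1}\mathfrak{p}_1=\mathfrak{g}_0^{-\theta}$, so in particular $\theta(Z)=-Z$.

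Next I would verify that $\operatorname{ad}Z$ grades $\mathfrak{g}_0$ by integers, which is exactly where the hypothesis $Z_1\in\Gamma$ enters. Using the restricted root decomposition \eqref{eq:root-space-decomp}, on $\mathfrak{g}_{\mathbb{C}}(\mathfrak{a}_1:\lambda)$ one has $\operatorname{ad}Z=-\tfrac{2\sqrt{-1}}{\pi}\operatorname{ad}Z_1=\tfrac{2}{\pi}\langle\lambda,Z_1\rangle\operatorname{id}$, and $\langle\lambda,Z_1\rangle\in\tfrac{\pi}{2}\mathbb{Z}$ by \eqref{eq:gamma} forces the eigenvalue $\tfrac{2}{\pi}\langle\lambda,Z_1\rangle$ to be an integer, while the centralizer $\mathfrak{z}(\mathfrak{a}_1)$ contributes eigenvalue $0$. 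Since $Z\in\mathfrak{g}_0$, the $\mathbb{C}$-linear operator $\operatorname{ad}Z$ preserves the real form $\mathfrak{g}_0$ and, being diagonalizable over $\mathbb{C}$ with integer (hence real) eigenvalues, is diagonalizable over $\mathbb{R}$ there. This yields $\mathfrak{g}_0=\sum_k\mathfrak{g}_0(k)$ with $\mathfrak{g}_0(k)=\{X\in\mathfrak{g}_0:\operatorname{ad}(Z)X=kX\}$, a $\mathbb{Z}$-grading (of positive kind as long as $Z\neq 0$, i.e.\ $\theta_1\neq\theta_2$) whose characteristic element in the sense of Lemma \ref{lem:ch} is $Z\in\mathfrak{g}_0(0)$. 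As $\theta$ is a Cartan involution of $\mathfrak{g}_0$ with $\theta(Z)=-Z$, it is grade-reversing, so $(Z,\theta)$ is an associated pair (Definition \ref{def:associated pair}).

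It remains to identify $\sigma$ with $\sigma_Z\theta$. From \eqref{eq:sigma_Z} and $\pi\sqrt{-1}Z=2Z_1$ I get $\sigma_Z=e^{\pi\sqrt{-1}\operatorname{ad}Z}=e^{\operatorname{ad}(2Z_1)}$ on the common complexification $\mathfrak{g}_{\mathbb{C}}$. Since $Z_1\in\mathfrak{p}_1$ satisfies $\theta_1(Z_1)=-Z_1$, Lemma \ref{lem:e} gives $e^{\operatorname{ad}Z_1}\theta_1=\theta_1e^{-\operatorname{ad}Z_1}$, whence
\[
\sigma_Z\theta=e^{\operatorname{ad}(2Z_1)}\theta_1=e^{\operatorname{ad}Z_1}\theta_1e^{-\operatorname{ad}Z_1}=\theta_2 .
\]
Restricting this identity of operators on $\mathfrak{g}_{\mathbb{C}}$ to $\mathfrak{g}_0$ yields $\sigma_Z\theta=\sigma$, so $(\mathfrak{g}_0,\sigma)$ is of type $K_{\varepsilon}$.

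I expect the main obstacle to be bookkeeping rather than a deep difficulty: one must keep straight that $\mathfrak{g}$ and $\mathfrak{g}_0$ are both real forms of the same $\mathfrak{g}_{\mathbb{C}}$, so that $Z_1\in\mathfrak{g}$, $Z\in\mathfrak{g}_0$, and the operators $e^{\operatorname{ad}Z_1}$ and $e^{\pi\sqrt{-1}\operatorname{ad}Z}$ all live in $\operatorname{Aut}\mathfrak{g}_{\mathbb{C}}$ where the manipulations are legitimate, and only afterwards restrict back to $\mathfrak{g}_0$. The other point requiring care is confirming that the integral-eigenvalue computation produces a genuine real grading of positive kind; the degenerate case $Z_1=0$ (equivalently $\theta_1=\theta_2$, where the dual is Riemannian) should be treated separately or excluded.
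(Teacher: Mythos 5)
Your proof is correct and follows the paper's own argument in all essentials: the same characteristic element $Z=\tfrac{2}{\pi\sqrt{-1}}Z_1\in\sqrt{-1}\mathfrak{a}_1\subset\mathfrak{g}_0^{-\theta}$, the same use of $Z_1\in\Gamma$ to force integrality of the $\operatorname{ad}Z$-eigenvalues, and the same final identity $\sigma_Z\theta=e^{\operatorname{ad}(2Z_1)}\theta_1=e^{\operatorname{ad}Z_1}\theta_1e^{-\operatorname{ad}Z_1}$ via Lemma \ref{lem:e}. The one place you diverge is in producing the real $\mathbb{Z}$-grading: the paper does this by hand, building orthonormal bases $S_i^{\lambda}$, $T_i^{\lambda}=f_{\lambda}(S_i^{\lambda})$ of $\mathfrak{k}_1(\lambda)$ and $\mathfrak{p}_1(\lambda)$ (Lemmas \ref{lem:f} and \ref{lem:st}) and checking that $S_i^{\lambda}\pm\sqrt{-1}T_i^{\lambda}$ are eigenvectors of $\operatorname{ad}Z$ with eigenvalues $\mp\sqrt{-1}\langle\lambda,Z\rangle\in\mathbb{Z}$, whereas you read the eigenvalues off the complexified restricted root decomposition (\ref{eq:root-space-decomp}) and invoke the fact that a real operator preserving $\mathfrak{g}_0$, diagonalizable over $\mathbb{C}$ with real eigenvalues, is diagonalizable over $\mathbb{R}$; both are valid, and your route bypasses the $f_{\lambda}$-machinery that the paper sets up for this purpose. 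Two small remarks: your caveat about the degenerate case is apt in that for $Z_1=0$ the construction yields only the trivial decomposition $\mathfrak{g}_0=\mathfrak{g}_0(0)$, which is not a grading of $m$-th kind for any positive $m$ (the paper silently has the same issue, and the Riemannian dual can still be shown to be of type $K_{\varepsilon}$ by choosing an auxiliary nontrivial grading and doubling its degrees so that $\sigma_Z=\operatorname{id}$); however, your parenthetical identification of this case with $\theta_1=\theta_2$ is not quite right, since $\theta_1=\theta_2$ only requires $\langle\lambda,Z_1\rangle\in\pi\mathbb{Z}$ for all $\lambda\in\Sigma$, which can hold with $Z_1\neq 0$, and in that situation your construction goes through verbatim (all eigenvalues even, $\sigma_Z=\operatorname{id}$). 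The genuinely degenerate case is exactly $Z_1=0$.
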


In order to prove Theorem \ref{thm:sim-typeK}, 
we begin with a general setup, based on \cite{takeuchi}. 
For this, we keep the setting as in Section \ref{subsubsec:property-sim}. 

Let us take and fix $Z_1\in \Gamma$. 
We choose a positive system $\Sigma ^+$ of $\Sigma $ 
characterized by $Z_1$, namely, 
\begin{align}
\label{eq:positive system}
\Sigma ^+:=\{ \lambda \in \Sigma :\langle \lambda ,Z_1\rangle >0\} , 
\end{align}
$\Sigma ^-:=-\Sigma ^+=\{ -\lambda :\lambda \in \Sigma ^+\} $ and 
$\Sigma ^0:=\{ \lambda \in \Sigma :\langle \lambda ,Z_1\rangle =0\} $. 
Then, the restricted root $\Sigma =\Sigma (\mathfrak{g}_{\mathbb{C}},\mathfrak{a}_1)$ 
is decomposed into the disjoint union $\Sigma =\Sigma ^+\sqcup \Sigma ^-\sqcup \Sigma ^0$. 
Obviously, $-\lambda \in \Sigma ^0$ if $\lambda \in \Sigma ^0$. 

For each $\lambda \in \Sigma$, 
we define a subspace $V(\lambda )$ in $\mathfrak{g}$ by 
\begin{align}
V(\lambda ):=\{ X\in \mathfrak{g}:(\operatorname{ad}A)^2X=-\langle \lambda ,A\rangle ^2X~
(\forall A\in \mathfrak{a}_1)\} . 
\label{eq:v}
\end{align}
By definition, $V(-\lambda )=V(\lambda )$ for any $\lambda \in \Sigma$. 
As $\mathfrak{a}_1\subset \mathfrak{p}_1$, $V(\lambda )$ is $\theta _1$-invariant. 
We set $\mathfrak{k}_1(\lambda ):=V(\lambda )\cap \mathfrak{k}_1$ and 
$\mathfrak{p}_1(\lambda ):=V(\lambda )\cap \mathfrak{p}_1$. 
Then, $V(\lambda )=\mathfrak{k}_1(\lambda )+\mathfrak{p}_1(\lambda )$ 
is the eigenspace decomposition of $\theta _1$. 
Further, 
the complexification $V(\lambda )_{\mathbb{C}}
=V(\lambda )+\sqrt{-1}V(\lambda )\subset \mathfrak{g}_{\mathbb{C}}$ 
coincides with the sum of two restricted root spaces 
$\mathfrak{g}_{\mathbb{C}}(\mathfrak{a}_1:\lambda )+\mathfrak{g}_{\mathbb{C}}(\mathfrak{a}_1:-\lambda )$. 
On the other hand, 
the centralizer $\mathfrak{z}(\mathfrak{a}_1)$ of $\mathfrak{a}_1$ in $\mathfrak{g}_{\mathbb{C}}$ 
is of the form 
$\mathfrak{z}(\mathfrak{a}_1)
=(\mathfrak{z}_{\mathfrak{k}_1}(\mathfrak{a}_1)+\mathfrak{a}_1)_{\mathbb{C}}$ 
where $\mathfrak{z}_{\mathfrak{k}_1}(\mathfrak{a}_1)$ denotes the centralizer of $\mathfrak{a}_1$ 
in $\mathfrak{k}_1$. 
Thus, the restricted root space decomposition (\ref{eq:root-space-decomp}) of $\mathfrak{g}_{\mathbb{C}}$ 
is the complexification of 
$
\mathfrak{z}_{\mathfrak{k}_1}(\mathfrak{a}_1)+\mathfrak{a}_1
	+\sum _{\lambda \in \Sigma ^+\sqcup \Sigma ^0}V(\lambda )
$. 
Hence, we get a decomposition of $\mathfrak{g}$ as follows: 
\begin{align}
\mathfrak{g}=\mathfrak{z}_{\mathfrak{k}_1}(\mathfrak{a}_1)+\mathfrak{a}_1+
	\sum _{\lambda \in \Sigma ^+\sqcup \Sigma ^0}(\mathfrak{k}_1(\lambda )+\mathfrak{p}_1(\lambda )), 
\label{eq:compact-decomp}
\end{align}
and the ones of $\mathfrak{k}_1$ and $\mathfrak{p}_1$, respectively, as follows: 
\begin{align}
\label{eq:compact-decomp-k}
\mathfrak{k}_1&=\mathfrak{z}_{\mathfrak{k}_1}(\mathfrak{a}_1)+
	\sum _{\lambda \in \Sigma ^+\sqcup \Sigma ^0}\mathfrak{k}_1(\lambda ), \\
\label{eq:compact-decomp-p}
\mathfrak{p}_1&=\mathfrak{a}_1+
	\sum _{\lambda \in \Sigma ^+\sqcup \Sigma ^0}\mathfrak{p}_1(\lambda ). 
\end{align}

Next, we consider a linear transformation $f_{\lambda }$ on $\mathfrak{g}$ 
for each $\lambda \in \Sigma $ defined by 
\begin{align*}
f_{\lambda }(X)=\langle \lambda ,\lambda \rangle ^{-1}(\operatorname{ad}\lambda )(X) \quad 
(X\in \mathfrak{g}). 
\end{align*}
This induces a linear transformation on $V(\lambda )=V(-\lambda )$. 
Indeed, the following equality holds for any $A\in \mathfrak{a}_1$: 
\begin{align*}
(\operatorname{ad}A)^2((\operatorname{ad}\lambda )X)
&=(\operatorname{ad}\lambda )(\operatorname{ad}A)^2X\\
&=(\operatorname{ad}\lambda )(-\langle \lambda ,A\rangle ^2X)\\
&=-\langle \lambda ,A\rangle ^2((\operatorname{ad}\lambda )X). 
\end{align*}

\begin{lemma}
\label{lem:f}
$\langle f_{\lambda }(X_1),f_{\lambda }(X_2)\rangle =\langle X_1,X_2\rangle $ 
for any $X_1,X_2\in V(\lambda )$. 
\end{lemma}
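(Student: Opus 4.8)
The plan is to reduce the claim to two standard properties of the operator $\operatorname{ad}\lambda$ acting on the compact Lie algebra $\mathfrak{g}$: its skew-symmetry with respect to the fixed $G$-invariant inner product, and the fact that $f_\lambda$ squares to $-\operatorname{id}$ on $V(\lambda)$. First I would record that since $\langle\cdot,\cdot\rangle$ is $G$-invariant, differentiating the relation $\langle \operatorname{Ad}(e^{tA})X,\operatorname{Ad}(e^{tA})Y\rangle=\langle X,Y\rangle$ at $t=0$ yields $\langle (\operatorname{ad}A)X,Y\rangle+\langle X,(\operatorname{ad}A)Y\rangle=0$ for all $A,X,Y\in\mathfrak{g}$; in particular $\operatorname{ad}\lambda$ is skew-symmetric. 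Since $\lambda\in\Sigma\subset\mathfrak{a}_1$, I may substitute $A=\lambda$ into the defining relation (\ref{eq:v}) of $V(\lambda)$ to obtain $(\operatorname{ad}\lambda)^2 X=-\langle\lambda,\lambda\rangle^2 X$ for every $X\in V(\lambda)$.

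With these two observations the computation is immediate. For $X_1,X_2\in V(\lambda)$, I would expand
\begin{align*}
\langle f_\lambda(X_1),f_\lambda(X_2)\rangle
&=\langle\lambda,\lambda\rangle^{-2}\langle (\operatorname{ad}\lambda)X_1,(\operatorname{ad}\lambda)X_2\rangle,
\end{align*}
then move one factor of $\operatorname{ad}\lambda$ across the inner product using skew-symmetry, which gives
\begin{align*}
\langle (\operatorname{ad}\lambda)X_1,(\operatorname{ad}\lambda)X_2\rangle
=-\langle X_1,(\operatorname{ad}\lambda)^2X_2\rangle
=\langle\lambda,\lambda\rangle^2\langle X_1,X_2\rangle,
\end{align*}
where the last equality uses the eigenvalue relation above. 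Substituting back cancels the factors $\langle\lambda,\lambda\rangle^{\pm 2}$ and yields $\langle f_\lambda(X_1),f_\lambda(X_2)\rangle=\langle X_1,X_2\rangle$, as desired.

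There is essentially no serious obstacle here; the argument is a short orthogonality computation. The only point requiring care is the justification that $\operatorname{ad}\lambda$ is skew-symmetric, which rests on the $G$-invariance of $\langle\cdot,\cdot\rangle$ fixed earlier (just before (\ref{eq:intg})), and the observation that $\lambda$ itself is an admissible choice of $A\in\mathfrak{a}_1$ in (\ref{eq:v}) so that the quadratic relation applies to $\operatorname{ad}\lambda$. Both are already available in the present setting, so the proof is a direct two-line manipulation.
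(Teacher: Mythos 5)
Your proof is correct and follows essentially the same route as the paper's: skew-symmetry of $\operatorname{ad}\lambda$ from the $G$-invariance of $\langle\cdot,\cdot\rangle$, followed by the eigenvalue relation $(\operatorname{ad}\lambda)^2X=-\langle\lambda,\lambda\rangle^2X$ on $V(\lambda)$ obtained by taking $A=\lambda$ in (\ref{eq:v}). The paper's proof is exactly this two-step computation, so there is nothing to add.
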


\begin{proof}
Since the inner product $\langle \cdot ,\cdot \rangle $ is $G$-invariant, 
we have $\langle (\operatorname{ad}\lambda )Y_1,Y_2\rangle 
=-\langle Y_1,(\operatorname{ad}\lambda )Y_2\rangle $ for $Y_1,Y_2\in \mathfrak{g}$. 
We compute 
\begin{align*}
\langle f_{\lambda }(X_1),f_{\lambda }(X_2)\rangle 
&=\langle \lambda ,\lambda \rangle ^{-2}
	\langle (\operatorname{ad}\lambda )X_1,(\operatorname{ad}\lambda )X_2\rangle \\
&=-\langle \lambda ,\lambda \rangle ^{-2}
	\langle X_1,(\operatorname{ad}\lambda )^2X_2\rangle \\
&=-\langle \lambda ,\lambda \rangle ^{-2}
	\langle X_1,-\langle \lambda ,\lambda \rangle ^2X_2\rangle \\
&=\langle X_1,X_2\rangle . 
\end{align*}
Hence, Lemma \ref{lem:f} has been verified. 
\end{proof}

This implies that $f_{\lambda }$ is regular on $V(\lambda )$, 
and the inverse 
$(f_{\lambda }|_{V(\lambda )})^{-1}$ of the restriction of $f_{\lambda }$ to $V(\lambda )$ 
coincides with $f_{-\lambda }|_{V(\lambda )}$. 

Here, the image $f_{\lambda }(\mathfrak{k}_1(\lambda ))$ is contained in $\mathfrak{p}_1(\lambda )$ 
because for $X\in \mathfrak{k}_1(\lambda )$ 
\begin{align*}
\theta _1(f_{\lambda }(X))
&=\langle \lambda ,\lambda \rangle ^{-1}(\operatorname{ad}\theta _1(\lambda ))\theta _1(X)
=-\langle \lambda ,\lambda \rangle ^{-1}(\operatorname{ad}\lambda )X
=-f_{\lambda }(X). 
\end{align*}
Similarly, we have $f_{\lambda }(\mathfrak{p}_1(\lambda ))\subset \mathfrak{k}_1(\lambda )$. 
Hence, $f_{\lambda }$ yields a linear isomorphism 
from $\mathfrak{k}_1(\lambda )$ to $\mathfrak{p}_1(\lambda )$. 
Now, we set $d_{\lambda }:=\dim \mathfrak{k}_1(\lambda )=\dim \mathfrak{p}_1(\lambda )$. 

Let $\{ S_i^{\lambda }: 1\leq i\leq d_{\lambda }\} $ 
be an orthonormal basis of $\mathfrak{k}_1(\lambda )$. 
We set $T_i^{\lambda }:=f_{\lambda }(S_i^{\lambda })$ for $1\leq i\leq d_{\lambda }$. 
By Lemma \ref{lem:f}, $\{ T_i^{\lambda }:1\leq i\leq d_{\lambda }\} $ is an 
orthonormal basis of $\mathfrak{p}_1(\lambda )$. 

\begin{lemma}
\label{lem:st}
The following equalities hold for each $i=1,2,\ldots .d_{\lambda }$ and any $A\in \mathfrak{a}_1$: 
\begin{align}
\label{eq:st}
(\operatorname{ad}A)S_i^{\lambda }&=\langle \lambda ,A\rangle T_i^{\lambda }
,\\
\label{eq:ts}
(\operatorname{ad}A)T_i^{\lambda }&=-\langle \lambda ,A\rangle S_i^{\lambda }
. 
\end{align}
\end{lemma}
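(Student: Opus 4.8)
The plan is to reduce both identities to a single linear relation between $\operatorname{ad}A$ and $f_\lambda$ on the subspace $V(\lambda)$, and then to invoke the fact that $f_\lambda$ restricts to a complex structure there. As a preliminary I would record that $f_\lambda^2=-\operatorname{id}$ on $V(\lambda)$: since $\lambda\in\mathfrak{a}_1$, the defining property \eqref{eq:v} of $V(\lambda)$ with $A=\lambda$ gives $(\operatorname{ad}\lambda)^2X=-\langle\lambda,\lambda\rangle^2X$ for $X\in V(\lambda)$, whence $f_\lambda^2X=\langle\lambda,\lambda\rangle^{-2}(\operatorname{ad}\lambda)^2X=-X$. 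This is exactly the relation $(f_\lambda|_{V(\lambda)})^{-1}=f_{-\lambda}|_{V(\lambda)}=-f_\lambda|_{V(\lambda)}$ already observed just before the statement.

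The key step is to establish
\[
(\operatorname{ad}A)X=\langle\lambda,A\rangle f_\lambda(X)\qquad(X\in V(\lambda),\ A\in\mathfrak{a}_1).
\]
For this I would pass to the complexification and use the decomposition $V(\lambda)_{\mathbb{C}}=\mathfrak{g}_{\mathbb{C}}(\mathfrak{a}_1:\lambda)+\mathfrak{g}_{\mathbb{C}}(\mathfrak{a}_1:-\lambda)$ recorded in the text. On $\mathfrak{g}_{\mathbb{C}}(\mathfrak{a}_1:\varepsilon\lambda)$ with $\varepsilon=\pm1$ one has $(\operatorname{ad}A)=\varepsilon\sqrt{-1}\langle\lambda,A\rangle$ by definition of the restricted root space, and, taking $A=\lambda$, $(\operatorname{ad}\lambda)=\varepsilon\sqrt{-1}\langle\lambda,\lambda\rangle$; dividing, the factor $\varepsilon\sqrt{-1}$ cancels and one gets $\operatorname{ad}A=\langle\lambda,A\rangle\langle\lambda,\lambda\rangle^{-1}\operatorname{ad}\lambda=\langle\lambda,A\rangle f_\lambda$ on each of the two root spaces, for both signs $\varepsilon$. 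As both sides are $\mathbb{C}$-linear, the identity holds on $V(\lambda)_{\mathbb{C}}$ and hence on its real form $V(\lambda)$.

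Finally I would apply the displayed relation to the basis vectors. Taking $X=S_i^{\lambda}\in\mathfrak{k}_1(\lambda)\subset V(\lambda)$ yields $(\operatorname{ad}A)S_i^{\lambda}=\langle\lambda,A\rangle f_\lambda(S_i^{\lambda})=\langle\lambda,A\rangle T_i^{\lambda}$, which is \eqref{eq:st}; taking $X=T_i^{\lambda}=f_\lambda(S_i^{\lambda})\in\mathfrak{p}_1(\lambda)\subset V(\lambda)$ yields $(\operatorname{ad}A)T_i^{\lambda}=\langle\lambda,A\rangle f_\lambda(T_i^{\lambda})=\langle\lambda,A\rangle f_\lambda^2(S_i^{\lambda})=-\langle\lambda,A\rangle S_i^{\lambda}$ by the preliminary step, which is \eqref{eq:ts}. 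The computation is short; the only point that requires care is the eigenvalue bookkeeping on the two root spaces in the key step, and once the cancellation of $\varepsilon\sqrt{-1}$ is verified for both signs, the remainder is immediate.
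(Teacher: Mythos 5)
Your proof is correct, but it takes a genuinely different route from the paper's. The paper argues entirely inside the real compact algebra using the invariant inner product: it first shows $(\operatorname{ad}A)S_i^{\lambda }=0$ whenever $\langle \lambda ,A\rangle =0$ (by computing $\langle (\operatorname{ad}A)S_i^{\lambda },(\operatorname{ad}A)S_i^{\lambda }\rangle =-\langle S_i^{\lambda },(\operatorname{ad}A)^2S_i^{\lambda }\rangle =0$ and invoking positive definiteness), then handles general $A$ by the orthogonal decomposition $A=A'+\langle \lambda ,\lambda \rangle ^{-1}\langle \lambda ,A\rangle \lambda $ with $\langle \lambda ,A'\rangle =0$, and finally deduces (\ref{eq:ts}) from (\ref{eq:st}) by applying $f_{\lambda }^{-1}=f_{-\lambda }$. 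You instead pass to the complexification, use $V(\lambda )_{\mathbb{C}}=\mathfrak{g}_{\mathbb{C}}(\mathfrak{a}_1:\lambda )+\mathfrak{g}_{\mathbb{C}}(\mathfrak{a}_1:-\lambda )$, and verify by eigenvalue bookkeeping the single operator identity $\operatorname{ad}A=\langle \lambda ,A\rangle f_{\lambda }$ on $V(\lambda )$, from which both (\ref{eq:st}) and (\ref{eq:ts}) fall out at once via $f_{\lambda }^2=-\operatorname{id}$. Your eigenvalue computation is right (the factor $\varepsilon \sqrt{-1}$ does cancel for both signs), and the restriction from $V(\lambda )_{\mathbb{C}}$ to $V(\lambda )$ is legitimate since both operators preserve the real form. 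What the paper's argument buys is that it never leaves $\mathfrak{g}$ and needs nothing beyond the inner product already fixed (indeed Lemma \ref{lem:f} is proved by the same device); what yours buys is a cleaner conceptual statement — $f_{\lambda }$ is a complex structure on $V(\lambda )$ intertwining the whole $\operatorname{ad}\mathfrak{a}_1$-action, so each pair $(S_i^{\lambda },T_i^{\lambda })$ spans an invariant $2$-plane — and it dispenses with the case distinction on $\langle \lambda ,A\rangle $ and with the separate derivation of the second identity.
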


\begin{proof}
Let us verify (\ref{eq:st}). 
Suppose $A\in \mathfrak{a}_1$ satisfies $\langle \lambda ,A\rangle =0$. 
Then, $\langle \lambda ,A\rangle T_i^{\lambda }$ equals zero. 
On the other hand, 
we compute 
\begin{align*}
\langle (\operatorname{ad}A)S_i^{\lambda },(\operatorname{ad}A)S_i^{\lambda }\rangle 
&=-\langle S_i^{\lambda },(\operatorname{ad}A)^2S_i^{\lambda }\rangle \\
&=-\langle S_i^{\lambda },-\langle \lambda ,A\rangle ^2S_i^{\lambda }\rangle \\
&=0. 
\end{align*}
Thus, we have $(\operatorname{ad}A)S_i^{\lambda }=0$, 
which coincides with $\langle \lambda ,A\rangle T_i^{\lambda }$. 

In a general case where $A\in \mathfrak{a}_1$ satisfies $\langle \lambda ,A\rangle \neq 0$, 
we put $A'=A-\langle \lambda ,\lambda \rangle ^{-1}\langle \lambda ,A\rangle \lambda \in \mathfrak{a}_1$. 
Then, we have $\langle \lambda ,A'\rangle =0$. 
This relation explains $(\operatorname{ad}A')S_i^{\lambda }=0$. 
Hence, we obtain 
\begin{align*}
(\operatorname{ad}A)S_i^{\lambda }
&=(\operatorname{ad}A')S_i^{\lambda }
	+\langle \lambda ,\lambda \rangle ^{-1}\langle \lambda ,A\rangle 
	(\operatorname{ad}\lambda )S_i^{\lambda }
=\langle \lambda ,A\rangle T_i^{\lambda }. 
\end{align*}
This implies (\ref{eq:st}) for any $A\in \mathfrak{a}_1$. 

The equality (\ref{eq:ts}) follows from $f_{\lambda }^{-1}((\operatorname{ad}A)S_i^{\lambda })
=f_{\lambda }^{-1}(\langle \lambda ,A\rangle T_i^{\lambda })$. 
Indeed, 
we have $f_{\lambda }^{-1}((\operatorname{ad}A)S_i^{\lambda })
=f_{-\lambda }((\operatorname{ad}A)S_i^{\lambda })
=-(\operatorname{ad}A)T_i^{\lambda }$ and 
$f_{\lambda }^{-1}(\langle \lambda ,A\rangle T_i^{\lambda }=\langle \lambda ,A\rangle S_i^{\lambda }$. 

Therefore, Lemma \ref{lem:st} has been proved. 
\end{proof}

We are ready to give a proof of Theorem \ref{thm:sim-typeK}. 

\begin{proof}[Proof of Theorem \ref{thm:sim-typeK}]
We set $(\mathfrak{g}_0,\sigma ;\theta )
=(\mathfrak{g},\theta _1,e^{\operatorname{ad}Z_1}\theta _1e^{-\operatorname{ad}Z_1})^*$. 
By (\ref{eq:pair}), 
we write $\mathfrak{g}_0=\mathfrak{k}_1+\sqrt{-1}\mathfrak{p}_1$, 
$\theta =\theta _1$ and $\sigma =e^{\operatorname{ad}Z_1}\theta _1e^{-\operatorname{ad}Z_1}$. 
By the observation of (\ref{eq:compact-decomp}), 
$\mathfrak{g}_0$ is decomposed as follows 
\begin{align}
\label{eq:non-compact-decomp}
\mathfrak{g}_0=\mathfrak{z}_{\mathfrak{k}_1}(\mathfrak{a}_1)+\sqrt{-1}\mathfrak{a}_1
+\sum _{\lambda \in \Sigma ^+\sqcup \Sigma ^0}(\mathfrak{k}_1(\lambda )+\sqrt{-1}\mathfrak{p}_1(\lambda )). 
\end{align}

Using an element $Z_1\in \Gamma$, 
we define $Z\in \sqrt{-1}\mathfrak{a}_1$ by 
\begin{align}
Z=\frac{2Z_1}{\pi \sqrt{-1}}. 
\label{eq:ch}
\end{align}
Then, we will show that 
one can find a $\mathbb{Z}$-grading of $\mathfrak{g}_0$ 
whose characteristic element is $Z$. 
Namely, $\mathfrak{g}_0$ is of the form $\mathfrak{g}_0=\sum _{k\in \mathbb{Z}}\mathfrak{g}_0(k)$ 
under 
$\mathfrak{g}_0(k)=\{ X\in \mathfrak{g}_0:(\operatorname{ad}Z)X=kX\} $. 

Now, we fix $\lambda \in \Sigma ^+$ 
and take a basis of $\mathfrak{k}_1(\lambda )+\sqrt{-1}\mathfrak{p}_1(\lambda )$ as 
$\{ S_i^{\lambda }\pm \sqrt{-1}T_i^{\lambda }:1\leq i\leq d_{\lambda }\} $. 
Then, 
it follows from the relations (\ref{eq:st}) and (\ref{eq:ts}) that 
for each $i=1,2,\ldots ,d_{\lambda }$ we have 
\begin{align*}
\operatorname{ad}(Z)(S_i^{\lambda }\pm \sqrt{-1}T_i^{\lambda })
=\mp \sqrt{-1}\langle \lambda ,Z\rangle (S_i^{\lambda }\pm \sqrt{-1}T_i^{\lambda }). 
\end{align*}
Since $Z_1\in \Gamma $ satisfies $2\langle \lambda ,Z_1\rangle \in \pi \mathbb{Z}$ 
(see (\ref{eq:gamma})) and our choice of the positive system $\Sigma ^+$ is characterized by $Z_1$ 
(see (\ref{eq:positive system})), 
the number $\sqrt{-1}\langle \lambda ,Z\rangle =2\pi ^{-1}\langle \lambda ,Z_1\rangle $ 
is a positive integer. 
Hence, we get the following inclusion: 
\begin{align*}
\mathfrak{k}_1(\lambda )+\sqrt{-1}\mathfrak{p}_1(\lambda )\subset 
\mathfrak{g}_0(\sqrt{-1}\langle \lambda ,Z\rangle )+\mathfrak{g}_0(-\sqrt{-1}\langle \lambda ,Z\rangle ). 
\end{align*}
Consequently, the subspace $\mathfrak{g}_0(k)+\mathfrak{g}_0(-k)$ for a positive integer $k$ 
is decomposed into as follows: 
\begin{align*}
\mathfrak{g}_0(k)+\mathfrak{g}_0(-k)
=\sum _{
	\begin{subarray}{c}
	\lambda \in \Sigma ^+\\
	\sqrt{-1}\langle \lambda ,Z\rangle =k
	\end{subarray}
}(\mathfrak{k}_1(\lambda )+\sqrt{-1}\mathfrak{p}_1(\lambda )). 
\end{align*}
Clearly, the eigenspace $\mathfrak{g}_0(0)$ with eigenvalue $0$ coincides with 
$\mathfrak{z}_{\mathfrak{k}_1}(\mathfrak{a}_1)+\sqrt{-1}\mathfrak{a}_1
+\sum _{\lambda \in \Sigma ^0}(\mathfrak{k}_1(\lambda )+\sqrt{-1}\mathfrak{p}_1(\lambda ))$. 
Therefore, it follows from (\ref{eq:non-compact-decomp}) that 
\begin{align}
\mathfrak{g}_0
=\mathfrak{g}_0(0)+\sum _{k\in \mathbb{Z}_+}(\mathfrak{g}_0(k)+\mathfrak{g}_0(-k))
=\sum _{k\in \mathbb{Z}}\mathfrak{g}_0(k) 
\label{eq:grading-typeK}
\end{align}
which is a $\mathbb{Z}$-grading of $\mathfrak{g}_0$ characterized by $Z$. 

As $Z_1\in \mathfrak{a}_1$, we have $\theta (Z)=-Z$. 
Hence, $\theta $ is a grade-reversing Cartan involution of $\mathfrak{g}_0$, from which 
$(Z,\theta )$ is the associated pair of the $\mathbb{Z}$-grading (\ref{eq:grading-typeK}). 

Finally, 
combining Lemma \ref{lem:yz} with (\ref{eq:ch}), 
$\sigma =e^{\operatorname{ad}Z_1}\theta _1e^{-\operatorname{ad}Z_1}
=e^{\operatorname{ad}(2Z_1)}\theta _1
=e^{\pi \sqrt{-1}\operatorname{ad}Z}\theta 
=\sigma _Z\theta $. 
Thus, 
$(\mathfrak{g}_0,\sigma )$ is of type $K_{\varepsilon }$ (see Definition \ref{def:typeK}). 

As a result, 
Theorem \ref{thm:sim-typeK} has been proved. 
\end{proof}

\subsubsection{A characterization of symmetric pair of type $K_{\varepsilon}$}
\label{subsubsec:characterization}

As a result, we establish a new characterization for 
a non-compact semisimple symmetric pair to be of type $K_{\varepsilon }$ 
by Theorem \ref{thm:duality-thm} as follows: 

\begin{theorem}
\label{thm:typeK}
Let $(\mathfrak{g}_0,\sigma )$ be a non-compact semisimple symmetric pair 
and $(\mathfrak{g},\theta _1,\theta _2):=(\mathfrak{g}_0,\sigma )^*$ 
the corresponding commutative compact semisimple symmetric triad. 
Then, the following conditions are equivalent: 
\begin{enumerate}
	\renewcommand{\theenumi}{\roman{enumi}}
	\item $(\mathfrak{g}_0,\sigma )$ is of type $K_{\varepsilon }$ (see Definition \ref{def:typeK}). 
	\label{item:typeK}
	\item $\theta _1\sim \theta _2$ (see Definition \ref{def:sim}). 
	\label{item:sim}
\end{enumerate}
\end{theorem}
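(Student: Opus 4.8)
The plan is to assemble the theorem from the two implications that are already essentially in place, taking care that both conditions are invariant under the equivalence relations involved. The substance lives in Proposition \ref{prop:typeK-sim} (for one direction) and in Proposition \ref{prop:sim} together with Theorem \ref{thm:sim-typeK} (for the other), so the work here is chiefly to verify compatibility with $\equiv$ and to chain the reductions correctly.

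First I would treat (\ref{item:typeK}) $\Rightarrow$ (\ref{item:sim}). Assuming $(\mathfrak{g}_0,\sigma)$ is of type $K_{\varepsilon}$, I fix a $\mathbb{Z}$-grading of $\mathfrak{g}_0$ with associated pair $(Z,\theta)$ satisfying $\sigma = \sigma_Z\theta$, and form the dual $(\mathfrak{g},\theta_1,\theta_2) = (\mathfrak{g}_0,\sigma;\theta)^*$ using precisely this grade-reversing $\theta$ as Cartan involution, so that $\theta_1 = \theta$ and $\theta_2 = \sigma$ after restriction to $\mathfrak{g}$. Proposition \ref{prop:typeK-sim} then produces $Y \in \mathfrak{g}$ with $\theta_2 = e^{\operatorname{ad}Y}\theta_1 e^{-\operatorname{ad}Y}$, which is exactly the relation $\theta_1 \sim \theta_2$ of Definition \ref{def:sim}. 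This direction is therefore immediate once the correct representative of the dual is singled out.

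For (\ref{item:sim}) $\Rightarrow$ (\ref{item:typeK}) I would start from $\theta_1 \sim \theta_2$ and apply Proposition \ref{prop:sim} to obtain $Z_1 \in \Gamma$ with $(\mathfrak{g},\theta_1,\theta_2) \equiv (\mathfrak{g},\theta_1, e^{\operatorname{ad}Z_1}\theta_1 e^{-\operatorname{ad}Z_1})$. Theorem \ref{thm:sim-typeK} guarantees that the dual $(\mathfrak{g},\theta_1, e^{\operatorname{ad}Z_1}\theta_1 e^{-\operatorname{ad}Z_1})^*$ is of type $K_{\varepsilon}$. Since the duality of Theorem \ref{thm:duality-thm} descends to equivalence classes (Lemma \ref{lem:well-defined}), the displayed equivalence of triads yields $(\mathfrak{g}_0,\sigma) = (\mathfrak{g},\theta_1,\theta_2)^* \equiv (\mathfrak{g},\theta_1, e^{\operatorname{ad}Z_1}\theta_1 e^{-\operatorname{ad}Z_1})^*$, and Lemma \ref{lem:equiv-typeK} transports the type $K_{\varepsilon}$ property across this equivalence. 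Hence $(\mathfrak{g}_0,\sigma)$ is of type $K_{\varepsilon}$.

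The one point requiring care — and the closest thing to an obstacle — is that both conditions must be genuine properties of the class $[(\mathfrak{g}_0,\sigma)^*]$ rather than of a chosen representative, since the dual is defined only up to $\equiv$. For (\ref{item:sim}) I would record that $\sim$ is preserved under equivalence of triads: if $\psi$ realizes $(\mathfrak{g},\theta_1,\theta_2) \equiv (\mathfrak{g}',\theta_1',\theta_2')$ and $\theta_2 = e^{\operatorname{ad}Y}\theta_1 e^{-\operatorname{ad}Y}$, then conjugating by $\psi$ and using $\psi\, e^{\operatorname{ad}Y}\psi^{-1} = e^{\operatorname{ad}\psi(Y)}$ gives $\theta_2' = e^{\operatorname{ad}\psi(Y)}\theta_1' e^{-\operatorname{ad}\psi(Y)}$ with $\psi(Y) \in \mathfrak{g}'$; the $K_{\varepsilon}$ side is already handled by Lemma \ref{lem:equiv-typeK}. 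Granting this well-definedness, the two implications combine into the asserted equivalence, and no genuinely hard computation remains beyond what is furnished by the cited results.
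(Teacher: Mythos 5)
Your proposal is correct and follows essentially the same route as the paper: (i) $\Rightarrow$ (ii) via Proposition \ref{prop:typeK-sim}, and (ii) $\Rightarrow$ (i) by chaining Proposition \ref{prop:sim}, Theorem \ref{thm:sim-typeK}, the descent of duality to equivalence classes, and Lemma \ref{lem:equiv-typeK}. Your explicit check that $\theta_1\sim\theta_2$ is invariant under equivalence of triads (via $\psi e^{\operatorname{ad}Y}\psi^{-1}=e^{\operatorname{ad}\psi(Y)}$) is a point the paper leaves implicit, and it is a welcome addition since the dual is only defined up to $\equiv$.
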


\begin{proof}
The implication (\ref{item:typeK}) $\Rightarrow$ (\ref{item:sim}) is an immediate consequence 
of Proposition \ref{prop:typeK-sim}. 

Let us assume $(\mathfrak{g},\theta _1,\theta _2)$ satisfies $\theta _1\sim \theta _2$. 
By Proposition \ref{prop:sim}, 
there exists $Z_1\in \Gamma$ such that 
$(\mathfrak{g},\theta _1,\theta _2)\equiv 
(\mathfrak{g},\theta _1,e^{\operatorname{ad}Z_1}\theta _1e^{-\operatorname{ad}Z_1})$. 
Thanks to Theorem \ref{thm:sim-typeK}, 
the dual $(\mathfrak{g},\theta _1,e^{\operatorname{ad}Z_1}\theta _1e^{-\operatorname{ad}Z_1})^*$ 
is of type $K_{\varepsilon}$. 
As Lemma \ref{lem:equiv-typeK}, 
$(\mathfrak{g}_0,\sigma )
=(\mathfrak{g},\theta _1,\theta _2)^*
\equiv (\mathfrak{g},\theta _1,e^{\operatorname{ad}Z_1}\theta _1e^{-\operatorname{ad}Z_1})^*$ 
is of type $K_{\varepsilon }$. 
Hence, the implication (\ref{item:sim}) $\Rightarrow $ (\ref{item:typeK}) also holds. 
\end{proof}

\begin{eg}
\label{eg:typeK}
Let us consider a non-compact semisimple symmetric pair $(\mathfrak{g}_0,\sigma )$ 
where $\mathfrak{g}_0=\mathfrak{sl}(n,\mathbb{R})=\{ X\in M(n,\mathbb{R}):\operatorname{Tr}X=0\} $ 
and $\sigma (X)=-I_{2p,n-2p}{{}^tX}I_{2p,n-2p}$ for $0<2p<n$. 
Here, $I_{2p,n-2p}\in M(n,\mathbb{R})$ stands for the diagonal matrix given by (\ref{eq:I_{m,n}}). 
Then, the fixed point set $\mathfrak{g}_0^{\sigma }$ equals $\mathfrak{so}(2p,n-2p)$ 
(see (\ref{eq:so(2p,2q)}) for realization). 

We take an element $Z$ of $\mathfrak{g}_0$ as 
\begin{align*}
Z=\operatorname{diag}(\underbrace{1,\ldots ,1}_{p},\underbrace{-1,\ldots ,-1}_{p},
	\underbrace{0,\ldots ,0}_{n-2p}). 
\end{align*}
Then, the Lie algebra $\mathfrak{g}_0$ has a $\mathbb{Z}$-grading of second kind. 
Let us choose a Cartan involution $\theta$ of $\mathfrak{g}_0$ commuting with $\sigma$ 
as $\theta (X)=-{}\,^tX$ $(X\in \mathfrak{g}_0)$. 
Clearly, 
$\theta $ is a grade-reversing Cartan involution, 
and then $(Z,\theta )$ is the associated pair of this $\mathbb{Z}$-grading. 
Moreover, the direct computation shows $\sigma =\sigma _Z\theta $. 

On the other hand, 
the commutative compact semisimple symmetric triad $(\mathfrak{g},\theta _1,\theta _2)$ 
corresponding to $(\mathfrak{g}_0,\sigma )$ via Theorem \ref{thm:duality-thm} is characterized as follows. 
The compact simple Lie algebra $\mathfrak{g}$ is 
$\mathfrak{su}(n)=\{ X\in M(n,\mathbb{C}):\overline{{}^tX}+X=O\} $, 
and the fixed point sets $\mathfrak{g}^{\theta _1}$ and $\mathfrak{g}^{\theta _2}$ are given by 
$\mathfrak{g}^{\theta _1}=\{ X\in \mathfrak{su}(n):\overline{X}=X\} =\mathfrak{so}(n)$ and 
$\mathfrak{g}^{\theta _2}=\{ X\in \mathfrak{su}(n):I_{2p,n-2p}\overline{X}I_{2p,n-2p}=X\} $, respectively. 
Thus, we obtain $\mathfrak{g}^{\theta _1}\simeq \mathfrak{g}^{\theta _2}$ 
via the Lie algebra isomorphism $\mathfrak{g}^{\theta _1}\to \mathfrak{g}^{\theta _2}$, 
$X\mapsto I_{2p,n-2p}XI_{2p,n-2p}$. 
Furthermore, we have 
$\theta _2=e^{\operatorname{ad}Z_1}\theta _1e^{-\operatorname{ad}Z_1}$ 
for $Z_1=\frac{\pi \sqrt{-1}}{2}Z$. 
Hence, $\theta _1\sim \theta _2$. 
\end{eg}


Finally, we end this subsection by the following corollary. 

\begin{corollary}
\label{cor:tykeK-dual}
A non-compact semisimple symmetric pair $(\mathfrak{g}_0,\sigma )$ of type $K_{\varepsilon }$ 
is self-dual. 
\end{corollary}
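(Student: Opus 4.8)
The plan is to reduce the self-duality of $(\mathfrak{g}_0,\sigma)$ to the self-duality of its dual triad, which is already available under the relation $\sim$. First I would invoke Theorem \ref{thm:typeK}: since $(\mathfrak{g}_0,\sigma)$ is of type $K_{\varepsilon}$, the corresponding commutative compact semisimple symmetric triad $(\mathfrak{g},\theta _1,\theta _2)=(\mathfrak{g}_0,\sigma)^*$ satisfies $\theta _1\sim \theta _2$. Then Proposition \ref{prop:sim-dual} immediately gives that $(\mathfrak{g},\theta _1,\theta _2)$ is self-dual, that is $(\mathfrak{g},\theta _1,\theta _2)^d\equiv (\mathfrak{g},\theta _1,\theta _2)$.

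Next I would transport this self-duality across the duality theorem. Fix a Cartan involution $\theta$ of $\mathfrak{g}_0$ commuting with $\sigma$, so that $(\mathfrak{g}_0,\sigma ;\theta )^*=(\mathfrak{g},\theta _1,\theta _2)$ by Notation \ref{notation:*}. The compatible condition of Proposition \ref{prop:compatible-n} reads $(\mathfrak{g}_0,\sigma ;\theta )^{d*}=(\mathfrak{g}_0,\sigma ;\theta )^{*d}$, and the right-hand side is $(\mathfrak{g},\theta _1,\theta _2)^d$, which by the previous paragraph is equivalent to $(\mathfrak{g},\theta _1,\theta _2)=(\mathfrak{g}_0,\sigma ;\theta )^*$. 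Hence $(\mathfrak{g}_0,\sigma ;\theta )^{d*}\equiv (\mathfrak{g}_0,\sigma ;\theta )^*$.

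Finally I would apply the $*$-operation once more and use that $**$ is the identity on equivalence classes (established in the proof of Theorem \ref{thm:duality-thm}, via $\Phi \circ \Psi =\operatorname{id}$) to obtain $(\mathfrak{g}_0,\sigma ;\theta )^{d}\equiv (\mathfrak{g}_0,\sigma ;\theta )$ in $\mathfrak{P}_c/{\equiv }$. Projecting by $p$ and recalling Definition \ref{def:a-d}, where $[(\mathfrak{g}_0,\sigma )]^d=[p((\mathfrak{g}_0,\sigma ;\theta )^d)]$, this yields $[(\mathfrak{g}_0,\sigma )]^d=[(\mathfrak{g}_0,\sigma )]$, i.e. $(\mathfrak{g}_0,\sigma )$ is self-dual.

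There is essentially no analytic obstacle in this argument, since all the substantive work is already packaged in Theorem \ref{thm:typeK} and Proposition \ref{prop:sim-dual}. The only point requiring care is the bookkeeping of the $*$-operation at the level of equivalence classes: I must apply the compatibility (Proposition \ref{prop:compatible-n}) and the involutivity of $*$ to the equipped triples $(\mathfrak{g}_0,\sigma ;\theta )\in \mathfrak{P}_c$ first, and only afterwards project down to the pair $(\mathfrak{g}_0,\sigma )\in \mathfrak{P}$ via $\widetilde{p}$, so that the notion of self-duality in Definition \ref{def:a-d} is the one that is verified.
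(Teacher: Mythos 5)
Your proposal is correct and follows essentially the same route as the paper's own proof: Theorem \ref{thm:typeK} (equivalently Proposition \ref{prop:typeK-sim}) to get $\theta _1\sim \theta _2$, Proposition \ref{prop:sim-dual} for self-duality of the triad, and Proposition \ref{prop:compatible-n} together with the involutivity of $*$ to transport self-duality back to $(\mathfrak{g}_0,\sigma ;\theta )$. The only difference is cosmetic bookkeeping — the paper writes the chain as $(\mathfrak{g}_0,\sigma ;\theta )^d=(\mathfrak{g}_0,\sigma ;\theta )^{*d*}\equiv (\mathfrak{g}_0,\sigma ;\theta )$ in one line, while you apply the compatibility first and then $*$, and you make the final projection to $\mathfrak{P}/{\equiv }$ explicit where the paper leaves it implicit.
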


\begin{proof}
Let $\theta $ be a Cartan involution of $\mathfrak{g}_0$ commuting with $\sigma$ and 
$(\mathfrak{g},\theta _1,\theta _2)=(\mathfrak{g}_0,\sigma )^*$ the 
commutative compact semisimple symmetric triad corresponding to $(\mathfrak{g}_0,\sigma )$. 
It follows from Theorem \ref{thm:typeK} 
(or Proposition \ref{prop:typeK-sim}) that $\theta _1\sim \theta _2$. 
By Proposition \ref{prop:sim-dual}, $(\mathfrak{g},\theta _1,\theta _2)^d
\equiv (\mathfrak{g},\theta _1,\theta _2)$. 
Using Proposition \ref{prop:compatible-n}, we have 
\begin{align*}
(\mathfrak{g}_0,\sigma ;\theta )^d
&=(\mathfrak{g}_0,\sigma ;\theta )^{*d*}
=(\mathfrak{g},\theta _1,\theta _2)^{d*}
\equiv (\mathfrak{g},\theta _1,\theta _2)^{*}
=(\mathfrak{g}_0,\sigma ;\theta ). 
\end{align*}
Hence, Corollary \ref{cor:tykeK-dual} has been proved. 
\end{proof}

As we have seen in Example \ref{eg:typeK}, 
it happens that 
$(\mathfrak{g},\theta _1,\theta _2):=(\mathfrak{g}_0,\sigma ;\theta )^*
\not \equiv (\mathfrak{g}_0,\theta ;\theta )^*=(\mathfrak{g},\theta _1,\theta _1)$ 
for any non-compact semisimple symmetric pair $(\mathfrak{g}_0,\sigma ;\theta )$ 
of type $K_{\varepsilon }$. 
This fact exhibits the difficulty 
in specifying the dual of a commutative compact semisimple symmetric triad 
$(\mathfrak{g},\theta _1,\theta _2)$. 
Then, we need to find a systematic description of $(\mathfrak{g},\theta _1,\theta _2)^*$ 
whether $\theta _1\sim \theta _2$ or not, 
which is an essential part of our method to classify pseudo-Riemannian simple symmetric pairs. 

Loosely speaking, our method consists of two parts. 
One is that we shall give a characterization of commutative compact semisimple symmetric triads 
in terms of symmetric triads with multiplicities 
which has been introduced by the second author in \cite{ikawa-jms}. 
The other is to determine the intersection $\mathfrak{g}^{\theta _1}\cap \mathfrak{g}^{\theta _2}$ 
from $(\mathfrak{g},\theta _1,\theta _2)\in \mathfrak{T}$. 
In fact, 
we will classify commutative compact simple symmetric triads up to the equivalence relation $\equiv$. 
Then, 
we shall gain the classification of non-compact pseudo-Riemannian simple symmetric pairs 
as the dual of commutative compact simple symmetric triads, 
which provides an alternative proof of it due to Berger. 
The detail will be explained in the forthcoming paper \cite{bis1}. 


\section{Appendix}
\label{sec:appendix}

This appendix concentrates on the notion of irreducible non-compact 
pseudo-Riemannian semisimple symmetric pairs. 
Throughout this section, 
let $(\mathfrak{g}_0,\sigma )$ be a non-compact semisimple symmetric pair 
and we set $\mathfrak{h}_0:=\mathfrak{g}_0^{\sigma }$, $\mathfrak{q}_0:=\mathfrak{g}_0^{-\sigma }$. 

We adopt the definition for $(\mathfrak{g}_0,\sigma )$ to be irreducible 
if Definition \ref{def:irr-pair} holds, namely, 
\begin{enumerate}
	\renewcommand{\labelenumi}{(N\theenumi) }
	\item 
	\label{item:def}
	there does not exist non-trivial $\sigma$-invariant ideals of $\mathfrak{g}_0$. 
\end{enumerate}

On the other hand, 
the reference \cite[p.435]{oshima-sekiguchi} says that 
$(\mathfrak{g}_0,\sigma )$ is irreducible 
if 
\begin{enumerate}
	\renewcommand{\labelenumi}{(N\theenumi) }
	\addtocounter{enumi}{1}
	\item 
	\label{item:adjoint}
	the adjoint action of $\mathfrak{h}_0$ on $\mathfrak{q}_0$ is irreducible. 
\end{enumerate}

We will compare two notions (N\ref{item:def}) and (N\ref{item:adjoint}) for $(\mathfrak{g}_0,\sigma )$. 
First, 
we show that (N\ref{item:adjoint}) implies (N\ref{item:def}) in Section \ref{subsec:adjoint->irreducible}. 
Second, we prove that the opposite is also true for Riemannian semisimple symmetric pairs 
in Section \ref{subsec:riemannian setting}. 
Third, we provide a counterexample of the implication (N\ref{item:def}) $\Rightarrow$ (N\ref{item:adjoint}) 
in Section \ref{subsec:counterexample}.

\subsection{Effective semisimple symmetric pair}
\label{subsec:effective}

The studies of (N\ref{item:def}) and (N\ref{item:adjoint}) will be carried out 
under the setting that $(\mathfrak{g}_0,\sigma )$ is effective without loss of generality. 
Here is a brief summary on effective non-compact semisimple symmetric pairs. 
For this, we consider a homogeneous space of a Lie group as follows. 

Let $G$ be a Lie group and $H$ a closed subgroup of $G$. 
The group $G$ acts on $G/H$ by the left transformation, namely, $g\cdot xH:=(gx)H$ ($g,x\in G$). 
Then, the isotropy subgroup $G_{xH}=\{ g\in G:g\cdot xH=xH\} $ of $G$ at $xH\in G/H$ equals $xHx^{-1}$. 
We set $S_H:=\bigcap _{x\in G}xHx^{-1}$. 
Then, the $G$-action on $G/H$ is called {\it effective} if $S_H$ coincides with $\{ e\} $. 

We observe that $S_H$ is a normal subgroup of $G$ and is contained in $H$. 
On the other hand, 
an arbitrary normal subgroup $N$ of $G$ with $N\subset H$ has to be also contained in $S_H$. 
Indeed, this follows from $N=xNx^{-1}\subset xHx^{-1}$ for any $x\in G$. 
Hence, the $G$-action on $G/H$ to be effective if and only if 
any normal subgroup of $G$ contained in $H$ equals $\{ e\} $. 

In this context, 
the notion of effective non-compact semisimple symmetric pairs 
is a Lie algebra version of the notion of effective Lie group actions on homogeneous spaces. 
More precisely, we define: 

\begin{define}
\label{def:effective}
A non-compact semisimple symmetric pair $(\mathfrak{g}_0,\sigma )$ is {\it effective} 
if any ideal of $\mathfrak{g}_0$ contained in $\mathfrak{h}_0$ equals $\{ 0\} $. 
\end{define}

It is without loss of generality 
for the study on the relation between (N\ref{item:def}) and (N\ref{item:adjoint}) 
that $(\mathfrak{g}_0,\sigma )$ is assumed to be effective. 

\subsection{Implication from (N\ref{item:adjoint}) to (N\ref{item:def})}
\label{subsec:adjoint->irreducible}

%
We begin with a general setup for the consideration of the implication 
(N\ref{item:adjoint}) $\Rightarrow$ (N\ref{item:def}) as follows. 
We denote by $B$ the Killing form of $\mathfrak{g}_0$. 
For a subspace $\mathfrak{l}_0$ of $\mathfrak{g}_0$, 
we write $\mathfrak{l}_0^{\bot }:=\{ X\in \mathfrak{g}_0:B(X,Y)=0~(\forall Y\in \mathfrak{l}_0)\} $ 
for the orthogonal complement of $\mathfrak{l}_0$ in $\mathfrak{g}_0 $ with respect to $B$. 

From now, we assume that $\mathfrak{l}_0$ is an ideal of $\mathfrak{g}_0$. 

\begin{lemma}
\label{lem:bracket}
If $\mathfrak{l}_0$ is an ideal of $\mathfrak{g}_0$, 
then $[\mathfrak{l}_0,\mathfrak{l}_0^{\bot}]=\{ 0\} $. 
\end{lemma}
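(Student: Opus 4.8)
The plan is to combine the $\operatorname{ad}$-invariance of the Killing form with the non-degeneracy of $B$ that comes from semisimplicity. Since $\mathfrak{g}_0$ is semisimple, its Killing form $B$ is non-degenerate, so it suffices to prove that $B([X,Y],Z)=0$ for all $X\in\mathfrak{l}_0$, $Y\in\mathfrak{l}_0^{\bot}$ and $Z\in\mathfrak{g}_0$; non-degeneracy then forces $[X,Y]=0$, and since $X,Y$ are arbitrary this yields $[\mathfrak{l}_0,\mathfrak{l}_0^{\bot}]=\{0\}$.

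First I would invoke the invariance identity $B([X,Y],Z)=-B(Y,[X,Z])$, which holds for all elements of $\mathfrak{g}_0$ because $\operatorname{ad}X$ is a derivation and $B$ is $\operatorname{ad}$-invariant. Next, using that $\mathfrak{l}_0$ is an ideal and $X\in\mathfrak{l}_0$, I would observe that $[X,Z]\in\mathfrak{l}_0$ for every $Z\in\mathfrak{g}_0$. Finally, since $Y\in\mathfrak{l}_0^{\bot}$ is by definition orthogonal with respect to $B$ to every element of $\mathfrak{l}_0$, the right-hand side $-B(Y,[X,Z])$ vanishes.

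Putting these three observations together gives $B([X,Y],Z)=0$ for all $Z\in\mathfrak{g}_0$, whence $[X,Y]=0$ by non-degeneracy of $B$. I do not anticipate a genuine obstacle here: the only ingredients are the $\operatorname{ad}$-invariance of the Killing form, the ideal property of $\mathfrak{l}_0$, and the non-degeneracy of $B$ guaranteed by semisimplicity. The one point to handle with care is fixing the sign convention in the invariance identity, but this has no effect on the conclusion. (An alternative, slightly longer route would first show that $\mathfrak{l}_0^{\bot}$ is itself an ideal, deduce $[X,Y]\in\mathfrak{l}_0\cap\mathfrak{l}_0^{\bot}$, and then conclude via Cartan's solvability criterion; the direct computation above avoids this detour.)
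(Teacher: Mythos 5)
Your proof is correct and is essentially the same as the paper's: both arguments use the $\operatorname{ad}$-invariance of the Killing form to move the bracket onto the $\mathfrak{l}_0$-factor, the ideal property to keep the result inside $\mathfrak{l}_0$, orthogonality to kill the pairing, and non-degeneracy of $B$ to conclude. The paper just writes it in the compressed form $B(\mathfrak{g}_0,[\mathfrak{l}_0,\mathfrak{l}_0^{\bot}])=B([\mathfrak{g}_0,\mathfrak{l}_0],\mathfrak{l}_0^{\bot})\subset B(\mathfrak{l}_0,\mathfrak{l}_0^{\bot})=\{0\}$, which is your computation with the invariance identity applied on the other side.
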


\begin{proof}
We observe 
$B(\mathfrak{g}_0,[\mathfrak{l}_0,\mathfrak{l}_0^{\bot }])=\{ 0\} $ because 
\begin{align*}
B(\mathfrak{g}_0,[\mathfrak{l}_0,\mathfrak{l}_0^{\bot }])
=B([\mathfrak{g}_0,\mathfrak{l}_0],\mathfrak{l}_0^{\bot })\subset B(\mathfrak{l}_0,\mathfrak{l}_0^{\bot })
=\{ 0\} . 
\end{align*}
Since $B$ is non-degenerate, we obtain $[\mathfrak{l}_0,\mathfrak{l}_0^{\bot}]=\{ 0\} $. 
\end{proof}

The orthogonal complement $\mathfrak{l}_0^{\bot }$ becomes an ideal of $\mathfrak{g}_0$, 
and then so is $\mathfrak{b}_0:=\mathfrak{l}_0\cap \mathfrak{l}_0^{\bot } $. 
By Lemma \ref{lem:bracket}, 
we obtain $[\mathfrak{b}_0,\mathfrak{b}_0]
\subset [\mathfrak{l}_0,\mathfrak{l}_0^{\bot } ]=\{ 0\} $. 
Thus, $\mathfrak{b}_0$ is an abelian ideal. 
Let $\mathfrak{c}_0$ be a complementary subspace to $\mathfrak{b}_0$ in $\mathfrak{g}_0$. 
For any $X\in \mathfrak{g}_0$ and $A\in \mathfrak{b}_0$, 
the endomorphism $\operatorname{ad}A\operatorname{ad}X$ on $\mathfrak{g}_0$ 
maps $\mathfrak{b}_0$ into $\{ 0\} $ since $\mathfrak{b}_0$ is an abelian 
and $\mathfrak{c}_0$ into $\mathfrak{b}_0$ since $\mathfrak{b}_0$ is an ideal of $\mathfrak{g}_0$. 
Then, we have $B(\mathfrak{b}_0,\mathfrak{g}_0)
=\operatorname{Tr}(\operatorname{ad}\mathfrak{b}_0\operatorname{ad}\mathfrak{g}_0)=\{ 0\} $. 
Hence, $\mathfrak{b}_0=\mathfrak{l}_0\cap \mathfrak{l}_0^{\bot } =\{ 0\} $. 
Therefore, 
$\mathfrak{g}_0$ is decomposed into the direct sum of $\mathfrak{l}_0$ and $\mathfrak{l}_0^{\bot }$, 
namely, $\mathfrak{g}_0=\mathfrak{l}_0+\mathfrak{l}_0^{\bot }$ 
(see \cite[Proposition 6.1 in Chapter II]{helgason}).

\begin{lemma}
\label{lem:bot-invariant}
If $\mathfrak{l}_0$ is $\sigma$-invariant, 
then $\mathfrak{l}_0^{\bot }$ is $\sigma$-invariant
\end{lemma}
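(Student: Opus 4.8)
The plan is to exploit the single fact that the Killing form $B$ is invariant under every automorphism of $\mathfrak{g}_0$, and in particular under the involution $\sigma$. Concretely, since $\operatorname{ad}(\sigma X)=\sigma \circ \operatorname{ad}(X)\circ \sigma^{-1}$ for any $X$, conjugation invariance of the trace gives $B(\sigma X,\sigma Y)=B(X,Y)$ for all $X,Y\in \mathfrak{g}_0$. This is the only analytic input; everything else is bookkeeping with the relation $\sigma^2=\operatorname{id}$.

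First I would fix an arbitrary $X\in \mathfrak{l}_0^{\bot}$ and aim to show $\sigma(X)\in \mathfrak{l}_0^{\bot}$, i.e. $B(\sigma(X),Y)=0$ for every $Y\in \mathfrak{l}_0$. Given such a $Y$, I would write $Y=\sigma(\sigma(Y))$ using $\sigma^2=\operatorname{id}$ and then apply the invariance of $B$ to pull $\sigma$ off both arguments:
\begin{align*}
B(\sigma(X),Y)=B(\sigma(X),\sigma(\sigma(Y)))=B(X,\sigma(Y)).
\end{align*}
The second key step is that $\sigma(Y)\in \mathfrak{l}_0$, which is exactly the hypothesis that $\mathfrak{l}_0$ is $\sigma$-invariant. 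Combined with $X\in \mathfrak{l}_0^{\bot}$, this forces $B(X,\sigma(Y))=0$, hence $B(\sigma(X),Y)=0$.

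Since $Y\in \mathfrak{l}_0$ was arbitrary, this shows $\sigma(X)\in \mathfrak{l}_0^{\bot}$, and since $X\in \mathfrak{l}_0^{\bot}$ was arbitrary, I conclude $\sigma(\mathfrak{l}_0^{\bot})\subset \mathfrak{l}_0^{\bot}$; applying $\sigma$ once more and using $\sigma^2=\operatorname{id}$ upgrades this to equality $\sigma(\mathfrak{l}_0^{\bot})=\mathfrak{l}_0^{\bot}$, which is the asserted $\sigma$-invariance. There is essentially no obstacle here: the whole argument rests on the $\sigma$-invariance of the Killing form, so the only point deserving care is to state that invariance explicitly (it holds because $\sigma$ is an automorphism, independently of whether $\sigma$ is a Cartan involution) before using it.
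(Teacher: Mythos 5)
Your proof is correct and is essentially the paper's own argument: both rest on the invariance of the Killing form under the automorphism $\sigma$ (so that $B(\sigma X,Y)=B(X,\sigma Y)$ via $\sigma^2=\operatorname{id}$) together with $\sigma(\mathfrak{l}_0)=\mathfrak{l}_0$, the only difference being that you write it element-wise while the paper states it at the level of subspaces, $B(\sigma(\mathfrak{l}_0^{\bot}),\mathfrak{l}_0)=B(\mathfrak{l}_0^{\bot},\sigma(\mathfrak{l}_0))=\{0\}$. Your explicit remark that the invariance of $B$ holds for any automorphism, and your final upgrade from inclusion to equality, are fine but inessential additions.
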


\begin{proof}
As $\sigma (\mathfrak{l}_0)=\mathfrak{l}_0$, 
we have 
$B(\sigma (\mathfrak{l}_0^{\bot}),\mathfrak{l}_0)
=B(\mathfrak{l}_0^{\bot },\sigma (\mathfrak{l}_0))=B(\mathfrak{l}_0^{\bot },\mathfrak{l}_0)=\{ 0\} $. 
Hence, we have verified $\sigma (\mathfrak{l}_0^{\bot })\subset \mathfrak{l}_0^{\bot }$. 
\end{proof}

Now, let $(\mathfrak{g}_0,\sigma )$ be an effective non-compact semisimple symmetric pair 
for which the condition (N\ref{item:def}) does not hold. 
Then, we have: 

\begin{proposition}
\label{prop:non-trivial-invariant}
Let $(\mathfrak{g}_0,\sigma )$ be an effective non-compact semisimple symmetric pair. 
If there exists a non-trivial $\sigma$-invariant ideal $\mathfrak{l}_0$ of $\mathfrak{g}_0$, 
then $\mathfrak{q}_0\cap \mathfrak{l}_0$ 
is a non-trivial $(\operatorname{ad}\mathfrak{h}_0)$-invariant subspace of $\mathfrak{q}_0$. 
\end{proposition}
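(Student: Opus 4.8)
The plan is to produce a non-trivial $(\operatorname{ad}\mathfrak{h}_0)$-invariant subspace of $\mathfrak{q}_0$ out of the hypothesized $\sigma$-invariant ideal $\mathfrak{l}_0$. The natural candidate is $\mathfrak{q}_0\cap\mathfrak{l}_0$, so the two things I must verify are: first, that this subspace is invariant under $\operatorname{ad}\mathfrak{h}_0$; and second, that it is non-trivial, meaning neither $\{0\}$ nor all of $\mathfrak{q}_0$. The invariance is the easy half. Since $\mathfrak{l}_0$ is an ideal, $[\mathfrak{h}_0,\mathfrak{l}_0]\subset\mathfrak{l}_0$; since $\mathfrak{q}_0=\mathfrak{g}_0^{-\sigma}$ is a $\sigma$-eigenspace and $\mathfrak{h}_0=\mathfrak{g}_0^{\sigma}$ satisfies $[\mathfrak{h}_0,\mathfrak{q}_0]\subset\mathfrak{q}_0$, we get $[\mathfrak{h}_0,\mathfrak{q}_0\cap\mathfrak{l}_0]\subset\mathfrak{q}_0\cap\mathfrak{l}_0$. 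So $\mathfrak{q}_0\cap\mathfrak{l}_0$ is always $(\operatorname{ad}\mathfrak{h}_0)$-invariant, for free.

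The substance of the proof is the non-triviality, and this is where I expect the real work to lie. The plan is to exploit the orthogonal decomposition established just before the statement: because $\mathfrak{l}_0$ is a $\sigma$-invariant ideal, Lemma \ref{lem:bot-invariant} gives that $\mathfrak{l}_0^{\bot}$ is also a $\sigma$-invariant ideal, and the preceding paragraph shows $\mathfrak{g}_0=\mathfrak{l}_0+\mathfrak{l}_0^{\bot}$ is a direct sum with $[\mathfrak{l}_0,\mathfrak{l}_0^{\bot}]=\{0\}$. Both summands are $\sigma$-invariant, so each decomposes along $\mathfrak{h}_0\oplus\mathfrak{q}_0$, giving $\mathfrak{q}_0=(\mathfrak{q}_0\cap\mathfrak{l}_0)\oplus(\mathfrak{q}_0\cap\mathfrak{l}_0^{\bot})$. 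To show $\mathfrak{q}_0\cap\mathfrak{l}_0$ is a proper subspace of $\mathfrak{q}_0$, I would argue that $\mathfrak{q}_0\cap\mathfrak{l}_0^{\bot}$ is non-zero; by symmetry (swapping the roles of $\mathfrak{l}_0$ and $\mathfrak{l}_0^{\bot}$), showing $\mathfrak{q}_0\cap\mathfrak{l}_0\neq\{0\}$ handles both ends.

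The hard part, then, is ruling out $\mathfrak{q}_0\cap\mathfrak{l}_0=\{0\}$, and this is exactly where effectiveness must enter. Suppose $\mathfrak{q}_0\cap\mathfrak{l}_0=\{0\}$. Then $\mathfrak{l}_0=\mathfrak{h}_0\cap\mathfrak{l}_0\subset\mathfrak{h}_0$, so $\mathfrak{l}_0$ is an ideal of $\mathfrak{g}_0$ contained in $\mathfrak{h}_0$. By Definition \ref{def:effective}, effectiveness forces $\mathfrak{l}_0=\{0\}$, contradicting the assumption that $\mathfrak{l}_0$ is non-trivial. Hence $\mathfrak{q}_0\cap\mathfrak{l}_0\neq\{0\}$. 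Applying the identical reasoning to the $\sigma$-invariant ideal $\mathfrak{l}_0^{\bot}$ (which is likewise non-trivial, since $\mathfrak{l}_0\neq\mathfrak{g}_0$ gives $\mathfrak{l}_0^{\bot}\neq\{0\}$) yields $\mathfrak{q}_0\cap\mathfrak{l}_0^{\bot}\neq\{0\}$, and the direct-sum decomposition above then shows $\mathfrak{q}_0\cap\mathfrak{l}_0$ is a proper subspace. Therefore $\mathfrak{q}_0\cap\mathfrak{l}_0$ is a non-trivial $(\operatorname{ad}\mathfrak{h}_0)$-invariant subspace, which is what we wanted. The only subtlety I would double-check is that $\mathfrak{l}_0^{\bot}$ is genuinely non-trivial whenever $\mathfrak{l}_0$ is a proper non-zero ideal, which follows from non-degeneracy of the Killing form $B$ together with the direct-sum decomposition $\mathfrak{g}_0=\mathfrak{l}_0\oplus\mathfrak{l}_0^{\bot}$.
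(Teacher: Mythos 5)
Your proof is correct, and its core is the same as the paper's: decompose $\mathfrak{g}_0=\mathfrak{l}_0\oplus\mathfrak{l}_0^{\bot}$ using Lemmas \ref{lem:bracket} and \ref{lem:bot-invariant}, intersect with the $\sigma$-eigenspace decomposition, and invoke effectiveness to rule out $\mathfrak{q}_0\cap\mathfrak{l}_0=\{0\}$. Two differences are worth recording. First, your invariance argument is more direct: you get $[\mathfrak{h}_0,\mathfrak{q}_0\cap\mathfrak{l}_0]\subset\mathfrak{q}_0\cap\mathfrak{l}_0$ immediately from the ideal property $[\mathfrak{h}_0,\mathfrak{l}_0]\subset\mathfrak{l}_0$ together with $[\mathfrak{h}_0,\mathfrak{q}_0]\subset\mathfrak{q}_0$, whereas the paper routes this through the splitting $\mathfrak{h}_0=\mathfrak{h}_{\mathfrak{l}_0}+\mathfrak{h}_{\mathfrak{l}_0^{\bot}}$ and the vanishing $[\mathfrak{h}_{\mathfrak{l}_0^{\bot}},\mathfrak{q}_{\mathfrak{l}_0}]\subset[\mathfrak{l}_0^{\bot},\mathfrak{l}_0]=\{0\}$; both are fine, yours needs less machinery. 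Second, and more substantively, you prove both halves of non-triviality: the paper's proof only shows $\mathfrak{q}_0\cap\mathfrak{l}_0\neq\{0\}$ via effectiveness and stops, while you also apply the same effectiveness argument to the non-zero $\sigma$-invariant ideal $\mathfrak{l}_0^{\bot}$ to get $\mathfrak{q}_0\cap\mathfrak{l}_0^{\bot}\neq\{0\}$, so that the direct sum $\mathfrak{q}_0=(\mathfrak{q}_0\cap\mathfrak{l}_0)\oplus(\mathfrak{q}_0\cap\mathfrak{l}_0^{\bot})$ forces $\mathfrak{q}_0\cap\mathfrak{l}_0\subsetneq\mathfrak{q}_0$. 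Since the proposition feeds into Theorem \ref{thm:action->irreducible}, where ``non-trivial invariant subspace'' must mean neither $\{0\}$ nor all of $\mathfrak{q}_0$, your extra step makes explicit a point the paper leaves implicit, and it costs nothing since $\mathfrak{l}_0^{\bot}\neq\{0\}$ follows from $\mathfrak{l}_0\neq\mathfrak{g}_0$ and the decomposition $\mathfrak{g}_0=\mathfrak{l}_0\oplus\mathfrak{l}_0^{\bot}$.
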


\begin{proof}
Retain the notation as above. 
Since $\mathfrak{l}_0$ is $\sigma$-invariant, 
we write $\mathfrak{l}_0=\mathfrak{h}_{\mathfrak{l}_0}+\mathfrak{q}_{\mathfrak{l}_0}$ 
for the $\sigma $-eigenspace decomposition of $\mathfrak{l}_0$ 
with $\mathfrak{h}_{\mathfrak{l}_0}=\mathfrak{l}_0^{\sigma }=\mathfrak{h}_0\cap \mathfrak{l}_0$ 
and $\mathfrak{q}_{\mathfrak{l}_0}=\mathfrak{l}_0^{-\sigma }=\mathfrak{q}_0\cap \mathfrak{l}_0$. 
On the other hand, $\mathfrak{l}_0^{\bot} $ is also a $\sigma$-invariant ideal of $\mathfrak{g}_0$ 
(see Lemma \ref{lem:bot-invariant}). 
Then, $\mathfrak{l}_0^{\bot }=\mathfrak{h}_{\mathfrak{l}_0^{\bot }}+\mathfrak{q}_{\mathfrak{l}_0^{\bot }}$ 
is a $\sigma$-eigenspace decomposition of $\mathfrak{l}_0^{\bot}$ 
with $\mathfrak{h}_{\mathfrak{l}_0^{\bot }}=(\mathfrak{l}_0^{\bot })^{\sigma }
=\mathfrak{h}_0\cap \mathfrak{l}_0^{\bot }$ 
and $\mathfrak{q}_{\mathfrak{l}_0^{\bot }}=(\mathfrak{l}_0^{\bot })^{-\sigma }
=\mathfrak{q}_0\cap \mathfrak{l}_0^{\bot }$. 
Using them, $\mathfrak{h}_0$ can be written as 
$\mathfrak{h}_0=\mathfrak{h}_{\mathfrak{l}_0}+\mathfrak{h}_{\mathfrak{l}_0^{\bot }}$ 
and $\mathfrak{q}_0$ as 
$\mathfrak{q}_0=\mathfrak{q}_{\mathfrak{l}_0}+\mathfrak{q}_{\mathfrak{l}_0^{\bot }}$ 
along the decomposition $\mathfrak{g}_0=\mathfrak{l}_0+\mathfrak{l}_0^{\bot}$. 

The subspace $\mathfrak{q}_{\mathfrak{l}_0}$ of $\mathfrak{q}_0$ has to be non-zero. 
Indeed, if $\mathfrak{q}_{\mathfrak{l}_0}=\{ 0\} $, 
then the ideal $\mathfrak{l}_0=\mathfrak{h}_{\mathfrak{l}_0}$ is contained in $\mathfrak{h}_0$. 
Since $(\mathfrak{g}_0,\sigma )$ is effective, 
$\mathfrak{l}_0$ equals $\{ 0\} $, 
which contradicts to $\mathfrak{l}_0\neq \{ 0\} $. 

Here, it follows from Lemma \ref{lem:bracket} that 
$[\mathfrak{h}_{\mathfrak{l}_0^{\bot }},\mathfrak{q}_{\mathfrak{l}_0}]
\subset [\mathfrak{l}_0^{\bot },\mathfrak{l}_0]=\{ 0\} $. 
Combining it with the relation 
$[\mathfrak{h}_{\mathfrak{l}_0},\mathfrak{q}_{\mathfrak{l}_0}]
=[\mathfrak{h}_0,\mathfrak{q}_0]\cap \mathfrak{l}_0
\subset \mathfrak{q}_0\cap \mathfrak{l}_0=\mathfrak{q}_{\mathfrak{l}_0}$, 
we have shown 
$[\mathfrak{h}_0,\mathfrak{q}_{\mathfrak{l}_0}]
=[\mathfrak{h}_{\mathfrak{l}_0},\mathfrak{q}_{\mathfrak{l}_0}]
	+[\mathfrak{h}_{\mathfrak{l}_0^{\bot }},\mathfrak{q}_{\mathfrak{l}_0}]
\subset \mathfrak{q}_{\mathfrak{l}_0}$. 
Hence, $\mathfrak{q}_{\mathfrak{l}_0}$ is 
a $(\operatorname{ad}\mathfrak{h}_0)$-invariant subspace of $\mathfrak{q}_0$. 
\end{proof}

Therefore, we get the implication (N\ref{item:adjoint}) $\Rightarrow$ (N\ref{item:def}) 
as a contraposition to Proposition \ref{prop:non-trivial-invariant}. 

\begin{theorem}[(N\ref{item:adjoint}) $\Rightarrow$ (N\ref{item:def})]
\label{thm:action->irreducible}
Let $(\mathfrak{g}_0,\sigma )$ be an effective non-compact semisimple symmetric pair. 
If the $\mathfrak{h}_0$-action on $\mathfrak{q}_0$ is irreducible 
then there does not exist non-trivial $\sigma $-invariant ideals of $\mathfrak{g}_0$. 
\end{theorem}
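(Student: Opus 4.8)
The plan is to argue by contraposition and to reduce the statement directly to Proposition \ref{prop:non-trivial-invariant}. Concretely, I would assume that $\mathfrak{g}_0$ admits a non-trivial $\sigma$-invariant ideal $\mathfrak{l}_0$ and produce a non-trivial $(\operatorname{ad}\mathfrak{h}_0)$-invariant subspace of $\mathfrak{q}_0$; this contradicts the hypothesis that the $\mathfrak{h}_0$-action on $\mathfrak{q}_0$ is irreducible, which is exactly the contrapositive of the asserted implication.

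First I would apply Proposition \ref{prop:non-trivial-invariant} to the effective pair $(\mathfrak{g}_0,\sigma)$ and the ideal $\mathfrak{l}_0$. Since $(\mathfrak{g}_0,\sigma)$ is effective, the proposition guarantees that $\mathfrak{q}_{\mathfrak{l}_0}:=\mathfrak{q}_0\cap\mathfrak{l}_0$ is a non-zero $(\operatorname{ad}\mathfrak{h}_0)$-invariant subspace of $\mathfrak{q}_0$. This already delivers the invariance and the non-vanishing; the remaining point is to ensure that $\mathfrak{q}_{\mathfrak{l}_0}$ is a \emph{proper} subspace of $\mathfrak{q}_0$, for only a subspace that is neither $\{0\}$ nor all of $\mathfrak{q}_0$ contradicts irreducibility.

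To settle the properness I would invoke the Killing-form orthogonal decomposition $\mathfrak{g}_0=\mathfrak{l}_0+\mathfrak{l}_0^{\bot}$ established earlier. Because $\mathfrak{l}_0$ is a proper ideal, $\mathfrak{l}_0^{\bot}$ is again non-trivial, and it is $\sigma$-invariant by Lemma \ref{lem:bot-invariant}; applying Proposition \ref{prop:non-trivial-invariant} to $\mathfrak{l}_0^{\bot}$ then yields $\mathfrak{q}_{\mathfrak{l}_0^{\bot}}:=\mathfrak{q}_0\cap\mathfrak{l}_0^{\bot}\neq\{0\}$. Intersecting the direct sum $\mathfrak{g}_0=\mathfrak{l}_0\oplus\mathfrak{l}_0^{\bot}$ with $\mathfrak{q}_0$ gives $\mathfrak{q}_0=\mathfrak{q}_{\mathfrak{l}_0}\oplus\mathfrak{q}_{\mathfrak{l}_0^{\bot}}$, so $\mathfrak{q}_{\mathfrak{l}_0}\neq\mathfrak{q}_0$. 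Hence $\mathfrak{q}_{\mathfrak{l}_0}$ is a genuinely non-trivial $(\operatorname{ad}\mathfrak{h}_0)$-invariant subspace, contradicting the assumed irreducibility and finishing the contrapositive. I expect essentially no obstacle here: the single subtlety is the properness check just described, and everything else is a direct citation of Proposition \ref{prop:non-trivial-invariant} together with Lemma \ref{lem:bot-invariant}.
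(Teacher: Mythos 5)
Your proposal is correct and follows essentially the same route as the paper, which likewise obtains Theorem \ref{thm:action->irreducible} as the contrapositive of Proposition \ref{prop:non-trivial-invariant}. In fact you go slightly further than the paper: the paper's proof of that proposition only verifies that $\mathfrak{q}_0\cap\mathfrak{l}_0$ is non-zero and $(\operatorname{ad}\mathfrak{h}_0)$-invariant, whereas your argument via $\mathfrak{g}_0=\mathfrak{l}_0\oplus\mathfrak{l}_0^{\bot}$, Lemma \ref{lem:bot-invariant}, and effectiveness applied to $\mathfrak{l}_0^{\bot}$ explicitly supplies the properness $\mathfrak{q}_0\cap\mathfrak{l}_0\neq\mathfrak{q}_0$ needed to contradict irreducibility, a point the paper leaves implicit.
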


\subsection{Equivalence of (N\ref{item:def}) and (N\ref{item:adjoint}) for Riemannian semisimple symmetric pair}
\label{subsec:riemannian setting}

In this subsection, 
we will treat a special case where $(\mathfrak{g}_0,\sigma )$ is a Riemannian semisimple symmetric pair, 
namely, $\sigma$ is a Cartan involution $\theta $ of $\mathfrak{g}_0$. 
Then, we shall replace $\mathfrak{h}_0$ by $\mathfrak{k}_0=\mathfrak{g}_0^{\theta }$ 
and $\mathfrak{q}_0$ by $\mathfrak{p}_0=\mathfrak{g}_0^{-\theta }$. 
From now, 
let us consider the opposite implication (N\ref{item:def}) $\Rightarrow$ (N\ref{item:adjoint}) 
for $(\mathfrak{g}_0,\theta )$. 

Let $\mathfrak{p}_1\neq \{ 0\} $ be 
a $(\operatorname{ad}\mathfrak{k}_0)$-invariant subspace of $\mathfrak{p}_0$. 
As $\theta |_{\mathfrak{p}_0}=-\operatorname{id}_{\mathfrak{p}_0}$, 
$\mathfrak{p}_1$ is $\theta $-invariant. 
We write $\mathfrak{p}_2=\{ Y\in \mathfrak{p}_0:B(Y,Y_1)=0~(\forall Y_1\in \mathfrak{p}_1)\} $ 
for the orthogonal complement of $\mathfrak{p}_1$ in $\mathfrak{p}_0$ with respect to $B$. 
Since $B|_{\mathfrak{p}_0\times \mathfrak{p}_0}$ is positive definite, 
$\mathfrak{p}_0$ is decomposed into the direct sum of $\mathfrak{p}_1$ and $\mathfrak{p}_2$, 
namely, $\mathfrak{p}_0=\mathfrak{p}_1+\mathfrak{p}_2$ and $\mathfrak{p}_1\cap \mathfrak{p}_2=\{ 0\} $. 
Further, we have: 

\begin{lemma}
\label{lem:p1p2}
$[\mathfrak{p}_1,\mathfrak{p}_2]=\{ 0\} $. 
\end{lemma}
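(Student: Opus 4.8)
The plan is to exploit the invariance of the Killing form $B$ together with its definiteness on each factor of the Cartan decomposition $\mathfrak{g}_0=\mathfrak{k}_0+\mathfrak{p}_0$. The starting observation is that $[\mathfrak{p}_1,\mathfrak{p}_2]\subset [\mathfrak{p}_0,\mathfrak{p}_0]\subset \mathfrak{k}_0$, so it suffices to show that every bracket $[Y_1,Y_2]$ with $Y_1\in\mathfrak{p}_1$ and $Y_2\in\mathfrak{p}_2$ is $B$-orthogonal to all of $\mathfrak{k}_0$; the negative-definiteness of $B|_{\mathfrak{k}_0\times\mathfrak{k}_0}$ will then force it to vanish.

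First I would fix $Y_1\in\mathfrak{p}_1$, $Y_2\in\mathfrak{p}_2$ and an arbitrary $X\in\mathfrak{k}_0$, and compute, using the invariance $B([a,b],c)=-B(b,[a,c])$ of the Killing form,
\begin{align*}
B([Y_1,Y_2],X)=-B(Y_2,[Y_1,X]).
\end{align*}
Since $\mathfrak{p}_1$ is $(\operatorname{ad}\mathfrak{k}_0)$-invariant, we have $[Y_1,X]=-[X,Y_1]\in\mathfrak{p}_1$, and as $Y_2$ lies in $\mathfrak{p}_2$, the orthogonal complement of $\mathfrak{p}_1$ in $\mathfrak{p}_0$, the right-hand side is zero. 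Hence $B([Y_1,Y_2],X)=0$ for every $X\in\mathfrak{k}_0$.

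It then remains to conclude. Because $[Y_1,Y_2]$ itself belongs to $\mathfrak{k}_0$, I may take $X=[Y_1,Y_2]$ in the identity above to obtain $B([Y_1,Y_2],[Y_1,Y_2])=0$. The restriction of $B$ to $\mathfrak{k}_0$ is negative definite, which is the counterpart for the Cartan involution $\theta$ of the positive-definiteness of $B|_{\mathfrak{p}_0\times\mathfrak{p}_0}$ already used to define $\mathfrak{p}_2$, so $[Y_1,Y_2]=0$. As $Y_1,Y_2$ were arbitrary, this yields $[\mathfrak{p}_1,\mathfrak{p}_2]=\{0\}$.

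I do not expect a genuine obstacle here: the argument is the standard orthogonality bookkeeping for reductive decompositions. The only points demanding care are the sign convention for the invariance of $B$ and the invocation of the correct definiteness, namely negativity on $\mathfrak{k}_0$ rather than the positivity on $\mathfrak{p}_0$; note in particular that the invariance of $\mathfrak{p}_1$ is used, but the invariance of $\mathfrak{p}_2$ is not needed for this computation.
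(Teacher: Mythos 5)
Your proof is correct and follows essentially the same route as the paper: both use the invariance of the Killing form to show $B(\mathfrak{k}_0,[\mathfrak{p}_1,\mathfrak{p}_2])=\{0\}$ via the $(\operatorname{ad}\mathfrak{k}_0)$-invariance of $\mathfrak{p}_1$ and the $B$-orthogonality of $\mathfrak{p}_2$ to $\mathfrak{p}_1$, then conclude from the negative definiteness of $B|_{\mathfrak{k}_0\times \mathfrak{k}_0}$. The only cosmetic difference is that you specialize to $X=[Y_1,Y_2]$ while the paper invokes definiteness (nondegeneracy) directly; your remark that the invariance of $\mathfrak{p}_2$ is not needed is also accurate.
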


\begin{proof}
We recall that $[\mathfrak{k}_0,\mathfrak{p}_1]\subset \mathfrak{p}_1$ 
and $[\mathfrak{p}_1,\mathfrak{p}_2]\subset [\mathfrak{p}_0,\mathfrak{p}_0]\subset \mathfrak{k}_0$. 
For $Y_1\in \mathfrak{p}_1$ and $Y_2\in \mathfrak{p}_2$, we have 
$B(X,[Y_1,Y_2])=B([X,Y_1],Y_2)=0$ for any $X\in \mathfrak{k}_0$. 
Since $B|_{\mathfrak{k}_0\times \mathfrak{k}_0}$ is negative definite, 
we obtain $[Y_1,Y_2]=0$. 
\end{proof}

We define a subspace $\mathfrak{l}_0\neq \{ 0\} $ of $\mathfrak{g}_0$ by 
\begin{align}
\label{eq:l0}
\mathfrak{l}_0:=[\mathfrak{p}_1,\mathfrak{p}_1]+\mathfrak{p}_1. 
\end{align}
Then, $\mathfrak{l}_0$ is $\theta $-invariant because 
$\theta (\mathfrak{l}_0)
=[\theta (\mathfrak{p}_1),\theta (\mathfrak{p}_1)]+\theta (\mathfrak{p}_1)
=[\mathfrak{p}_1,\mathfrak{p}_1]+\mathfrak{p}_1=\mathfrak{l}_0$. 
Now, we show: 

\begin{proposition}
\label{prop:ideal}
$\mathfrak{l}_0=[\mathfrak{p}_1,\mathfrak{p}_1]+\mathfrak{p}_1$ is an ideal of $\mathfrak{g}_0$. 
\end{proposition}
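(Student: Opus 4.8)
The plan is to verify directly that $[\mathfrak{g}_0,\mathfrak{l}_0]\subseteq \mathfrak{l}_0$, using the Cartan decomposition refined by the splitting $\mathfrak{p}_0=\mathfrak{p}_1+\mathfrak{p}_2$. Writing $\mathfrak{k}_1:=[\mathfrak{p}_1,\mathfrak{p}_1]$, the standard bracket relation $[\mathfrak{p}_0,\mathfrak{p}_0]\subseteq \mathfrak{k}_0$ of the Cartan decomposition gives $\mathfrak{k}_1\subseteq \mathfrak{k}_0$, so that $\mathfrak{l}_0=\mathfrak{k}_1+\mathfrak{p}_1$. Since $\mathfrak{g}_0=\mathfrak{k}_0+\mathfrak{p}_1+\mathfrak{p}_2$, it suffices to show that each of $[\mathfrak{k}_0,\mathfrak{l}_0]$, $[\mathfrak{p}_1,\mathfrak{l}_0]$ and $[\mathfrak{p}_2,\mathfrak{l}_0]$ lies in $\mathfrak{l}_0$. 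Throughout I would use only three inputs, combined with the Jacobi identity: that $\mathfrak{p}_1$ is $(\operatorname{ad}\mathfrak{k}_0)$-invariant, i.e. $[\mathfrak{k}_0,\mathfrak{p}_1]\subseteq \mathfrak{p}_1$; that $[\mathfrak{p}_1,\mathfrak{p}_1]\subseteq \mathfrak{k}_0$; and that $[\mathfrak{p}_1,\mathfrak{p}_2]=\{0\}$ by Lemma~\ref{lem:p1p2}.

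For the bracket with $\mathfrak{k}_0$, invariance gives $[\mathfrak{k}_0,\mathfrak{p}_1]\subseteq \mathfrak{p}_1$, while the Jacobi identity yields $[\mathfrak{k}_0,\mathfrak{k}_1]=[\mathfrak{k}_0,[\mathfrak{p}_1,\mathfrak{p}_1]]\subseteq [[\mathfrak{k}_0,\mathfrak{p}_1],\mathfrak{p}_1]+[\mathfrak{p}_1,[\mathfrak{k}_0,\mathfrak{p}_1]]\subseteq [\mathfrak{p}_1,\mathfrak{p}_1]=\mathfrak{k}_1$, so $[\mathfrak{k}_0,\mathfrak{l}_0]\subseteq \mathfrak{l}_0$. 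For the bracket with $\mathfrak{p}_1$, one has $[\mathfrak{p}_1,\mathfrak{p}_1]=\mathfrak{k}_1\subseteq \mathfrak{l}_0$, and since $\mathfrak{k}_1\subseteq \mathfrak{k}_0$ the invariance again gives $[\mathfrak{p}_1,\mathfrak{k}_1]\subseteq [\mathfrak{k}_0,\mathfrak{p}_1]\subseteq \mathfrak{p}_1\subseteq \mathfrak{l}_0$; hence $[\mathfrak{p}_1,\mathfrak{l}_0]\subseteq \mathfrak{l}_0$.

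The step I expect to matter most is the bracket with $\mathfrak{p}_2$, which is precisely where Lemma~\ref{lem:p1p2} enters in an essential way. The term $[\mathfrak{p}_2,\mathfrak{p}_1]$ vanishes outright, and for the remaining term I would apply the Jacobi identity once more, propagating this vanishing: $[\mathfrak{p}_2,\mathfrak{k}_1]=[\mathfrak{p}_2,[\mathfrak{p}_1,\mathfrak{p}_1]]\subseteq [[\mathfrak{p}_2,\mathfrak{p}_1],\mathfrak{p}_1]+[\mathfrak{p}_1,[\mathfrak{p}_2,\mathfrak{p}_1]]=\{0\}$. Thus $[\mathfrak{p}_2,\mathfrak{l}_0]=\{0\}\subseteq \mathfrak{l}_0$. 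Combining the three cases gives $[\mathfrak{g}_0,\mathfrak{l}_0]\subseteq \mathfrak{l}_0$, so $\mathfrak{l}_0$ is an ideal of $\mathfrak{g}_0$. No genuine difficulty arises beyond careful bookkeeping of the Jacobi-identity inclusions; the real content is simply that the orthogonality-induced vanishing $[\mathfrak{p}_1,\mathfrak{p}_2]=\{0\}$ of Lemma~\ref{lem:p1p2} forces $[\mathfrak{p}_2,[\mathfrak{p}_1,\mathfrak{p}_1]]=\{0\}$, which is what isolates $\mathfrak{l}_0$ from the complementary piece $\mathfrak{p}_2$.
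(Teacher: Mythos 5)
Your proof is correct and is essentially the same as the paper's: both decompose $\mathfrak{g}_0=\mathfrak{k}_0+\mathfrak{p}_1+\mathfrak{p}_2$, check the three brackets $[\mathfrak{k}_0,\mathfrak{l}_0]$, $[\mathfrak{p}_1,\mathfrak{l}_0]$, $[\mathfrak{p}_2,\mathfrak{l}_0]$ via the Jacobi identity, and use Lemma \ref{lem:p1p2} to kill $[\mathfrak{p}_2,[\mathfrak{p}_1,\mathfrak{p}_1]]$. The only (harmless) difference is that you state the Jacobi step as an inclusion into a sum of two terms, where the paper writes it as an equality of spans.
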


\begin{proof}
We write $\mathfrak{g}_0=\mathfrak{k}_0+\mathfrak{p}_0$ for the corresponding Cartan decomposition. 
According to the decomposition $\mathfrak{p}_0=\mathfrak{p}_1+\mathfrak{p}_2$, 
we obtain $\mathfrak{g}_0=\mathfrak{k}_0+\mathfrak{p}_1+\mathfrak{p}_2$. 
Then, it is necessary to show following three relations: 
\begin{align}
[\mathfrak{k}_0,\mathfrak{l}_0]\subset \mathfrak{l}_0,\quad 
[\mathfrak{p}_1,\mathfrak{l}_0]\subset \mathfrak{l}_0,\quad 
[\mathfrak{p}_2,\mathfrak{l}_0]=\{ 0\} . 
\end{align}

First, the Jacobi identity shows $[\mathfrak{k}_0,[\mathfrak{p}_1,\mathfrak{p}_1]]
=[\mathfrak{p}_1,[\mathfrak{k}_0,\mathfrak{p}_1]]$. 
Further, the relation $[\mathfrak{k}_0,\mathfrak{p}_1]\subset \mathfrak{p}_1$ 
gives the inclusion $[\mathfrak{p}_1,[\mathfrak{k}_0,\mathfrak{p}_1]]\subset [\mathfrak{p}_1,\mathfrak{p}_1]$. 
Thus, we have 
$[\mathfrak{k}_0,\mathfrak{l}_0]
=[\mathfrak{k}_0,[\mathfrak{p}_1,\mathfrak{p}_1]]+[\mathfrak{k}_0,\mathfrak{p}_1]
\subset [\mathfrak{p}_1,\mathfrak{p}_1]+\mathfrak{p}_1=\mathfrak{l}_0$. 

Second, as $[\mathfrak{p}_1,\mathfrak{p}_1]\subset \mathfrak{k}_0$ 
and $[\mathfrak{k}_0,\mathfrak{p}_1]\subset \mathfrak{p}_1$, 
we have $[\mathfrak{p}_1,\mathfrak{l}_0]
=[\mathfrak{p}_1,[\mathfrak{p}_1,\mathfrak{p}_1]]+[\mathfrak{p}_1,\mathfrak{p}_1]
\subset [\mathfrak{p}_1,\mathfrak{k}_0]+[\mathfrak{p}_1,\mathfrak{p}_1]
\subset \mathfrak{p}_1+[\mathfrak{p}_1,\mathfrak{p}_1]=\mathfrak{l}_0$. 

Third, the Jacobi identity implies that 
$[\mathfrak{p}_2,[\mathfrak{p}_1,\mathfrak{p}_1]]=[\mathfrak{p}_1,[\mathfrak{p}_1,\mathfrak{p}_2]]$. 
By Lemma \ref{lem:p1p2}, 
this equals $[\mathfrak{p}_1,\{ 0\} ]=\{ 0\} $. 
Thus, we obtain $[\mathfrak{p}_2,\mathfrak{l}_0]
=[\mathfrak{p}_2,[\mathfrak{p}_1,\mathfrak{p}_1]]+[\mathfrak{p}_2,\mathfrak{p}_1]=\{ 0\} $. 

Hence, we conclude 
\begin{align*}
[\mathfrak{g}_0,\mathfrak{l}_0]
=[\mathfrak{k}_0,\mathfrak{l}_0]+[\mathfrak{p}_1,\mathfrak{l}_0]+[\mathfrak{p}_2,\mathfrak{l}_0]
\subset \mathfrak{l}_0. 
\end{align*}
Therefore, we have proved Proposition \ref{prop:ideal}. 
\end{proof}

Using the $\theta $-invariant ideal $\mathfrak{l}_0$ given by (\ref{eq:l0}), we prove: 

\begin{theorem}[(N\ref{item:def}) $\Leftrightarrow$ (N\ref{item:adjoint}) for Riemannian semisimple symmetric pair]
Let $(\mathfrak{g}_0,\theta )$ be a non-compact Riemannian semisimple symmetric pair. 
Then, the adjoint $\mathfrak{k}_0$-action on $\mathfrak{p}_0$ is irreducible 
if and only if there does not exist non-trivial $\sigma$-invariant ideals of $\mathfrak{g}_0$. 
\end{theorem}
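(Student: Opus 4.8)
The plan is to assemble the two halves of the equivalence that have already been set up in this section, specialised to the Riemannian situation $\sigma=\theta$, in which $\mathfrak{h}_0=\mathfrak{k}_0=\mathfrak{g}_0^{\theta}$ and $\mathfrak{q}_0=\mathfrak{p}_0=\mathfrak{g}_0^{-\theta}$, and in which $\theta$ acts as $-\operatorname{id}$ on $\mathfrak{p}_0$. Throughout I retain the standing hypothesis of this appendix that $(\mathfrak{g}_0,\theta)$ is effective (Definition \ref{def:effective}), which is harmless by the reduction of Section \ref{subsec:effective}. With this in hand, the implication ``irreducible action $\Rightarrow$ no non-trivial $\theta$-invariant ideals'', that is (N\ref{item:adjoint}) $\Rightarrow$ (N\ref{item:def}), requires nothing new: it is precisely Theorem \ref{thm:action->irreducible} read with $\mathfrak{h}_0=\mathfrak{k}_0$ and $\mathfrak{q}_0=\mathfrak{p}_0$.

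The substance is the converse (N\ref{item:def}) $\Rightarrow$ (N\ref{item:adjoint}), which I would prove by contraposition. Assume the adjoint $\mathfrak{k}_0$-action on $\mathfrak{p}_0$ is reducible, so there is a subspace $\mathfrak{p}_1$ with $\{0\}\neq \mathfrak{p}_1\subsetneq \mathfrak{p}_0$ and $[\mathfrak{k}_0,\mathfrak{p}_1]\subset \mathfrak{p}_1$. Since $\theta|_{\mathfrak{p}_0}=-\operatorname{id}_{\mathfrak{p}_0}$, the subspace $\mathfrak{p}_1$ is automatically $\theta$-invariant, so Proposition \ref{prop:ideal} applies and produces the $\theta$-invariant ideal $\mathfrak{l}_0:=[\mathfrak{p}_1,\mathfrak{p}_1]+\mathfrak{p}_1$ of $\mathfrak{g}_0$. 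It then remains to check that $\mathfrak{l}_0$ is non-trivial, which I would extract from the directness of the Cartan decomposition $\mathfrak{g}_0=\mathfrak{k}_0\oplus \mathfrak{p}_0$: because $[\mathfrak{p}_1,\mathfrak{p}_1]\subset \mathfrak{k}_0$, the $\mathfrak{p}_0$-component of $\mathfrak{l}_0$ is exactly $\mathfrak{p}_1$, so $\mathfrak{l}_0\cap \mathfrak{p}_0=\mathfrak{p}_1$. Hence $\mathfrak{l}_0\neq \{0\}$ since $\mathfrak{p}_1\neq \{0\}$, and $\mathfrak{l}_0\neq \mathfrak{g}_0$ since $\mathfrak{p}_1\subsetneq \mathfrak{p}_0$. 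Thus $\mathfrak{l}_0$ is a non-trivial $\theta$-invariant ideal, contradicting (N\ref{item:def}), which completes the contrapositive and hence the equivalence.

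The only delicate point, and the one deserving the most care, is the non-triviality of $\mathfrak{l}_0$: one must read off its $\mathfrak{p}_0$-part through the orthogonality and directness of the Cartan decomposition (Lemma \ref{lem:p1p2} supplies the bracket relation underlying Proposition \ref{prop:ideal}). Conceptually, the other thing worth recording is that the forward direction genuinely uses effectiveness; without it a compact simple ideal of $\mathfrak{g}_0$ would lie inside $\mathfrak{k}_0$, violating (N\ref{item:def}) while acting trivially on $\mathfrak{p}_0$ and leaving the $\mathfrak{k}_0$-action irreducible, so effectiveness is exactly what rules out such a degeneracy.
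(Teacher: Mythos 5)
Your proposal is correct and follows essentially the same route as the paper: the direction (N\ref{item:adjoint}) $\Rightarrow$ (N\ref{item:def}) is quoted from Theorem \ref{thm:action->irreducible}, and the converse is obtained from the ideal $\mathfrak{l}_0=[\mathfrak{p}_1,\mathfrak{p}_1]+\mathfrak{p}_1$ of Proposition \ref{prop:ideal}, with the key step in both cases being that the $\mathfrak{p}_0$-component of $\mathfrak{l}_0$ is exactly $\mathfrak{p}_1$ (you phrase this as $\mathfrak{l}_0\cap\mathfrak{p}_0=\mathfrak{p}_1$ in a contrapositive argument, while the paper argues directly that $\mathfrak{l}_0=\mathfrak{g}_0$ forces $\mathfrak{p}_1=\mathfrak{p}_0$). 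Your closing remark that effectiveness is genuinely needed for the forward implication is also accurate and consistent with the standing hypothesis of the appendix.
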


\begin{proof}
The necessary condition is a direct consequence of Theorem \ref{thm:action->irreducible}. 
Then, it suffices to show the sufficient condition. 

For a $(\operatorname{ad}\mathfrak{k}_0)$-invariant subspace $\mathfrak{p}_1\neq \{ 0\} $ in $\mathfrak{p}_0$, 
the subspace $\mathfrak{l}_0\neq \{ 0\} $ defined by (\ref{eq:l0}) 
is a $\theta $-invariant ideal of $\mathfrak{g}_0$ 
(see Proposition \ref{prop:ideal}). 
If there does not exist non-trivial $\sigma$-invariant ideals of $\mathfrak{g}_0$, 
then $\mathfrak{l}_0$ must to be $\mathfrak{g}_0$. 
Thus, we get 
$[\mathfrak{p}_1,\mathfrak{p}_1]+\mathfrak{p}_1=\mathfrak{k}_0+\mathfrak{p}_0$. 
This implies $\mathfrak{p}_1=\mathfrak{p}_0$. 
Hence, the adjoint $\mathfrak{k}_0$-action on $\mathfrak{p}_0$ is irreducible. 
\end{proof}

\subsection{The implication (N\ref{item:def}) $\Rightarrow $ (N\ref{item:adjoint}) for pseudo-Riemannian symmetric pair}
\label{subsec:counterexample}

Finally, we give an example of non-compact pseudo-Riemannian semisimple symmetric pairs 
which satisfy (N\ref{item:def}) but do not satisfy (N\ref{item:adjoint}). 

\begin{eg}
\label{eg:counterexample}
Let $m,n$ be positive integers. 
We take a non-compact real semisimple Lie algebra $\mathfrak{g}_0$ 
as $\mathfrak{sl}(m+n,\mathbb{R})=\{ X\in M(m+n,\mathbb{R}):\operatorname{tr}X=0\} $ 
and an involution $\sigma$ on $\mathfrak{g}_0$ as $\sigma (X)=I_{m,n}XI_{m,n}$ $(X\in \mathfrak{g}_0)$ 
where $I_{m,n}$ is defined by (\ref{eq:I_{m,n}}). 
Then, $(\mathfrak{g}_0,\sigma )$ is a non-compact pseudo-Riemannian semisimple symmetric pair, 
in particular, $\sigma$ is not a Cartan involution of $\mathfrak{g}_0$. 
Since $\mathfrak{g}_0$ is simple, $(\mathfrak{g}_0,\sigma )$ satisfies (N\ref{item:def}). 

On the other hand, the fixed point set $\mathfrak{h}_0=\mathfrak{g}_0^{\sigma }$ is of the form 
\begin{align*}
\mathfrak{h}_0
&=\left\{ 
	\left( 
		\begin{array}{cc}
		A & O \\
		O & D
		\end{array}
	\right) :
	\begin{array}{c}
	A\in M(m,\mathbb{R}),~D\in M(n,\mathbb{R}),\\
	\operatorname{tr}A+\operatorname{tr}D=0
	\end{array}
\right\} ,
\end{align*}
which is $\mathfrak{s}(\mathfrak{gl}(m,\mathbb{R})+\mathfrak{gl}(n,\mathbb{R}))$, 
and the fixed point set $\mathfrak{q}_0=\mathfrak{g}_0^{-\sigma}$ is 
\begin{align*}
\mathfrak{q}_0
&=\left\{ 
	\left( 
		\begin{array}{cc}
		O & X \\
		Y & O
		\end{array}
	\right) :
	X\in M(m,n\,;\mathbb{R}),~Y\in M(n,m\,;\mathbb{R})
\right\} . 
\end{align*}
Here, we take two subspaces $\mathfrak{q}_1,\mathfrak{q}_2$ of $\mathfrak{q}_0$ as 
\begin{align*}
\mathfrak{q}_1
&=\left\{ 
	\left( 
		\begin{array}{cc}
		O & X \\
		O & O
		\end{array}
	\right) :
	X\in M(m,n\,;\mathbb{R})
\right\} ,\\
\mathfrak{q}_2
&=\left\{ 
	\left( 
		\begin{array}{cc}
		O & O \\
		Y & O
		\end{array}
	\right) :
	Y\in M(n,m\,;\mathbb{R})
\right\} . 
\end{align*}
Then, they are $(\operatorname{ad}\mathfrak{h}_0)$-invariant, 
and then $\mathfrak{q}_0$ is decomposed into two $(\operatorname{ad}\mathfrak{h}_0)$-invariant subspaces as 
$\mathfrak{q}_0=\mathfrak{q}_1+\mathfrak{q}_2$. 
Hence, the adjoint $\mathfrak{h}_0$-action on $\mathfrak{q}_0$ is not irreducible, 
from which (N\ref{item:adjoint}) does not hold. 
\end{eg}

As mentioned in Section \ref{subsec:irr}, 
any non-compact semisimple symmetric pair $(\mathfrak{g}_0,\sigma )$ 
can be decomposed into the direct sum 
\begin{align*}
(\mathfrak{g}_0,\sigma )
=(\mathfrak{l}_0^{(1)},\sigma _1)\oplus \cdots \oplus 
	(\mathfrak{l}_0^{(k)},\sigma _k)
\end{align*}
of non-compact semisimple symmetric pairs $(\mathfrak{l}_0^{(1)},\sigma _1)$, 
\ldots , $(\mathfrak{l}_0^{(k)},\sigma _k)$ 
which satisfy (N\ref{item:def}). 
On the other hand, 
even though the adjoint $\mathfrak{h}_0$-action on $\mathfrak{q}_0$ is not irreducible, 
$(\mathfrak{g}_0,\sigma )$ does not always have a non-trivial $\sigma$-invariant ideal 
(see Example \ref{eg:counterexample}). 
In this context, 
Definition \ref{def:irr-pair} (that is, (N\ref{item:def})) would be appropriate 
to a definition for a non-compact semisimple symmetric pair to be irreducible. 



\end{document}